\setlist[enumerate]{label = (\roman*)}
\newcolumntype{C}[1]{>{\centering\arraybackslash}m{#1}}
\newcommand{\U}{\mathrm{U}}
\newcommand{\SU}{\mathrm{SU}}
\newcommand{\cT}{\mathcal{T}}
\newcommand{\RR}{\mathbb{R}}
\newcommand{\CC}{\mathbb{C}}
\newcommand{\QQ}{\mathbb{Q}}
\newcommand{\ZZ}{\mathbb{Z}}
\newcommand{\PP}{\mathbb{P}}
\newcommand{\Id}{\mathrm{Id}}
\newcommand{\Ann}{\mathrm{Ann}}
\newcommand{\depth}{\mathrm{depth}}
\newcommand{\Th}{\mathrm{Th}}
\newcommand{\im}{\mathrm{im}}
\newtheorem{thm}{Theorem}[section]
\newtheorem{prop}[thm]{Proposition}
\newtheorem{cor}[thm]{Corollary}
\newtheorem{lem}[thm]{Lemma}
\theoremstyle{definition}
\newtheorem{rem}[thm]{Remark}
\newtheorem{defn}[thm]{Definition}
\newtheorem{ex}[thm]{Example}
\begin{document}
\title{The GKM correspondence in dimension $6$}
\author{Oliver Goertsches\footnote{Philipps-Universit\"at Marburg, email:
goertsch@mathematik.uni-marburg.de}, Panagiotis Konstantis\footnote{Philipps-Universit\"at Marburg,
email: pako@mathematik.uni-marburg.de}, and Leopold
Zoller\footnote{Ludwig-Maximilians-Universit\"at M\"unchen, email: leopold.zoller@mathematik.uni-muenchen.de}}

\maketitle

\begin{abstract}
It follows from the GKM description of equivariant cohomology that the GKM graph of a GKM manifold has free equivariant graph cohomology, and satisfies a Poincar\'e duality condition. We prove that these conditions are sufficient for an abstract $3$-valent $T^2$-GKM graph to be realizable by a simply-connected $6$-dimensional GKM manifold. Our realization has the property that any closed stratum of a finite isotropy group contains a fixed point. Furthermore, we argue that in case there exists a fixed point in whose vicinity there occur at most two distinct finite nontrivial isotropy groups such a realization is unique up to equivariant homeomorphism, thus establishing a complexity one GKM correspondence in dimension $6$. We show that the statement on equivariant uniqueness is false without the two conditions on the finite isotropies by providing counterexamples in presence of a fixed point with three distinct neighbouring finite isotropy groups, as well as an example of a simply-connected integer GKM manifold with a closed stratum of a finite isotropy group which does not contain any fixed point.
\end{abstract}
{
\hypersetup{linkcolor=Blue}
\tableofcontents
}

\section{Introduction}

The idea to understand torus actions via associated combinatorial invariants has a long history. In any of the categories one traditionally considers -- Hamiltonian torus actions on symplectic manifolds, algebraic torus actions on complex algebraic varieties, or smooth actions on differentiable manifolds -- the outcome of the theory is that for actions of tori of maximal rank the situation is as clean as possible: the action is uniquely determined by the invariants up to equivariant isomorphism, and one can associate an action to any prescribed admissible set of invariants. In the symplectic setting, this amounts to the Delzant correspondence associating to any toric symplectic manifold its momentum polytope \cite{Delzant}; in the complex algebraic setting one considers the fan of a toric variety, see for example \cite[Chapter 5]{ToricTopology}. In the smooth setting there are various topological analogues of these theories like those of quasitoric \cite{DavisJ}, torus \cite{MasudaUnitary, MasudaPanov}, or topological toric manifolds \cite{IshidaFM}.

More recently, torus actions of complexity one, i.e., for which the dimension of the acting torus is one less than maximal, have attracted attention. Among the properties that were investigated are the orbit spaces of such actions, see \cite{KarshonTolmanQuotients} for the Hamiltonian case and \cite{AyzenbergMasuda, AyzenbergCherepanov} for the topological case, or their equivariant cohomology algebras, see \cite{HolmKessler}. The subclass of tall (symplectic) complexity one spaces was classified in \cite{KarshonTolman}.

In order to deal with actions of tori of arbitrary dimension, another theory has emerged in recent years. While in the toric setting the combinatorial object associated to the action is a polytope or similar, in GKM theory \cite{GKM} one relaxes the assumptions insofar as to retain only a labelled graph, the so-called GKM graph. In the case of a toric symplectic manifold this graph coincides with the one-skeleton of the momentum polytope, and in the topological setting with the one-skeleton of the orbit space. This theory allows for much greater flexibility: there are many spaces that do not allow for a toric action, but admit an action of GKM type, such as homogeneous spaces of equal rank \cite{GHZhom} or certain biquotients \cite{GKZsympbiquot, GKZdim6}. The main appeal of the GKM setting is that, despite the added flexibility, the GKM graph does still encode a lot of the topology of the manifold. In particular it encodes the entire equivariant as well as the nonequivariant cohomology in terms of combinatorial formulas (cf. \cite{GKM}). Note that we consider GKM theory in the general setting of smooth orientable manifolds; some authors additionally assume an invariant almost complex structure, see Remark \ref{rem:signedgraph} below. Independently from the geometric setup, the notion of an abstract GKM graph was introduced in \cite{GuilleminZaraII} and studied from a purely combinatorial perspective.

Two natural questions about the GKM correspondence
\[
\{ \textrm{GKM actions} \} \longrightarrow \{ \textrm{abstract GKM graphs} \}
\]
that sends a GKM action to its GKM graph arise: the rigidity question asks in how far a GKM action is determined by its GKM graph; the realization question asks whether an abstract GKM graph is realized by a GKM action.

Concerning the rigidity question, we showed in \cite{GKZrigid} that in dimensions $8$ and higher, the GKM graph does not determine the homotopy type of the manifold acted on: we explicitly constructed two GKM actions (of complexity one) with the same GKM graph on the total spaces of the two $S^2$-bundles over $S^6$, whose fifth homotopy group differs by a $\mathbb{Z}_2$ factor. Using the diffeomorphism classification in dimension $6$ by Wall \cite{Wall}, Jupp \cite{Jupp}, and \v{Z}ubr \cite{Zubr} we were able to show, however, that in dimension $6$ the GKM graph determines even the (nonequivariant) diffeomorphism type \cite{GKZdim6}. As a corollary, Tolman's \cite{Tolman} and Woodward's \cite{Woodward} examples of Hamiltonian non-K\"ahler manifolds were seen to be diffeomorphic to Eschenburg's twisted flag manifold, and hence admit a nonequivariant K\"ahler structure.

Concerning the realization question, not much is known beyond the
classical correspondences mentioned above. It was observed in \cite[Section
3.1]{GuilleminZaraII} that a given (signed) GKM graph can be thickened to an
open manifold on which a torus acts with the graph as orbit space of the
one-skeleton. In \cite{Ayzenberg} certain characteristic data is realized as a topological space on which a torus acts with complexity one and connected stabilizers, see \cite[Construction 3.5]{Ayzenberg}. In the realm of closed smooth manifolds however, as far as we are aware,
the only known result is that certain $3$-valent GKM fiber bundles
\cite{GKMFiberBundles} can be realized as projectivizations of complex rank $2$
vector bundles over quasitoric $4$-manifolds or $S^4$, see \cite{GKZ}.

In this paper, we focus on the general GKM realization and rigidity problem for complexity one actions in dimension $6$. When aiming for a correspondence between geometry and combinatorics it is reasonable to restrict to simply-connected spaces on the geometric side as otherwise no rigidity can be expected.
Our main result is (cf. Theorems \ref{thm:rationalGKM}, \ref{thm:mainintegerGKM}, and \ref{thm:rigidity} as well as Remark \ref{rem:condition(a)}):
\begin{thm}\label{thm:mainthm}
\begin{enumerate}
\item An abstract effective 3-valent $T^2$-GKM graph $(\Gamma,\alpha)$ is realizable geometrically by a $6$-dimensional simply-connected rational GKM manifold if and only if the graph cohomology $H^*(\Gamma,\alpha;\mathbb{Q})$ satisfies Poincaré duality. It is realizable by a simply-connected integer GKM manifold if and only if additionally the equivariant graph cohomology $H^*_{T^2}(\Gamma,\alpha;\mathbb{Z})$ is a free module over $H^*(BT^2;\mathbb{Z})$. In both cases there is such a realization satisfying condition
\begin{enumerate}
\item[(a)] every closed stratum of a finite isotropy group contains a $T^2$-fixed point.
\end{enumerate}
\item If two simply-connected integer $T^2$-GKM manifolds have the same GKM graph and satisfy condition (a) as well as
\begin{enumerate}
\item[(b)] there exists a $T^2$-fixed point in whose vicinity there occur at most two distinct finite nontrivial isotropy groups,
\end{enumerate} then they are equivariantly homeomorphic.
\end{enumerate}
\end{thm}

By a closed stratum of a finite isotropy group in condition (a) we mean a connected component $N$ of the fixed point set of a finite subgroup $H\subset T$ such that $H$ is the principal isotropy group on $N$. Condition (b) can be reformulated as the condition that at one (and hence any) $T^2$-fixed point there exist two weights of the isotropy representation that form an integer basis (cf.\ Remark \ref{rem:condition(b)}). In particular, unlike condition (a), condition (b) manifests directly in the GKM graph.
 The theorem implies that one has a bijective correspondence
\begin{center}
\begin{tikzpicture}
\node (a) at (0,0) {$\left\{\begin{array}{c}
\text{6-dim.\ simply-connected}\\
\text{integer $T^2$-GKM manifolds whose}\\
\text{finite isotropies satisfy (a) and (b)}\end{array}\right\}$};

\node (b) at (8.7,0) {$\left\{\begin{array}{c}
\text{3-valent } T^2\text{-GKM graphs with Poincaré}\\
\text{duality, free equivariant cohomology,}\\
\text{s.t.\ some adjacent weights form a basis}\end{array}\right\}$};

\draw[<->] (a) -- (b);
\end{tikzpicture}
\end{center}
where the left hand side is up to equivariant homeomorphism and the right hand side is up to graph isomorphisms preserving the labels.

We stress again that the injectivity of this correspondence is a low-dimensional phenomenon, by the main result of \cite{GKZrigid}. The fact that the left hand side is up to equivariant homeomorphism as opposed to equivariant diffeomorphism is due to the fact that, on the regular stratum, $T^2$-compatible smooth structures correspond to smooth structures on the orbit space. Hence rigidity up to equivariant diffeomorphism is connected to uniqueness of smooth structures in dimension $4$ (cf.\ Remark \ref{rem:smoothuniqueness}). 

Roughly speaking, the proofs of both our realization and rigidity result make use of an equivariant handle decomposition that starts with a thickening of the one-skeleton. We would like to emphasize that in case all isotropy groups are connected, the rigidity part of the statement, i.e., the first sentence of (ii), can also be proven using the results of \cite{Ayzenberg}, as outlined in the beginning of Section \ref{sec:rigidity} below.

As an immediate consequence of the above correspondence we answer the open question whether Tolman's and Woodward's examples mentioned above are equivariantly homeomorphic, see Corollary \ref{cor:tolwood}: \begin{cor}
Tolman's and Woodward's examples are equivariantly homeomorphic to Eschen\-burg's twisted flag manifold.
\end{cor}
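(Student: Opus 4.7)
The plan is to reduce the corollary directly to the rigidity statement contained in part (ii) of the main theorem (Theorem \ref{thm:rigidity}). That theorem asserts that any two simply-connected integer $T^2$-GKM $6$-manifolds with connected isotropy groups and the same labelled GKM graph are equivariantly homeomorphic, so it suffices to verify that Tolman's example, Woodward's example, and Eschenburg's twisted flag manifold all fit into this common framework.

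First I would check that all three spaces carry complexity-one Hamiltonian $T^2$-actions of GKM type on simply-connected compact $6$-manifolds. For Tolman's and Woodward's spaces this is the setting in which they were originally constructed in \cite{Tolman} and \cite{Woodward}; for the Eschenburg flag $\SU(3)/\!/T^2$ it is classical. Connectedness of isotropy groups is automatic for Hamiltonian torus actions on simply-connected symplectic manifolds, so the connectedness hypothesis of Theorem \ref{thm:rigidity}(ii) is satisfied. The integer GKM property (in particular the fact that the weights at adjacent edges of each GKM graph form a $\mathbb{Z}$-basis) can be read off from explicit computation of the isotropy weights at the $T^2$-fixed points, and is already implicit in the GKM graphs recorded in \cite{GKZdim6}.

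The second ingredient is the identification of the three labelled GKM graphs. This was in fact already carried out in \cite{GKZdim6}: the nonequivariant diffeomorphism recalled in the introduction was obtained there precisely by computing the GKM graphs of the three spaces, observing that they coincide, and then invoking the Wall--Jupp--\v{Z}ubr classification via the nonequivariant rigidity theorem established in that paper. Hence the coincidence of GKM graphs demanded by our rigidity statement is on record, and no new combinatorial work is required.

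With both hypotheses in place, Theorem \ref{thm:rigidity}(ii) directly produces the desired $T^2$-equivariant homeomorphisms between Tolman's, Woodward's, and Eschenburg's manifolds. I do not anticipate a genuine obstacle here: the main theorem of the present paper is exactly tailored to upgrade the nonequivariant diffeomorphism of \cite{GKZdim6} to an equivariant homeomorphism, and the corollary is essentially its immediate specialization to these three examples.
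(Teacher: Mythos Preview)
Your proposal is correct and follows exactly the approach the paper takes: cite \cite{GKZdim6} for simply-connectedness and coincidence of the three GKM graphs, verify connected isotropies, and apply Theorem~\ref{thm:rigidity}. One small caveat: the natural $T^2$-action on Eschenburg's biquotient is not a priori Hamiltonian, so connectedness of isotropies there is better justified---as you also indicate---by reading the $\mathbb{Z}$-basis condition directly off the explicit GKM graph recorded in \cite{GKZdim6}.
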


Furthermore we prove that the rigidity in the above correspondence fails if
either one of conditions (a) or (b) are violated (cf.\ Theorem
\ref{thm:nonrigidity}):

\begin{thm}
There are two simply-connected $6$-dimensional integer GKM $T^2$-manifolds which are not equivariantly homeomorphic but have the same GKM graph. The examples can be chosen such that both satisfy condition (a) or both satisfy condition (b).
\end{thm}

The reasons for non-rigidity in the two cases are quite different in flavour. Condition (a) ensures that the global behaviour of the finite isotropy strata is determined by the GKM graph, which is not in general a consequence of (integer) equivariant formality. The reason is that equivariant cohomology, even with integer coefficients, can not detect certain interactions between $p$-torsion isotropy and $q$-torsion isotropy for different primes $p,q$. More precisely our counterexample relies on the fact that while $\mathbb{Z}_p$- and $\mathbb{Z}_q$-strata are in a sense encoded in the equivariant cohomology, the latter can not necessarily detect whether two such strata intersect in a $\mathbb{Z}_{pq}$-orbit. On the other hand, condition (b) ensures that the orbit space of all nonregular orbits is a surface with non-empty boundary. Without this condition non-homeomorphic examples can be produced via knotted closed surfaces in $S^4$.

In order to prove the realization part of the main theorem, we develop a combinatorial orientability property for GKM graphs (Definition \ref{defn:graphorientable}) which we believe to be of independent interest outside of the low dimensional setting. It manifests in forcing certain signs in the congruence relations among the labels of the graph. Using the built in orientability condition in the definition of a GKM manifold we prove (cf.\ Corollary \ref{cor:orientable})

\begin{prop}
The GKM graph of a GKM manifold is orientable.
\end{prop}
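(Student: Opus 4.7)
The plan is to use the orientation of the manifold to extract compatible sign data for all edges of the GKM graph, and then verify the orientability condition of Definition \ref{defn:graphorientable} by a local analysis at each edge and vertex. First I would fix an orientation on the GKM manifold $M$, and for each edge $e$ of the GKM graph choose an orientation on the corresponding invariant $2$-sphere $S_e \subset M$. At each fixed point $p$ on $S_e$, this orientation pins down a sign for the edge weight $\alpha(e)$: the one for which the complex orientation of the $2$-dimensional isotropy representation agrees with the chosen orientation of $T_p S_e$.

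For the congruence relations along $e$, I would exploit the equivariant structure of the normal bundle $\nu_e$ of $S_e$ in $M$. It inherits an orientation from the orientations of $M$ and $S_e$, and restricting the $T^2$-action to the $1$-dimensional subtorus stabilizing $S_e$ decomposes $\nu_e$ equivariantly into a sum of two $T^2$-equivariant complex line bundles $L_1 \oplus L_2$. Their weights at $p$ are the two isotropy weights of $T_p M$ other than $\alpha(e)$, and analogously at $q$. The classical formula that a $T^2$-equivariant line bundle of Euler number $c$ over $S^2$ has weights at its two poles differing by $c \cdot \alpha(e)$ then yields the required congruences with definite signs. A separate check at each vertex shows that the product of the complex orientations of the three incident isotropy representations matches the orientation of $M$, which should supply the vertex-wise part of the condition.

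The main obstacle I anticipate is the sign bookkeeping: the locally defined sign conventions, coming from arbitrary orientation choices on the spheres $S_e$, must glue along edges and combine at vertices into globally consistent data independent of these auxiliary choices. Once this is tracked carefully, the congruences themselves are immediate consequences of the equivariant topology of vector bundles over $S^2$.
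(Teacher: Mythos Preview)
Your approach differs from the paper's, which is purely cohomological: there one shows (Proposition~\ref{prop:thomclasslem}) that if the graph were not orientable then the Thom class of every vertex would vanish in $H^{2n}(\Gamma,\alpha;\mathbb{Q})$, and then observes that the equivariant Thom class of a fixed point in a GKM manifold is nonzero because it restricts to a generator of $H^{2n}(M;\mathbb{Q})$. No normal bundles or orientation bookkeeping enter.

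Your direct geometric strategy can be made to work, but the mechanism you propose has a gap. The normal bundle $\nu_e$ does not in general split $T$-equivariantly into complex line bundles: the isotypic decomposition under the identity component of $\ker\alpha(e)$ collapses whenever two of the remaining weights restrict to the same character there (for instance $\alpha_1=(1,0)$, $\alpha_2=(0,1)$, $\alpha_3=(1,1)$ at a vertex), and even when a rank-$2$ splitting exists the summands carry complex structures only up to conjugation, so the ``Euler number'' formula you invoke is not available without further choices. Also, the assertion that the product of the complex orientations of the incident isotropy representations matches the orientation of $M$ is not a fact to be verified but a sign $\mathrm{or}(v)\in\{\pm1\}$ to be recorded; it equals $-1$ at some vertices in general.

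The fix is to drop the line-bundle splitting and compare orientations of $\nu_e$ (or of $TM|_{S_e}$) as a whole. With arbitrary signs fixed, define $\mathrm{or}(v)$ as above; the key identity is then $\eta(e)=\mathrm{or}(v)\,\mathrm{or}(w)$ for each edge $e$ between $v$ and $w$, which immediately gives $\prod_i\eta(e_i)=1$ along any closed path. This identity is essentially the content of Lemma~\ref{lem:orientation} and Proposition~\ref{prop:orientability}, proved in the paper for the abstract thickening of the graph; since a tubular neighbourhood of the one-skeleton of a GKM manifold is locally modelled on that thickening, the argument transfers. The paper's Thom-class proof is shorter and sidesteps all of this local analysis, at the price of invoking equivariant characteristic classes.
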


It is not hard to find examples of abstract GKM graphs for which the
orientability condition fails and which are therefore not realizable by GKM
manifolds. However they might of course come from
nonorientable  analogues of GKM
manifolds, see Example \ref{ex:nonorientable}. The reason for orientability not
showing up in our main theorem is that we prove it to be implied by the Poincaré
duality hypothesis, see Corollary \ref{cor:PDorientable}. Our proof is however
specifically for dimension $6$ and does not immediately generalize.

Section \ref{sec:prelims} begins with a recollection of the essential terminology. Furthermore the notion of orientability of a GKM graph is introduced and some results on graph cohomology are proved.
In Section \ref{sec:realization} we provide a construction to realize a given abstract GKM graph as the GKM graph of a GKM action. Our strategy to achieve this result is outlined in detail the beginning of said section. The proof that the constructed realization is in fact a GKM manifold is the purpose of Section \ref{sec:cohom}. The final Section \ref{sec:rigidity} contains the results on the rigidity problem.\\

Throughout the paper, cohomology is taken with integer coefficients, unless indicated otherwise. All GKM manifolds as well as all GKM graphs will be assumed to be effective (cf.\ Definition \ref{defn:effectivegraph}).\\

\noindent {\bf Acknowledgements.} We thank Anton Ayzenberg, Daniel Kasprowski and Catalin Zara for some valuable explanations, as well as Nikolas Wardenski for helpful literature references. This work is part of a project funded by the Deutsche Forschungsgemeinschaft (DFG, German Research Foundation) - 452427095. The research that led to this paper was partly carried out during a stay of the third author at the Max Planck Institute for Mathematics to which he is grateful for its hospitality and financial support.

\section{GKM graphs and equivariant cohomology}\label{sec:prelims}

\subsection{GKM actions and graphs}

The GKM correspondence, named after Goresky--Kottwitz--MacPherson \cite{GKM}, associates to certain actions of compact tori on compact, connected, orientable manifolds, so-called GKM actions, a combinatorial object, namely an (abstract) GKM graph \cite{GuilleminZaraII}. From this graph one can read off various topological information, such as the equivariant cohomology and equivariant characteristic classes of the action.

\begin{defn} \label{defn:gkmaction}
We consider a torus $T = T^k = S^1 \times \ldots \times S^1$ acting on a compact, connected, orientable $2n$-dimensional manifold $M$ with $H^{odd}(M)=0$. If 
\begin{enumerate}
\item the fixed point set $M^T = \{p\in M\mid Tp=\{p\}\}$ is finite and
\item the one-skeleton $M_1 = \{p\in M\mid \dim Tp\leq 1\}$ is a finite union of $T$-invariant $2$-spheres,
\end{enumerate}
then we say that the action is an (integer) GKM action. If only the rational odd cohomology vanishes, we speak of a (rational) GKM action. The manifold $M$ together with the action is called an (integer/rational) GKM $T$-manifold.
\end{defn}
Throughout the paper we will furthermore assume all GKM actions are effective and repress effectivity from the notation. In the above situation, the orbit space of the one-skeleton $M_1/T$ is homeomorphic to a graph $\Gamma$, whose vertices $V(\Gamma)$ are in one-to-one correspondence to the fixed point set $M^T$, and whose set of (unoriented) edges $E(\Gamma)$ contains precisely one edge connecting $p$ and $q$ for each $T$-invariant $2$-sphere containing $p$ and $q$. As a space $M_1$ is fully encoded in $\Gamma$. The $T$-action on $M_1$ can be encoded combinatorially by assigning labels to edges of $\Gamma$ via a function $\alpha\colon E(\Gamma)\rightarrow \hom (T^k,S^1)/\pm$ where $\hom(T^k,S^1)$ is understood as an additive Abelian group. The function $\alpha$ is defined as follows: the $T^k$-action on the $2$-sphere associated to an edge is equivariantly diffeomorphic to the pullback of the standard unit speed rotation action of $S^1$ on $S^2$ along a homomorphism $\varphi\colon T^k\rightarrow S^1$. Note however that $\varphi$ is only well defined up to sign as $\varphi$ and $-\varphi$ give rise to equivariantly homeomorphic actions on $S^2$. The labelled graph $(\Gamma,\alpha)$ is called the \emph{GKM graph} of the $T^k$-space $M$.

We will often identify $\hom(T^k,S^1)\cong\mathbb{Z}^k$ such that a tuple $(a_1,\ldots,a_k)\in \mathbb{Z}^k$ encodes the action $(t_1,\ldots,t_k)\cdot(z,h)=(t_1^{a_1}\ldots t_k^{a_k}z,h)$ on $S^2$ for $(t_1,\ldots,t_k)\in T^k$, $(z,h)\in S^2\subset \mathbb{C}\oplus \mathbb{R}$. Another interpretation of the labels is obtained via the observation that the $2$-spheres meeting at a fixed point $p$ correspond bijectively to the $2$-dimensional irreducible subrepresentations of $T_p M$. The associated elements in $\hom(T^k,S^1)$ are precisely the characters of these subrepresentations hence the information is equivalent to the weights of the subrepresentations in the dual Lie algebra $\mathfrak{t}^*$ of the torus (up to sign). We will therefore also refer to the labels as weights.

As the aim of this paper is a correspondence between geometry and algebra we are also interested in GKM graphs as abstract combinatorial objects. Let $\Gamma$ be an $n$-valent graph with finite sets of vertices $V(\Gamma)$ and edges $E(\Gamma)$. Multiple edges between vertices are allowed, but no loops. For the moment, edges are considered unoriented, but soon we will, for notational purposes, fix an auxiliary arbitrary orientation on each edge. Edges attached to a vertex $v\in V(\Gamma)$ will be collected in the $n$-element set $E(\Gamma)_v$.  The following definition goes back to \cite{GuilleminZaraI}.
\begin{defn}
A connection on $\Gamma$ is a collection $\nabla$ of bijections
$\nabla_{e,v,w}:E(\Gamma)_v\to E(\Gamma)_w$, for each edge $e$ connecting
vertices $v$ and $w$, such that
\begin{enumerate}
\item $\nabla_{e,v,w}(e) = e$ and
\item $\nabla_{e,w,v} = \nabla_{e,v,w}^{-1}$ for all vertices $v$ and $w$ and edges $e$ connecting $v$ and $w$.
\end{enumerate}
When $e$ is an oriented edge from $v$ to $w$ we also write $\nabla_e$ for $\nabla_{e,v,w}$.
\end{defn}
We picture a connection as a method to slide edges along edges. 
\begin{figure}[h]
	\centering
	\includegraphics{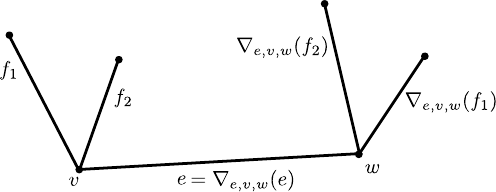}
	\caption{Picture of a connection.}
	\label{fig: connection} 
\end{figure}

\begin{defn}
Let $k\geq 1$. An (abstract) GKM graph $(\Gamma,\alpha)$ consists of a connected $n$-valent graph $\Gamma$ and a labelling of the edges $\alpha:E(\Gamma)\to {\mathbb{Z}}^k/{\pm 1}$, called axial function, such that the following conditions are satisfied:
\begin{enumerate}
\item For any $v\in V(\Gamma)$ and distinct $e,f\in E(\Gamma)_v$ the labels $\alpha(e)$ and $\alpha(f)$ are linearly independent.
\item There exists a compatible connection $\nabla$ on $\Gamma$, i.e., for any $v\in V(\Gamma)$ and $e,f\in E(\Gamma)_v$ there exists $c\in {\mathbb{Z}}$ such that
\begin{equation}\label{eq:gkmcond}
\alpha(\nabla_{e,v,w}(f)) = \pm  \alpha(f) + c\alpha(e).
\end{equation}
\end{enumerate}
\end{defn}
Although we usually fix a compatible connection when working with GKM graphs, they are in general not unique and not part of the canonical combinatorial data arising from geometry.
\begin{rem}\label{rem:signedgraph}
The original definition of abstract GKM graphs was given in \cite{GuilleminZaraII} and builds upon the above definition by an additional compatible choice of orientations and signs. We will refer to it as a \emph{signed} GKM graph, as it slightly differs from the above definition. This is due to the fact that it is tailored towards the geometric situation of almost complex manifolds, while the above definition is designed without additional geometric structures in mind. In order to separate the notion of an abstract GKM graph as defined above from the original definition we also like to refer to it as an \emph{unsigned} GKM graph. In this paper however we repress this from the notation as signed graphs will not play a role.
\end{rem}

The fact that a GKM graph $(\Gamma,\alpha)$ associated to a GKM manifold does admit a compatible connection and is thus indeed a GKM graph in the abstract sense is proven in \cite[Proposition 2.3]{GoertschesWiemelerNonNeg}, or \cite{GuilleminZaraII}.

\begin{defn} \label{defn:effectivegraph} We say that an abstract GKM graph is \emph{effective} if for one, and hence all vertices of the graph the common kernel of the labels of the adjacent edges, considered as homomorphisms $T\to S^1$, is trivial. 
\end{defn}
For a GKM graph associatied to a GKM $T$-action, this is equivalent to the effectivity of the action. Just as on the geometric side, all occurring GKM graphs will assumed to be effective.

\subsection{Cohomology}
Let $T$ be a torus and $T\rightarrow ET\rightarrow BT$ the universal $T$-bundle. For a $T$-space $X$ one defines the Borel construction as
\[X_T:=(ET\times X)/T\]
where the $T$-action is the diagonal one.
The (Borel) equivariant cohomology $H_T^*(X)$ is defined as the singular cohomology $H^*(X_T)$. We recall our convention that when not specified otherwise, the coefficient ring is the integers, but of course we may also consider equivariant cohomology with arbitrary coefficient rings. The Borel construction is natural with respect to maps of $T$-spaces. Hence such a map induces a map on equivariant cohomology. Usual cohomological techniques such as e.g.\ the Mayer-Vietoris sequence carry over directly to the equivariant setting.

An important special case is the single point $T$-space $*$ for which we obtain
$H_T^*(*)=H^*(ET/T)=H^*(BT)$. We will denote this graded ring by $R$. It is
isomorphic to the polynomial ring $\mathbb{Z}[x_1,\ldots,x_k]$ where the $x_i$
are of degree $2$ and $k$ is the rank of $T$. Analogously we define
$R_\mathbb{Q}$ to be $H^*(BT;\mathbb{Q})\cong \mathbb{Q}[x_1,\ldots,x_{
k}]$.
Considering the $T$-equivariant map $X\rightarrow *$ endows $H_T^*(X)$ with a
natural $R$-algebra structure.

Another feature which we will commonly use, in particular in Section \ref{sec:cohom}, is how equivariant cohomology relates to the cohomology of the orbit space. The projection $X_T\rightarrow X/T$ induces a map $H^*(X/T)\rightarrow H_T^*(X)$. It is an isomorphism if the action is free. If all stabilizers are finite, then one still gets an isomorphism $H^*(X/T;\mathbb{Q})\rightarrow H^*_T(X;\mathbb{Q})$. Note that this comparison homomorphism is natural with respect to equivariant maps.

Finally we recall the concept of equivariant formality as defined in \cite{GKM}. 
\begin{defn}
We say a $T$-space $X$ is (integer) equivariantly formal if $H_T^*(X)$ is free as an $R$-module. If $H_T^*(X;\mathbb{Q})$ is free over $R_\mathbb{Q}$ we say it is rationally equivariantly formal.
\end{defn}
This very restrictive condition is central to the theory of GKM manifolds and is what allows one to compute the nonequivariant cohomology from the equivariant one. The following observation sheds some light on the condition of vanishing odd cohomology in the definition of GKM manifolds. The rational version is a rather direct consequence of the Borel localization theorem and
\cite[Corollary 4.2.3]{AlldayPuppe}. The integral version was observed in \cite[Lemma 2.1]{MasudaPanov}.
\begin{prop}
Let $X$ be a $T$-manifold such that $X^T$ is finite. Then $X$ is equivariantly formal (resp.\ rationally equivariantly formal) if and only if $H^{odd}(X)$ (resp.\ $H^{odd}(X;\mathbb{Q})$) vanishes.
\end{prop}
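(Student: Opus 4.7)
The plan is to treat the two claims via related but distinct strategies. The rational version follows directly from the two hints the authors provide: the Allday-Puppe corollary characterises rational equivariant formality by the numerical equality
\[\sum_i\dim_{\mathbb{Q}} H^i(X;\mathbb{Q})=\sum_i\dim_{\mathbb{Q}} H^i(X^T;\mathbb{Q}).\]
Under the hypothesis that $X^T$ is finite the right-hand side equals $|X^T|$, while Borel localisation applied to Euler characteristics yields $\chi(X)=\chi(X^T)=|X^T|$. Since $H^{odd}(X;\mathbb{Q})=0$ is equivalent to $\chi(X)=\sum_i\dim_{\mathbb{Q}} H^i(X;\mathbb{Q})$, combining these three identities shows this vanishing is equivalent to the Allday-Puppe criterion, and hence to rational equivariant formality.

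For the integral version, I would employ the Serre spectral sequence of the Borel fibration $X\hookrightarrow X_T\to BT$, which has $E_2^{p,q}=H^p(BT)\otimes H^q(X;\mathbb{Z})$ with trivial local coefficients (as $T$ is connected) and no Tor-term (as $H^*(BT)$ is free). For the ``only if'' direction, assume $H^*_T(X;\mathbb{Z})$ is free over $R$. Borel localisation identifies $H^*_T(X;\mathbb{Z})\otimes_R Q(R)\cong H^*_T(X^T;\mathbb{Z})\otimes_R Q(R)$, and since $X^T$ is finite the right-hand side is a free $Q(R)$-module of rank $|X^T|$ concentrated in even degrees. A free $R$-module whose localisation is even-concentrated must have all free generators in even degrees, so $H^*(X;\mathbb{Z})\cong H^*_T(X;\mathbb{Z})/R^+H^*_T(X;\mathbb{Z})$ is concentrated in even degrees. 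For the ``if'' direction, the assumption $H^{odd}(X;\mathbb{Z})=0$ together with $H^{odd}(BT;\mathbb{Z})=0$ forces $E_2$ to live in even-even bidegrees; every differential $d_r$ changes the parity of one of the two degrees, so all differentials vanish and the spectral sequence degenerates at $E_2$.

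I expect the main obstacle to be promoting this degeneration in the integral ``if'' direction to genuine freeness of $H^*_T(X;\mathbb{Z})$ over $R$. Degeneration only describes the associated graded of the standard filtration as the $R$-module $R\otimes H^*(X;\mathbb{Z})$, and a Leray-Hirsch-style argument lifting a basis of $H^*(X;\mathbb{Z})$ to a free $R$-basis of $H^*_T(X;\mathbb{Z})$ requires $H^*(X;\mathbb{Z})$ to be torsion-free. This torsion-freeness is not automatic from $H^{odd}(X;\mathbb{Z})=0$ alone, but the finiteness of $X^T$ pins matters down: the rational version above already forces $\sum_i\dim_{\mathbb{Q}} H^i(X;\mathbb{Q})=|X^T|$, and a comparison with mod-$p$ Betti numbers via the Bockstein sequence and Smith theory (applied to a cyclic $p$-subgroup $\mathbb{Z}/p^N\subset T$ chosen so that its fixed set coincides with $X^T$) excludes $p$-torsion in $H^*(X;\mathbb{Z})$ for each prime $p$. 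This is the content of the cited lemma of Masuda-Panov.
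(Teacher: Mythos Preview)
The paper does not prove this proposition; it only cites Allday--Puppe for the rational statement and Masuda--Panov for the integral one. Your rational argument is exactly the route the paper's hint indicates, and it is correct.

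For the integral ``only if'' direction your argument is fine. The step $H^*(X;\mathbb{Z})\cong H^*_T(X;\mathbb{Z})/R^+H^*_T(X;\mathbb{Z})$ when $H^*_T(X;\mathbb{Z})$ is $R$-free is standard (for instance via the Eilenberg--Moore spectral sequence of the pullback square defining the fiber $X\hookrightarrow X_T\to BT$, which collapses since $\mathrm{Tor}^R_{>0}$ vanishes on a free module).

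For the integral ``if'' direction you correctly isolate the real issue: degeneration of the Serre spectral sequence alone only gives $\mathrm{gr}\,H^*_T(X)\cong R\otimes H^*(X)$, and the Leray--Hirsch lift to an $R$-basis needs $H^*(X;\mathbb{Z})$ to be free abelian. However, the Smith-theoretic fix you propose does not work as written. Choosing $\mathbb{Z}/p^N\subset T$ with the same fixed set gives only
\[
|X^T|=\dim_{\mathbb{F}_p}H^*(X^{\mathbb{Z}/p^N};\mathbb{F}_p)\ \leq\ \dim_{\mathbb{F}_p}H^*(X;\mathbb{F}_p),
\]
which is the wrong inequality to exclude $p$-torsion; you would need the reverse bound, and the Bockstein count $\dim_{\mathbb{F}_p}H^*(X;\mathbb{F}_p)=|X^T|+2\cdot(\text{number of }p\text{-power summands})$ shows Smith theory is vacuous here.

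The actual argument in this setting is much simpler and does not use Smith theory at all. In the paper's context (see Definition~\ref{defn:gkmaction}) $X$ is a closed orientable manifold of even dimension $2n$, so Poincar\'e duality gives $H_k(X;\mathbb{Z})\cong H^{2n-k}(X;\mathbb{Z})$. Hence $H^{\mathrm{odd}}(X;\mathbb{Z})=0$ forces $H_{\mathrm{odd}}(X;\mathbb{Z})=0$, and then the universal coefficient theorem shows $H^{\mathrm{even}}(X;\mathbb{Z})$ has no torsion. With this in hand your Leray--Hirsch argument goes through and yields freeness of $H^*_T(X;\mathbb{Z})$ over $R$.
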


The special appeal of the GKM conditions is that equivariant formality allows
the computation of topological invariants via equivariant cohomology and that
the latter is encoded combinatorially in the GKM graph via the notion of graph
cohomology. To define it we observe that there is yet another interpretation of
the labels of a GKM graph. A homomorphism $\varphi\in \hom(T,S^1)$ gives rise
to a map $BT\rightarrow BS^1$ of classifying spaces which in turn gives a map
$H^2(BS^1)\rightarrow H^2(BT)$. Associating to $\varphi$ the image of a fixed
generator of $H^2(BS^1)\cong \mathbb{Z}$ in $H^2(BT)$ gives an isomorphism
$\hom(T,S^1)\cong H^2(BT)$. Hence labels can be interpreted with values in
$H^2(BT)$, up to sign.

\begin{defn}
The equivariant cohomology $H^*_T(\Gamma,\alpha)$ of a GKM graph $(\Gamma,\alpha)$ is given by
\[
H^*_T(\Gamma,\alpha) = \left\{\left.(f(u))_u\in \bigoplus_{u\in V(\Gamma)} R \,\,\,\right|  \, f(v) - f(w) \equiv 0 \text{ mod }\alpha(e) \text{ for all edges }e \text{ from } v \text{ to } w\right\}.
\]
\end{defn}
We consider $H^*_T(\Gamma,\alpha)$ as an $R$-algebra. Replacing $R$ with $R_\QQ$ in the definition we obtain the rational equivariant cohomology $H^*_T(\Gamma,\alpha;{\mathbb{Q}})$.

\begin{defn}
The cohomology $H^*(\Gamma,\alpha)$ of a GKM graph $(\Gamma,\alpha)$ is defined as
\[
H^*_T(\Gamma,\alpha) := H^*_T(\Gamma,\alpha)/R^+\cdot H^*_T(\Gamma,\alpha),
\]
where $R^+$ consists of all elements in $R$ with positive
degree. The rational cohomology $H^*(\Gamma,\alpha;\QQ)$ is defined
analogously.
\end{defn}
\begin{rem}
Note that our notation is slightly different from the one used in \cite{GuilleminZaraII}: They denote the equivariant cohomology of the GKM graph $(\Gamma,\alpha)$ by $H^*(\Gamma,\alpha)$.
\end{rem}
The following proposition was shown in \cite[Theorem 7.2]{GKM}, as a consequence of the Chang-Skjelbred Lemma \cite[Lemma 2.3]{ChangSkjelbred}.

\begin{prop}\label{prop:gkmprop} For any integer (resp.\ rational) GKM $T$-manifold $M$, its
equivariant cohomology $H^*_T(M)$ (resp.\ $H^*_T(M;{\mathbb{Q}})$) is isomorphic
as an $R$-algebra to $H^*_T(\Gamma,\alpha)$ (resp.\
$H^*_T(\Gamma,\alpha;{\mathbb{Q}})$). \end{prop}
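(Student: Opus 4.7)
The plan is to follow the classical Goresky–Kottwitz–MacPherson approach, which in this context reduces to a controlled localization argument combined with a Mayer–Vietoris computation on the one-skeleton.

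First I would consider the restriction map to the fixed point set, $\iota^*\colon H^*_T(M)\to H^*_T(M^T)$. Since $M^T$ is a finite set of points, the target is canonically $\bigoplus_{p\in M^T} R$, which already has the same underlying additive structure as $H^*_T(\Gamma,\alpha)$ (viewed as a subset of the same direct sum). Equivariant formality, which follows from the vanishing of (rational) odd cohomology together with the proposition above about $T$-manifolds with finite fixed point set, is the crucial input: for an equivariantly formal action the Borel localization map $\iota^*$ is injective. In the rational case this is standard; in the integer case one uses that the freeness of $H^*_T(M)$ as an $R$-module forces injection into the localization at the set $S$ of nonzero homogeneous elements of $R$, and that $S^{-1}H^*_T(M^T)\supset H^*_T(M^T)$ still injects because $H^*_T(M^T)$ is $R$-free and hence $S$-torsion free.

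Next I would invoke the Chang–Skjelbred lemma, which under equivariant formality identifies the image of $\iota^*$ with the image of the analogous restriction $H^*_T(M_1)\to H^*_T(M^T)$. Thus the whole question is reduced to computing the image coming from the one-skeleton. This is the step that requires the most care in the integer setting: one needs an integral refinement of Chang–Skjelbred, which one can obtain via the long exact sequence of the pair $(M,M_1)$ together with the fact that for equivariantly formal actions with only isolated fixed points the codimension-two skeleton carries all of the equivariant cohomology.

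The final step is explicit: decompose $M_1$ as the union of its invariant $2$-spheres glued along the fixed points. Each such sphere $S_e$ for an edge $e$ from $v$ to $w$ is equivariantly homeomorphic to the rotation action by the character $\alpha(e)$, so $H^*_T(S_e) \cong \{(a,b)\in R\oplus R\mid a\equiv b \pmod{\alpha(e)}\}$ as an $R$-subalgebra of $R\oplus R = H^*_T(\{v,w\})$. Running Mayer–Vietoris over all spheres (which meet only in the fixed points) glues these constraints vertex by vertex and identifies the image of $H^*_T(M_1)\to H^*_T(M^T)$ precisely with the tuples $(f(u))_u$ satisfying $f(v)-f(w)\equiv 0\bmod\alpha(e)$ for every edge $e$; that is, with $H^*_T(\Gamma,\alpha)$. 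Combining with injectivity of $\iota^*$ yields the desired $R$-algebra isomorphism. The main obstacle is really the integral Chang–Skjelbred step, since in the rational case everything reduces cleanly to localization, whereas integrally one has to verify that no torsion obstructions appear when gluing the $S_e$ together — this is where the integer GKM hypothesis (freeness of $H^*_T(M)$ over $R$) is used in an essential way.
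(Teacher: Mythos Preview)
Your proposal is correct and follows exactly the approach the paper refers to: the paper does not give its own proof of this proposition but simply cites \cite[Theorem 7.2]{GKM} together with the Chang--Skjelbred Lemma \cite[Lemma 2.3]{ChangSkjelbred}, which is precisely the localization-plus-Mayer--Vietoris strategy you outline. Your remark about the integral Chang--Skjelbred step needing extra care is apt; the paper does not elaborate on this point either, but the required integral refinement (under the hypothesis $H^{odd}(M;\mathbb{Z})=0$) is established in the literature (e.g.\ Franz--Puppe).
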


For each edge we now fix an arbitrary sign of the associated weight. In other words, we consider a lift of the axial function to a map $\alpha:E(\Gamma)\to {\mathbb{Z}}^k$.

\begin{rem}
This choice is not to be confused with the notion of a \emph{signed} GKM graph, where Condition \eqref{eq:gkmcond} reads differently, with $+$ instead of $\pm$; cf.\ Remark \ref{rem:signedgraph}.
\end{rem}

\begin{defn}[\cite{GuilleminZaraII}]
The Thom class $\Th_v$ of a vertex $v\in V(\Gamma)$ is the element $(f(u))_u\in
H^{2n}_T(\Gamma, \alpha)$ defined by
\[
f(u) = \begin{cases} \prod_{e\in E(\Gamma)_v} \alpha(e) & u = v\\ 0 & u \neq v.\end{cases}
\]
\end{defn}
If we had not chosen signs of weights, the Thom class would be well-defined only up to sign. 
\begin{lem}\label{lem:rankgraphcohom}
The rank of $H^*_T(\Gamma,\alpha)$ as an $R$-module is equal to the number of vertices of $\Gamma$.
\end{lem}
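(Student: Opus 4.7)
The plan is to prove both inequalities on the rank simultaneously by localizing at the fraction field of $R$. Write $K=\mathrm{Frac}(R)$, so the rank of an $R$-module $M$ is $\dim_K(M\otimes_R K)$, and tensoring the inclusion $H_T^*(\Gamma,\alpha)\hookrightarrow \bigoplus_{u\in V(\Gamma)} R$ with $K$ gives an injection into a $K$-vector space of dimension $|V(\Gamma)|$. This immediately supplies the upper bound $\mathrm{rk}\, H_T^*(\Gamma,\alpha)\le |V(\Gamma)|$.

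For the lower bound I would exhibit $|V(\Gamma)|$ linearly independent classes in $H_T^*(\Gamma,\alpha)\otimes_R K$ using the Thom classes $\Th_v$ introduced just above the lemma. For this, first I would verify that $\Th_v$ actually lies in $H_T^*(\Gamma,\alpha)$: the defining congruence $f(u)-f(w)\equiv 0 \bmod \alpha(e)$ is trivial whenever $e$ is not adjacent to $v$, and whenever $e\in E(\Gamma)_v$, the nonzero component $\prod_{e'\in E(\Gamma)_v}\alpha(e')$ is manifestly divisible by $\alpha(e)$. Hence $\Th_v\in H_T^{2n}(\Gamma,\alpha)$ is well defined for every vertex $v$.

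Now I claim the family $\{\Th_v\}_{v\in V(\Gamma)}$ is $K$-linearly independent in $H_T^*(\Gamma,\alpha)\otimes_R K$. Under the componentwise projection into $\bigoplus_u R\otimes_R K=\bigoplus_u K$, the image of $\Th_v$ vanishes at every vertex except $v$, where its value is $\prod_{e\in E(\Gamma)_v}\alpha(e)$. By the linear independence condition (i) in the definition of a GKM graph, no two distinct labels at $v$ are proportional, so in particular each $\alpha(e)$ is a nonzero element of $R$ and the product is a nonzero element of $K$. Hence the $\Th_v$ map to $|V(\Gamma)|$ nonzero multiples of distinct basis vectors in $\bigoplus_u K$, and are therefore $K$-linearly independent. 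Combining both inequalities yields $\mathrm{rk}\, H_T^*(\Gamma,\alpha)=|V(\Gamma)|$.

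I do not expect any real obstacle here; the only mild subtlety is the bookkeeping of signs in defining $\Th_v$ over $\mathbb{Z}$, which is handled by the choice of lift $\alpha:E(\Gamma)\to\mathbb{Z}^k$ fixed just before the definition of Thom classes, so that the value $\prod_{e\in E(\Gamma)_v}\alpha(e)$ is a well defined element of $R$.
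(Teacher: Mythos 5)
Your proof is correct and takes essentially the same approach as the paper: both arguments rest on the observation that the inclusion $H^*_T(\Gamma,\alpha)\hookrightarrow\bigoplus_v R$ becomes an isomorphism after passing to the fraction field, witnessed by the vertex Thom classes (the paper shows they span by exhibiting $\prod_e\alpha(e)\cdot f$ as an $R$-linear combination of them, you show they are linearly independent; in the ambient $K$-space of dimension $|V(\Gamma)|$ these are equivalent).
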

\begin{proof}
We show that the cokernel of the inclusion
\[
H^*_T(\Gamma,\alpha) \to \bigoplus_{v\in V(\Gamma)} R
\]
is a torsion $R$-module. To this end, let $f\in \bigoplus_{v\in V(\Gamma)} R$ be arbitrary. Then,
\[
\prod_{e\in E(\Gamma)} \alpha(e)\cdot f
\] 
is in the image of the inclusion, as it can be expressed as an $R$-linear combination of the Thom classes of the vertices of $\Gamma$.
\end{proof}
We can also define the Thom class of an oriented edge $e$ going from a vertex $v$ to a vertex $w$: let $\nabla$ be a compatible connection, and $e_i$ be the other edges at $v$. Having fixed signs for the weights we may write
\[
\alpha(\nabla_{e,v,w}(e_i)) = \varepsilon_i \alpha(e_i) + k_i \alpha(e),
\]
where $\varepsilon_i\in \{\pm 1\}$ and $k_i\in {\mathbb{Z}}$. We set $\varepsilon_1=1$, $k_1=0$ and note that the remaining numbers are uniquely determined. 
\begin{defn}
The Thom class $\Th_e\in H^{2n-2}_T(\Gamma,\alpha)$ of the oriented edge $e$ is the element $(f(u))_u$ defined by
\[
f(u) = \begin{cases} \prod_{f\in E(\Gamma)_v\setminus\{e\}} \alpha(f) & u=v \\
\prod_i \varepsilon_i \prod_{f\in E(\Gamma)_w\setminus\{e\}} \alpha(f) & u=w \\ 0 & u \neq v,w\end{cases}
\]
\end{defn}
A short combinatorial argument shows that $\Th_e$ does not depend on the choice of the connection $\nabla$.

\begin{lem}
The Thom class of an oriented edge $e$ defines an element of $H^{2n-2}_T(\Gamma,\alpha)$.
\end{lem}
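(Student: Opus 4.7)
The plan is to verify directly that the tuple $(f(u))_u$ defining $\Th_e$ satisfies the defining congruence of $H^*_T(\Gamma,\alpha)$: namely, for every edge $e'$ from a vertex $v'$ to a vertex $w'$, one has $f(v')-f(w')\equiv 0 \pmod{\alpha(e')}$. Since $f$ is supported on the two endpoints $v$ and $w$ of $e$, the only edges $e'$ whose congruence is not immediate are those touching $v$ or $w$. I would split the verification into the following cases.

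First, if neither endpoint of $e'$ lies in $\{v,w\}$, both values $f(v'),f(w')$ are zero and the congruence is trivial. Next, suppose $e'\in E(\Gamma)_v\setminus\{e\}$ and its other endpoint $v''$ is not $w$. Then $f(v'')=0$ while $f(v)=\prod_{f\in E(\Gamma)_v\setminus\{e\}}\alpha(f)$ contains $\alpha(e')$ as a factor, so $f(v)-f(v'')\equiv 0\pmod{\alpha(e')}$; the symmetric statement handles an edge at $w\setminus\{e\}$ with other endpoint different from $v$. The case of a multi-edge $e'\neq e$ with endpoints $v$ and $w$ is essentially the same: both $f(v)$ and $f(w)$ are divisible by $\alpha(e')$ by inspection of the defining products, since $e'\in E(\Gamma)_v\setminus\{e\}$ and $e'\in E(\Gamma)_w\setminus\{e\}$.

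The one substantive case is $e'=e$ itself. Let $e_1,\ldots,e_{n-1}$ be the edges at $v$ different from $e$, so that the edges at $w$ different from $e$ are $\nabla_{e,v,w}(e_1),\ldots,\nabla_{e,v,w}(e_{n-1})$, and recall
\[
\alpha(\nabla_{e,v,w}(e_i)) = \varepsilon_i\alpha(e_i)+k_i\alpha(e).
\]
Working modulo $\alpha(e)$ one has $\alpha(\nabla_{e,v,w}(e_i))\equiv \varepsilon_i\alpha(e_i)$, hence
\[
f(w)=\Big(\prod_i\varepsilon_i\Big)\prod_i\alpha(\nabla_{e,v,w}(e_i))\equiv \Big(\prod_i\varepsilon_i^{2}\Big)\prod_i\alpha(e_i)=\prod_i\alpha(e_i)=f(v)\pmod{\alpha(e)},
\]
which gives the required congruence.

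The main obstacle — if any — is organising the case distinction cleanly, in particular handling multi-edges between $v$ and $w$; everything else is a direct verification. Once the cases are laid out, the decisive computation is the one-line use of the axial-function congruence $\alpha(\nabla_{e,v,w}(e_i))\equiv\varepsilon_i\alpha(e_i)\pmod{\alpha(e)}$, combined with the fact that the signs $\varepsilon_i$ appear as an overall prefactor at $w$ so that their squares produce $1$.
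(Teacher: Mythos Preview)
Your proof is correct and follows essentially the same approach as the paper: the same case distinction on edges touching $v$, $w$, or neither, and the same key computation for $e'=e$ using the congruence $\alpha(\nabla_{e,v,w}(e_i))\equiv\varepsilon_i\alpha(e_i)\pmod{\alpha(e)}$ so that the $\varepsilon_i$ square to $1$. The paper's presentation expands $f(w)=\prod_i(\alpha(e_i)+\varepsilon_i k_i\alpha(e))$ and cancels $\prod_i\alpha(e_i)$ against $f(v)$, which is just your modular computation written out; the multi-edge case is handled identically.
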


\begin{proof}
The divisibility condition is trivial for any edge which is attached to at most one of the vertices $v$ and $w$. Let us check the condition for the edge $e$: write $\Th_e=(f(u))_u$; when multiplying out the expression
\[
f(w) - f(v) = \prod_i (\alpha(e_i) + \varepsilon_i k_i \alpha(e)) - \prod_i \alpha(e_i)
\]
we see that the two summands $\prod_i \alpha(e_i)$ cancel, and any of the remaining summands is divisible by $\alpha(e)$. If there is another edge $e_j$ that connects $v$ and $w$, then we see that both products in the expression above contain a factor $\alpha(e_j)$, hence $f(w)-f(v)$ is divisible by $\alpha(e_j)$.
\end{proof}

\begin{lem}\label{lem:thomverticescohomologous}
The Thom classes of any two vertices of $\Gamma$ define, up to sign, the same element in $H^*(\Gamma,\alpha)$.
\end{lem}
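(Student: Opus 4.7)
The plan is to relate the Thom classes of adjacent vertices by multiplying the edge Thom class by the weight of the edge, and then conclude by connectedness of $\Gamma$.

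More concretely, fix an oriented edge $e$ from $v$ to $w$, and let $e_1,\dots,e_{n-1}$ be the other edges at $v$, with $\nabla_e(e_i)$ the corresponding edges at $w$ and $\varepsilon_i, k_i$ as in the definition of $\Th_e$. I would compute directly that $\alpha(e)\cdot \Th_e$, as an element of $\bigoplus_u R$, has entry $\alpha(e)\prod_{i} \alpha(e_i)=\prod_{f\in E(\Gamma)_v}\alpha(f)$ at $v$, entry $\varepsilon\,\alpha(e)\prod_{i}\alpha(\nabla_e(e_i))=\varepsilon\prod_{f\in E(\Gamma)_w}\alpha(f)$ at $w$ (where $\varepsilon:=\prod_i \varepsilon_i\in\{\pm 1\}$), and entry $0$ at every other vertex. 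Comparing with the definition of the vertex Thom classes, this shows that
\[
\alpha(e)\cdot \Th_e \;=\; \Th_v \,+\, \varepsilon \,\Th_w
\]
in $H^*_T(\Gamma,\alpha)$.

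Passing to the quotient $H^*(\Gamma,\alpha)=H^*_T(\Gamma,\alpha)/R^+\cdot H^*_T(\Gamma,\alpha)$ kills the left hand side, since $\alpha(e)\in R^+$. Hence $[\Th_v]=-\varepsilon\,[\Th_w]=\pm[\Th_w]$ in $H^*(\Gamma,\alpha)$ for any pair of adjacent vertices $v,w$. Since $\Gamma$ is connected, any two vertices can be joined by a path of edges, and iterating the equality along this path yields $[\Th_{v_0}]=\pm[\Th_{v_m}]$ for arbitrary vertices $v_0,v_m$, which is the claim.

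There is no real obstacle here; the only point requiring care is to ensure that the sign factor appearing in the definition of the edge Thom class is precisely what is needed to recover $\prod_{f\in E(\Gamma)_w}\alpha(f)$ (rather than something differing from it by $\pm 1$ when one drops the connection data), so that the identity $\alpha(e)\Th_e = \Th_v + \varepsilon\Th_w$ holds on the nose in $H^*_T(\Gamma,\alpha)$ and not just up to correction terms.
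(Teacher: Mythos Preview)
Your proof is correct and follows essentially the same approach as the paper: both reduce to adjacent vertices by connectedness and use the identity $\alpha(e)\cdot\Th_e = \Th_v \pm \Th_w$ in $H^*_T(\Gamma,\alpha)$, which vanishes after passing to the quotient by $R^+$. You have simply made explicit the computation that the paper summarizes in one sentence.
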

\begin{proof}
As $\Gamma$ is connected, it suffices to show that Thom classes of two vertices $v$ and $w$ joined by an edge $e$ define, up to sign, the same element. But this follows immediately, as the product of the Thom class of $e$ with $\alpha(e)$ is either the sum or the difference of the Thom classes of $v$ and $w$.
\end{proof}

For later reference we also recall the following definition.
\begin{defn}
Let $n\geq 0$. We say that a graded $\mathbb{Q}$-algebra $H$ satisfies ($n$-dimensional) Poincaré duality if $H^n\cong \mathbb{Q}$ and the paring $H^k\otimes H^{n-k}\rightarrow H^n\cong \mathbb{Q}$ is nondegenerate.
\end{defn}

\subsection{Orientable GKM graphs}

Let us fix an GKM graph $(\Gamma,\alpha)$, as well as a compatible connection $\nabla$. We choose signs for the weights, i.e.\ we consider a lift $\alpha\colon E(\Gamma)\rightarrow \mathbb{Z}^k$ of the axial function. For edges $e_1,\ldots,e_n$ meeting at a vertex, recall that transporting along say $e_1$ we obtain numbers $k_i\in \mathbb{Z}$, $\varepsilon_i\in \{\pm 1\}$ such that
\[ \alpha(\nabla_{e_1}(e_i))=\varepsilon_i \alpha(e_i)+k_i\alpha(e_1),\]
where we declare $\varepsilon_1=1$, $k_1=0$ and the remaining numbers are uniquely determined. We define $\eta(e_1)=-\varepsilon_2\cdot\ldots\cdot\varepsilon_n$.

\begin{defn}\label{defn:graphorientable}
A GKM graph is called \emph{orientable} if there is a choice of signs for the edge weights, such that for each closed edge path $e_1,\ldots,e_n$ we have
\[\prod_{i=1}^n \eta(e_i)=1.\]

\end{defn}

\begin{rem}
A geometric justification for this choice of terminology is given by Corollary \ref{cor:orientable} as well as Proposition \ref{prop:orientability}.
In \cite{oddGKM}, orientability of a GKM graph $(\Gamma,\alpha)$ was defined ad hoc via the nonvanishing of the top cohomology $H^*(\Gamma,\alpha)$. Furthermore there is a definition of orientability for so-called torus graphs, i.e.\ in the special case of an $n$-valent graph with labels in $\mathbb{Z}^n$, see \cite[Def.\ 7.9.16]{ToricTopology}. We are not aware of the precise relation between these combinatorial terms.
\end{rem}

\begin{rem}
Consider two vertices $v,w$ connected by an edge $e$ and with the other adjacent edges labelled $e_2,\ldots,e_n$ at $v$ and $e_2',\ldots,e_n'$ at $w$. Given a choice of sign for every edge, we observe that changing the sign of $\alpha(e)$ changes the sign of the $\eta(e_i)$ and $\eta(e_i')$, while leaving $\eta(-)$ invariant for all other edges. In particular the product over $\eta(-)$ remains unchanged along every closed edge path. We conclude that a GKM graph is orientable if and only if any choice of sign for the edge weights fulfils the condition in the definition of orientability.
\end{rem}

\begin{ex}\label{ex:nonorientable}
One can consider GKM theory for actions on nonorientable manifolds $M$, see
\cite{GoertschesMare}. In this setting, the one-skeleton $M_1$ consists of a
union of $T$-invariant two-spheres and real projective planes. If there are no
real projective planes occurring, then the orbit space of $M_1$, labelled as
usual with the isotropy weights, defines an abstract GKM graph $\Gamma$ such
that $H^*(\Gamma,\alpha;{\mathbb{Q}})$ does not satisfy Poincar\'e duality. As a
concrete example, consider a product $T^2$-action on $S^2\times S^2 \times S^2$
given by three pairwise linearly independent weights $\alpha,\beta$ and
$\gamma$. This action commutes with the ${\mathbb{Z}}_2$-action given by the
componentwise antipodal map, hence the $T^2$-action descends to the nonorientable manifold $M:=(S^2\times S^2\times S^2)/{\mathbb{Z}}_{2}$ with the following nonorientable GKM graph.
 \begin{center}
\begin{tikzpicture}

\draw[very thick] (-2,0) -- ++(3,0) -- ++(0,-3) -- ++(-3,0) -- ++(0,3) -- ++(3,-3);
\draw[very thick] (-2,-3) -- ++(3,3);

  \node at (-2,0)[circle,fill,inner sep=2pt]{};
  \node at (-2.3,-1.5){$\alpha$};

  \node at (1,0)[circle,fill,inner sep=2pt]{};
    \node at (1.3,-1.5){$\alpha$};
    \node at (-.5,.3){$\beta$};
    \node at (-.5,-3.3){$\beta$};
        \node at (-.5,-1.1){$\gamma$};
        
    \node at (-.5,-1.95){$\gamma$};

  \node at (1,-3)[circle,fill,inner sep=2pt]{};

  \node at (-2,-3)[circle,fill,inner sep=2pt]{};

\end{tikzpicture}
\end{center}
The connection can be defined with $\varepsilon_i=1$, $k_i=0$ for all edges. Hence $\eta(e)=-1$ for every edge $e$ and consequently any path of odd length violates the orientability condition.
\end{ex}

\begin{prop}\label{prop:thomclasslem}
If $(\Gamma,\alpha)$ is not orientable, then for every vertex $v\in V(\Gamma)$ the Thom class $\Th(v)$ vanishes in $H^{2n}(\Gamma,\alpha;\mathbb{Q})$ .
\end{prop}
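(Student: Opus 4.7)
The strategy is to turn the nonorientability hypothesis into an algebraic contradiction by chaining a local identity for Thom classes along a closed edge path. The key local identity I want to establish is that for any oriented edge $e$ from $v$ to $w$,
\[ \alpha(e)\cdot \Th_e = \Th_v - \eta(e)\cdot \Th_w \]
as elements of $H^{2n}_T(\Gamma,\alpha)$. Once this is in place, passing to the quotient $H^{2n}(\Gamma,\alpha;\mathbb{Q})$ immediately gives $[\Th_v] = \eta(e)\,[\Th_w]$, since $\alpha(e) \in R^+$ kills the left-hand side.

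To prove the local identity I would simply compare both sides vertexwise using the defining formulas. Only the values at $v$ and at $w$ are nonzero on either side. At $v$, both sides evaluate to $\prod_{f\in E(\Gamma)_v} \alpha(f)$. At $w$, the value of $\alpha(e)\cdot \Th_e$ is $\bigl(\prod_i \varepsilon_i\bigr)\prod_{f\in E(\Gamma)_w}\alpha(f)$, while the definition $\eta(e) = -\varepsilon_2\cdots \varepsilon_n$ together with $\varepsilon_1 = 1$ gives $\prod_i \varepsilon_i = -\eta(e)$, so this is precisely $-\eta(e)\cdot \Th_w(w)$. This is essentially the only computation in the proof.

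With the propagation $[\Th_v] = \eta(e)[\Th_w]$ available, a closed edge path $e_1,\ldots,e_n$ through vertices $v_0, v_1,\ldots, v_n = v_0$ yields by induction
\[ [\Th_{v_0}] = \left(\prod_{i=1}^n \eta(e_i)\right) [\Th_{v_0}] \quad\text{in } H^{2n}(\Gamma,\alpha;\mathbb{Q}). \]
By the assumption of nonorientability, after fixing signs of the weights one can choose such a closed path with $\prod_i \eta(e_i) = -1$, giving $2[\Th_{v_0}] = 0$ and hence $[\Th_{v_0}] = 0$ over $\mathbb{Q}$. Lemma \ref{lem:thomverticescohomologous} then extends the vanishing, up to sign, to every vertex.

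I do not anticipate any serious obstacle: the proof is essentially sign bookkeeping. The one point that deserves care is a small consistency check, namely that $\eta(e)$ does not depend on which endpoint of $e$ one transports from, so that the chaining of the local relation along a path is well-defined; this follows from the fact that $\nabla_{e,w,v}$ is the inverse of $\nabla_{e,v,w}$, and the same $\varepsilon_i$ appears.
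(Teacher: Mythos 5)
Your proof is correct and follows essentially the same route as the paper's: both establish the local relation $\Th_v - \alpha(e)\Th_e = \eta(e)\Th_w$ (which you write equivalently as $\alpha(e)\Th_e = \Th_v - \eta(e)\Th_w$), pass to the quotient $H^{2n}(\Gamma,\alpha;\mathbb{Q})$ to get $[\Th_v]=\eta(e)[\Th_w]$, and chain along a closed path witnessing nonorientability to force $2[\Th_{v_1}]=0$. Your added remarks — the independence of $\eta(e)$ from the direction of transport and the use of Lemma \ref{lem:thomverticescohomologous} to propagate vanishing to all vertices — are correct and fill small implicit steps in the paper's argument.
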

\begin{proof}
We fix some choice of sign for the weights. As $(\Gamma,\alpha)$ is not orientable, there exists a closed edge path $e_1,\ldots,e_n$ starting at a vertex $v_1$, for which $\prod_{i=1}^n \eta(e_i) = -1$.
By definition of the Thom classes, we obtain $\Th_{v_1} - \alpha(e_1)\Th_{e_1} = \eta(e_1)\Th_{v_2}$.  Following the closed path, we conclude that $\Th_{v_1} = \prod_{i=1}^n \eta(e_i) \cdot \Th_{v_1}= -\Th_{v_1}$ in $H^{2n}(\Gamma,\alpha;{\mathbb{Q}})$.
\end{proof}

\begin{cor}\label{cor:orientable}
If $M$ is a GKM manifold then its GKM graph is orientable.
\end{cor}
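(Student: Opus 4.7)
The plan is to argue by contrapositive via Proposition \ref{prop:thomclasslem}: if the graph $(\Gamma,\alpha)$ of $M$ were not orientable, then for every vertex $v$ the Thom class $\Th_v$ would vanish in $H^{2n}(\Gamma,\alpha;\mathbb{Q})$, and I want to show that this contradicts $M$ being a GKM manifold, which in particular is orientable by definition.

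First I would translate the hypothesis into a statement about the cohomology of $M$. Since $M$ is a rational GKM manifold, its rational odd cohomology vanishes and its fixed-point set is finite, so by the proposition recalled in the excerpt, $H^*_T(M;\mathbb{Q})$ is free over $R_\mathbb{Q}$. Equivariant formality therefore identifies the quotient $H^*_T(M;\mathbb{Q})/R_\mathbb{Q}^+\cdot H^*_T(M;\mathbb{Q})$ with $H^*(M;\mathbb{Q})$ via the restriction to a fiber of the Borel fibration. Combined with the GKM isomorphism of Proposition \ref{prop:gkmprop}, this yields an isomorphism of graded $\mathbb{Q}$-algebras
\[
H^*(\Gamma,\alpha;\mathbb{Q}) \cong H^*(M;\mathbb{Q}).
\]

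The second step is to identify $\Th_v$ with a nonzero top class on the geometric side. Fix $v\in V(\Gamma)$ and let $p\in M^T$ be the corresponding fixed point. The equivariant Poincaré dual $[p]_T\in H^{2n}_T(M;\mathbb{Q})$, obtained as the image of $1$ under the equivariant Gysin map of the inclusion $\{p\}\hookrightarrow M$, restricts to the equivariant Euler class $\prod_{e\in E(\Gamma)_v}\alpha(e)$ of the isotropy representation at $p$ and to $0$ at all other fixed points. Under the GKM isomorphism this matches, up to sign of the weights, the defining description of $\Th_v$. Further mapping to $H^{2n}(M;\mathbb{Q})$, the class $[p]_T$ becomes the nonequivariant Poincaré dual of $p$, which generates $H^{2n}(M;\mathbb{Q})\cong\mathbb{Q}$ since $M$ is a closed connected orientable $2n$-manifold. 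This contradicts $\Th_v=0$.

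The main obstacle is the identification of $\Th_v$ with the equivariant Poincaré dual of the corresponding fixed point. This uses the fact that $T_p M$ is an equivariantly orientable normal bundle to the fixed point, which is available precisely because $M$ is orientable and $T$ is connected so preserves any chosen orientation; with rational coefficients one has an equivariant Thom class and the localization description of $H^*_T(M;\mathbb{Q})$ embeds into $\bigoplus_{q\in M^T}H^*_T(\{q\};\mathbb{Q})$ in such a way that the restriction-to-fixed-points picture matches the combinatorial vertexwise recipe defining $\Th_v$. Once this standard identification is in place, the corollary follows at once.
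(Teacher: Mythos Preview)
Your proof is correct and follows essentially the same approach as the paper's: both argue by contradiction using Proposition \ref{prop:thomclasslem}, identify the combinatorial Thom class $\Th_v$ with the equivariant Thom/Poincar\'e dual of the corresponding fixed point via the GKM isomorphism, and observe that this class is nonzero in $H^{2n}(M;\mathbb{Q})$ by orientability of $M$. Your write-up is slightly more explicit about the identification $H^*(\Gamma,\alpha;\mathbb{Q})\cong H^*(M;\mathbb{Q})$ via equivariant formality, but the underlying argument is the same.
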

\begin{proof}
Recall e.g.\ from \cite[Chapter 10]{supersymmetry} that for a $T$-invariant  submanifold there is an equivariant Thom class in $H_T^*(M)$ which restricts to the nonequivariant Thom class in $H^*(M)$, see Section 10.4 in the above reference. In particular, specializing to the case of a fixed point $p\in M^T$, we obtain a class $x\in H_T^*(M;\mathbb{Q})$ which is localized around $p$ in the sense that $x$ restricts to $0$ at all other fixed points and which restricts to a fundamental class in $H^*(M)$ (note that from the reference we get this for real coefficients but then such a class automatically also exists over $\mathbb{Q}$). We denote by $(\Gamma,\alpha)$ the GKM graph of $M$. Under the isomorphism $H_T^*(\Gamma,\alpha)\cong H_T^*(M)$ the class corresponding to $x$ is located at a single vertex and hence a multiple of the (combinatorial) Thom class at this vertex. If $(\Gamma,\alpha)$ were not orientable, then the Proposition \ref{prop:thomclasslem} would yield $x=0$, which is a contradiction.
\end{proof}

\subsection{Results on $3$-valent GKM graphs}

Starting in this section, we restrict to $n=3$ and $k=2$, i.e., $3$-valent GKM graphs with labels in ${\mathbb{Z}}^2$. As at the end of the last section, we fix (arbitrary) signs of the labels.

\begin{prop}\label{prop:eqcohomfreeoverQ}
The equivariant cohomology $H^*_T(\Gamma,\alpha;{\mathbb{Q}})$ is a free $R_\QQ$-module.
\end{prop}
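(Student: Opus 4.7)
The plan is to reduce the question to a depth computation and appeal to the graded Auslander--Buchsbaum formula. Since $R_\mathbb{Q}=\mathbb{Q}[x_1,x_2]$ is a regular ring of Krull dimension $2$, a finitely generated graded $R_\mathbb{Q}$-module is free if and only if its depth equals $2$. So the goal is to show $\mathrm{depth}_{R_\mathbb{Q}}\bigl(H^*_T(\Gamma,\alpha;\mathbb{Q})\bigr)\geq 2$.

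First I would introduce the defining short exact sequence
\[
0\longrightarrow H^*_T(\Gamma,\alpha;\mathbb{Q})\longrightarrow \bigoplus_{v\in V(\Gamma)} R_\mathbb{Q} \stackrel{\Phi}{\longrightarrow} N\longrightarrow 0,
\]
where $\Phi$ sends a tuple $(f(v))_v$ to the tuple of residues $f(v)-f(w)\bmod \alpha(e)$ (after fixing an arbitrary orientation on each edge) and $N:=\mathrm{image}(\Phi)\subseteq \bigoplus_{e\in E(\Gamma)} R_\mathbb{Q}/\alpha(e)$. The middle term has depth $2$, and the usual depth lemma for short exact sequences,
\[
\mathrm{depth}(A)\geq \min\bigl(\mathrm{depth}(B),\,\mathrm{depth}(C)+1\bigr),
\]
then reduces matters to checking $\mathrm{depth}(N)\geq 1$.

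For this last step I would use that each label $\alpha(e)$ is a nonzero linear form in $R_\mathbb{Q}$, hence a prime element. So each summand $R_\mathbb{Q}/\alpha(e)$ is an integral domain with unique associated prime $(\alpha(e))$ of height $1$, and every associated prime of $\bigoplus_{e\in E(\Gamma)} R_\mathbb{Q}/\alpha(e)$ is of height $1$. Since associated primes of a submodule are contained in those of the ambient module, the same holds for $\mathrm{Ass}(N)$; in particular the graded maximal ideal $R_\mathbb{Q}^+=(x_1,x_2)$, which has height $2$, is not in $\mathrm{Ass}(N)$ and therefore contains a nonzerodivisor on $N$. This yields $\mathrm{depth}(N)\geq 1$, completing the reduction.

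I do not anticipate a real obstacle in this approach; the argument is essentially formal once one invokes the depth lemma and the observation that linear forms in $R_\mathbb{Q}$ are prime. It is worth noting that this proof uses neither the $3$-valence of $\Gamma$ nor the existence of a compatible connection: the only ingredient of the hypothesis that is essential is $k=\mathrm{rk}(T)=2$, because this is precisely what forces the maximal possible depth $2$ to coincide with the height of the irrelevant ideal.
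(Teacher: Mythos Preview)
Your proof is correct and follows essentially the same route as the paper: the same defining short exact sequence, the same depth-lemma reduction to $\mathrm{depth}(N)\geq 1$, and the same conclusion via depth $=$ Krull dimension $=2$. Your justification of $\mathrm{depth}(N)\geq 1$ via associated primes is a slightly more explicit phrasing of what the paper states as ``multiplication with a generic element of $R_\mathbb{Q}^2$ is injective,'' and your closing remark that only $k=2$ is used (not $3$-valence or the connection) is accurate.
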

\begin{proof}
We fix, for every edge $e$, an arbitrary orientation, i.e., we have chosen its initial and terminal vertex $i(e)$ and $t(e)$. This determines an exact sequence
\[
0\to H^*_T(\Gamma,\alpha;{\mathbb{Q}})\to \bigoplus_{u\in V(\Gamma)} R_\QQ \to \bigoplus_{e\in E(\Gamma)} R_\mathbb{Q}/(\alpha(e)),
\]
where the last map is $\varphi:(f(v))_{v\in V(\Gamma)}\mapsto (f(i(e))-f(t(e))+R_\mathbb{Q}\cdot \alpha(e))_{e\in E(\Gamma)}$. 
Then, the image of $\varphi$ has depth $\geq 1$ since multiplication with a generic element of $\mathbb{R}_\mathbb{Q}^2$ is injective -- see e.g.\ \cite{GoertschesToeben} for the notion of depth and Cohen-Macaulay modules in equivariant cohomology. Thus it follows from the short exact sequence
\[
0\to H^*_T(\Gamma,\alpha;{\mathbb{Q}})\to \bigoplus_{u\in V(\Gamma)} R_\QQ \to \im\, \varphi\to 0
\]
that $H^*_T(\Gamma,\alpha;\QQ)$ is of depth $\geq 2$. As $R_\mathbb{Q}$ has Krull dimension $2$ this implies freeness.
\end{proof}

\begin{prop}\label{prop:h4thom}
$H^4_T(\Gamma,\alpha;{\mathbb{Q}})$ is additively spanned by the Thom classes of the edges of $\Gamma$ and $R_\QQ^4\cdot 1$.
\end{prop}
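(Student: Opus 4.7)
The plan is to prove the slightly stronger statement that the Thom classes of the edges already $\QQ$-linearly span $H^4_T(\Gamma,\alpha;\QQ)$ on their own: given any $f\in H^4_T(\Gamma,\alpha;\QQ)$, I will exhibit scalars $c_e\in\QQ$ with $f=\sum_{e\in E(\Gamma)}c_e\Th_e$, absorbing the $R_\QQ^4\cdot 1$ summand into this expression.

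First I would record the local basis observation: at every vertex $v$ with adjacent edges $e_1,e_2,e_3$, the three Thom class values $\Th_{e_i}(v)=\alpha(e_j)\alpha(e_k)$ are products of distinct pairs of the three weights at $v$. The GKM hypothesis forces these weights to be pairwise linearly independent, and since $R_\QQ$ is a UFD the three quadratic polynomials $\alpha(e_j)\alpha(e_k)$ must be $\QQ$-linearly independent in the three-dimensional space $R_\QQ^4$. Hence they form a basis, and every $y\in R_\QQ^4$ admits a unique expansion $y=\sum_{e\in E(v)}\lambda_{e,v}(y)\Th_e(v)$ with coefficients $\lambda_{e,v}(y)\in\QQ$. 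The two basis vectors different from $\Th_e(v)$ lie in $\alpha(e)R_\QQ^2$, so $\lambda_{e,v}$ vanishes on $\alpha(e)R_\QQ^2$ and descends to a nonzero functional on the one-dimensional quotient $R_\QQ^4/(\alpha(e))$.

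The decisive step would then be to observe that for any edge $e=vw$, the two functionals $\lambda_{e,v}$ and $\lambda_{e,w}$ coincide on $R_\QQ^4$. Both descend to $R_\QQ^4/(\alpha(e))\cong\QQ$, and both are characterized by sending the common class $\Th_e(v)\equiv\Th_e(w)\pmod{\alpha(e)}$ to $1$; the congruence is precisely the divisibility built into the proof, in the lemma preceding this proposition, that $\Th_e$ is an element of $H^*_T(\Gamma,\alpha;\QQ)$. Combined with the GKM divisibility $f(v)-f(w)\in(\alpha(e))\subseteq\ker\lambda_{e,v}$, this yields a well-defined scalar
\[
c_e\;:=\;\lambda_{e,v}(f(v))\;=\;\lambda_{e,w}(f(w))
\]
per edge. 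Finally I would verify $f=\sum_e c_e\Th_e$ by evaluation at each vertex $u$: only edges incident to $u$ contribute to the right-hand side, and the local basis expansion gives $\sum_{e\in E(u)}c_e\Th_e(u)=\sum_{e\in E(u)}\lambda_{e,u}(f(u))\Th_e(u)=f(u)$.

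The only subtle step in this plan is the identity $\lambda_{e,v}=\lambda_{e,w}$, but it reduces directly to the edge-divisibility statement already contained in the construction of $\Th_e$ as an element of $H^*_T$, so I do not anticipate a serious obstacle.
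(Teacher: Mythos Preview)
Your argument is correct and in fact proves a slight strengthening of the proposition: the edge Thom classes already span $H^4_T(\Gamma,\alpha;\QQ)$ on their own, without the $R_\QQ^4\cdot 1$ summand. The only point worth making explicit is that your local basis at a vertex $u$ consists of the values $\Th_e(u)$ for the \emph{globally} oriented Thom classes, so these values are $\pm\alpha(e_j)\alpha(e_k)$ with signs coming from the fixed orientations; once this is said, the normalization $\lambda_{e,v}(\Th_e(v))=1=\lambda_{e,w}(\Th_e(w))$ together with $\Th_e(v)\equiv\Th_e(w)\pmod{\alpha(e)}$ really does force $\lambda_{e,v}=\lambda_{e,w}$, and the rest is immediate.

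The paper takes a different route. There one first subtracts a constant to kill $f$ at one vertex, and then argues inductively: at the interface between the vanishing and nonvanishing regions one uses the divisibility conditions to see that $f(w)$ is a linear combination of at most two of the local products $\alpha(e_i)\alpha(e_j)$, and subtracts the corresponding Thom classes to enlarge the vanishing set. Your approach replaces this inductive peeling with a single global construction: you set up the dual-basis functionals $\lambda_{e,v}$, prove they are edge-consistent, and read off the coefficients $c_e$ directly. This is cleaner and yields the stronger statement, at the price of the explicit check that $\lambda_{e,v}=\lambda_{e,w}$; the paper's proof never needs this identity because it only ever works locally at one interface at a time. Both methods are short and rely on the same key fact, namely that $\Th_e$ is a genuine graph-cohomology class, but your direct linear-algebra packaging is arguably more transparent.
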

\begin{proof}
Let $(f(v))_v\in H^4_T(\Gamma,\alpha;\QQ)$. By adding a suitable multiple of $1\in H^4_T(\Gamma,\alpha;\QQ)$ we may assume that $f(v)=0$ for some vertex $v$.
We show that by adding appropriate multiples of other Thom classes of edges, we can increase the number of vertices at which $f$ vanishes. Inductively, this will  imply the claim. To this end, consider any edge $e_1$ connecting a vertex $v$ with $f(v)=0$ with a vertex $w$ with $f(w)\neq 0$. Then $f(w) = \alpha(e_1)g$ for a linear form $g$. Let $e_2$ and $e_3$ be the other two edges at $w$. As $\alpha(e_2)$ and $\alpha(e_3)$ are a ${\mathbb{Q}}$-basis of $R_\mathbb{Q}^2\cong\mathbb{Q}^2$ we can write $g=a_2\alpha(e_2) + a_3\alpha(e_3)$ for some $a_2,a_3\in {\mathbb{Q}}$. If one of the coefficients, say $a_3$, vanishes, then $\alpha(e_3)$ does not divide $f(w)$, hence $e_3$ connects $w$ to a vertex $u$ with $f(u)\neq 0$, and we can subtract from $f$ a multiple of  $\Th_{e_3}$ (with respect to some orientation of $e_3$) to obtain a class that vanishes at more vertices. If none of the coefficients $a_2$ and $a_3$ vanishes, then neither of $\alpha(e_2)$ and $\alpha(e_3)$ divide $f(w)$, hence both $e_2$ and $e_3$ connect $w$ to vertices at which $f$ does not vanish. Then, we can subtract the corresponding linear combination of  $\Th_{e_2}$ and $\Th_{e_3}$ to arrive at a class that vanishes at more vertices.
\end{proof}

\begin{prop}\label{prop:thomverticesnontriv} If $H^*(\Gamma,\alpha;{\mathbb{Q}})$ satisfies Poincaré duality, then the Thom classes of the vertices define nontrivial elements in $H^{6}(\Gamma,\alpha;{\mathbb{Q}})$.
\end{prop}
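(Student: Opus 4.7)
The plan is to identify $H^6(\Gamma,\alpha;\QQ)$ with the $\QQ$-span of the vertex Thom classes and then invoke Poincaré duality, which forces this span to be $1$-dimensional and hence nonzero. By Lemma~\ref{lem:thomverticescohomologous} the classes $\bar\Th_v$ for different $v$ coincide up to sign in $H^6$, so it suffices to show that a single $\bar\Th_v$ is nonzero.

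I would split into two cases based on $H^4$. If $H^4(\Gamma,\alpha;\QQ)=0$, Poincaré duality forces $H^2=0$ and hence $|V(\Gamma)|=2$; the graph is then three multiedges between two vertices, and nonvanishing of $\bar\Th_v$ follows from an explicit calculation identifying $\{1,\Th_{v_1}\}$ as a free $R_\QQ$-basis of $H^*_T(\Gamma,\alpha;\QQ)$. In the main case $H^4\neq 0$, Proposition~\ref{prop:h4thom} guarantees that some edge Thom class $\bar\Th_e$ is nonzero; nondegeneracy of the Poincaré pairing $H^2\otimes H^4\to H^6$ then produces $\bar\xi\in H^2$ with $\bar\xi\cdot\bar\Th_e\neq 0$. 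Lifting $\xi$ to $H^2_T(\Gamma,\alpha;\QQ)$, I would consider the product $\xi\Th_e\in H^6_T$, which is supported on the two endpoints $v,w$ of $e$ since $\Th_e$ is.

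The key technical point, which I expect to be the main obstacle, is the following structural claim: every class in $H^6_T(\Gamma,\alpha;\QQ)$ supported on a pair $\{v,w\}$ joined by at least one edge lies in
\[
\QQ\cdot\Th_v + \QQ\cdot\Th_w + \sum_{e'} R_\QQ^2 \cdot \Th_{e'},
\]
where $e'$ ranges over the edges between $v$ and $w$. The reverse inclusion is immediate, so the task is to match $\QQ$-dimensions. On the ambient side the dimension is controlled by divisibility of $g(v)$ and $g(w)$ by the weights of the edges at $v$ respectively $w$ not going to the opposite endpoint, together with one congruence modulo $\alpha(e')$ for each $e'$ joining $v$ and $w$. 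On the candidate side the dimension is $2+2k$ minus the $k$ relations $\alpha(e')\Th_{e'}=\pm\Th_v\pm\Th_w$, where $k$ is the number of edges between $v$ and $w$. Since $H^4\neq 0$ already implies $|V(\Gamma)|\geq 3$, only $k\in\{1,2\}$ needs to be treated, and the two cases follow a uniform pattern.

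Granting this lemma, I can expand $\xi\Th_e = a\Th_v+b\Th_w+\sum_{e'} r_{e'}\Th_{e'}$ with $a,b\in\QQ$ and $r_{e'}\in R_\QQ^2$. Reducing modulo $R_\QQ^+\cdot H^*_T(\Gamma,\alpha;\QQ)$ annihilates the $r_{e'}\Th_{e'}$ terms, so $\overline{\xi\Th_e}=a\bar\Th_v+b\bar\Th_w$. Applying Lemma~\ref{lem:thomverticescohomologous} once more, this is a $\QQ$-multiple of $\bar\Th_v$; since $\overline{\xi\Th_e}\neq 0$ by the choice of $\xi$, we conclude $\bar\Th_v\neq 0$.
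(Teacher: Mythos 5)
Your proposal is correct in outline and follows the same case split as the paper: handle $H^4=0$ separately, and in the main case use Proposition~\ref{prop:h4thom} together with nondegeneracy of the Poincar\'e pairing to obtain a nonzero class $\overline{\xi\Th_e}\in H^6$ represented by a class supported on the two endpoints of $e$. The difference lies in how one passes from a class supported on two vertices to a multiple of a vertex Thom class.

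The paper's route is more economical and avoids your structural lemma entirely: one simply subtracts the $R_\QQ^2$-multiple $\xi(v)\cdot\Th_e$ of $\Th_e$ from $\xi\Th_e$, which kills the component at $v$. The remaining class is supported at $w$, and its value there, $(\xi(w)-\xi(v))\Th_e(w)$, is automatically divisible by $\prod_{f\in E(\Gamma)_w}\alpha(f)$ because $\xi(w)-\xi(v)$ is divisible by $\alpha(e)$ and $\Th_e(w)$ supplies the other two factors; hence it equals $c\,\Th_w$ for some $c\in\QQ$. Since the modification was by an element of $R_\QQ^+\cdot H^*_T(\Gamma,\alpha;\QQ)$ it does not change the class in $H^6$, and $c\neq 0$ follows from $\overline{\xi\Th_e}\neq 0$. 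This one-line reduction replaces your dimension count, which --- while it does close up (I checked both $k=1$ and $k=2$, and the $k$ relations $\alpha(e')\Th_{e'}=\pm\Th_v\pm\Th_w$ are indeed independent and exhaustive) --- requires verifying surjectivity of the reduction map onto $R_\QQ^6/\prod_{e'}\alpha(e')R_\QQ$, which you do not spell out. Your treatment of the $H^4=0$ case is also heavier than necessary: rather than pinning down $|V(\Gamma)|=2$ and exhibiting a basis, it suffices to note that $H^2=H^4=0$ forces $R_\QQ^+\cdot H^6_T=R_\QQ^6\cdot 1$, and this consists of globally constant tuples, which cannot equal a class supported at a single vertex when $\Gamma$ has more than one vertex. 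So: no gap, but both cases can be streamlined by exploiting the localization of the Thom classes directly instead of via explicit bases or dimension counts.
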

\begin{proof}
If $H^4(\Gamma,\alpha;{\mathbb{Q}})= 0$, then by Poincaré duality also $H^2(\Gamma,\alpha;{\mathbb{Q}})=0$. In this situation, if a Thom class of a vertex was trivial in $H^6(\Gamma,\alpha;{\mathbb{Q}})$, it would already be in $R_\QQ^6\cdot 1$. But this is impossible as $\Gamma$ has more than one vertex, and the Thom classes of vertices are localized at a single vertex.

If $H^4(\Gamma,\alpha;{\mathbb{Q}})\neq 0$, then by Proposition \ref{prop:h4thom} we find an edge $e$ whose Thom class defines a nontrivial element in $H^4(\Gamma,\alpha;{\mathbb{Q}})$. By Poincaré duality, we can  multiply to it an element in $H^2(\Gamma,\alpha;{\mathbb{Q}})$ to obtain a nonzero element $\omega\in H^6(\Gamma,\alpha;{\mathbb{Q}})$ (i.e., the product in $H^6_T(\Gamma,\alpha;{\mathbb{Q}})$ is not divisible by an element in $R_\QQ^2$). By definition of $\Th_e$, the class $\omega$ is represented by an element in $H^6_T(\Gamma,\alpha;{\mathbb{Q}})$ that is localized at at most two vertices. By adding an appropriate $R_\QQ^2$-multiple of $\Th_e$, this class is localized at a single vertex, hence a multiple of the Thom class of that vertex.
\end{proof}

Propositions \ref{prop:thomclasslem} and \ref{prop:thomverticesnontriv} combine to yield

\begin{cor}\label{cor:PDorientable}
If $H^*(\Gamma,\alpha;{\mathbb{Q}})$ satisfies Poincaré duality, then $\Gamma$ is orientable.
\end{cor}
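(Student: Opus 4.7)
The proof is essentially a one-line contrapositive argument combining the two preceding propositions, so my plan is to set it up as such without introducing any new ingredients.

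First I would assume, for contradiction, that $H^*(\Gamma,\alpha;\mathbb{Q})$ satisfies Poincaré duality but $(\Gamma,\alpha)$ is not orientable. Since we are in the setting $n=3$, $k=2$, the top degree of the graph cohomology is $2n = 6$, matching the degree in which the Thom class of a vertex lives (after passing to $H^*(\Gamma,\alpha;\mathbb{Q})$).

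Then I would apply Proposition \ref{prop:thomclasslem} to conclude that for every vertex $v \in V(\Gamma)$, the Thom class $\Th_v$ is zero in $H^6(\Gamma,\alpha;\mathbb{Q})$. On the other hand, Proposition \ref{prop:thomverticesnontriv} uses the Poincaré duality hypothesis to guarantee that these same classes $\Th_v$ are nonzero in $H^6(\Gamma,\alpha;\mathbb{Q})$. These two statements directly contradict each other, completing the proof.

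There is no real obstacle here: both propositions were proved in the preceding paragraphs and the corollary is just their formal juxtaposition. The only thing to note is that Proposition \ref{prop:thomclasslem} is formulated in the $n$-valent generality, so applying it in our case just means setting $n=3$, while Proposition \ref{prop:thomverticesnontriv} is already stated for the $3$-valent, $k=2$ situation. No further calculations or constructions are required.
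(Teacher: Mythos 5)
Your proof is correct and is exactly the paper's argument: the paper states that Propositions \ref{prop:thomclasslem} and \ref{prop:thomverticesnontriv} combine to yield the corollary, and you have simply spelled out the contradiction that results from juxtaposing them.
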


\section{Realization}
\label{sec:realization}

\subsection{The construction}

Given a $3$-valent abstract GKM graph $(\Gamma,\alpha)$ we want to build a $6$-dimensional closed GKM manifold realizing it. From a certain point in the construction $(\Gamma,\alpha)$ will be assumed to be orientable as defined in Definition \ref{defn:graphorientable}. The construction proceeds by gluing $T^2$-equivariant handles, i.e.\ $6$-dimensional $T^2$-spaces whose orbit spaces are given by the standard nonequivariant notion of a $4$-dimensional handle. We give an overview of the construction procedure:
\begin{enumerate}
\item We start by using $0$-handles and $1$-handles -- corresponding bijectively to the vertices and edges of the graph -- to construct a $6$-manifold $M$ with boundary which contains the $1$-skeleton of the action and deformation retracts onto the graph of $2$-spheres encoded by $(\Gamma,\alpha)$. The boundary $\partial M$ is almost free and potentially contains orbits with nontrivial discrete isotropy. We note that the fact that the abstract one-skeleton defined by an abstract signed GKM graph can be thickened to an open manifold has already been observed in \cite[Section 3.1]{GuilleminZaraII}.
\item 
	
	We now glue $2$-handles to the thickened graph
	$M$, reducing the fundamental group in
	the process. As $M/T$ deformation retracts onto the graph $\Gamma$ this
	process corresponds (on the orbit space level and up to homotopy) to gluing in
	$2$-discs into $\Gamma$ along edge paths. The connection defines canonical
	candidates for these closed edge paths in form of the so-called connection
	paths. Gluing in $2$-handles along these connection paths (which can be
	thought of as the paths of potentially nonregular orbits remaining in the
	boundary) we arrive at a manifold with boundary $N$ in which all nonregular
	isotropy lies on the inside and whose orbit space deformation retracts onto
	the closed surface $F$ which arises from $\Gamma$ by gluing in $2$-disks
	along all connection paths.
\item The surface $F$ depends on the connection and can in general be an arbitrary, even nonorientable, closed surface. In order for our $T^2$-manifold to satisfy the topological conditions of $H^{odd}=0$ and of simply-connectedness we want $F$ to be simply-connected, i.e.\ $S^2$. This is achieved by gluing additional $2$-handles to $N$ which, as opposed to all previous building blocks, do not directly come from combinatorial data provided by the graph. However, at this point we are working in the regular stratum of the manifold which reduces the problem mostly to non-equivariant surgery and hence does not rely on the compatibility of isotropies which were previously ensured by the combinatorics of the graph. We arrive at a simply-connected $T^2$ manifold $N_2$ with $N_2/T^2=S^2\times D^2$ and $\partial N_2=S^2\times S^1\times T^2$. (for completeness we note that $N_2$ does not directly arise out of the original $N$ but potentially one of the connection handles from the previous step has to be removed and reglued with a gluing map modified by an equivariant Dehn twist).
\item We now obtain a closed manifold by equivariantly gluing $D^3\times
	S^1\times T^2$ to $\partial N_2$. More generally one can cap off $N_2$ by
	embedding $N_2/T$ into $S^4$ and  gluing appropriately with $Z\times T^2$,
	where $Z$ is the complement of the embedding. The final orbit space will be
	$S^4$, as predicted by results of \cite{AyzenbergMasuda} for connected
	isotropies. 
\item In Section \ref{sec:cohom} we will prove that that for any choice of embedding of $S^2 \times D^2$ in $S^4$ as in (iv) the resulting simply-connected manifold $X$ is GKM, provided the abstract GKM graph $(\Gamma,\alpha)$ satisfies the necessary conditions. That is, if $H^*(\Gamma,\alpha;{\mathbb{Q}})$ satisfies $6$-dimensional Poincar\'e duality (which implies orientability by Corollary \ref{cor:PDorientable}), then $X$ is a rational GKM manifold, see Theorem \ref{thm:rationalGKM}. If additionally $H^*_T(\Gamma,\alpha)$ is a free $R$-module, then $X$ is an integer GKM manifold, see Theorem \ref{thm:mainintegerGKM}. Note that in the rational case, by Proposition \ref{prop:eqcohomfreeoverQ}, we do not need any assumption on freeness of the equivariant cohomology of the graph.
\end{enumerate}
We briefly comment on some notation and technicalities in the construction: in order to write down explicit building blocks we will at several points fix certain data like orders of e.g.\ edges or weights which is not part of the combinatorial data of $(\Gamma,\alpha)$. These are highly non-canonical and merely a way to fix the language. In order to compare them we will use appropriate permutations. The notation is such that, e.g. when passing from the building block associated to an edge $e$ to the one associated to a vertex $v$ one uses the permutation $\sigma_{e,v}$. Similarly when passing from a vertex $v$ to an adjacent vertex $w$ by using an edge $e$ between them, the corresponding permutation will be given the name $\sigma_{v,e,w}$. The same logic applies throughout the text to all kinds of transition or gluing maps.

We want to point the reader towards to the fact that, in particular during steps (i) and (ii) of the above procedure, there are three layers to the construction: the first two are the $6$-dimensional $T^2$-spaces and their $4$-dimensional orbit spaces. However there is a third $3$-dimensional layer underlying the two, which we will track throughout the construction. It comes from the fact, that locally our $T^2$-action will be described as the restriction of a surrounding $T^3$-action. The aforementioned $3$-dimensional layer is that of the corresponding $T^3$-orbit spaces which are naturally smooth manifolds with corners. Although the $T^3$-action does glue globally, the orbit spaces do, providing a very helpful description of the $4$-dimensional $T^2$-orbit space over the $3$-dimensional layer.

\begin{rem}
We would like to relate our construction to another realization procedure in \cite{Ayzenberg}, see Construction 3.5 therein. In this paper Ayzenberg produces, starting from a topological manifold $Q$ as well as certain abstract compatible characteristic data, a complexity one topological space with orbit space $Q$. In general, his construction does not produce a manifold and is limited to connected stabilizers. However, having realized an abstract $3$-valent GKM graph in which any pair of adjacent weights is a basis with our method by a GKM manifold with connected stabilizers, a posteriori this realization is equivariantly homeomorphic to Ayzenberg's construction, see \cite[Proposition 3.7]{Ayzenberg}.
\end{rem}

\subsection{0-handles and 1-handles}

We set $T=T^2$ and fix a $3$-valent GKM graph $(\Gamma,\alpha)$ with labels in $\mathbb{Z}^2/{\pm 1}$ as well as a compatible connection on it.

For each vertex we fix an order of the three outgoing edges. For each edge we choose an orientation and a sign of the associated weight. Note that the choice of sign and orientation are independent and arbitrary. Now for a vertex $v$ let $\alpha_1,\alpha_2,\alpha_3$ be the weights of the adjacent edges in the chosen order and with the chosen signs. Set $B_v=D^6_{(\alpha_1,\alpha_2,\alpha_3)}$ where the latter is $D^6\subset \mathbb{C}^3$ together with the $T$-action defined by the set of weights $(\alpha_1,\alpha_2,\alpha_3)$.

For an edge $e$ set $K_e=I\times S^1_{\alpha_1}\times D^4_{(\alpha_2,\alpha_3)}$, where $I=[0,1]$ is the unit interval, $\alpha_1,\alpha_2,\alpha_3$ are the weights with the chosen signs at the initial vertex of $e$ and the order is chosen arbitrarily with the condition that $\alpha_1$ is the weight associated to $e$. Now each of the building blocks $B_v$, $K_e$ comes with the following additional structure:

\begin{itemize}
\item There is a canonical $T^3$ action on $B_v$ (resp. $K_e$). The weights define a unique homomorphism $T\rightarrow T^3$ such that the $T$-action is the pullback of the $T^3$-action. The image of this homomorphism is denoted $T_v\subset T^3$ (resp. $T_e$).
\item The orbit space $\overline{B_v}:= B_v/T^3$ can be canonically identified with $\mathbb{R}_{\geq 0}^3\cap D^3$ such that the map $s_v\colon \mathbb{R}_{\geq 0}^3\cap D^3\rightarrow B_v$ induced by the obvious inclusion $\mathbb{R}^3\rightarrow \mathbb{C}^3$ is a section of the orbit map.
Analogously $\overline{K_e}:=K_e/T^3\cong I\times (\mathbb{R}_{\geq 0}^2\cap D^2)$ and $s_e\colon I\times (\mathbb{R}_{\geq 0}^2\cap D^2)\rightarrow K_e$, $(t,x,y)\mapsto (t,1,x,y)$ is a section of the orbit map.
\end{itemize}

Analogous to \cite[Lemma 2.11]{Ayzenberg} we have the following description of $T$-orbit spaces. Here, $\mu$ denotes the multiplication maps of the $T^3$-actions on $B_v$, $K_e$, and $F_v=(\partial \mathbb{R}_{\geq 0}^3)\cap D^3$, $F_e= I\times ((\partial \mathbb{R}_{\geq 0}^2)\cap D^2)$.

\begin{lem} \label{lem:orbitspaceisS1bundle}
\begin{enumerate}
\item The map $\mu\circ (s_v\times\Id_{T^3})$ induces a homeomorphism
\[ \left(\overline{B_v}\times T^3/T_v\right)/_\sim\cong B_v/T_v\]
where $\sim$ collapses the $T^3/T_v$ factor over $F_v$.
\item The map $\mu\circ (s_e\times\Id_{T^3})$ induces a homeomorphism
\[ \left(\overline{K_e}\times T^3/T_e\right)/_\sim\cong K_e/T_e\]
where $\sim$ collapses the $T^3/T_e$ factor over $F_e$.
\end{enumerate}
\end{lem}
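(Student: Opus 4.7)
The plan is to show that in each case the map induces a continuous surjection from the claimed quotient space to the target which is also injective, and then invoke the standard fact that a continuous bijection between compact Hausdorff spaces is a homeomorphism. Both $B_v/T_v$ and $K_e/T_e$ are compact Hausdorff as quotients of the compact Hausdorff spaces $B_v$, $K_e$ by actions of the compact group $T_v$, resp.\ $T_e$; the sources are likewise compact Hausdorff, as the equivalence relation $\sim$ is closed.

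I would first treat (i) in detail; part (ii) is entirely analogous. Define $\bar\phi\colon \overline{B_v}\times(T^3/T_v)\to B_v/T_v$ by $(x,[t])\mapsto [t\cdot s_v(x)]$. Well-definedness and continuity are immediate. Surjectivity follows because $s_v$ is a section of the full $T^3$-orbit map and $T_v\subseteq T^3$. To analyse fibres, suppose $\bar\phi(x_1,[t_1])=\bar\phi(x_2,[t_2])$. Then $t_0 t_1 s_v(x_1)=t_2 s_v(x_2)$ for some $t_0\in T_v$; the section property forces $x_1=x_2=:x$ and $t_2^{-1}t_0 t_1\in\operatorname{Stab}_{T^3}(s_v(x))$. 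Equivalently, $[t_1]$ and $[t_2]$ differ in $T^3/T_v$ by an element of the image $H_x$ of $\operatorname{Stab}_{T^3}(s_v(x))$ under the quotient $T^3\to T^3/T_v$.

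The heart of the proof is to show that $H_x=T^3/T_v$ for every $x\in F_v$, while $H_x=\{1\}$ for $x$ in the interior of $\overline{B_v}$. The interior case is trivial. For $x\in F_v$ it suffices by containment of stabilizers under face inclusion to check open codimension-$1$ faces. On the face $\{x_1=0\}$ one has $\operatorname{Stab}_{T^3}(s_v(x))=S^1\times\{1\}^2$, and the composition $S^1\hookrightarrow T^3\twoheadrightarrow T^3/T_v$ is a homomorphism of compact connected Lie groups of dimension $1$ each -- the target is $1$-dimensional because effectiveness of the graph makes $T\to T^3$ injective, so $T_v$ is $2$-dimensional. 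Such a homomorphism is either trivial or surjective; triviality would mean $S^1\times\{1\}^2\subseteq T_v$, which via the description $T_v=\{(\alpha_1(t),\alpha_2(t),\alpha_3(t)):t\in T\}$ would require $\alpha_1$ restricted to $\ker(\alpha_2)\cap\ker(\alpha_3)$ to surject onto $S^1$. But the GKM axiom of linear independence of the adjacent weights $\alpha_2,\alpha_3$ forces this common kernel to be finite, contradicting surjectivity onto the infinite group $S^1$. Hence $H_x=T^3/T_v$, and the equivalence relation induced by $\bar\phi$ coincides with the relation $\sim$ that collapses the $T^3/T_v$-factor over $F_v$.

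Part (ii) proceeds identically with $\overline{K_e}\cong I\times(\mathbb{R}_{\geq 0}^2\cap D^2)$; now $F_e$ consists only of the two codimension-$1$ faces coming from the $D^2$-factor, since the $S^1_{\alpha_1}$-factor contributes no vanishing locus. The stabilizer argument on the face coming from the $i$-th $D^2$-coordinate ($i\in\{2,3\}$) uses linear independence of the two weights other than $\alpha_i$ in exactly the same form. The main obstacle in both parts is the stabilizer computation on codimension-$1$ faces: it is a short Lie group computation, but one has to invoke the GKM axiom in its integral form -- linear independence of adjacent weights, which forces the relevant common kernel in $T$ to be finite. Everything else in the proof is formal.
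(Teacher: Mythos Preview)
Your proof is correct and follows essentially the same approach as the paper: both arguments identify the induced map, observe that the question reduces to computing the image $H_x$ of the $T^3$-stabilizer in $T^3/T_v$, use that this image is a connected subgroup of a circle (hence trivial or everything), and rule out triviality on $F_v$ via pairwise linear independence of the weights. One small imprecision: you write ``$H_x=\{1\}$ for $x$ in the interior of $\overline{B_v}$'', but what you actually need (and what holds) is $H_x=\{1\}$ for all $x\notin F_v$, including points on the spherical part of the boundary with all coordinates positive; since the $T^3$-action is free there as well, the argument goes through unchanged.
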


\begin{proof}
We prove $(i)$. The proof of $(ii)$ is identical. The map $\overline{B_v}\times T^3\rightarrow B_v$ is $T^3$-equivariant. Thus we obtain a $T^3/T_v$-equivariant map $\overline{B_v}\times T^3/T_v\rightarrow B_v/T_v$ which is a homeomorphism on $T^3/T_v$-orbit spaces. Hence it becomes a homeomorphism if we divide every orbit $\{p\}\times T^3/T_v$ by the $T^3/T_v$-stabilizer of its image orbit $T_v\cdot s_v(p)\subset B_v/T_v$. Said stabilizer is just the image of the stabilizer $T^3_{s_v(p)}$ in $T^3/T_v$. Note that since the $T^3$-action has only connected stabilizers, this image is a connected subgroup of the circle $T^3/T_v$. So the only question is whether it is the trivial subgroup or all of $T/T_v$, i.e.\ whether $T^3_{s_v(p)}\subset T_v$ or not. Using that the weights at $v$ are pairwise linearly independent (i.e.\ $T_v$ is in general position) one quickly verifies that this is the case if and only if $p\notin F_v$.
\end{proof}

Consider an edge $e$ going from $v$ to $w$. Denote by $\alpha_1,\alpha_2,\alpha_3$ the weights (with chosen signs) at $v$, corresponding to the edges $e_1=e, e_2, e_3$ in the order in which they appear in $K_e$. Let $\beta_1,\beta_2,\beta_3$ denote the weights at $w$ in the fixed order chosen at $w$. Then the connection $\nabla_e$ gives a bijective correspondence $\{\alpha_i\}\leftrightarrow \{\beta_i\}$. Let $\sigma_{e,w}$ be the permutation such that $\alpha(\nabla_e (e_i))=\beta_{\sigma_{e,w}(i)}$. Using the congruence relations of the connection, let $\epsilon_i\in\{\pm1\}$ and $k_i\in \mathbb{Z}$ such that $\alpha(\nabla_{e_1}(e_i))=\varepsilon_i\alpha_i+k_i\alpha_1$. These numbers are unique except for $i=1$, where we choose $\varepsilon_1=1$ and $k_1=0$.
Let $P_{{e,w}}$ be the permutation matrix which swaps the entries of column vectors according to $\sigma_{e,w}$, i.e.\ $P_{e,w}\cdot b_i=b_{\sigma_{e,w}(i)}$ for the standard basis $b_i$. Then the automorphism $\varphi_{e,w}\colon T^3\rightarrow T^3$ defined by the matrix
\[P_{e,w}\cdot\begin{pmatrix}
1 & 0 & 0\\ k_2 & \varepsilon_2 & 0\\ k_3& 0& \varepsilon_3
\end{pmatrix}\]
has the property that the diagram
\begin{equation}\label{eq1}
\xymatrix{
& T\ar[dl]_{(\alpha_1,\alpha_2,\alpha_3)}\ar[dr]^{(\beta_1,\beta_2,\beta_3)} &\\
T^3\ar[rr]^{\varphi_{e,w}} & &T^3
}
\end{equation}
commutes. In particular $\varphi_{e,w}$ sends $T_e$ to $T_w$. Define $\sigma_{e,v}$, $\varphi_{e,v}$ analogously, where the permutation transforms the order of the weights at $e$ into the one at $v$ and for $\varphi_{e,v}$ we just need the permutation part, i.e. $k_i=0$, $\varepsilon_i=1$.

Now fix some $\delta>0$ and denote by $c_1$ and $c_{-1}$ the identity and conjugation map on $\mathbb{C}$. We glue ${K_e}$ to ${B_w}$ along the map
\[\psi_{e,w}\colon \partial_1 K_e=\{1\}\times S^1\times D^4\rightarrow S^5\subset B_w\]
which is the composition of \[(v,w,z)\mapsto (\sqrt{1-\delta^2\|(w,z)\|^2}\cdot v,\delta \cdot c_{\varepsilon_2}(w)\cdot v^{k_2},\delta\cdot c_{\varepsilon_3}(z)\cdot v^{k_3})\]
with the coordinate switch map on $\mathbb{C}^3$ defined by multiplication with $P_{e,w}$. We collect some important observations in the following

\begin{lem}\label{lem:orbitstuff}
\begin{enumerate}
\item For $p\in \partial_1K_e$, $g\in T^3$ we have
\[\psi_{e,w}(g\cdot p)=\varphi_{e,w}(g)\cdot\psi_{e,w}(p).\]
\item $\psi_{e,w}$ is $T$-equivariant.
\item $\psi_{e,w}$ induces a map on $T^3$-orbit spaces which is given by the composition of
\begin{align*}
\overline{\psi}_{e,w}\colon \partial_1\overline{K_e}= \{1\}\times (\mathbb{R}^2_{\geq 0}\cap D^2)
&\longrightarrow \mathbb{R}^3_{\geq 0}\cap D^3= \overline{B_w}\\
(x,y)&\longmapsto (\sqrt{1-\delta^2\|(x,y)\|^2},\delta x,\delta y)
\end{align*}
with the coordinate switch map which is multiplication with $P_{e,w}$.
\item The isomorphisms from Lemma \ref{lem:orbitspaceisS1bundle} are compatible with these maps in the sense that the diagram
\[\xymatrix{
(\overline{K_e}\times T^3/T_e)/_\sim\ar[r] & K_e/T_e\\
(\partial_1\overline{K_e}\times T^3/T_e)/_\sim\ar[d]_{\overline{\psi}_{e,w}\times\overline{\varphi}_{e,w}} \ar[u]\ar[r]&\partial_1K_e/T_e\ar[d]\ar[u]\\
(\overline{B_w}\times T^3/T_w)/_\sim \ar[r]& B_w/T_w
}\]
commutes, where $\overline{\varphi}_{e,w}\colon T^3/T_e\rightarrow T^3/T_w$ is induced by $\varphi_{e,w}$.
\end{enumerate}
\end{lem}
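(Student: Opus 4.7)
The plan is to verify the four items by direct computation, treating (i) as the main computation and deducing the remaining parts from it together with the definitions.

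For (i), I would work in coordinates. Given $g=(t_1,t_2,t_3)\in T^3$ and $p=(1,v,w,z)\in \partial_1 K_e$, the $T^3$-action sends $p$ to $(1,t_1 v,t_2 w,t_3 z)$. Plugging into the defining formula for $\psi_{e,w}$ and pulling out $t_1,t_2,t_3$, the three output entries before the permutation become
\[
\bigl(t_1\cdot\sqrt{1-\delta^2\|(w,z)\|^2}\,v,\ t_1^{k_2}t_2^{\varepsilon_2}\cdot \delta c_{\varepsilon_2}(w)v^{k_2},\ t_1^{k_3}t_3^{\varepsilon_3}\cdot \delta c_{\varepsilon_3}(z)v^{k_3}\bigr),
\]
where one uses $c_{\varepsilon_i}(t_i w)=t_i^{\varepsilon_i}c_{\varepsilon_i}(w)$. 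This is the action of the diagonal element $(t_1,\,t_1^{k_2}t_2^{\varepsilon_2},\,t_1^{k_3}t_3^{\varepsilon_3})$ on the unpermuted image, and after conjugating by $P_{e,w}$ we recognise this element, read out of the exponents of $t_1,t_2,t_3$, as precisely $\varphi_{e,w}(g)$ by comparison with the matrix defining $\varphi_{e,w}$.

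For (ii) I would combine (i) with the commutative diagram \eqref{eq1}: if $t\in T$ acts on $p\in K_e$ through the inclusion $T\hookrightarrow T^3$ given by $(\alpha_1,\alpha_2,\alpha_3)$, then by (i) it acts on $\psi_{e,w}(p)$ through $\varphi_{e,w}\circ(\alpha_1,\alpha_2,\alpha_3)=(\beta_1,\beta_2,\beta_3)$, which is exactly the $T$-action on $B_w$. For (iii), I would substitute the section $s_e(1,x,y)=(1,1,x,y)$ into $\psi_{e,w}$; since $x,y\in\mathbb{R}_{\geq 0}$, the maps $c_{\varepsilon_i}$ act as the identity and the formula collapses to $P_{e,w}\cdot(\sqrt{1-\delta^2\|(x,y)\|^2},\delta x,\delta y)$, which lands in the image of $s_w$ and gives the claimed $\overline{\psi}_{e,w}$. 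Well-definedness on $T^3$-orbit spaces is immediate from (i).

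For (iv) the commutativity of the diagram is then a matter of unwinding the identification in Lemma \ref{lem:orbitspaceisS1bundle}. An element of $(\partial_1\overline{K_e}\times T^3/T_e)/_\sim$ of the form $[(1,x,y),[g]]$ maps right to $[g\cdot s_e(1,x,y)]\in\partial_1 K_e/T_e$ and then down by $\psi_{e,w}$ to $[\psi_{e,w}(g\cdot s_e(1,x,y))]=[\varphi_{e,w}(g)\cdot\psi_{e,w}(s_e(1,x,y))]$ by (i); going the other way down and then right gives $[\overline{\varphi}_{e,w}([g])\cdot s_w(\overline{\psi}_{e,w}(1,x,y))]$, and these agree by (iii). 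The main bookkeeping obstacle is part (i); once that is dealt with, the remaining items are straightforward unwindings of the definitions and of Lemma \ref{lem:orbitspaceisS1bundle}.
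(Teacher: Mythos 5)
Your proposal is correct and takes essentially the same approach as the paper, which dismisses (i) and (iii) as straightforward verifications, derives (ii) from (i) together with the commutative triangle \eqref{eq1}, and proves (iv) by lifting the lower square to a diagram of spaces before quotienting — exactly the element chase you carry out. One small imprecision: in (i) the last step is not a conjugation by $P_{e,w}$ but rather the observation that the coordinate permutation commutes with the componentwise $T^3$-action, i.e.\ $P_{e,w}(a\cdot b)=P_{e,w}(a)\cdot P_{e,w}(b)$; the content is nonetheless right.
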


\begin{proof}
Part $(i)$ is a straight forward verification. Part $(ii)$ follows from $(i)$ and the commutative triangle \eqref{eq1}. Regarding part $(iii)$, the existence of $\overline{\psi}_{e,w}$ is a consequence of $(i)$. Its explicit formula is a straight forward verification. In the diagram of $(iv)$ only the commutativity of the lower square needs verification. It is induced as a quotient of the outer square of the diagram
\[\xymatrix{
\partial_1\overline{K_e}\times T^3\ar[r]^{s_e\times\Id}\ar[d]_{\overline{\psi}_{e,w}\times\varphi_{e,w}}& \partial_1 K_e\times T^3\ar[r]^\mu \ar[d]_{\psi_{e,w}\times \varphi_{e,w}} & \partial_1 K_e\ar[d]_{\psi_{e,w}}\\
\overline{B_w}\times T^3\ar[r]^{s_w\times\Id} & B_w\times T^3\ar[r]^\mu & B_w
}\]
in which commutativity of the left hand square is easily checked and commutativity of the right hand square is $(i)$.
\end{proof}

The analogous definitions can be used to define the gluing map $\psi_{e,v}\colon \partial_0 K_e=\{0\}\times S^1\times D^4\rightarrow B_v$ from the starting point of an edge to its initial vertex. The definitions and properties extend in the obvious way. Now we define

\[M:=\left(\bigsqcup_{v\in V(\Gamma)}B_v\sqcup \bigsqcup_{e\in E(\Gamma)} K_e\right)/_\sim\]
with the equivalence relation induced by all the gluing maps $\psi(e,v)$ for $e\in E(\Gamma)$, $v\in V(\Gamma)$.

\begin{figure}[H]
	\centering
	\includegraphics{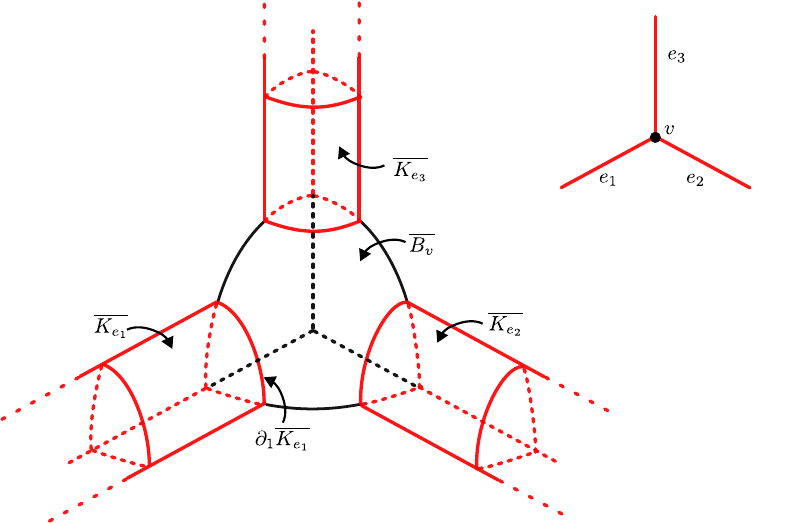}
	\caption{A local picture of the $T^3$ orbit of $M$ around a vertex $v$.}
	\label{fig: connection path}  
\end{figure}

\begin{rem}\label{rem:bookkeeping} We set $\sigma_{v,e,w}=\sigma_{e,w}\circ\sigma_{e,v}^{-1}$ and $ \varphi_{v,e,w}=\varphi_{e,w}\circ\varphi_{e,v}^{-1}$ and sum up the essentials for bookkeeping when passing from one vertex to an adjacent one:
Let $v$ be a vertex with edges $e_1,e_2,e_3$ in the chosen order with weights $\alpha_i$. Let $w$ be an adjacent vertex connected by $e\in \{e_1,e_2,e_3\}$. Denote by $e_1',e_2',e_3'$ the outgoing edges at $w$ in the chosen order with associated weights $\beta_i$. Then $\nabla_e\colon \{e_1,e_2,e_3\}\rightarrow \{e_1',e_2',e_3'\}$ defines a bijection. One checks that, independently of the enumeration of weights chosen in $K_e$, the permutation $\sigma_{v,e,w}$ is defined by the property that \[\nabla_e(e_i)=e_{\sigma_{v,e,w}(i)}'.\]
The homomorphism $\varphi_{v,e,w}$ is has the property that
\[\xymatrix{
& T\ar[dl]_{(\alpha_1,\alpha_2,\alpha_3)}\ar[dr]^{(\beta_1,\beta_2,\beta_3)} &\\
T^3\ar[rr]^{\varphi_{v,e,w}} & &T^3
}\]
commutes. To determine the matrix representation $(a_{ij})$ of $\varphi_{v,e,w}$ one proceeds as follows: we have $\alpha(\nabla_e(e_i))= \varepsilon_i \alpha(e_i)+k_i\alpha(e)$, for some $\varepsilon_i=\pm 1$, $k_i\in \mathbb{Z}$, setting $\varepsilon_m=1$, $k_m=0$ for the $m\in\{1,2,3\}$ with $e=e_m$. Now $a_{ij}=\varepsilon_j$ for $i=\sigma_{v,e,w}(j)$, $a_{ij}=0$ for $j\neq m$, $i\neq \sigma_{v,e,w}(j)$, and $a_{im}=k_{\sigma_{v,e,w}^{-1}(i)}$ for $i\neq \sigma_{v,e,w}(m)$.

We note for later use that if we change the order of the edges at $v,w$ via permutations $\sigma,\tau$ to be $e_{\sigma(1)},e_{\sigma(2)},e_{\sigma(3)}$ and $e'_{\tau(1)},e'_{\tau(2)},e'_{\tau(3)}$, then $\varphi_{v,e,w}$ changes to $P_\tau\varphi_{v,e,w} P_\sigma^{-1}$, where $P_\sigma,P_\tau$ are the permutation automorphisms commuting the coordinates according to $\sigma,\tau$.
\end{rem}

\subsection{Orientability}\label{sec:orientability}

Recall that an orientation on a manifold with boundary induces a unique orientation on its boundary by defining a local frame to be positive if an inward pointing tangent vector followed by said frame is positive in the whole manifold.
Now we fix orientations on every $B_v$ via the orientation on $\mathbb{C}$ defined by the standard complex structure. We also orient the $K_e$ by orienting $S^1$ counterclockwise and $I$ from $0$ to $1$. In view of orienting the global object $M$ we remark that the chosen orientations on $B_v$, $K_e$ extend to the glued object $B_v\cup K_e$ (assuming $v$ is adjacent to $e$) if and only if the the gluing map $\psi_{e,v}$ is orientation reversing w.r.t.\ the orientations on the boundary.

\begin{lem}\label{lem:orientation}
Let $e$ be an edge which is oriented from $v$ to $w$.
\begin{enumerate}
\item The gluing map $\psi_{e,v}$ is always orientation reversing.
\item The gluing map $\psi_{e,w}$ is orientation reversing if and only if $\eta(e)=1$.
\end{enumerate}
\end{lem}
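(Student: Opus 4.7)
The plan is to prove both statements by a direct local computation of the Jacobian determinant of the gluing maps at a single conveniently chosen point on the boundary, after fixing orientation conventions carefully. I would use the outward-normal convention throughout: on $\partial_1 K_e = \{1\} \times S^1 \times D^4$ the induced boundary orientation agrees with the product of the counterclockwise $S^1$ and the complex orientation on $D^4 \subset \mathbb{C}^2$, while on $\partial_0 K_e$ it differs from this product by an overall sign, the outward normal at $t=0$ being $-\partial_t$. On the target side, $S^5 = \partial B_v \subset \mathbb{C}^3$ carries the orientation obtained by contracting the complex volume form on $\mathbb{C}^3$ with the outward radial vector.

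Before carrying out the computation, I would dispose of the coordinate-switch part $P_{e,\bullet}$ by noting that any permutation of the $\mathbb{C}$-factors of $\mathbb{C}^3$ is orientation preserving on $\mathbb{C}^3$: a transposition of two complex factors acts on the underlying real coordinates as a product of two disjoint real transpositions and hence has signature $+1$. Being orthogonal, $P_{e,\bullet}$ also preserves the outward radial field and therefore the induced orientation on $S^5$. Thus it is enough to analyze the ``pre-permutation'' part of the gluing formula.

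The Jacobian calculation itself I would perform at the base point $(v,w,z) = (1,0,0) \in \partial_1 K_e$ (respectively $\partial_0 K_e$). There the factor $\sqrt{1 - \delta^2\|(w,z)\|^2}$ equals $1$ and all $v^{k_i}$ contributions drop out of the partial derivatives, since they come paired with $c_{\varepsilon_i}(w)$ or $c_{\varepsilon_i}(z)$, both zero. One finds that $\partial_\theta$ maps to the imaginary unit of the first $\mathbb{C}$-factor, while $\partial_{w_1}, \partial_{w_2}, \partial_{z_1}, \partial_{z_2}$ map to $\delta$ times real/imaginary units of the second and third $\mathbb{C}$-factors with signs $+,\,\varepsilon_2,\,+,\,\varepsilon_3$. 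Evaluated against the standard positive frame of $T_{(1,0,0)}S^5$, the determinant equals $\delta^4\,\varepsilon_2\varepsilon_3$.

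For (ii), the boundary orientation of $\partial_1 K_e$ carries no extra sign, so $\psi_{e,w}$ is orientation reversing iff $\varepsilon_2\varepsilon_3 = -1$, which by the definition $\eta(e) = -\varepsilon_2\varepsilon_3$ is exactly $\eta(e) = 1$. For (i), the definition of $\varphi_{e,v}$ forces $\varepsilon_i = 1$ and $k_i = 0$, so the Jacobian determinant is positive; combined with the extra sign in the boundary orientation at $t = 0$, one concludes that $\psi_{e,v}$ is always orientation reversing. The main obstacle is purely in consistently lining up the boundary-orientation sign at the two ends of $K_e$ with the $\varepsilon$-signs emerging from the Jacobian; once these conventions are fixed, both statements of the lemma drop out of the same one-point computation, and the dichotomy between (i) and (ii) comes exclusively from which end of $K_e$ one glues.
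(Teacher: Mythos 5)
Your proof is correct and follows essentially the same route as the paper: dispose of the coordinate permutation by observing that $\mathbb{C}$-factor switches preserve orientation, isolate the sign $\varepsilon_2\varepsilon_3$, and account for the flip between the boundary orientations at $t=0$ and $t=1$. The only real difference is how you dismiss the $k_i$: the paper lifts to the universal cover $\mathbb{R}\times D^4\to S^1\times D^4$ and varies the $k_i$ continuously, whereas you observe that the $k_i$-dependent terms have vanishing derivative at the base point $(1,0,0)$ and read the sign directly off a single Jacobian --- a slightly more concrete version of the same step.
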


\begin{proof}
The map on $B_v$ which switches complex coordinates is orientation preserving. Hence the question whether $\psi_{e,v}$ is orientation preserving depends not on the order of the edges which we chose in every vertex and edge. Thus for part $(i)$ we only need to show that the map
\begin{align*}
S^1\times D^4&\longrightarrow S^5\subset D^6\\
(s,v,w)&\mapsto \left( \sqrt{1-\delta^2\| (v,w)\|} s,\delta v,\delta w\right)
\end{align*}
reverses the chosen orientations. We recall that $S^1\times D^4$ is oriented as $\{0\}\times S^1\times D^4$, hence $(i)$ holds. By the same reasoning, for $(ii)$ we need to investigate whether the map

\begin{align*}
S^1\times D^4&\longrightarrow S^5\subset D^6\\
(s,v,w)&\mapsto \left( \sqrt{1-\delta^2\| (v,w)\|} s,\delta c_{\varepsilon_2}(v)s^{k_2},\delta c_{\varepsilon_3}(w)s^{k_3}\right)
\end{align*}
preserves orientation where $c_1$ is the identity and $c_{-1}$ is complex conjugation, $\varepsilon_i\in \{\pm 1\}$, $k_i\in \mathbb{Z}$. Note that this time the orientation on $S^1\times D^4$ is the one coming from $\{1\}\times S^1\times D^4$ and thus different from the one in the proof of part $(i)$. We claim that the question whether the map preserves orientations does not depend on the $k_i$. Indeed we consider the covering $\mathbb{R}\times D^4\rightarrow S^1\times D^4$ and compose it with the above map. The isotopy class (through local diffeomorphisms) of this composition does not depend on the value of the $k_i$ as these can be now varied continuously. Consequently they all behave identically with respect to orientations. We may thus assume $k_1=0=k_2$.
Since conjugation reverses orientation the whole map reverses orientation if and only if $\varepsilon_1\varepsilon_2=-1$. This is equivalent to $\eta(e)=1$.
\end{proof}

\begin{prop}\label{prop:orientability}
The manifold $M$ is orientable if and only if $(\Gamma,\alpha)$ is orientable.
\end{prop}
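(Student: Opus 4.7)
The plan is to parametrize the possible orientations on $M$ by signs attached to the building blocks and then reduce the question of global consistency to a monodromy condition on closed edge paths in $\Gamma$. Write $\epsilon_v,\epsilon_e\in\{\pm 1\}$ for the sign of the orientation on $B_v$, respectively $K_e$, relative to the orientations fixed at the beginning of this subsection. Locally oriented manifolds-with-boundary glue to a globally oriented manifold precisely when each attaching map reverses the induced orientation on the boundary; hence orientations of $M$ correspond bijectively to collections $(\epsilon_v)_v,(\epsilon_e)_e$ such that every gluing map $\psi_{e,v}$ and $\psi_{e,w}$ is orientation-reversing with respect to these signed orientations.

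I would next translate this requirement into sign relations using Lemma \ref{lem:orientation}. Flipping the orientation of either the source or the target of a boundary diffeomorphism multiplies its ``sign as an orientation map'' by $-1$, so the sign of $\psi_{e,v}$ after the flips equals its original sign times $\epsilon_v\epsilon_e$, and similarly for $\psi_{e,w}$ with $\epsilon_w\epsilon_e$. Since by Lemma \ref{lem:orientation}(i) the original $\psi_{e,v}$ is always orientation-reversing, the modified map is orientation-reversing iff $\epsilon_v\epsilon_e=1$, i.e.\ $\epsilon_e=\epsilon_v$. By Lemma \ref{lem:orientation}(ii) the original sign of $\psi_{e,w}$ is $-\eta(e)$, so the modified map is orientation-reversing iff $\epsilon_w\epsilon_e=\eta(e)$. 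Eliminating $\epsilon_e$ collapses both requirements into the single constraint
\[\epsilon_w=\eta(e)\,\epsilon_v\]
for every edge $e$ from $v$ to $w$. Since $\eta(e)\in\{\pm 1\}$ is its own inverse, the relation is symmetric and does not depend on the chosen direction of traversal of $e$.

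Finally, because $\Gamma$ is connected, a standard monodromy argument shows that a function $\epsilon\colon V(\Gamma)\to\{\pm 1\}$ satisfying the above relation along every edge exists if and only if for every closed edge path $e_1,\ldots,e_n$ one has $\prod_{i=1}^n\eta(e_i)=1$; one simply fixes $\epsilon$ at a base vertex, propagates along a spanning tree, and checks that there is no obstruction around the remaining loops. By Definition \ref{defn:graphorientable} this is exactly orientability of $(\Gamma,\alpha)$. The only real obstacle is the sign bookkeeping in the middle paragraph, and this has essentially been done already in Lemma \ref{lem:orientation}, so after invoking it the proof becomes a short diagram of sign manipulations.
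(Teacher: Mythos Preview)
Your proof is correct and follows essentially the same approach as the paper's own proof: fix an orientation at a base vertex, propagate along edges using Lemma~\ref{lem:orientation}, and check that consistency around loops is exactly the condition $\prod_i \eta(e_i)=1$. The paper states this much more tersely (simply ``spread the orientation through the graph''), whereas you make the bookkeeping with the signs $\epsilon_v,\epsilon_e$ and the relation $\epsilon_w=\eta(e)\,\epsilon_v$ explicit, but there is no genuine difference in strategy.
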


\begin{proof}
Given an orientation on $(\Gamma,\alpha)$, we fix an orientation on a single $B_v$ and spread its orientation through the graph by giving each piece $K_e$, $B_w$ the orientation such that its gluing map is orientation reversing. The orientability condition together with Lemma \ref{lem:orientation} ensures that this gives a well defined orientation on $M$. Conversely, similar arguments show that $M$ is nonorientable in case there is a closed edge path $e_1,\ldots,e_n$ such that $\prod_{i=1}^n\eta(e_i)=-1.$
\end{proof}

Recall that for any edge $e$ adjacent to vertices $v,w$ we also have the associated permutation $\sigma_{v,e,w}\in S_3$ and the homomorphism $\varphi_{v,e,w}\colon T^3\rightarrow T^3$. We identify the latter with its corresponding matrix in $\mathrm{GL}(3,\mathbb{Z})$.

\begin{lem}\label{lem:signformula}
One has $\eta(e)=-\mathrm{sign}(\sigma_{v,e,w})\cdot\det(\varphi_{v,e,w})$.
\end{lem}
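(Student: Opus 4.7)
The plan is to read off $\det(\varphi_{v,e,w})$ directly from the explicit matrix description given just after diagram (2.1) and to identify the resulting sign with $\eta(e)$. First I would use the factorization $\varphi_{v,e,w} = \varphi_{e,w}\circ\varphi_{e,v}^{-1}$ from Remark~\ref{rem:bookkeeping}. The map $\varphi_{e,v}$ is a pure coordinate permutation (its defining matrix has $k_i=0$ and $\varepsilon_i=1$ since passing from $e$ to its initial vertex requires no connection data), so $\det(\varphi_{e,v}) = \mathrm{sign}(\sigma_{e,v})$. The map $\varphi_{e,w}$ is $P_{e,w}$ times the lower-triangular matrix with diagonal $(1,\varepsilon_2,\varepsilon_3)$ and off-diagonal entries $k_2,k_3$ in the first column; this inner matrix has determinant $\varepsilon_2\varepsilon_3$, so $\det(\varphi_{e,w}) = \mathrm{sign}(\sigma_{e,w})\cdot\varepsilon_2\varepsilon_3$.

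Combining these and using $\sigma_{v,e,w}=\sigma_{e,w}\circ\sigma_{e,v}^{-1}$ together with multiplicativity of sign and determinant gives
\[
\det(\varphi_{v,e,w}) = \mathrm{sign}(\sigma_{v,e,w})\cdot\varepsilon_2\varepsilon_3.
\]
To conclude I would invoke the definition of $\eta$. The $\varepsilon_2,\varepsilon_3$ appearing in the matrix of $\varphi_{e,w}$ are exactly the signs in the connection identities $\alpha(\nabla_e(e_i)) = \varepsilon_i\alpha(e_i)+k_i\alpha(e)$ for the two edges at $v$ other than $e$, i.e.\ the data used to define $\eta(e)=-\varepsilon_2\varepsilon_3$ when $e$ is taken as the distinguished first edge at $v$. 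Substituting yields $\det(\varphi_{v,e,w}) = -\mathrm{sign}(\sigma_{v,e,w})\cdot\eta(e)$, which rearranges to the asserted formula.

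I do not anticipate any real obstacle: the statement is essentially a bookkeeping identity, made transparent once the matrices are unpacked. The only minor care required is that the arbitrary order of the two ``off-$e$'' weights chosen in $K_e$ need not agree with the order of the corresponding edges at $v$; however, any such reordering merely interchanges $\varepsilon_2$ and $\varepsilon_3$ and hence leaves their product, and thereby both sides of the identity, invariant. Similarly, one can check that changing the initial sign choices on the weights does not affect the formula, consistent with the observation that $\eta$ is invariant under such changes on closed paths.
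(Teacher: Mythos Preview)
Your proposal is correct and essentially identical to the paper's own proof: both compute $\det(\varphi_{v,e,w})$ by factoring through $\varphi_{e,v}^{-1}$ and $\varphi_{e,w}$, read off $\mathrm{sign}(\sigma_{v,e,w})\cdot\varepsilon_2\varepsilon_3$ from the explicit matrices, and then substitute $\eta(e)=-\varepsilon_2\varepsilon_3$. The paper does this in a single displayed line, while you spell out each factor and add the harmless remark about the ordering of the two off-$e$ weights, but there is no substantive difference.
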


\begin{proof}
By construction we have
\[
\det(\varphi_{v,e,w})=\det P_{e,v}^{-1}\cdot \det P_{e,w}\cdot \det
\begin{pmatrix}
1 &0 &0\\ k_2& \varepsilon_2 & 0\\ k_3& 0 &\varepsilon_3
\end{pmatrix}
=\mathrm{sign}(\sigma_{v,e,w})\cdot \varepsilon_1\varepsilon_2
\]
which yields the desired formula.
\end{proof}

\subsection{2-handles}\label{sec:2-handles}

Given two edges $e,e'$ at a vertex $v$, the connection defines an edge path $c$ through $e_1,\ldots,e_n$ with $e_1=e$, $e_n=e'$, and $\nabla_{e_i}(e_{i-1})=e_{i+1}$ where we set $e_0=e', e_{n+1}=e_1$. Thinking of $e_1$ as leaving the vertex $v$ and $e_n$ entering $v$ we obtain an orientation on the edge path and we define $v_i$ as the starting point of $e_i$. We call $c$ a \emph{connection path}. The construction depends only on the two initial edges and their order (where changing the latter reverses orientation of the connection path). Up to orientation and starting point of the connection path, every edge is contained in exactly two of them.

\begin{figure}[H]
	\centering
	\includegraphics{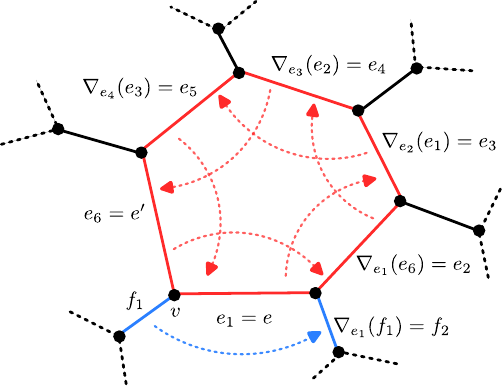}
	\caption{A connection path (red).}
	\label{fig: connection path} 
\end{figure}

\begin{lem} \label{lem:circlehomom}
If $(\Gamma,\alpha)$ is orientable then $\varphi_c=\varphi_{v_n,e_n,v_{n+1}}\circ\ldots\circ\varphi_{v_1,e_1,v_2}=\Id_{T^3}$.
\end{lem}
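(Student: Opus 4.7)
The plan is to pick a convenient ordering of the edges at each vertex on the connection path and perform a direct matrix computation. At each $v_i$, let $f_i$ denote the unique third edge (distinct from $e_{i-1}$ and $e_i$) and use the ordering $(e_i, e_{i-1}, f_i)$. Then the ordering at $v_{n+1}=v_1$ matches that at $v_1$, since $e_{n+1}=e_1$, $e_n=e_0$, and bijectivity of $\nabla_{e_n}$ forces $f_{n+1}=f_1$. By the change-of-order observation at the end of Remark~\ref{rem:bookkeeping}, replacing the chosen orderings by the natural ones conjugates $\varphi_c$ by a single permutation matrix $P_{\pi_1}$, so it suffices to prove $\varphi_c=\Id_{T^3}$ in this natural ordering.

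Writing the connection congruences at $v_i$ as $\alpha(e_{i+1})=a_i\alpha(e_{i-1})+b_i\alpha(e_i)$ and $\alpha(f_{i+1})=c_i\alpha(f_i)+d_i\alpha(e_i)$ with $a_i,c_i\in\{\pm 1\}$ and $b_i,d_i\in\mathbb{Z}$, Remark~\ref{rem:bookkeeping} yields
\[
\varphi_{v_i,e_i,v_{i+1}} = \begin{pmatrix} b_i & a_i & 0 \\ 1 & 0 & 0 \\ d_i & 0 & c_i \end{pmatrix}.
\]
This block lower-triangular shape is preserved under multiplication, so $\varphi_c$ inherits the same form with top-left $2\times 2$ block $B'=B_n\cdots B_1$, where $B_i=\left(\begin{smallmatrix} b_i & a_i \\ 1 & 0\end{smallmatrix}\right)$, and bottom-right entry $c'=c_1\cdots c_n$. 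The commutative diagrams defining the individual $\varphi_{v_i,e_i,v_{i+1}}$ compose to show that $\varphi_c$ fixes the $3\times 2$ weight matrix at $v_1$; projecting to the top two rows gives $B'\cdot M = M$, where $M$ is the $2\times 2$ matrix with rows $\alpha(e_1),\alpha(e_0)$. Since adjacent weights at $v_1$ are linearly independent, $M$ is invertible over $\mathbb{Q}$, and consequently $B'=I$.

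It remains to pin down $c'$ and the bottom-left row $r'$ of $\varphi_c$. A direct expansion gives $\det\varphi_{v_i,e_i,v_{i+1}}=-a_ic_i=\eta(e_i)$; equivalently, this follows from Lemma~\ref{lem:signformula} since $\sigma_{v_i,e_i,v_{i+1}}$ is the transposition $(1\,2)$ in the natural ordering. Hence $\det\varphi_c=\prod_{i=1}^n\eta(e_i)=1$ by orientability of $(\Gamma,\alpha)$ applied to the closed edge path $e_1,\ldots,e_n$, and together with $\det B'=1$ this forces $c'=1$. Finally, the third row of the identity $\varphi_c\cdot(\alpha(e_1),\alpha(e_0),\alpha(f_1))^T=(\alpha(e_1),\alpha(e_0),\alpha(f_1))^T$ becomes $r'\cdot M=0$, so $r'=0$ by invertibility of $M$. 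We conclude $\varphi_c=I$. The main obstacle is spotting the ordering that exposes the block structure; once that is in place, the remainder is elementary linear algebra feeding on the orientability hypothesis.
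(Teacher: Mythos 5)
Your proof is correct, and it uses the same core ingredients as the paper (Lemma~\ref{lem:signformula}, the commutative diagram from Remark~\ref{rem:bookkeeping}, linear independence of adjacent weights), but the route to the block structure of $\varphi_c$ is cleaner. The paper normalizes the ordering only at $v_1$ (to $e,e',f$) and then proves inductively that for each partial composition $\varphi_{v_{k-1},e_{k-1},v_k}\circ\cdots\circ\varphi_{v_1,e_1,v_2}$ the column corresponding to the transported third edge is $\pm$ a standard basis vector; this relies on the observation that one never transports along the $f$-edge and requires tracking the permutation $\sigma_{v_{k-1},e_{k-1},v_k}\circ\cdots\circ\sigma_{v_1,e_1,v_2}(3)$ through the induction. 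Your choice of ordering $(e_i,e_{i-1},f_i)$ at every $v_i$ --- which is consistent around the loop because $e_0=e_n$, $e_{n+1}=e_1$ and $\nabla_{e_n}$ is forced to send $f_n$ to $f_1$ --- makes each factor $\varphi_{v_i,e_i,v_{i+1}}$ literally block lower-triangular, so the block form of the product is immediate and the induction disappears. The last paragraph of Remark~\ref{rem:bookkeeping} then takes care of the dependence on orderings by telescoping to a single conjugation (using again that the ordering at $v_1$ and $v_{n+1}$ coincide), so your normalization is harmless. After that, both arguments close out identically: the commutative diagram pins the top-left $2\times2$ block to the identity via invertibility of $M$; orientability of the closed path $e_1,\ldots,e_n$ feeds through Lemma~\ref{lem:signformula} (or, as you also note, a direct expansion giving $\det\varphi_{v_i,e_i,v_{i+1}}=-a_ic_i=\eta(e_i)$) to force $\det\varphi_c=1$ and hence $c'=1$; and the third row of the fixed-weight equation then kills $r'$. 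In short: a correct proof, same skeleton, with a nicer bookkeeping device replacing the paper's third-column induction.
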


\begin{proof}
Let $f$ denote the third edge adjacent to $v_1$ besides $e,e'$. We note first that by the last paragraph of Remark \ref{rem:bookkeeping}, the statement does not depend on the order of the edges chosen at every vertex. Thus we assume that the fixed order at $v_1$ is $e,e',f$. The composition $\nabla_{e_n}\circ\ldots\circ\nabla_{e_1}$ around the edge path is the identity in case $n$ is even and swaps $e,e'$ while keeping $f$ fixed in case $n$ is odd. Thus for \[\sigma=\sigma_{v_n,e_n,v_1}\circ\ldots\circ\sigma_{v_1,e_1,v_2}\] we have $\mathrm{sign}(\sigma)=(-1)^n$. Furthermore by Lemma \ref{lem:signformula} and orientability we have
\[1=\prod_{i=1}^n \eta(e)=(-1)^n \mathrm{sign}(\sigma)\prod_{i=1}^n \det \varphi_{v_i,e_i,v_{i+1}}=\det \varphi_c.\]
Now we prove inductively that the third column of $\varphi_{v_{k-1},e_{k-1},v_{k}}\circ\ldots\circ\varphi_{v_1,e_1,v_2}$ has only a single nonzero entry $\pm 1$ which is located in row number $\sigma_{v_{k-1},e_{k-1},v_{k}}\circ\ldots\circ\sigma_{v_1,e_1,v_2}(3)$. Assuming that this holds for some $k$ we observe that we never transport along the edge coming from $f$, i.e.\ $e_k\neq \nabla_{e_{k-1}}\circ\ldots\circ\nabla_{e_1}(f)$, where the right hand side is the edge with number $j=\sigma_{v_{k-1},e_{k-1},v_{k}}\circ\ldots\circ\sigma_{v_1,e_1,v_2}(3)$ at the vertex $v_k$. Thus the description of $\varphi_{v_k,e_k,v_{k+1}}$ from Remark \ref{rem:bookkeeping} yields that the $j$th column has a single nonzero entry $\pm 1$ in the row number $\sigma_{v_k,e_k,v_{k+1}}$. This finishes the induction. In particular, since $\sigma(3)=3$, after a full turn we have

\[\varphi_c=\begin{pmatrix}
a_{11}&a_{12}&0\\a_{21}&a_{22}&0\\ a_{31}& a_{32} & \varepsilon\end{pmatrix}
\]
for some $a_{ij}\in \mathbb{Z},\varepsilon\in \{\pm 1\}$. Because of the commutative diagram (\ref{eq1}) we also have
\[\begin{pmatrix}
a_{11}&a_{12}&0\\a_{21}&a_{22}&0\\ a_{31}& a_{32} & \varepsilon\end{pmatrix} \cdot \begin{pmatrix}
\alpha_1 (t)\\ \alpha_2(t) \\ \alpha_3(t)
\end{pmatrix}=\begin{pmatrix}
\alpha_1 (t)\\ \alpha_2(t) \\ \alpha_3(t)
\end{pmatrix}
\]
where $\alpha_1,\alpha_2,\alpha_3$ are the weights of $e,e',k$ and $t\in T$. Now linear independence of the weights forces
\[
\begin{pmatrix}
a_{11}&a_{12}&0\\a_{21}&a_{22}&0\\ a_{31}& a_{32} & \varepsilon\end{pmatrix} =
\begin{pmatrix}
1&0&0\\0& 1&0\\ a_{31}& a_{32} & \varepsilon\end{pmatrix}.\]
Then $\det \varphi_c=1$ implies $\varepsilon=1$ and finally the equation 
$a_{31}\alpha_1+a_{32}\alpha_2+\alpha_3=\alpha_3$
yields $a_{31},a_{32}=0$.
\end{proof}

From now on we assume $(\Gamma,\alpha)$ to be orientable as a standing assumption. We define $D_c=D^2\times S^1_{\alpha_1}\times S^1_{\alpha_2}\times D^2_{\alpha_3}$ where $\alpha_1,\alpha_2,\alpha_3$ are the weights of the edges $e_1,e_n,f$ at $v_1$ in this order. Again we view the weights as a homomorphism $T\rightarrow T^3$ and denote the image by $T_c$. Furthermore let $\varphi_{c,v_1}\colon T^3\rightarrow T^3$ be the coordinate switching automorphism which permutes the weights according to the weight order in $v_1$, thus in particular mapping $T_c$ onto $T_{v_1}$. Then by Lemma \ref{lem:circlehomom} there is a well defined (and inherently unique) way to extend this definition to transition homomorphisms $\varphi_{c,e_i},\varphi_{c,v_i}$ such that the diagrams

\[\xymatrix{& & T^3\ar[dd]^{\varphi_{e_i,v}}\\
T \ar@/^1.3pc/[urr]\ar@/_1.3pc/[drr]\ar[r]& T^3\ar[ur]^{\varphi_{c,e_i}}\ar[dr]_{\varphi_{c,v}} &\\
 & & T^3
}\]
commute, where $v=v_i,v_{i+1}$, $i=1,\ldots,n$. Thus we obtain

\begin{lem}\label{lem:T3action}
There is a well-defined $T^3$-action on $M_c:=\bigcup_{i=1}^n (B_{v_i}\cup K_{e_i})$ which extends the $T$-action and acts on $K_{e_i}$ (resp.\ $B_{v_i}$) as the pullback of the standard $T^3$-action along $\varphi_{c,e_i}$ (resp. $\varphi_{c,v_i}$).
\end{lem}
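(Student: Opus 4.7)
The plan is to verify three things in order: well-definedness of the maps $\varphi_{c,v_i}$ and $\varphi_{c,e_i}$, compatibility of the resulting local $T^3$-actions across the gluing maps $\psi_{e_i,v}$ occurring inside $M_c$, and the fact that the assembled $T^3$-action restricts on $T$ to the original $T$-action on each piece.

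For well-definedness, the commutativity of the two displayed triangles forces, starting from the given $\varphi_{c,v_1}$, the recursive relations $\varphi_{c,e_i} = \varphi_{e_i,v_i}^{-1} \circ \varphi_{c,v_i}$ and $\varphi_{c,v_{i+1}} = \varphi_{e_i,v_{i+1}} \circ \varphi_{c,e_i}$, which combine into $\varphi_{c,v_{i+1}} = \varphi_{v_i,e_i,v_{i+1}} \circ \varphi_{c,v_i}$. Running this recursion once around the closed connection path reproduces $\varphi_{c,v_1}$ on the nose precisely when $\varphi_{v_n,e_n,v_1} \circ \ldots \circ \varphi_{v_1,e_1,v_2} = \Id_{T^3}$, which is exactly Lemma \ref{lem:circlehomom}. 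Thus the $\varphi_{c,-}$ are consistently defined.

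For gluing compatibility, I would write the pulled-back $T^3$-action as $g \cdot_{\mathrm{new}} p = \varphi_{c,e_i}(g) \cdot p$ on $K_{e_i}$, and analogously $g \cdot_{\mathrm{new}} q = \varphi_{c,v}(g) \cdot q$ on $B_v$ for $v \in \{v_i, v_{i+1}\}$. Equivariance of $\psi_{e_i,v}$ with respect to the new actions amounts to the identity $\psi_{e_i,v}(\varphi_{c,e_i}(g) \cdot p) = \varphi_{c,v}(g) \cdot \psi_{e_i,v}(p)$. Applying Lemma \ref{lem:orbitstuff}(i) to the left-hand side gives $\varphi_{e_i,v}(\varphi_{c,e_i}(g)) \cdot \psi_{e_i,v}(p)$, so equivariance reduces to $\varphi_{e_i,v} \circ \varphi_{c,e_i} = \varphi_{c,v}$, which is precisely one of the commuting triangles characterizing $\varphi_{c,v}$. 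Finally, the outer arcs in the same diagram express that composing $\varphi_{c,e_i}$ (respectively $\varphi_{c,v}$) with the weight homomorphism $T \to T^3$ coming from $c$ recovers the original weight homomorphism defining the $T$-action on $K_{e_i}$ (respectively $B_v$), so restricting the pulled-back $T^3$-action to $T$ via the $c$-weights reproduces the original $T$-action on each piece. The local actions therefore fit together into a single $T^3$-action extending the $T$-action on $M_c$.

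The main obstacle has already been cleared by Lemma \ref{lem:circlehomom}; the rest is a formal consequence of Lemma \ref{lem:orbitstuff}(i) together with the bookkeeping of pullback actions along the $\varphi_{c,-}$.
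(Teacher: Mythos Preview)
Your argument is correct and follows exactly the approach the paper takes: the paper does not give an explicit proof of this lemma but simply states that it follows from the commuting triangles defining the $\varphi_{c,e_i}$ and $\varphi_{c,v_i}$, whose well-definedness along the closed path is guaranteed by Lemma~\ref{lem:circlehomom}. You have spelled out the details---the recursive definition, the closing condition $\varphi_c = \Id_{T^3}$, the equivariance check via Lemma~\ref{lem:orbitstuff}(i), and the restriction to $T$---more explicitly than the paper does, but the content is the same.
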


\begin{rem}
The $T^3$-action being well defined along a connection path $c$ is a special occurrence. It does in general not extend over arbitrary edge paths or even globally. Note however that the $T^3$-orbit spaces of the local $T^3$-actions do in fact glue globally and will help us derive a global description of the $T^2$-orbit spaces.
\end{rem}

We find the same structure on the $D_c$ as on the $B_v$, $K_e$.
\begin{itemize}
\item We have $\overline{D_c}:=D_c/T^3\cong D^2\times I$. Denote by $F_c\subset \overline{D_c}$ the subspace $D^2\times 0$.
\item The map $s_c\colon D^2\times I\rightarrow D^2\times S^1\times S^1\times D^2$, $(p,t)\mapsto (p,1,1,t)$ is a section of the orbit map.
\end{itemize}

We also obtain a description of the $T$-orbit space of $D_c$ analogous to Lemmas \ref{lem:orbitspaceisS1bundle}. The proof is analogous to before.

\begin{lem}\label{lem:orbitspaceisS1bundle2}
The section of the $T^3$-orbit map and $T^3$-multiplication induce a homeomorphism
\[(\overline{D_c}\times T^3/T_c)/_\sim\rightarrow D_c/T_c\]
where $\sim$ collapses $T^3/T_c$ over $F_c$.
\end{lem}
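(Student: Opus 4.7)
The plan is to mimic the proof of Lemma \ref{lem:orbitspaceisS1bundle} line by line. The map $\mu \circ (s_c \times \Id_{T^3})\colon \overline{D_c}\times T^3 \to D_c$ is visibly $T^3$-equivariant with respect to right multiplication on the second factor, and since $s_c$ is a section of the $T^3$-orbit map, the induced continuous map is surjective and a homeomorphism on $T^3$-orbit spaces (both identifying with $\overline{D_c}$). Passing to $T_c$-orbits on the target, we obtain a continuous surjective $T^3/T_c$-equivariant map from $\overline{D_c}\times T^3/T_c$ to $D_c/T_c$ whose fiber over $p\in\overline{D_c}$ is the quotient of $\{p\}\times T^3/T_c$ by the image in $T^3/T_c$ of the $T^3$-stabilizer of $s_c(p)$. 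Compactness of the source (after collapsing) upgrades the resulting bijection to a homeomorphism, so the content of the lemma is the stabilizer computation.

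To carry that out, write a point $p\in\overline{D_c}=D^2\times I$ as $(q,t)$, so that $s_c(p)=(q,1,1,t)\in D^2\times S^1\times S^1\times D^2$. For $t>0$, the $T^3$-stabilizer of $s_c(p)$ is trivial: matching the two middle coordinates forces the first two circle factors of $T^3$ to be $1$, and then fixing a nonzero point of $D^2_{\alpha_3}$ forces the third factor as well. Hence no collapsing happens outside $F_c$. For $p\in F_c$, i.e.\ $t=0$, the $T^3$-stabilizer of $s_c(p)$ is $\{1\}\times\{1\}\times S^1$, whose image in the one-dimensional torus $T^3/T_c$ is a closed connected subgroup and thus either trivial or everything.

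The main (and only) point to check is therefore that $\{1\}\times\{1\}\times S^1$ is not contained in $T_c$. If it were, then $\ker \alpha_1\cap\ker\alpha_2\subset T$ would have to surject onto $S^1$ via $\alpha_3$; but this intersection is finite because the weights $\alpha_1,\alpha_2$ of the distinct adjacent edges $e_1$ and $e_n$ at $v_1$ are linearly independent by the GKM axiom. Consequently the image is all of $T^3/T_c$ and the fiber over $F_c$ is collapsed to a single $T^3/T_c$-orbit, matching the description of $\sim$. I do not anticipate a genuine obstacle; the argument is a verbatim translation of the proof of Lemma \ref{lem:orbitspaceisS1bundle}, with the role of $F_v$ (where one of the weight-coordinates vanishes) played by $F_c=D^2\times\{0\}$ (where the $D^2_{\alpha_3}$-coordinate vanishes).
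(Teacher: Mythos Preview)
Your proof is correct and follows exactly the approach the paper intends: the paper's own proof consists of the single sentence ``The proof is analogous to before,'' referring to Lemma~\ref{lem:orbitspaceisS1bundle}, and you have carried out precisely that analogy, including the key point that the stabilizer $\{1\}\times\{1\}\times S^1$ cannot lie in $T_c$ because $\ker\alpha_1\cap\ker\alpha_2$ is finite by linear independence of adjacent weights.
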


Denote by $\overline{M}$ the gluing of all $\overline{B_v}$ and $\overline{K_e}$. Also set $\overline{M_c}:=M_c/T^3$. Note that the $F_e$ get glued to to the $F_v$ and the union of all of them gives rise to a topological surface with boundary $F_M\subset \overline{M}$.

\begin{lem}\label{lem:FM}
We have $\overline{M}\cong F_M\times I$, where $F_M$ gets mapped homeomorphically onto $F_M\times\{0\}$ and $F_e$ (resp. $F_v$) corresponds to $F_e\times\{0\}\subset F_e\times I$ (resp. $F_v\times\{0\}\subset F_v\times I)$.
\end{lem}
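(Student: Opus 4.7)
The plan is to assemble the homeomorphism $\overline{M}\cong F_M\times I$ from compatible product structures on each building block $\overline{B_v}$ and $\overline{K_e}$. Starting with edges: $\overline{K_e}=I_1\times Q$ where $Q=\mathbb{R}_{\geq 0}^2\cap D^2$ is a quarter disk and $F_e=I_1\times L$ with $L$ the L-shaped corner of $\partial Q$. Any homeomorphism $\phi\colon Q\to L\times I$ sending $L$ to $L\times\{0\}$ (which exists, as both are 2-disks with $L$ as a boundary arc) yields $\overline{K_e}\cong I_1\times L\times I = F_e\times I$ with $F_e$ sitting as $F_e\times\{0\}$; moreover each gluing end $\{0,1\}\times Q$ thereby becomes $(\{0,1\}\times L)\times I$, an arc of $\partial F_e$ crossed with the new $I$-factor. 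For vertices, $\overline{B_v}=\mathbb{R}_{\geq 0}^3\cap D^3$ is a $3$-ball whose boundary decomposes as $F_v\cup S_v$ with both pieces $2$-disks sharing the circle $\partial F_v$, so standard manifold topology provides a homeomorphism $h_v\colon\overline{B_v}\cong F_v\times I$ sending $F_v$ to $F_v\times\{0\}$ (and thus $S_v$ to $(\partial F_v\times I)\cup (F_v\times\{1\})$).

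The principal step is to ensure compatibility on the gluings. By Lemma~\ref{lem:orbitstuff}(iii), $\overline{\psi}_{e,v}$ embeds $\{0\}\times Q$ as a small $2$-disk in $S_v$ that sends the L-shape $L\subset\partial Q$ onto an L-shaped arc $A_{e,v}\subset\partial F_v$ concentrated at the axis point of $F_v$ corresponding to $e$. In the product structure on $\overline{K_e}$ this gluing disk automatically corresponds to $(\{0\}\times L)\times I\subset \partial F_e\times I$. Since the three axis points of $\partial F_v$ are disjoint (and the arcs $A_{e,v}$ disjoint for small enough $\delta$), we can choose $h_v$ so that each of the three gluing disks in $S_v$ corresponds to $A_{e,v}\times I\subset\partial F_v\times I$. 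After this choice, each overlap of the construction identifies $(\{0\}\times L)\times I$ with $A_{e,v}\times I$ in a way that respects the $I$-factor up to reparametrization, and this reparametrization can be absorbed into the choice of $\phi$.

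With coordinated product structures in hand, the local homeomorphisms $h_v$ and $h_e$ patch to a global homeomorphism $\overline{M}\cong F_M\times I$ sending each $F_v$ to $F_v\times\{0\}$ and each $F_e$ to $F_e\times\{0\}$, as required. The main technical obstacle is the simultaneous coordination of all product structures and $I$-parametrizations across overlaps; I would handle this by first fixing the edge product structures (via choices of $\phi$ for each $e$) and then constructing the $h_v$ one vertex at a time, exploiting the flexibility of $h_v$ on $S_v$ to prescribe its behaviour on the three disjoint gluing disks independently. The argument is in spirit the same as extending a collar of a bicollared surface in a $3$-manifold over the entire ambient manifold, and the disjointness of the gluing disks inside each $\overline{B_v}$ is exactly what makes this local collaring extend globally.
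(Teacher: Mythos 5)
Your proposal is correct and takes essentially the same approach as the paper's proof, which is compressed into two sentences: the paper simply identifies each $\overline{B}_v$ with $F_v\times I$ and each $\overline{K_e}$ with $F_e\times I$ and asserts that the homeomorphisms can be chosen compatibly with the gluing maps, leaving all details to the reader. Your argument makes the claimed compatibility precise — fixing the edge product structures first, then constructing each $h_v$ to match the induced data on the three pairwise disjoint gluing disks in $S_v$ — which is exactly the content behind the paper's terse second sentence.
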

\begin{proof}
Topologically we have $\overline{B}_v=\mathbb{R}^3_{\geq 0}\cap D^3\cong (\partial \mathbb{R}^3_{\geq 0})\times I$ and $K_e=I\times \mathbb{R}^2_{\geq 0}\cap D^2\cong (I\times \partial \mathbb{R}^2_{\geq 0})\times I$. The homeomorphisms can be chosen compatible with the gluing maps.
\end{proof}

		\begin{figure}[h]
			\centering
			\includegraphics{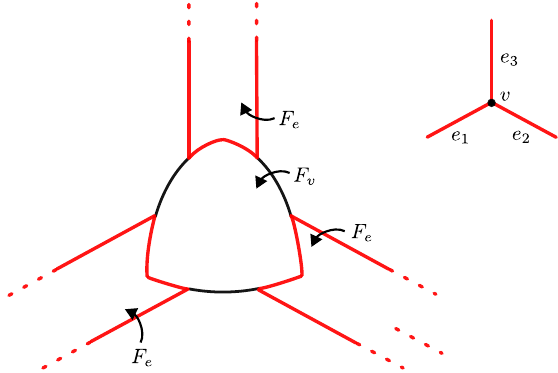}
			\caption{Local picture of $F_M$}
			\label{fig: surface locally} 
  	\end{figure}

Now with $e,e',k$ as above we consider $F_e=I\times ((\partial \mathbb{R}^2_{\geq 0})\cap D^2)=I\times (I\times\{0\}\cup \{0\}\times I)$. The intersection $F_e\cap \partial F_M$ consists of the two segments $s_1= I\times \{(1,0)\}$ and $s_2=I\times \{(0,1)\}$. Recall that the complex coordinates of $D^4$ in $K_e=I\times S^1\times D^4$ correspond to the edges $e',f$ and assume that $e'$ corresponds to the first (resp.\ second) coordinate of $D^4$. Then following the boundary component of $s_1$ (resp.\ $s_2$), which is a topological circle, in the direction of $v_2$, we obtain a closed path in $\partial F_M$ which is combinatorially exactly the edge path $c$. We denote the corresponding boundary component by $S_{e',e}\subset \partial F_M\cap \overline{M_c}$ and fix an identification $S_{e',e}\cong S^1$. Now, beginning on the level of $T^3$-orbit spaces, we glue $\overline{D_c}=D^2\times I$ to $\overline{M}$ along the map \[\overline{\psi_c}\colon S^1\times I\cong S_{e,e'}\times I\subset F_M\times I\cong \overline{M}.\]
Although it is not quite the topological boundary we also write $\partial\overline{D_c}=S^1\times I\subset \overline{D_c}$ and $\partial D_c=S^1\times S^1\times S^1\times D^2\subset D_c$. Now observe that there is a unique $T^3$-equivariant extension of this i.e.\ a unique map $\psi_{c}$ fitting into the commutative diagram

\[\xymatrix{
\partial \overline{D_c}\times T^3\ar[d]_{\overline{\psi_c}\times \Id_{T^3}}\ar[r] & \partial D_c\ar[d]_{\psi_c}\ar[r] &\overline{D_c}\ar[d]_{\overline{\psi_c}}\\
\overline{M_c}\times T^3\ar[r] & M_c \ar[r] & \overline{M_c}
}\]
where the left hand vertical maps are given by the respective sections of the orbit space maps composed with the respective $T^3$-actions (see Lemma \ref{lem:T3action}).

We view two connection paths as equivalent if they agree up to starting point and orientation. This is the case precisely if they agree on two consecutive edges, up to orientation of the path. Choose a representative $c$ for each connection path and glue the equivariant $2$-handle $D_c$ to $M$ via the procedure described above. We call the resulting manifold $N$. Let $F$ denote the union of $F_M$ with the $F_c$.

		\begin{figure}[H]
			\centering
			\includegraphics{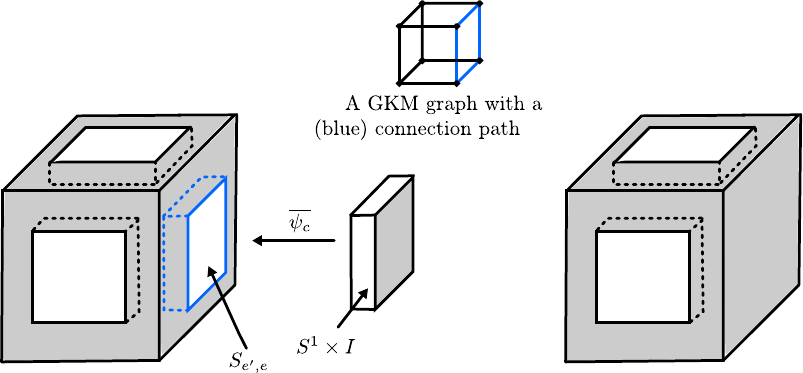}
			\caption{Before and after attaching a 2-handle to $\overline M$.}
			\label{fig: attaching two-handles} 
  	\end{figure}
  	
\begin{ex}\label{ex:toric surface}
		\begin{enumerate}[label=(\alph*)]
			\item 
Let $X$ be a $6$-dimensional quasitoric manifold and $T^2$ a subtorus of $T^3$ such that $T^2$ acts in a GKM fashion on $X$ (which holds true for generic subtori). The	$1$-skeleta of the $T^2$- and the $T^3$-action coincide. Furthermore the $T^3$-action	is also GKM and possesses a natural connection such that the connection paths are given by the faces of the orbit space, considered as a $3$-dimensional polytope. For the $T^2$-action we choose the same connection, thus obtaining an	abstract $3$-valent GKM graph. Gluing as above the $2$-discs along the	connection paths with respect to the chosen connection, we recover as surface $F$ the boundary of the $3$-dimensional polytope, which is a $2$-sphere.
			\item Let $T^2$ be the standard maximal torus of $\mathrm{SU}(3)$ and	consider the $T^2$-action by left-multiplication on the flag manifold	$\mathrm{SU}(3)/T^2$. This action is GKM \cite{GHZhom} and respects the projective	bundle structure 
	\[
	\CC\PP^{1}=\mathrm{S}(\mathrm{U}(1) \times \mathrm{U}(2))/T^2 \longrightarrow \mathrm{SU}(3)/T^2 \longrightarrow	\mathrm{SU}(3)/\mathrm{S}(\mathrm{U}(1) \times \mathrm{U}(2)) = \CC\PP^{2},
	\]
resulting in the following GKM fibration (a notion introduced in \cite{GKMFiberBundles}), see \cite[Section 2.1]{GKMFiberBundles}. The label of each edge is given by its primitive slope vector.
\begin{center}
\begin{tikzpicture}
\draw[step=1, dotted, gray] (-2.5,-3.5) grid (7.5,1.5);
\draw[very thick] (1,1) -- ++(-3,0) -- ++(4,-4) -- ++(0,3);
\draw[very thick] (2,0) -- ++(-1,1);
\draw[very thick] (2,-3)--++(-1,0);
\draw[very thick] (1,-3)--++(0,4);
\draw[very thick] (1,-3) -- ++(-3,3) -- ++ (4,0);
\draw[very thick] (-2,0) -- ++(0,1);
 \node at (1,1)[circle,fill,inner sep=2pt]{};
 \node at (-2,1)[circle,fill,inner sep=2pt]{};
 \node at (1,-3)[circle,fill,inner sep=2pt]{};
\node at (-2,0)[circle,fill,inner sep=2pt]{};
 \node at (2,0)[circle,fill,inner sep=2pt]{};
\node at (2,-3)[circle,fill,inner sep=2pt]{};
\draw[->, very thick] (3,-1) -- ++(1.5,0);
\draw[very thick] (5.0,0) -- ++(2,0);
\draw[very thick] (7.0,0) -- ++(0,-2);
\draw[very thick] (7.0,-2) -- ++(-2,2);
 \node at (5.0,0)[circle,fill,inner sep=2pt]{};
\node at (7.0,0)[circle,fill,inner sep=2pt]{};
\node at (7.0,-2)[circle,fill,inner sep=2pt]{};
\end{tikzpicture}
\end{center}
Concretely, this fibration collapses the three short (called vertical) edges, and sends pairs of parallel long  (called horizontal) edges to a single edge of the same slope in the GKM graph of $\CC P^2$.
There is a canonical connection on the GKM graph of ${\mathrm{SU}}(3)/T^2$ compatible with the GKM fibration respecting vertical and horizontal edges, and for which the trapezoids lying over the edges of the base graph are connection paths. For this connection, there is a unique fourth connection path consisting of $6$ horizontal edges which wind around the base triangle twice. Gluing the $2$-discs along the first type of connection paths results in a Möbius band (this corresponds to the fact that the GKM fibration is of \emph{twisted type} in the sense of \cite[Definition 4.1]{GKZ}). Gluing a $2$-disk along the remaining connection path gives as surface $F$ the real projective plane.
\item  A similar structure can be found on Eschenburg's twisted flag manifold $\mathrm{SU}(3)//T^2$ which is also a $\CC\PP^{1}$-bundle over $\CC\PP^{2}$. The corresponding GKM fibration is graph-theoretically identical to the one in (b), but with different labeling, see \cite[Example 4.8]{GKZ}. Thus, in this case the surface $F$ is again the real projective plane.
		\end{enumerate}
\end{ex}

\begin{thm}
\begin{enumerate}\label{thm:diskbudle}
\item $N$ is a topological $T$-manifold with boundary. The action is free on $\partial N$.
\item $N/T$ is a $D^2$-bundle over the closed topological surface $F$. More precisely it is of the form $E\times I/ \sim$ where $E\rightarrow F$ is a $S^1$-bundle with orientable total space and a global section, and $\sim$ collapses the fibers of $E$ in $E\times\{0\}$. 
\end{enumerate}
\end{thm}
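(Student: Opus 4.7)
For Part (i), I would first observe that each building block $B_v$, $K_e$, $D_c$ is visibly a topological $T$-manifold with corners, and that every gluing map $\psi_{e,v}$ and $\psi_c$ is a $T$-equivariant homeomorphism onto a codimension-zero piece of the boundary of the next building block. After standard corner smoothing, this shows $N$ is a topological $T$-manifold with boundary. For freeness on $\partial N$, I would track the non-free locus of each piece: the union of complex coordinate planes in $B_v$, the two axis directions of the $D^4$ factor in $K_e$, and the slice $D^2 \times S^1 \times S^1 \times \{0\}$ in $D_c$. By design, each such locus either lies in the interior of $N$ or gets matched onto another non-free locus by the gluings. Crucially, the non-free locus of $M$ that reaches $\partial M$ consists precisely of the connection paths, and $\psi_c$ is constructed so that the corresponding non-free slice of $D_c$ glues exactly onto the connection path $c$ in $\partial M$, leaving $\partial N$ with only regular orbits.

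For Part (ii), I would assemble the $D^2$-bundle structure via the local descriptions of Lemmas \ref{lem:orbitspaceisS1bundle} and \ref{lem:orbitspaceisS1bundle2}: each $T$-orbit space has the form $(\overline{X} \times T^3/T_X)/{\sim}$ with the circle factor $T^3/T_X$ collapsed over $F_X$. By diagram \eqref{eq1} and Lemma \ref{lem:T3action}, every transition homomorphism $\varphi_{e,v}$ and $\varphi_{c,v}$ carries the local copy of $T \subset T^3$ to the next, hence descends to an isomorphism of circles $T^3/T_X$. These assemble into transition functions for a global $S^1$-bundle $E' \to \overline{N}$, where $\overline{N}$ is obtained by gluing the local $T^3$-orbit spaces along $\overline{\psi_{e,v}}$ and $\overline{\psi_c}$. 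By Lemma \ref{lem:FM} one has $\overline{M} \cong F_M \times I$, and the attachments $\overline{\psi_c}$ glue $\overline{D_c} = D^2 \times I$ along $S^1 \times I \subset \partial F_M \times I$, producing $\overline{N} \cong F \times I$; that $F$ is a closed surface follows from the construction and the fact, already noted, that $F_M$ is a surface with boundary to which $2$-disks are attached along all boundary circles. Since $E'$ is an $S^1$-bundle over $F \times I$, it pulls back from $E := E'|_{F \times \{0\}}$, so $E' \cong E \times I$. Collapsing $S^1$-fibers over $F \times \{0\}$ yields $N/T \cong (E \times I)/{\sim}$. The local sections $s_v, s_e, s_c$ combine via Lemma \ref{lem:orbitstuff}(iv) and the analogous statement for $\overline{\psi_c}$ into a global section of $E'$, and hence of $E$.

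For orientability of the total space of $E$, I would first extend Proposition \ref{prop:orientability} to $N$: each $D_c$ is orientable as a product of oriented pieces, and its attaching map $\psi_c$ hits the free portion of $\partial M$, so the orientation of $M$ extends across every $D_c$-gluing. Then $\partial N$ is orientable as the boundary of the orientable $N$, and since the $T$-action on $\partial N$ is free (by (i)) and $T$ is orientable, the quotient $E = \partial N / T$ inherits an orientation. The main technical obstacle is the bookkeeping in the construction of $E'$: one must verify that the transition automorphisms on $T^3/T$ and the local sections agree on every overlap, particularly around the initial vertex of a connection path where a $B_v$, multiple $K_{e_i}$'s, and multiple $D_{c_j}$'s all meet. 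Lemma \ref{lem:orbitstuff} covers the vertex–edge overlaps; the remaining overlaps involving $D_c$ require an analogous verification which follows from the $T^3$-equivariant construction of $\psi_c$ outlined in Section \ref{sec:2-handles}.
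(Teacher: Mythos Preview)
Your proposal is correct and follows essentially the same architecture as the paper's proof, but with a notable difference in Part~(i). You track the non-free locus piece by piece through $B_v$, $K_e$, $D_c$ and argue that the connection-path gluings absorb all of it into the interior. The paper instead observes that the local $T^3$-orbit maps assemble to a global map $N\to F\times I$ sending $\partial N$ onto $F\times\{1\}$; since the $T^3$-orbits over $F\times\{1\}$ are free and effectivity makes $T\hookrightarrow T^3$ injective, freeness of the $T$-action on $\partial N$ follows immediately. Your approach works (what you call the non-free locus is really the $T^3$-non-free locus, which over-estimates the $T$-non-free locus, and that is harmless here), but the paper's global argument is considerably shorter and avoids the delicate bookkeeping you flag at the end.

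For Part~(ii) your argument matches the paper's: the local models of Lemmas~\ref{lem:orbitspaceisS1bundle} and~\ref{lem:orbitspaceisS1bundle2} glue via the $\overline{\varphi}_{*,*}$ to give the $S^1$-bundle structure over $F\times I$, and the global section exists because each $\varphi_{*,*}$ is a group homomorphism and hence preserves the identity coset. One small remark: your sections $s_v,s_e,s_c$ are sections of the $T^3$-orbit maps, not directly of the circle bundle $E'$; they descend to the constant-at-$[1]$ sections of $E'$, which is exactly the paper's formulation. Your extra care in extending orientability from $M$ to $N$ (via the $D_c$-gluings landing in the free part of $\partial M$) is a point the paper glosses over.
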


\begin{proof}
Clearly $N$ is a manifold with boundary. The $T^3$-orbit maps piece together to a map $N\rightarrow F\times I$ (see Lemma \ref{lem:FM}) which maps $\partial N$ onto $F\times \{1\}$. In each of the building blocks, the $T^3$ orbits over $F\times\{1\}$ are free. The standing assumption on $(\Gamma,\alpha)$ being effective ensures that for any set of weights $(\alpha_1,\alpha_2,\alpha_3)$ meeting at a vertex the corresponding map $T\rightarrow T^3$ is injective. Hence the $T$-action is free as well.

Regarding $(ii)$ the Lemmas \ref{lem:orbitspaceisS1bundle}, \ref{lem:orbitspaceisS1bundle2} give a description of $N/T$ as an $S^1$-bundle over $F\times I$ with collapsed fibers over $F\times \{0\}$. This gives the desired description of $N/T$ as a $D^2$-bundle. The total space of the corresponding boundary circle bundle $E\rightarrow F$ is exactly $\partial N/T$ which is orientable since the $T$-action on $N$ is free and $N$ is orientable. Regarding the existence of a global section of $E$, we note that we have explicit trivializations over the $F_v$, $F_e$, and $F_c$ as $F_v\times T^3/T_v$, $F_e\times T^3/T_e$, and $F_c\times T^3/T_v$. The transition maps are induced by the gluing maps $\psi_{*,*}$ and are precisely the maps induced by the group homomorphisms $\varphi_{*,*}$. But all of these preserve the identity element. Hence the constant local sections at $1$ of these explicit trivializations glue to a global section.
\end{proof}

We continue to modify $N$ further. We start by choosing a single connection path $c$ and remove the corresponding $2$-handle by setting $N_0$ to be the closure of $N\backslash {D_c}$. The handle $D_c$ will be reglued eventually with closer attention to the gluing map.

\begin{rem}\label{rem:zerosection}
Part $(ii)$ of the previous theorem restricts to $N_0$
whose $T$-orbit space is a $D^2$-bundle over $F_0:=F\backslash \mathring{F_c}$. The corresponding circle bundle has orientable total space. The boundary of $N_0/T$ is the union of said circle bundle and the restriction of the disk bundle to $\partial F_0$. We note that the disk bundle has a preferred ``$0$-section'' whose image are those points which map to $F\times\{0\}$ under the map $N\rightarrow F\times I$. We observe that orbits are regular outside of this $0$-section.  Let $\eta\subset N_0/T$ denote the circle which is given by the $0$-section over $\partial F_0$. We note that the action on $\partial N_0$ is free outside of the orbits belonging to $\eta$.
\end{rem}

Let $g$ be the rank of $\pi_1(F_0)$.

\begin{prop}\label{prop:N0->N1}
There is a sequence of attachings of $g$ $T$-equivariant $2$-handles $D^2\times D^2\times T$ along disjoint equivariant embeddings $f_i\colon S^1\times D^2\times T\rightarrow \partial N_0$, $i=1,\ldots,g$, with the following properties
\begin{itemize}
\item The images of the $f_i$ lie over $\mathring{F_0}$.
\item The orbit space  of $N_1=N_0\cup \bigsqcup_{i=1}^g D^2\times D^2\times T$ is homeomorphic to $D^4$.
\item the loop $\eta \subset \partial (N_1/T)\cong S^3$ is an unknot.
\end{itemize}
\end{prop}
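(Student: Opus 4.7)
The plan is to reduce the proposition to a $4$-manifold handle-theoretic problem on the orbit space. By Theorem~\ref{thm:diskbudle}(ii), $N_0/T$ is a $D^2$-bundle over the bordered surface $F_0$ whose underlying circle bundle has a global section; since $F_0$ deformation retracts onto a graph, this bundle is trivial and $N_0/T\cong F_0\times D^2$. Under this identification, $\partial(N_0/T)=(\partial F_0\times D^2)\cup_{T^2}(F_0\times S^1)$, the $T$-action on the $F_0\times S^1$-part is free and trivialized via the section, and $\eta=\partial F_0\times\{0\}$ is the core of the solid torus $\partial F_0\times D^2$.

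Since $\pi_1(F_0)$ is free of rank $g$, I fix a handle decomposition of $F_0$ with one $0$-handle $D\cong D^2$ and $g$ $1$-handles, whose cocores are properly embedded arcs $a_1,\ldots,a_g\subset F_0$ with endpoints on $\partial F_0$. This promotes $N_0/T$ to a $4$-dimensional $1$-handlebody. I then choose simple closed curves $\alpha_1,\ldots,\alpha_g\subset\mathring F_0$ geometrically dual to the $a_i$'s (each $\alpha_i$ meets $a_i$ transversely in one point and is disjoint from the other $a_j$), together with distinct points $q_1,\ldots,q_g\in S^1$; the lifts $\alpha_i\times\{q_i\}$ are disjoint simple closed curves in $\mathring F_0\times S^1$. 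Tubular neighborhoods together with the section produce disjoint equivariant embeddings $f_i\colon S^1\times D^2\times T\to\partial N_0$ with images over $\mathring F_0$, and attaching the equivariant $2$-handles along the $f_i$ defines $N_1$. On the orbit space, this attaches standard $4$-dimensional $2$-handles to $F_0\times D^2$ along $\alpha_i\times\{q_i\}$ with the framing inherited from $F_0\times\{q_i\}$; by the duality of $\alpha_i$ and $a_i$, each attaching circle meets the belt sphere of its corresponding $1$-handle transversely in a single point and is disjoint from the others, so the pairs cancel and $N_1/T\cong D\times D^2\cong D^4$ by the handle cancellation theorem.

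To verify that $\eta$ is an unknot in $\partial(N_1/T)\cong S^3$, I analyze the complement $Y:=\overline{\partial(N_1/T)\setminus(\partial F_0\times D^2)}$. This manifold is obtained from $F_0\times S^1\subset\partial(N_0/T)$ by Dehn surgery along the circles $\alpha_i\times\{q_i\}$ with the above surface framings. A van Kampen argument shows that this surgery kills each $\alpha_i$ in $\pi_1(F_0\times S^1)=\pi_1(F_0)\times\mathbb Z$, leaving $\pi_1(Y)=\mathbb Z$. Moreover, $Y$ is irreducible: any embedded $2$-sphere in $Y$ bounds a ball in $S^3$, and the ball disjoint from the connected solid torus $\partial F_0\times D^2$ lies entirely inside $Y$. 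The classification of compact orientable irreducible $3$-manifolds with torus boundary and cyclic fundamental group then gives $Y\cong S^1\times D^2$. Thus $S^3=(\partial F_0\times D^2)\cup_{T^2}Y$ expresses $S^3$ as a union of two solid tori, and since the complement of a solid torus in $S^3$ is itself a solid torus precisely when the core is unknotted, we conclude that $\eta$ is an unknot.

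The main obstacle is the $\pi_1$-computation for $Y$: one must carefully track the framings induced by the surface slices $F_0\times\{q_i\}$ to ensure that the surgery genuinely kills the $\alpha_i$'s without introducing spurious generators, a step complicated by the fact that the $\alpha_i$'s may intersect each other in $F_0$ so that their lifts link nontrivially in $F_0\times S^1$. A potentially cleaner alternative is to embed $F_0$ in $S^3=\partial D^4$ as a standard spine with $\partial F_0$ a round unknot and to verify that the complement of a tubular neighborhood of this $F_0$ in $D^4$ carries exactly the canceling $2$-handle structure constructed above, making the unknottedness of $\eta=\partial F_0$ manifest.
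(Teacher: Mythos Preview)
Your argument has a genuine gap: the claim that $N_0/T\cong F_0\times D^2$ fails when $F_0$ is nonorientable. Having a global section of the boundary circle bundle does \emph{not} force the $D^2$-bundle to be trivial, because the structure group here is $O(2)$ rather than $SO(2)$: the transition functions $\overline{\varphi}_{*,*}$ between the local trivializations can reverse orientation on the $T^3/T_\bullet\cong S^1$ fibers. Concretely, over a $1$-complex a $D^2$-bundle is classified by $w_1\in H^1(F_0;\mathbb{Z}_2)$, and orientability of the total space forces $w_1(\xi)=w_1(TF_0)$; so when $F_0$ is nonorientable the disk bundle is necessarily the twisted one. The paper treats exactly this case separately: for $F_0=\mathbb{R}P^2_*\#T^2\#\cdots\#T^2$ the final piece is the nontrivial $D^2$-bundle over the M\"obius band, which it identifies with $S^1\times D^2\times D^1$, and in that model $\eta$ sits as a $(2,1)$-torus curve rather than a product circle---an extra handle and a nontrivial isotopy are then needed to unknot it. This case genuinely occurs (see Example~\ref{ex:toric surface}(b),(c), where $F=\mathbb{R}P^2$), so your product identification and everything built on it (the explicit lifts $\alpha_i\times\{q_i\}$, the surface framing, the description of $Y$ as a surgery on $F_0\times S^1$) collapse there.

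In the orientable case your approach is a legitimate and rather elegant alternative to the paper's inductive surgery via Lemma~\ref{lem:surgery}: you cancel all $g$ handle pairs at once and then read off unknottedness from $\pi_1(Y)=\mathbb{Z}$ together with irreducibility. The paper instead works two handles at a time, reducing the genus of $F_0$ step by step while tracking that the disk-bundle structure with orientable total space persists, so that at the end $\eta$ is visibly $S^1\times\{0\}\subset D^2\times D^2$. Your route is shorter but, as you yourself flag, the van Kampen step is delicate: since the $\alpha_i$ typically intersect in $F_0$ (already for $T^2_*$ the standard dual curves meet once), the lifts $\alpha_i\times\{q_i\}$ form a genuine link in $F_0\times S^1$, and one has to verify that the surface-framed surgeries kill exactly the classes $[\alpha_i]$ without introducing new relations involving the meridians. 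This is correct, but it needs to be argued rather than asserted.
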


As an intermediate step Let $T^2_*$ be the $2$-torus with a small open disk removed and consider $T^2_*\times D^2$. We will make use of the following

\begin{lem}\label{lem:surgery}
There exist disjoint embeddings $f_1,f_2\colon S^1\times D^2\rightarrow \mathring{T}^2_*\times S^1$ such that there is a homeomorphism 
\[T^2_*\times D^2\cup_{f_1,f_2} \bigsqcup_{i=1,2} D^2\times D^2\cong D^2\times D^2\] which for some identification $\partial T^2_*\cong S^1$ fits into the commutative diagram
\[\xymatrix{
(\partial T^2_*)\times D^2\ar[r] & D^2\times D^2\\
S^1\times D^2\ar[u]\ar[ur] &
}\]
\end{lem}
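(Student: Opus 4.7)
The plan is to exploit the standard handle decomposition of the surface $T^2_*$ and to cancel the resulting $1$-handles in $T^2_* \times D^2$ by two geometrically compatible $2$-handles. First I would model $T^2_*$ as a $2$-dimensional $0$-handle $D^2$ with two $1$-handles $h_1,h_2$ attached, chosen so that the cores of the $h_i$ close up along arcs in $\partial D^2$ to give simple closed curves $\tilde a,\tilde b\subset T^2_*$ generating $\pi_1(T^2_*)\cong F_2$; the boundary $\partial T^2_*\cong S^1$ is then the circle corresponding to the commutator $[\tilde a,\tilde b]$. Crossing with $D^2$ yields the $4$-dimensional handle decomposition
\[
T^2_*\times D^2 \;=\; (D^2\times D^2)\cup (h_1\times D^2)\cup (h_2\times D^2),
\]
i.e., one $0$-handle with two $4$-dimensional $1$-handles attached.

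Next I would select two disjoint simple closed curves $a,b\subset \mathring T^2_*$ dual to $h_1,h_2$, in the sense that $a$ meets the cocore of $h_1$ transversely in a single point and is disjoint from $h_2$, and analogously for $b$. Pushing these onto distinct levels $a\times\{p_1\},\,b\times\{p_2\}\subset \mathring T^2_*\times S^1$ for $p_1\neq p_2\in S^1=\partial D^2$ and taking disjoint product tubular neighborhoods provides the required embeddings $f_1,f_2\colon S^1\times D^2\to \mathring T^2_*\times S^1$. The attaching circle of each resulting $4$-dimensional $2$-handle then meets the belt sphere of exactly one of the $1$-handles $h_i\times D^2$ transversely in a single point and is disjoint from the other. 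By the classical $1$-handle/$2$-handle cancellation lemma of Kirby calculus, each such $2$-handle cancels a $1$-handle (the framing is irrelevant, since an unknotted meridian bounds an embedded disk inside the $1$-handle it pierces), so the two cancellations leave only the $0$-handle, yielding a homeomorphism with $D^2\times D^2$.

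For the commutative diagram I would take the identification $\partial T^2_*\cong S^1$ to be the one under which the boundary circle of $T^2_*$ is carried, after cancellation, to $\partial D^2\times\{0\}\subset S^3=\partial(D^2\times D^2)$. The inclusion $(\partial T^2_*)\times D^2\hookrightarrow T^2_*\times D^2\hookrightarrow D^2\times D^2$ then becomes the tubular neighborhood embedding of the unknot $\partial D^2\times\{0\}$ in $D^2\times D^2$, which one designates as the diagonal map in the diagram. The main obstacle is the book-keeping required to promote the handle cancellation to an explicit homeomorphism that simultaneously identifies the total space with $D^2\times D^2$ and carries $(\partial T^2_*)\times D^2$ to the standard neighborhood; once the cancelling $2$-handles are chosen as above this is the classical $1$-$2$ cancellation, and the boundary statement follows by tracking where $\partial T^2_*\subset T^2_*$ ends up once the $1$-handles of $T^2_*\times D^2$ have been absorbed into the $0$-handle.
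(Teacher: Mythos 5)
Your strategy is the same in spirit as the paper's (attach two $2$-handles to cancel the two $1$-handles of $T^2_*\times D^2$), but your execution via the $4$-dimensional cancellation theorem leaves the crux of the lemma unproved. The cancellation theorem does give $T^2_*\times D^2\cup(\text{handles})\cong D^2\times D^2$ for any framing, and you are right that framing plays no role there. But the commutative diagram is the real content of the statement --- it is precisely what Proposition~\ref{prop:N0->N1} uses, to propagate the $D^2$-bundle structure and to identify $\eta$ as an unknot --- and it is not a formal consequence of cancellation. After cancellation, $\partial T^2_*\times\{0\}$ sits as some knot in $S^3=\partial(D^2\times D^2)$, and $\partial T^2_*\times D^2$ carries some framing of it. Which knot and which framing you obtain does depend on the choice of dual curves $a,b$ and on the framings of the cancelling $2$-handles: the isotopies furnished by the cancellation theorem are supported near the cancelling pairs, but those neighbourhoods are not disjoint from $\partial T^2_*\times D^2$ (the lateral boundary of each $1$-handle lies in $\partial T^2_*$), and a priori the boundary curve can be dragged around. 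You explicitly flag this as the ``main obstacle'' and then assert that it ``follows by tracking,'' but that tracking is exactly where the work is. The paper circumvents this by dropping to a $3$-dimensional picture: $T^2_*\times D^1$ is a thickened standard torus in $\mathbb{R}^3$ with a ball removed, the two $2$-handles are attached along the (inner) meridian and (outer) longitude, and in that concrete embedded model one can see by inspection both that the result is $D^3$ and that $\partial T^2_*\times D^1$ lands as the standard annulus $S^1\times D^1\subset S^2$; taking a further product with $D^1$ then gives the $4$-dimensional statement with the diagram for free. If you want to run the $4$-dimensional handle argument instead, you would need to pin down the framings (e.g.\ the product framings you implicitly use) and then explicitly trace $\partial T^2_*$ through the cancelling isotopies, or else argue via a Heegaard-splitting style observation that both $\partial T^2_*\times D^2$ and its complement in the resulting $S^3$ are solid tori and the framing is the $0$-framing.

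Two smaller remarks. First, $a$ and $b$ cannot be chosen disjoint in $\mathring{T}^2_*$: any two simple closed curves representing the two standard generators of $H_1(T^2_*)\cong H_1(T^2)$ have algebraic intersection number $\pm1$ in $T^2$ and hence must intersect. This does no harm, since you immediately push them to distinct levels $\{p_1\}\neq\{p_2\}\in S^1$, which makes the attaching circles disjoint regardless; but the disjointness-in-$T^2_*$ claim should be dropped. Second, the claim that framing is irrelevant ``since an unknotted meridian bounds an embedded disk inside the $1$-handle it pierces'' justifies that the pair cancels abstractly, but says nothing about where curves away from the pair end up, which is the point at issue.
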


\begin{proof}
We reduce fiber dimension by one and show that there are attaching maps $f_1,f_2\colon S^1\times D^1\rightarrow \mathring{T}^2_*\times D^1$ such that
\[T^2_*\times D^1\cup_{f_1,f_2} \bigsqcup_{i=1,2} D^2\times D^1\cong D^2\times D^1\]
and the homeomorphism identifies $(\partial T_*)\times D^1$ with $S^1\times D^1\subset D^2\times D^1$. To see this visualize $T^2_*\times D^1$ as a thickening of the standard $T^2\subset \mathbb{R}^3$ with a small thickened up disk removed. Now we add a $2$-handle $D^2\times D^1$ which lies on the ``inside'' with respect to the embedding (glued along the meridian of the torus). The result is homeomorphic to the full torus (to see this take the standard full torus in $\mathbb{R}^3$ and, without changing the homeomorphism type, push a hole through the removed disk into the inside). Now this can be turned into $D^3\cong D^2\times D^1$ in the desired fashion by gluing another $2$-handle along the longitude of the torus.

This settles the situation in dimension $3$. To obtain the statement of the lemma we just take the product of all spaces and handles with $D^1$ and and extend the gluing maps $f_i\colon S^1\times D^1\times D^1$ to be the identity on the second $D^1$ factor. Identifying $D^1\times D^1=D^2$ finishes the proof of the lemma.
\end{proof}

\begin{proof}[Proof of \ref{prop:N0->N1}] Assume first that $F_0$ is orientable.
	In this case we can write it as a connected sum $T^2_*\#T^2\#\ldots\# T^2$.
	Now we start with the ``rightmost'' $T^2$ summand (in itself homeomorphic to
	$T^2_*$) over which the $T$-orbit space is a $D^2$-bundle by Theorem
	\ref{thm:diskbudle} and the corresponding circle bundle has orientable total
	space. Consequently said circle bundle is an oriented $S^1$-bundle and hence
	in fact a principal $S^1$-bundle \cite[Prop.\ 6.15]{Morita}. As $H^2(T^2_*)=0$
	such a bundle is trivial. Hence the disk bundle over $T_*^2$ is given by
	$T_*^2\times D^2$. Applying Lemma \ref{lem:surgery} we find maps
	$f_1,f_2\colon S^1\times D^2\rightarrow \mathring{T}_*^2\times S^1$. This maps
	have image in the free orbits of $\partial N_0/T$ since the only nonregular
	orbits are in $\eta$ (see Remark \ref{rem:zerosection}) which corresponds to
	the $0$ section over $\partial T^2_*$ in the ``leftmost'' summand of the
	connected sum decomposition above. Thus the attaching map lifts to an
	equivariant attaching map $S^1\times D^2 \times T\rightarrow \partial N_0$
	(where we use that every principal $T$-bundle over $S^1\times D^2$ is
	trivial). Attaching the equivariant handles $D^2\times D^2\times T$ changes
	the orbit space by attaching the nonequivariant handles as in the Lemma. Note
	that the $D^2$-bundle structure over $\partial T^2_*$ extends to $D^2\times
	D^2$ by the commutative diagram in \ref{lem:surgery}. Orientability of the
	total space of the boundary circle bundle extends as well. Thus we find that
	the new orbit space is again a $D^2$-bundle over a connected sum $T^2_*\#
	T^2\#\ldots\# T^2$ with one  $T^2$ summand less. We continue the procedure
	until we have arrived at $D^2\times D^2$. Note that we can avoid intersections by using small disks
	when attaching the $2$-handles so we may indeed take the $f_i$ to have
	disjoint image during the induction. In the end, the knot $\eta$ which
	corresponds to the boundary $\partial T_*^2\times \{0\}$ is
	$S^1\times\{0\}\subset D^2\times D^2$ by the lemma. This is the unknot.

In the nonorientable case we may write $F_0=\mathbb{R}P^2_*\# T^2\#\ldots\# T^2$, where $\mathbb{R}P^2_*$ is $\mathbb{R}P^2$ with a small open disk removed. We use the same inductive procedure until we have arrived at a $D^2$-bundle over $\mathbb{R}P^2_*$. Since the corresponding circle bundle has orientable total space, there is only one possibility, namely the nonorientable disk bundle over the Möbius band $\mathbb{R}P^2_*$. Identifying $D^2=D^1\times D^1$, this bundle splits as $DV\times D^1$, where $DV$ denotes the disk bundle of the canonical line bundle over $\mathbb{R}P^2$ restricted to $\mathbb{R}P^2_*$. The total space of $DV$ is homeomorphic to $S^1\times D^2$, where the $0$-section of $DV$ over $\partial\mathbb{R}P^2_*$ now corresponds to the $(2,1)$-torus knot in $S^1\times S^1\subset S^1\times D^2$. Hence $\eta$ corresponds to the $(2,1)$-torus knot in $S^1\times S^1\times \{0\}\subset S^1\times D^2\times D^1$.

We now attach a $2$-handle $D^2\times D^2$ along the path $S^1\times\{0\}\times \{1\}$ using the gluing map $\varphi\colon S^1\times D^2\rightarrow S^1\times D^2\times\{1\}$. The resulting space is homeomorphic to $D^4$: to see this note that gluing $D^2$ to $S^1\times D^1$ along the path $S^1\times\{1\}$ gives a $D^2$ and the above construction is just the product of this with another $D^2$.
To conclude the proof it suffices to show that $\eta$ is the unknot in the boundary of the resulting $D^2\times D^2$. We argue in the gluing $S^1\times D^2\times D^1\cup_\varphi D^2\times D^2$ whose boundary is given as the union $S^1\times D^2\times\{-1\}\cup S^1\times S^1\times D^1\cup_\varphi D^2\times S^1$. We may isotope $\eta$ to the $(2,1)$-torus knot in $S^1\times D^2\times\{1\}$ by pushing the $D^1$-coordinate. This is identified with the $(2,1)$-torus knot in $D^2\times S^1$ when viewed in the part of the boundary that comes from the attached handle. Within this part of the boundary it can be isotoped to the $(0,1)$-torus knot in $S^1\times S^1$. Returning to the original space this corresponds to the $(0,1)$-torus knot in $S^1\times D^2\times\{1\}$. Again, we push along the $D^1$-coordinate to end up with the $(0,1)$-torus knot in $S^1\times D^2\times\{-1\}$. The resulting path can be isotoped to a point within this full torus.
\end{proof}

Recall that $D_c$ is the single $2$-handle which was removed from $N$ in order to form $N_0$. Having simplified $N_0$ to the manifold $N_1$ by gluing free $2$-handles away from $D_c$, we will now reglue $D_c$ with a more carefully chosen attaching map.

\begin{prop}\label{prop:N1->N2}
There is an equivariant embedding $\hat{\psi_c}\colon \partial D_c\rightarrow \partial N_1$ such that the gluing
\[N_2=N_1\cup_{\hat{\psi_c}} D_c\]
satisfies $N_2/T\cong S^2\times D^2$ and $\partial N_2$ is equivariantly homeomorphic to the product $S^2\times S^1\times T$.
\end{prop}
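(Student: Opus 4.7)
The plan is to construct $\hat{\psi}_c$ by modifying the original attaching map $\psi_c\colon \partial D_c \to \partial M \subset \partial N_0 \subset \partial N_1$ from Section \ref{sec:2-handles} by precomposition with an equivariant self-homeomorphism $\phi$ of $\partial D_c$ realizing an equivariant Dehn twist. Recall $\partial D_c = S^1 \times S^1 \times S^1 \times D^2$ with $T$-action by weights $\alpha_1,\alpha_2,\alpha_3$ on the last three factors; a natural family of equivariant self-homeomorphisms is given by
\[
\phi(s,t_1,t_2,z)=\bigl(s,\, f_1(s)\, t_1,\, f_2(s)\, t_2,\, f_3(s)\, z\bigr),
\]
for continuous $f_i\colon S^1 \to S^1$. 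Equivariance is immediate (translations on each factor commute with the $T$-action), and up to equivariant isotopy these maps are classified by the degrees $(d_1,d_2,d_3) \in \mathbb{Z}^3$.

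On orbit spaces, by Proposition \ref{prop:N0->N1} the attachment of $D_c/T \cong D^2\times D^2$ to $N_1/T \cong D^4$ occurs along a tubular neighborhood of the unknot $\eta \subset S^3 = \partial N_1/T$. Precomposing with $\phi$ shifts the framing of this attachment by an integer combination of the $d_i$; for an appropriate choice of $(d_1,d_2,d_3)$ the attachment becomes $0$-framed. Since a $0$-framed $2$-handle glued along an unknot in $\partial D^4$ produces the trivial disk bundle over $S^2$, this yields $N_2/T \cong S^2 \times D^2$.

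For the equivariant statement, I would note that all nonregular orbits of $T$ on $N_2$ lie in the interior (along the $0$-section of the glued $D_c$), so $\partial N_2 \to \partial N_2/T = S^2 \times S^1$ is a principal $T$-bundle. The $T$-bundle over $\partial N_1/T = S^3$ is trivial since $H^2(S^3;\mathbb{Z}^2)=0$, and an explicit trivialization of the $T$-bundle over the complementary $D^2 \times S^1 \subset \partial D_c/T$ is apparent from the coordinate description of $D_c$ (in the same way as the sections constructed in the proof of Theorem \ref{thm:diskbudle}). Using the freedom remaining in $(d_1,d_2,d_3)$ after the framing is prescribed, one matches these two trivializations along the interface torus $S^1 \times S^1$, producing a global section of the principal $T$-bundle over $S^2 \times S^1$ and hence the desired equivariant identification $\partial N_2 \cong S^2 \times S^1 \times T$.

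The main obstacle is the coordination between these steps: showing that the $\mathbb{Z}^3$-parameter family of Dehn twists surjects onto the three integer invariants that must be prescribed, namely the framing at orbit-space level ($\in \mathbb{Z}$) and the bundle class in $H^2(S^2 \times S^1;\mathbb{Z}^2) = \mathbb{Z}^2$. This reduces to a direct computation with the coordinate formulae for $\psi_c$ and for $\phi$, leveraging the linear independence of the weights $\alpha_1,\alpha_2,\alpha_3$ and the effectivity assumption to ensure that the induced map $\mathbb{Z}^3 \to \mathbb{Z}^3$ is surjective.
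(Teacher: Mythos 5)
Your overall strategy --- modifying $\psi_c$ by precomposition with equivariant self-homeomorphisms of $\partial D_c = S^1\times S^1\times S^1\times D^2$ given by a loop $S^1\to T^3$ --- is the right one, and in fact contains the paper's modifications as special cases. But you leave the central verification undone: you assert, and explicitly label as ``the main obstacle,'' that the degrees $(d_1,d_2,d_3)\in\mathbb{Z}^3$ surject onto the required data, namely the framing in $\mathbb{Z}$ together with the class of the $T$-bundle $\partial N_2\to S^2\times S^1$ in $H^2(S^2\times S^1;\mathbb{Z}^2)\cong\mathbb{Z}^2$. Nothing in the write-up establishes this; as it stands the argument is an outline. (The claim is in fact true: the framing change is the image of $(d_1,d_2,d_3)$ under $\pi_1(T^3)\twoheadrightarrow\pi_1(T^3/T)\cong\mathbb{Z}$, whose kernel is $\pi_1(T)\cong\mathbb{Z}^2$, and on that kernel the change of transition function $\tau|_{S^1_1}$ --- hence of the bundle class --- is $\pm\mathrm{id}_{\pi_1(T)}$; so the combined affine map $\mathbb{Z}^3\to\mathbb{Z}\oplus\mathbb{Z}^2$ is onto. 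None of this reasoning, nor the identification of the orbit-space rotation degree with the element of $\pi_1(T^3/T)$, appears in your proposal.) There is also a small imprecision: the action on $\partial N_1$ is not free over the whole of $S^3$ (the orbits over $\eta$ carry finite isotropy), so one should speak of the principal $T$-bundle over the complement $D^2\times S^1$ of a tubular neighbourhood of $\eta$, not over all of $S^3$.

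The paper avoids this surjectivity computation entirely by doing the two adjustments sequentially and concretely. For the framing it constructs one explicit equivariant automorphism $\kappa$ of $\partial D_c\cong S^1\times T\times_H D_{\alpha_3}$ from a path $p\colon I\to T$ terminating in $H=\ker\alpha_1\cap\ker\alpha_2$ that rotates $D_{\alpha_3}/H$ once; the powers $\kappa^n$ then realize every Dehn-twist framing on $\partial D_c/T\cong S^1\times D^2$. For the section, instead of computing a cohomology class it fixes a section of $\partial N_2$ over $(S^3\setminus\mathrm{im}\,\psi_c)/T\cong D^2\times S^1$, reads off its trace on the interface torus as a map $\hat{s}\colon S^1_1\to T^{S^1_2}$, then modifies the attaching map by the adaptively chosen loop $p\mapsto (\hat{s}(p)(1))^{-1}\in T$ (precisely your framing-preserving $T$-valued twist, but picked in terms of $\hat{s}$ rather than by degree bookkeeping), and shows nullhomotopy of the resulting pointed family by lifting to $\mathbb{R}^2$ and straightening paths uniformly. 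To make your version complete you should carry out the degree computation you defer, or else adopt this constructive route.
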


\begin{proof}
We observe that there is a commutative diagram

\[\xymatrix{
\partial D_c/T\ar[r]\ar[d] & D_c/T\ar[d]\\
S^1\times D^2\ar[r]& D^2\times D^2
}\]
with the vertical maps being homeomorphisms. To see this recall that $D_c= D^2\times S^1_{\alpha_1}\times S^1_{\alpha_2}\times D^2_{\alpha_3}$ for weights $\alpha_1,\alpha_2,\alpha_3$. Thus this comes down to $S^1_{\alpha_1}\times S^1_{\alpha_2}\times D^2_{\alpha_3}/T\cong D^2$. Let $H=\ker \alpha_1\cap\ker \alpha_2$. Then we have equivariant homeomorphisms
\[S^1_{\alpha_1}\times S^1_{\alpha_2}\times D^2_{\alpha_3}\cong T/H\times D^2_{\alpha_3}\cong T\times_H D_{\alpha_3}\]
where the action on the middle space is the diagonal one and the right hand side action is only on the $T$ factor. In particular the quotient map is the projection $T\times_H D_{\alpha_3}\rightarrow D_{\alpha_3}/H\cong D^2$ (note that $T$ and hence $H$ acts on $D_{\alpha_3}$ via rotations).

Thus we see that attaching the equivariant handle $D_c$ along the original gluing map $\psi_c$ changes the orbit space $N_1/T\cong D^4$ by attaching a $2$-handle along the unknot. The result does of course depend on the framing. We will show that the Dehn twist on $S^1\times D^2\cong \partial D_c/T$ lifts to an equivariant automorphism $\kappa$ of $\partial D_c$. Having shown this we may precompose $\psi$ with powers of $\kappa$ to achieve any framing on the level of orbit spaces. In particular we may achieve that the orbit space of the handle attachment is $S^2\times D^2$.

To define $\kappa$ let $p\colon I\rightarrow T$ be a path from $1$ to an element of $H$ such that $p(x)$ rotates $D_{\alpha_3}/H$ exactly once while $x$ goes from $0$ to $1$. Then consider
\[I\times T\times_H D_{\alpha_3}\rightarrow I\times T\times_H D_{\alpha_3},\quad (x,[t,y])\mapsto (x,[tp(x)^{-1},p(x)y])\]
which induces the desired automorphism $\kappa$ of $S^1\times T\times_H D_{\alpha_3}$ after gluing the endpoints.

The action on $\partial N_2$ as defined above is free. It remains to show that $\partial N_2\rightarrow \partial N_2/T$ admits a section.
We have $\partial N_2/T=(S^3-\im \psi_c/T)\cup (D^2\times S^1_{\alpha_1} \times S^1_{\alpha_2}\times \partial D^2_{\alpha_3})/T$. Both sides of the decomposition are individually homeomorphic to $D^2\times S^1$ so any principal bundle over it admits a section. Fix a section on the left hand side. Via the gluing map it induces a section $s$ of the $T$-action on boundary of the right hand side which we want to extend to the interior. Identifying the quotient on the right with $D^2\times S^1$ as above this comes down to the extension problem

\[\xymatrix{
S^1_1\times S^1_2\times T\ar[r] & D^2_1\times S^1_2\times T\\
S^1_1\times S^1_2\ar[u]^s\ar[r] & D^2_1\times S^1_2\ar@{-->}[u]
}\]
where we have put an extra index to the occurring circles and disks in order to
distinguish the factors in the notation. Denoting by $T^{S^1_2}$ the
corresponding space of continuous maps with the compact open topology, the datum
of a section $s$ is equivalent to a map $\hat{s}\colon S_1^1\rightarrow
T^{S^1_2}$ which we need to extend to $D_1^2$. This requires $\hat{s}$ to be
nullhomotopic which might not be the case but can be ensured by further
modifying the gluing map of the handle \[\psi_c\colon S^1_1\times
S^1_{\alpha_1}\times S^1_{\alpha_2}\times D^2_{\alpha_3}\rightarrow \partial
N_1.\] We do so by additionally multiplying the $S^1_{\alpha_1}\times
S^1_{\alpha_2}\times D^2_{\alpha_3}$ component over a point
$p \in S^1_1$ with $(\hat{s}(p)(1))^{-1}\in T$. Note that as this does not
change what happens on the level of orbit spaces. It does however change how the
sections of the principal $T$-bundles on the gluing surfaces translate. By abuse
of notation, we now have that for any $p\in S^1_1$ the map $\hat{s}(p)\colon
S^1_2\rightarrow T$ sends $1\mapsto 1$. This family of base point preserving
maps lifts to a family of paths with fixed endpoints in the universal cover
$\mathbb{R}^2\rightarrow T$. Uniform straightening of the paths in
$\mathbb{R}^2$ pushes down to $T$ and defines a homotopy from $\hat{s}$ to a
constant map. This gives the desired extension of $\hat{s}$ to $D_1^2$ and thus
we get a section of the principal $T$-bundle $\partial N_2\rightarrow \partial
N_2/T$.
\end{proof}

We have constructed a $T$-manifold with boundary $N_2$ whose orbit space is $S^2\times D^2$. We now wish to cap off the boundary $S^2\times S^1\times T^2$ to obtain a closed $T$-manifold $X$ whose orbit space is $S^4$. As we shall see in the next section, having this orbit space will imply that our constructed manifold is indeed GKM. We note that in case the stabilizers of the action were all connected, we could apply \cite[Theorem 4]{AyzenbergMasuda}.
The most canonical method to close $N_2$ off to a closed manifold is to glue $D^3\times S^1\times T$. However we do have some flexibility here, as there are many embeddings $\phi\colon S^2\times D^2\rightarrow S^4$ (see in particular Section \ref{sec:nonrigid}). Fixing such a $\phi$ we obtain a decomposition
\[
S^2\times D^2\cup Z = S^4,
\]
where $Z$ is the closure of the complement of the image of $\phi$. It is a $4$-dimensional manifold whose boundary is identified with $S^2\times S^1$ via $\phi$. Now we glue $N_2$ and $Z\times T$ along an equivariant identification of their boundaries $S^2\times S^1\times T$ to form a closed $T$-manifold $X_\phi$ with orbit space $X/T\cong S^4$. This is our desired realization.

We note that this is in line with \cite[Corollary 1.3]{AyzenbergMasuda} where the authors prove, generalizing a result of \cite{KarshonTolman} in the symplectic setting, that the orbit space of an equivariantly formal $T^{n-1}$-action with connected stabilizers on a simply-connected closed manifold of dimension $2n$ is homeomorphic to a sphere.

\begin{rem}\label{rem:condition(a)}
We interpret the labels of the graph as elements of $\hom(T,S^1)$. The isotropies occurring in $X$ are given by the trivial group, $\ker \alpha(e)$ for $e\in E(\Gamma)$ as well as $\ker\alpha(e)\cap\ker \alpha(f)$ for adjacent edges $e,f\in E(\Gamma)_v$, $v\in V(\Gamma)$. We note further that by construction any component of the fixed point set of one of these occurring subgroups contains a fixed point. The group $\ker\alpha(e)$ is connected if and only if $\alpha(e)$ is primitive and $\ker\alpha(e)\cap\ker \alpha(f)$ is connected if and only if $\alpha(e),\alpha(f)$ form a $\mathbb{Z}$-basis of $\hom(T,S^1)$. Hence $X$ has connected isotropy groups if and only if any two weights of adjacent edges form a basis.
\end{rem}

\subsection{Smoothness}

The constructed manifold $X$ can be endowed with a smooth structure making the $T$-action smooth. Let $\tilde{N}$ denote the union of $M$ with all connection handles $D_c$ (i.e.\ the original $N$ but one of the gluing maps potentially modified as in Proposition \ref{prop:N1->N2}). We construct a smooth structure on $\tilde{N}$. As $X$ arises from $\tilde{N}$ by attaching free handles, one can then complete this to a smooth structure on $X$ in the following way: the classical nonequivariant technique of smoothing the corners provides a smooth structure on the set of regular orbits $X_{reg}/T$ which extends the smooth structure on $\widetilde{N}_{reg}/T$ coming from $\widetilde{N}_{reg}$. The smooth principal bundle $\widetilde{N}_{reg}\rightarrow \widetilde{N}_{reg}/T$ is smoothly equivalent to the pullback of the universal $T$-bundle along a smooth map $f\colon N_{reg}/T\rightarrow BT$ with values in some finite skeleton $\mathbb{C}P^n\times\mathbb{C}P^n\subset BT=\mathbb{C}P^\infty\times\mathbb{C}P^\infty$. We know that the map $f$ extends to $X_{reg}/T$ as the principal bundle extends. Homotoping this extended $f$ to a smooth map as above identifies $X_{reg}$ as a smooth pullback and extends the smooth structure from $\widetilde{N}$ to all of $X$.

Now for the construction of the smooth structure on $\widetilde{N}$ we make use of the local $T^3$ action on each of the building blocks.
We introduce a chart for every building block $B_v,K_e,D_c$ and we start on the level of the local $T^3$-orbit spaces. For a connection path $c$ we set $\overline{U_v}=\mathring{D}^2\times I\subset \overline{D_c}$. For an edge $e$ set $\overline{U_e}$ to be the union of $\mathring{I}\times (\mathbb{R}^2_{\geq 0}\cap D^2)\subset \overline{K_e}$ together with small open radial extensions of its intersection with $S^1\times I=\partial\overline{D_c}$ into the interior of $\overline{D_c}$ for the two connection paths which run through $e$. Finally, for a vertex $v$ set $\overline{U_v}$ to be the union of $B_v$ with small extensions into the interior of the three adjacent $\overline{K_e}$ as well as of the three adjacent $\overline{D_c}$. Now denote by $U_c,U_e,U_v$ the respective part in $\widetilde{N}$ lying over $\overline{U_c},\overline{U_e},\overline{U_v}$. The standard $T^3$-action on ${B_v}$ extends to $U_v$ by twisting it with the appropriate automorphisms $\varphi_{v,\bullet}\colon T^3\rightarrow T^3$ when entering the other adjacent building blocks (the same holds for $U_e$). The sections over the building blocks are compatible with the gluing maps and hence induce sections on the $\overline{U}_\bullet$ which agree on their intersections. The only exception is the equivariant $2$-handle $D_c$ whose attaching map has been modified in Proposition \ref{prop:N1->N2}. Here the gluing map has been additionally multiplied with certain elements of $T^3$. For the $U_v$ (and analogously the $U_e$) touching $D_c$, the sections can still be extended over $\overline{U_v}$ but not necessarily in a way that they agree with the section defined on $\overline{U_c}$.

 Now we may choose a smooth atlas of the underlying $3$-dimensional manifold with corners in the following way: identify $\overline{U_c},\overline{U_e},\overline{U_v}$ with smooth subset $\overline{V_c},\overline{V_e},\overline{V_v}$ of $\mathbb{R}^3_{\geq 0}$ preserving the face structure and such that the transition functions between the $\overline{V_\bullet}$ are smooth. We denote by $V_\bullet\subset \mathbb{C}^3$ the $T^3$-invariant smooth subspace generated by $\overline{V}_\bullet$. We have sections $\overline{V_\bullet}\rightarrow V_\bullet$ induced by $\mathbb{R}_{\geq 0}^3\rightarrow \mathbb{C}^3$.
Then, using the sections $\overline{U_\bullet}\rightarrow U_\bullet$ and $\overline{V_\bullet}\rightarrow V_\bullet$, the maps $\overline{V_\bullet}\rightarrow \overline{U_\bullet}$ can be extended $T^3$-equivariantly to maps $V_\bullet\rightarrow U_\bullet$. Smoothness of the transition functions can be argued as follows: they are by construction twisted $T^3$-equivariant, i.e.\ they become $T^3$-equivariant after pulling back one of the actions along a suitable automorphism $\varphi_{\bullet,\bullet}$ of $T^3$. Furthermore the transition maps respect the section $\overline{V_\bullet}\rightarrow V_\bullet$ except in the case of the modified $2$-handle $D_c$ above where it additionally multiplies with certain elements of $T^3$. We note that these elements can be chosen to vary smoothly in $\overline{V_c}$. Thus the sections are preserved after additionally composing with a suitable smooth equivariant automorphism of the respective subset of $V_c$. It is shown in \cite[Theorem 4.1]{Wiemeler} (see also the initial sentences of the proof) that equivariant extensions of regular sections as above are smooth. Hence the $V_\bullet$ give a smooth atlas. 
The $T$-action is smooth because the (local) $T^3$-action is.

\section{Topological properties of the realization}\label{sec:cohom}
\subsection{Generalities}
\label{sec:generalities}
In this section we prove that the realization $X = X_\phi$ constructed in the previous section does indeed satisfy the (integer) GKM conditions in case the initial abstract GKM graph $(\Gamma,\alpha)$ satisfies the appropriate necessary conditions --- for any choice of $\phi$.
Let $G\subset X$ denote the $1$-skeleton of the action. By construction this is the graph of $2$-spheres encoded in $(\Gamma,\alpha)$. Also for a connection edge path $c_i\colon e_1,e_2,\ldots$ let $\alpha_1,\alpha_2$ be the weights at $v_1$ (with notation as before) and $H_i=\ker \alpha_1\cap \ker \alpha_2$. Then we obtain an equivariant map $\tilde{c_i}\colon S^1\times T/H_i\rightarrow G$ following the edge path $c_i$ which is unique up to equivariant homotopy (up to orientation of the path) and we choose it such that $S^1\times\{1\}$ gets embedded transversely to the orbits. Denote by $\widetilde{G}$ the space that arises by gluing equivariant $T/H_i$-cells $D^2\times T/H_i$ along all the $\tilde{c_i}$. Denote by $\widetilde{N}$ the union of $M$ with all connection handles, i.e.\ the original $N$ but with the gluing map of a single handle replaced by the modified $\hat{\psi_c}$ from Proposition \ref{prop:N1->N2}. Alternatively one can view it as $N_2$ without the free handles attached during Proposition \ref{prop:N0->N1}.

\begin{prop}\label{prop:retraction}
The inclusion $G\rightarrow M$ is an equivariant homotopy equivalence. It extends to an equivariant homotopy equivalence $\widetilde{G}\rightarrow \widetilde{N}$.
\end{prop}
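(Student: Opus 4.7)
The plan is to build explicit equivariant deformation retractions $r\colon M\to G$ and $\widetilde r\colon \widetilde N\to \widetilde G$ extending $r$, piece by piece over the building blocks. On each $1$-handle $K_e=I\times S^1\times D^4$, the linear contraction $h_t(s,v,w,z)=(s,v,(1-t)w,(1-t)z)$ is equivariant (the $T$-action on $D^4$ is linear with weights $\alpha_2,\alpha_3$) and retracts $K_e$ onto the core cylinder $I\times S^1\times\{0\}\subset G$. On each $0$-handle $B_v=D^6\subset\mathbb{C}^3$, the tripod $G\cap B_v$ is the union of the three coordinate disks; both $B_v$ and the tripod are equivariantly contractible to the origin via the radial homotopy $(z,t)\mapsto (1-t)z$, and the inclusion is an equivariant cofibration, so by the equivariant version of Dold's theorem the tripod is an equivariant deformation retract of $B_v$. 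Compatibility at the gluings is secured by the observation that $\psi_{e,v}$ sends the central circle $\{0\}\times S^1\times\{0\}\subset \partial_0 K_e$ to the equator of the corresponding axis disk in $B_v$, so the boundary values of the $K_e$-retraction already lie in $G\cap B_v$; the $B_v$-retraction can be chosen (or modified on a collar of $\partial B_v$) to agree there, after which an equivariant partition of unity yields $r$.

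For the extension to $\widetilde N$, I retract each connection handle $D_c=D^2\times S^1_{\alpha_1}\times S^1_{\alpha_2}\times D^2_{\alpha_3}$ onto its core $D^2\times S^1_{\alpha_1}\times S^1_{\alpha_2}\times\{0\}$ via linear contraction of the $D^2_{\alpha_3}$ factor, which is equivariant because the action there is linear with weight $\alpha_3$. The crucial observation is that this core, as a $T$-space, is equivariantly homeomorphic to $D^2\times T/H_i$: since $\alpha_1,\alpha_2$ are linearly independent, the induced map $T\to S^1_{\alpha_1}\times S^1_{\alpha_2}$ is surjective with kernel exactly $H_i$, so the $T$-action on $S^1_{\alpha_1}\times S^1_{\alpha_2}$ is transitive with stabilizer $H_i$. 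Moreover, the restriction of $\psi_c$ to $\partial D^2\times S^1\times S^1\times\{0\}$ traces the connection path in $G$ and thus agrees, up to equivariant homotopy, with the attaching map $\tilde{c_i}$ used in forming $\widetilde G$. Consequently the equivariant $2$-cells attached to $G$ to form $\widetilde G$ are precisely the cores of the handles, and combining the handle retractions with $r$ gives $\widetilde r$; compatibility on the overlaps $\psi_c$ is again handled via a partition of unity.

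The main obstacle is the bookkeeping required to make all the local retractions glue into a globally compatible one, especially around vertices where the tripod is singular and near the gluings of the connection handles where the $D^4$- and $D^2_{\alpha_3}$-contractions interact. If the explicit construction becomes unwieldy, I would fall back on a more abstract route: equip both $G\subset M$ and $\widetilde G\subset \widetilde N$ with the structure of $T$-CW pairs using the $B_v$, $K_e$, and $D_c$ as equivariant cells, so the inclusions are automatically $T$-cofibrations. It then suffices to check that for every closed subgroup $H\subseteq T$ the $H$-fixed point inclusions are ordinary weak equivalences, which locally on each cell reduces to the elementary statements that $K_e^H$ retracts to $(I\times S^1\times\{0\})^H$, that $B_v^H$ retracts to the $H$-fixed part of the tripod, and analogously for $D_c^H$.
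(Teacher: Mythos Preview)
Your approach is viable but takes a different route from the paper. The paper exploits the ``third layer'' of local $T^3$-orbit spaces: by Lemma~\ref{lem:FM} one has $\overline{M}\cong F_M\times I$ (and analogously for $\widetilde N$ over $F$), and on each building block the section together with the $T^3$-multiplication gives a $T^3$-equivariant surjection $F_\bullet\times I\times T^3\to(\text{block})$. The retraction is then simply $(x,s,g)\mapsto(x,ts,g)$, scaling the $I$-coordinate uniformly. Because all the gluing maps $\psi_{e,v}$, $\psi_c$, $\hat\psi_c$ respect this product structure (Lemma~\ref{lem:orbitstuff}(iv) and its analogues), compatibility across building blocks is automatic --- no patching argument is needed at all. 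The image over $F\times\{0\}$ is then identified with $\widetilde G$ directly.

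Your piecewise linear contractions also work in principle, but the phrase ``an equivariant partition of unity yields $r$'' is not correct as stated: averaging two deformation retractions does not give a deformation retraction. What you actually need is the collar modification you allude to --- run the $K_e$-retraction first (its endpoint on $\partial_0 K_e$ already lands in the tripod), push it inward over a collar of $\partial B_v$, and then append an equivariant retraction of $B_v$ onto the tripod relative to that collar. This can be made to work, but it is precisely the bookkeeping the paper's uniform $I$-coordinate sidesteps. Your fallback through $T$-CW structure and fixed-point inclusions is a legitimate alternative; just be aware that $B_v$, $K_e$, $D_c$ are not single $T$-cells, so one must first give them honest $T$-CW decompositions before equivariant Whitehead applies.
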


\begin{proof}
Recall that for each building block $B_v$, $v$ a vertex of $\Gamma$, we have a section $\overline{B_v}\rightarrow B_v$ and a fixed identification $\overline{B_v}\cong F_v\times I$. These combine to a $T^3$-equivariant surjection $F_v\times I\times T^3\rightarrow B_v$ with the nonregular $T^3$-orbits contained in the image of $F_v\times \{0\}\times T^3$. Thus
\[r_t\colon F_v\times I\times T^3\rightarrow F_v\times I\times T^3,\quad (x,s,g)\mapsto (x,ts,g)\]
descends to an equivariant deformation retraction of $B_v$ to the part lying over $F_v\times\{0\}\subset\overline{B_v}$. This retraction can be defined analogously over the $1$- and $2$-handles $K_e$ and $D_c$. Defining the $r_t$ in this way we see that they are in fact compatible with the gluing maps, even for the modified $\hat{\psi_c}$ from Proposition \ref{prop:N1->N2}. Hence we obtain a global deformation retraction of $\widetilde{N}$ onto the part lying over the compact surface $F$. This is exactly $\widetilde{G}$. The retraction is $T$-equivariant as it is $T^3$-equivariant on every building block. Note that the retraction respects $M$. The intersection $\widetilde{G}\cap M$ is just $\widetilde{G}$ with small equivariant $2$-disks removed from the interior of every equivariant $2$-cells. In particular this deformation retracts further onto the $1$-skeleton of the action, which is $G\subset M$.
\end{proof}

\subsection{Fundamental group}

\begin{lem}
The map
\[\pi_1(G_T)\rightarrow \pi_1(G/T)\]
is an isomorphism.
\end{lem}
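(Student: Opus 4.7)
The plan is to identify both $\pi_1(G_T)$ and $\pi_1(G/T)$ with the fundamental group $\pi_1(\Gamma)$ of the underlying graph, in a compatible way. First I would identify the orbit space: each invariant $2$-sphere $S_e^2\subset G$ has orbit space an arc joining its two poles, and these arcs are glued at their endpoints exactly according to the incidence structure of $\Gamma$. This gives a homeomorphism $G/T \cong \Gamma$ and thus $\pi_1(G/T)\cong \pi_1(\Gamma)$.

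Next I would compute $\pi_1(G)$ via a convenient CW-structure: on each sphere $S_e^2$ take the two $T$-fixed poles as $0$-cells, one $1$-cell $\epsilon_e$ connecting them, and a single $2$-cell attached along the null-homotopic word $\epsilon_e\bar{\epsilon}_e$. Assembling all spheres, the $1$-skeleton of $G$ becomes $\Gamma$ and all $2$-cell attaching maps are null-homotopic, so $\pi_1(G)\cong \pi_1(\Gamma)$. Moreover, the quotient map $G \to G/T$ sends each $\epsilon_e$ to the corresponding arc in the orbit space, so it induces the identity on $\pi_1(\Gamma)$.

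The last ingredient is the Borel fibration $G\hookrightarrow G_T \to BT$. Here I would exploit the existence of $T$-fixed points: for any $v\in G^T$, the equivariant inclusion $\{v\}\hookrightarrow G$ Borel-constructs to a section $BT \hookrightarrow G_T$ of the fibration. Since $\pi_1(BT)=0$, the long exact sequence in homotopy combined with the existence of this section forces the connecting homomorphism $\partial\colon \pi_2(BT)\to \pi_1(G)$ to vanish, and therefore the fiber inclusion induces an isomorphism $\pi_1(G)\xrightarrow{\cong}\pi_1(G_T)$. Putting everything together, the composition $\pi_1(G)\to \pi_1(G_T)\to \pi_1(G/T)$ is induced by the quotient $G\to G/T$ and is hence an isomorphism, forcing the second factor to be an isomorphism as well. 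I do not anticipate any real obstacle in this argument; the only point that requires a moment of thought is the use of a fixed-point section to kill the connecting homomorphism, which is a standard equivariant technique.
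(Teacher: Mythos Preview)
Your argument is correct and follows essentially the same route as the paper: both use a fixed-point section of the Borel fibration to obtain $\pi_1(G)\cong\pi_1(G_T)$, and then invoke the isomorphism $\pi_1(G)\cong\pi_1(G/T)$. The paper simply asserts the latter isomorphism without proof, whereas you spell it out via an explicit CW-structure on $G$ and the identification $G/T\cong\Gamma$; this added detail is fine and the argument goes through.
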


\begin{proof}
The existence of a fixed point in $G$ implies surjectivity of $\pi_2(G_T)\rightarrow \pi_2(BT)$ as can be seen by comparing with the long exact homotopy sequence of the Borel fibration of the action on the fixed point. Hence $\pi_1(G)\rightarrow \pi_1(G_T)$ is an isomorphism. The Lemma now follows by observing that $\pi_1(G)\rightarrow \pi_1(G/T)$ is an isomorphism.
\end{proof}

\begin{prop}\label{prop:pi1N2}
We have $\pi_1((N_2)_T)=0$.
\end{prop}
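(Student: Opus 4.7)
The plan is to compute $\pi_1((N_2)_T)$ directly via the equivariant handle decomposition of $N_2$, leveraging the preceding lemma's identification $\pi_1(G_T)\cong\pi_1(\Gamma)$ together with the freeness of $\pi_1(\Gamma)$.

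By Proposition~\ref{prop:retraction}, up to equivariant homotopy $N_2$ is built from $G$ by successively attaching the equivariant cells $D^2\times T/H_i$ along $\widetilde{c}_i\colon S^1\times T/H_i\to G$ for each connection path $c_i$ (one of them using the modified gluing of Proposition~\ref{prop:N1->N2}), followed by the free $2$-handles $D^2\times D^2\times T$ of Proposition~\ref{prop:N0->N1}. Passing to Borel constructions, $(N_2)_T$ is obtained from $G_T$ by attaching $D^2\times BH_i$ along $S^1\times BH_i$, and then $D^4$ along $S^1\times D^2$. For a connection handle, Seifert--van Kampen gives a pushout whose attaching boundary has $\pi_1\cong\mathbb{Z}\oplus H_i$, mapping to $\pi_1(D^2\times BH_i)\cong H_i$ by killing $\mathbb{Z}$ and by the identity on $H_i$, and to $\pi_1(G_T)\cong\pi_1(\Gamma)$ by sending the $\mathbb{Z}$-generator to $[c_i]$ and the $H_i$-factor to a finite subgroup. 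The key observation is that $\pi_1(\Gamma)$ is free (hence torsion-free), so the image of the finite group $H_i$ is trivial; since the $\pi_1$ at each intermediate stage is a quotient of $\pi_1(\Gamma)$, this triviality persists. Hence each connection-handle pushout only adds the relation $[c_i]=1$. A free handle attaches $D^4$ along $S^1\times D^2$ and, as a standard $2$-handle on the Borel construction, contributes only the relation $[\gamma_i/T]=1$.

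Consequently $\pi_1((N_2)_T)\cong\pi_1(\Gamma)/\langle [c_i],\,[\gamma_i/T]\rangle$. The same Seifert--van Kampen calculation performed on the level of orbit spaces yields $\pi_1(N_2/T)\cong\pi_1(\Gamma)/\langle [c_i],\,[\gamma_i/T]\rangle$, because $N_2/T$ deformation retracts to $F$ (which is $\Gamma$ with disks glued along the $c_i$) together with additional $2$-cells coming from the orbit spaces of the free handles, attached along the $[\gamma_i/T]$. By Proposition~\ref{prop:N1->N2}, $N_2/T\cong S^2\times D^2$ is simply connected, forcing $\pi_1((N_2)_T)=0$. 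The technical heart of the argument is the torsion-freeness of $\pi_1(\Gamma)$, which lets the equivariant Borel-construction cell attachments be treated as ordinary $2$-cell attachments at the level of $\pi_1$.
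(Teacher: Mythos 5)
Your proof is correct and follows the same overall strategy as the paper's: compute $\pi_1$ of the Borel construction via Seifert--van Kampen over the equivariant handle decomposition and compare against the (known to be simply-connected) orbit space $N_2/T\cong S^2\times D^2$. The crucial step in both arguments is the same: showing that the homomorphism $\pi_1(BH_i)\to \pi_1(G_T)$ induced by inclusion of an orbit $T/H_i\hookrightarrow M$ is trivial, so that the connection handle behaves like an ordinary $2$-cell attachment on $\pi_1$. What differs is how you establish this. The paper gives a \emph{geometric} argument: any orbit inclusion $T/H_i\to M$ is equivariantly null-homotopic because each component of $M^{H_i}$ contains a fixed point. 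You give an \emph{algebraic} argument: $\pi_1(G_T)\cong\pi_1(\Gamma)$ is a free group, hence torsion-free, so any homomorphism out of the finite group $H_i$ has trivial image. Both are valid, and both crucially use that the attaching maps of all connection handles land in $M$ (so the relevant map factors through $\pi_1(M_T)$). Your torsion-freeness argument is arguably more elementary, while the paper's equivariant null-homotopy is a stronger and more geometric statement that does not depend on the ambient group being free. One small phrasing caveat: your remark that ``triviality persists'' because intermediate stages have $\pi_1$ a quotient of $\pi_1(\Gamma)$ should be read as the map $\pi_1(BH_i)\to\pi_1(Y_T)$ \emph{factoring through} the free group $\pi_1(\Gamma)$ (where it vanishes), not as intermediate $\pi_1$'s being torsion-free (quotients of free groups need not be). Finally, rather than showing the comparison map $\pi_1((N_2)_T)\to\pi_1(N_2/T)$ is an isomorphism at each stage as the paper does, you compute both sides explicitly as $\pi_1(\Gamma)/\langle [c_i],[\gamma_i/T]\rangle$; this is an equivalent but slightly more explicit presentation of the same conclusion.
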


\begin{proof}
We prove that $\pi_1((N_2)_T)\rightarrow \pi_1((N_2)/T)$ is an isomorphism. and use $N_2/T=S^2\times D^2$.
As $M$ deformation retracts equivariantly onto $G$ the previous Lemma implies that $\pi_1(M_T)\rightarrow \pi_1(M/T)$ is an isomorphism. Now we show that this property is stable under suitable equivariant $2$-handle attachment.

Consider such a $2$-handle of the form $D^2\times S^1_{\alpha_1}\times S^1_{\alpha_2} \times D_{\alpha_3}= D^2\times T/H\times D^2_{\alpha_3}$, where $H=\ker\alpha_1\cap\ker \alpha_2$, which gets glued to a space $Y$ along an equivariant map $\varphi\colon S^1\times T/H\times D^2_{\alpha_3}\rightarrow Y$. Set $Y'$ to be the union of $Y$ and the equivariant handle. Note that we may equivariantly contract the $D_{\alpha_3}$ factor hence it suffices to study gluings of equivariant $2$-cells. Identifying $(D^2\times T/H)_T\cong D^2\times BH$, we obtain a diagram

\[\xymatrix{
\pi_1(Y_T)\ar[d] & \pi_1( S^1\times BH)\ar[d]\ar[l]\ar[r] & \pi_1(D^2\times BH)\ar[d]\\
\pi_1(Y/T)& \pi_1( S^1)\ar[l]\ar[r] & \pi_1(D^2)
}\]
where by Seifert van Kampen the pushout over the first (resp.\ 2nd) row yields $\pi_1(Y'_T)$ (resp.\ $\pi_1(Y'/T)$). Now we assume that the left hand vertical map is an isomorphism and want to show that the induced map between the pushouts is an isomorphism. To do this it suffices to show that $\pi_1(BH)\rightarrow \pi_1(S^1)\oplus \pi_1(BH)\cong \pi_1(S^1\times BH)\rightarrow \pi_1(Y_T)$ is trivial. This is the map on equivariant cohomology induced by the inclusion of an orbit $T/H\rightarrow Y$. This is not homotopically trivial in general. However in the case of attaching handles to $M$ we note that any orbit inclusion of $T/H$ is equivariantly homotopic to the constant map because every component of $M^H$ contains a fixed point by construction. Now we apply the above discussion to the connection handles, which get glued disjointly to $M$, and subsequently to the handles added in Proposition $\ref{prop:N0->N1}$ for which no problems arise since here $H=0$.
\end{proof}

\begin{prop}
We have $\pi_1(X)=0$.
\end{prop}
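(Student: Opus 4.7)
The plan is to compute $\pi_1(X_T) = 0$ via Seifert--van Kampen and then deduce $\pi_1(X) = 0$ using the existence of a $T$-fixed point in $X$. By construction, $X$ is the union of $N_2$ and $Z \times T$, glued along $\partial N_2 \cong S^2 \times S^1 \times T$, where $T$ acts freely on both sides through the $T$-factor. Passing to Borel constructions and using $(Y \times T)_T \simeq Y$ for a trivial $T$-action on $Y$, this yields a homotopy equivalence
\[
X_T \simeq (N_2)_T \cup_{S^2 \times S^1} Z,
\]
so the task is to compute $\pi_1$ of this pushout.

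I will feed two inputs into Seifert--van Kampen. The first is $\pi_1((N_2)_T) = 0$, which is Proposition \ref{prop:pi1N2}. The second is that $\pi_1(Z)$ is generated by the image of $\pi_1(\partial Z) = \pi_1(S^2 \times S^1)$; this follows from applying Seifert--van Kampen to the decomposition $S^4 = (S^2 \times D^2) \cup Z$ appearing in the construction of $X$, together with $\pi_1(S^4) = \pi_1(S^2 \times D^2) = 0$. Combining these two inputs, the pushout computing $\pi_1(X_T)$ collapses to the trivial group.

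To conclude $\pi_1(X) = 0$ from $\pi_1(X_T) = 0$, I use that $X$ contains $T$-fixed points coming from the vertex-building blocks $B_v \subset N_2$. The inclusion of such a fixed point $p$ induces a section $BT = \{p\}_T \hookrightarrow X_T$ of the Borel fibration $X \to X_T \to BT$, which splits the long exact homotopy sequence into short exact sequences $0 \to \pi_n(X) \to \pi_n(X_T) \to \pi_n(BT) \to 0$ for all $n$; the case $n = 1$ then forces $\pi_1(X) = 0$. I do not anticipate any serious obstacle here; the section argument is needed precisely because $\pi_2(BT) \neq 0$, which would otherwise allow the boundary map $\pi_2(BT) \to \pi_1(X)$ in the homotopy long exact sequence to be nontrivial. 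The main bookkeeping is the identification of the Borel constructions of the free-$T$ pieces with their orbit spaces, and the short Seifert--van Kampen computation for $\pi_1(Z)$.
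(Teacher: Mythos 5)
Your proof is correct and follows essentially the same route as the paper: Seifert--van Kampen on $X = N_2 \cup (Z\times T)$ to get $\pi_1(X_T)=0$ (using $\pi_1((N_2)_T)=0$ and the decomposition $S^4 = (S^2\times D^2)\cup Z$), followed by the fixed-point section of the Borel fibration to pass from $\pi_1(X_T)$ to $\pi_1(X)$. The only cosmetic point is that Seifert--van Kampen gives that $\pi_1(Z)$ is \emph{normally} generated by the image of $\pi_1(\partial Z)$ rather than generated outright, but that is exactly what the pushout computation needs.
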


\begin{proof}
We compute $\pi_1(X_T)$ as the pushout of

\[\pi_1((N_2)_T)\longleftarrow \pi_1( (S^2\times S^1\times T)_T)\longrightarrow\pi_1((Z\times T)_T)
\]
by applying Seifert van Kampen. The left hand side is trivial by the previous proposition and the right hand side arrow can be identified with $\pi_1(S^2\times S^1)\rightarrow \pi_1(Z)$. Thus, $\pi_1(X_T)$ is the same as the pushout of 
\[\pi_1(S^2\times D^2)\longleftarrow \pi_1( S^2\times S^1)\longrightarrow\pi_1(Z),
\]
i.e., $\pi_1(X_T)=\pi_1(S^4)=0$. Comparing the long exact homotopy sequences of the Borel fibrations of $X$ and a fixed point of $X$ yields surjectivity of $\pi_2(X_T)\rightarrow \pi_2(BT)$. Hence $\pi_1(X)\cong\pi_1(X_T)=0$.
\end{proof}

\subsection{Rational cohomology}

\begin{lem}\label{lem:dimensionstuff}
We have $H^{even}_T(G)\cong H^*_T(\Gamma,\alpha)$. If $H^*(\Gamma,\alpha;\mathbb{Q})$ satisfies (6-dimensional) Poincaré duality, then $H_T^3(G;\mathbb{Q})=\mathbb{Q}$ as well as $H_T^{2n+1}(G;\mathbb{Q})=0$ for $n\geq 2$.
\end{lem}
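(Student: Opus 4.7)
The plan is to compute $H^*_T(G)$ by an equivariant Mayer--Vietoris argument adapted to the structure of $G$ as a union of invariant $2$-spheres meeting at the fixed points $V\subseteq G$. I would cover $G$ by an invariant open neighborhood $A$ of $V$ equivariantly retracting onto $V$, together with $B$, the complement of a smaller such neighborhood, which equivariantly retracts onto $\bigsqcup_{e\in E(\Gamma)} T/H_e$ with $H_e := \ker\alpha(e)$; then $A\cap B$ retracts to $\bigsqcup_{(e,v):\,v\in\partial e} T/H_e$. A direct computation (either via the Serre spectral sequence for $S^2_e\to(S^2_e)_T\to BT$, or a short Gysin argument for $BH_e\to BT$) gives $H^*(BH_e)\cong R/\alpha(e)$ as $R$-algebras, integrally even when $\alpha(e)$ fails to be primitive. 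In particular all three of $H^*_T(A)$, $H^*_T(B)$, $H^*_T(A\cap B)$ are concentrated in even degrees.

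For the first identification, the even part of the MV sequence reads
\[
0\to H^{2k}_T(G) \to \bigoplus_v R^{2k} \oplus \bigoplus_e (R/\alpha(e))^{2k} \xrightarrow{\partial} \bigoplus_{(e,v):\,v\in\partial e} (R/\alpha(e))^{2k} \to H^{2k+1}_T(G) \to 0,
\]
with boundary $\partial((f_v),(g_e)) = (f_v\bmod\alpha(e) - g_e)_{(e,v)}$. Using the $(g_e)$ to kill one end of each edge, the kernel projects isomorphically onto the subset of $\bigoplus_v R$ in which $f_v\equiv f_w \pmod{\alpha(e)}$ for every edge $e$ connecting $v,w$; this is $H^*_T(\Gamma,\alpha)$ by definition.

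For the second identification I would work over $\mathbb{Q}$ and read off the cokernel. After eliminating the $g_e$'s via the $B$-factor, the same MV exactness yields
\[
H^{2n+1}_T(G;\mathbb{Q}) \cong \operatorname{coker}\bigl(\delta^{2n}\colon\bigoplus_v R_\mathbb{Q}^{2n} \to \bigoplus_e (R_\mathbb{Q}/\alpha(e))^{2n}\bigr),\qquad \delta(f)_e = f_w|_e - f_v|_e,
\]
whose kernel is $H^{2n}_T(\Gamma,\alpha;\mathbb{Q})$. A dimension count using $\dim R_\mathbb{Q}^{2n} = n+1$, $\dim(R_\mathbb{Q}/\alpha(e))^{2n}=1$, and $3$-valency $3V(\Gamma)=2E(\Gamma)$ gives
\[
\dim H^{2n+1}_T(G;\mathbb{Q}) = E - V(n+1) + \dim H^{2n}_T(\Gamma,\alpha;\mathbb{Q}).
\]

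To finish, I would invoke the hypotheses: by Lemma~\ref{lem:rankgraphcohom} and Proposition~\ref{prop:eqcohomfreeoverQ}, $H^*_T(\Gamma,\alpha;\mathbb{Q})$ is a free $R_\mathbb{Q}$-module of rank $V$, and Poincaré duality pins down the Hilbert polynomial of $H^*(\Gamma,\alpha;\mathbb{Q})$ as $1+at^2+at^4+t^6$ with $V=2a+2$, $E=3(a+1)$. Expanding the Hilbert series $(1+at^2+at^4+t^6)/(1-t^2)^2$ one obtains $\dim H^{2n}_T(\Gamma,\alpha;\mathbb{Q}) = (2n-1)(a+1)$ for all $n\geq 2$, which cancels exactly with $V(n+1)-E = (2n-1)(a+1)$, giving $H^{2n+1}_T(G;\mathbb{Q}) = 0$. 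For $n=1$ the expansion gives $\dim H^2_T(\Gamma,\alpha;\mathbb{Q}) = a+2$, whence $\dim H^3_T(G;\mathbb{Q}) = 1$. The main technical care is the MV setup --- verifying the $R$-algebra identification $H^*(BH_e)\cong R/\alpha(e)$ and tracking signs in the boundary; once these are in place the rest is a Hilbert-series bookkeeping argument that uses only freeness and Poincaré duality.
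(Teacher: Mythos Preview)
Your proposal is correct and follows essentially the same approach as the paper: the same Mayer--Vietoris cover by neighborhoods of vertices and of open edges, the same reduction to the short exact sequence $0\to H^{even}_T(G)\to\bigoplus_v R\to\bigoplus_e R/\alpha(e)\to H^{odd}_T(G)\to 0$, and the same dimension count using freeness of $H^*_T(\Gamma,\alpha;\mathbb{Q})$ and the Poincar\'e-duality Hilbert polynomial. The only differences are cosmetic --- you phrase the count via Hilbert series and make the identification $H^*(BH_e)\cong R/\alpha(e)$ explicit (your Gysin remark is the right way to see this integrally, and is implicit in the paper's computation of $\dim H^{2n}_T(E;\mathbb{Q})$).
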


\begin{proof}
We cover $G$ with the neighbourhood $V$ which consists of a small ball around every fixed point as well as the neighbourhood $E$ which is the interior of the edges. The corresponding Mayer-Vietoris sequence (with integer coefficients) reads
\[0\rightarrow H_T^{even}(G)\rightarrow H_T^{even}(V)\oplus H_T^{even}(E)\rightarrow H_T^{even}(E\sqcup E)\rightarrow H^{odd}_T(G)\rightarrow 0.\]
As the map $H_T^*(E)\rightarrow H_T^*(E\sqcup E)$ is just the diagonal $\Delta$, the chosen orientation for each edge yields an isomorphism $H^*(E\sqcup E)/\im(\Delta)\rightarrow H^*(E)$. Hence the above sequence reduces to an exact sequence
\[0\rightarrow H_T^{even}(G)\rightarrow H_T^{even}(V)\rightarrow H_T^{even}(E)\rightarrow H^{odd}_T(G)\rightarrow 0.\]
This gives the description $H_T^{even}(G)\cong H^*_T(\Gamma,\alpha)$. The rest of the lemma follows via a dimension argument: let $2v$ be the number of vertices in $\Gamma$. By Proposition \ref{prop:eqcohomfreeoverQ}, $H^{even}_T(G;\mathbb{Q})=H^*_T(\Gamma,\alpha;\QQ)$ is free over $R_\QQ$. By Lemma \ref{lem:rankgraphcohom} and Poincaré duality, it has exactly $1,v-1,v-1,1$ generators in degrees $0,2,4,6$. Using that $\dim R_\QQ^{2k}=k+1$, a calculation shows that $\dim H^0_T(G;\mathbb{Q})=1$, $\dim H^2_T(G;\mathbb{Q})=v+1$, and $\dim H^{2n}_T(G;\mathbb{Q})=(2n-1)v$ for $n\geq 2$. Furthermore $\dim H^{2n}_T(V;\mathbb{Q})=(n+1)2v$ and $\dim H^{2n}_T(E;\mathbb{Q})=3v$ (for the last equality note that there are $3v$ edges and each of the corresponding algebras has $R_\QQ$-annihilator generated by the weight in degree $2$).
We compute that $\dim H^3_T(G;\mathbb{Q})=2\cdot 3v-3v-4v+v+1=1$ and $\dim H^{2n+1}_T(G;\mathbb{Q})=3v-(n+1)2v+(2n-1)v=0$ for $n\geq 2$.
\end{proof}

\begin{lem}\label{lem:G->N1}
The map $H^k_T(N_1;\mathbb{Q})\rightarrow H^k_T(G;\mathbb{Q})$ is an isomorphism for $k\geq 2$.
\end{lem}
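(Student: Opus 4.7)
The plan is to analyze the inclusion $G\hookrightarrow N_1$ by factoring it through $G\hookrightarrow M\hookrightarrow N_0\hookrightarrow N_1$ and applying the long exact sequences of pairs together with excision.

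First, by Proposition \ref{prop:retraction} the inclusion $G\hookrightarrow M$ is a $T$-equivariant homotopy equivalence, so it induces an isomorphism on $H^*_T(-;\mathbb{Q})$ in every degree. Thus it suffices to prove $H^k_T(N_1;\mathbb{Q})\to H^k_T(M;\mathbb{Q})$ is an isomorphism for $k\geq 2$. For the passage $N_0\hookrightarrow N_1$, I would use excision to identify $H^*_T(N_1,N_0;\mathbb{Q})$ with the sum of relative cohomologies of the $g$ free handle attachings $(D^2\times D^2\times T,\,S^1\times D^2\times T)$. Since the $T$-action is free, this reduces to the nonequivariant pair $(D^2\times D^2,\,S^1\times D^2)$, whose rational cohomology vanishes outside degree 2 by a Thom isomorphism for the trivial rank-2 bundle over the contractible base $D^2$.

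Analogously, for $M\hookrightarrow N_0$ I would reduce via excision to $\bigoplus_i H^*_T(D_{c_i},\partial D_{c_i};\mathbb{Q})$, and observe that each $D_{c_i}$ equivariantly retracts onto the orbit $T/H_{c_i}$ of the finite stabilizer $H_{c_i}=\ker\alpha_1\cap\ker\alpha_2$, while $\partial D_{c_i}$ retracts onto $S^1\times T/H_{c_i}$. Since $BH_{c_i}$ has rational cohomology $\mathbb{Q}$ concentrated in degree 0, the resulting Borel-level inclusion looks rationally like the projection $S^1\times \mathrm{pt}\to \mathrm{pt}$, whose relative cohomology is concentrated in degree 2. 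Feeding these computations into the long exact sequences, together with the vanishing $H^1_T(M;\mathbb{Q})=0$ and the description of $H^*_T(M;\mathbb{Q})=H^*_T(G;\mathbb{Q})$ afforded by Lemma \ref{lem:dimensionstuff}, will yield the isomorphism in degrees $k\geq 3$ essentially for free, since both relative groups vanish there and the connecting maps force exactness to split as a sequence of isomorphisms.

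The main obstacle will be handling the degree-2 case. A naive run of the LES produces Thom-type classes of dimension equal to the total number of attached handles, and the crux of the proof is to verify that the restriction map to $H^2_T(G;\mathbb{Q})$ is nevertheless bijective. I expect this requires a careful analysis of the connecting homomorphism that exploits the specific geometry of the attaching maps: each connection handle is glued along a connection path inside the $1$-skeleton, so the relative Thom class of a connection handle must be comparable, under the identification $H^*_T(M;\mathbb{Q})\cong H^*_T(\Gamma,\alpha;\mathbb{Q})$, to a combinatorial expression supported on that path; the free handles are attached in the free stratum, where the relevant equivariant cohomology is already controlled by ordinary cohomology of the orbit space, which by construction is contractible by Proposition \ref{prop:N0->N1}. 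The freeness of $H^*_T(\Gamma,\alpha;\mathbb{Q})$ over $R_\mathbb{Q}$ established in Proposition \ref{prop:eqcohomfreeoverQ} should be used to rule out spurious contributions, and careful bookkeeping, possibly via the auxiliary $T^3$-action used throughout Section \ref{sec:realization}, is likely needed to match the extra classes against the kernel of the restriction map to $G$.
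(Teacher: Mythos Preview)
Your setup and the argument for degrees $k\geq 3$ are essentially correct and parallel the paper's Mayer--Vietoris approach: the relative groups of the handles are rationally concentrated in degree $2$, so nothing changes above that. However, there are two problems.

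First, a factual error: you assert $H^1_T(M;\mathbb{Q})=0$, but this is false. Since $M$ equivariantly retracts onto $G$ and $\pi_1(G_T)\cong\pi_1(\Gamma)$ is a free group of rank $|E(\Gamma)|-|V(\Gamma)|+1>0$, the group $H^1_T(M;\mathbb{Q})$ is large. What \emph{is} true (and what the paper uses) is $H^1_T(N_1;\mathbb{Q})=0$, coming from $\pi_1((N_1)_T)=0$ since $N_1/T\cong D^4$. With this correction, your long exact sequence in low degrees reads
\[
0\longrightarrow H^1_T(M;\mathbb{Q})\longrightarrow H^2_T(N_1,M;\mathbb{Q})\longrightarrow H^2_T(N_1;\mathbb{Q})\longrightarrow H^2_T(M;\mathbb{Q})\longrightarrow 0,
\]
and the question becomes whether the first map is an isomorphism.

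Second, your proposed resolution of this degree-$2$ question is not a proof but a hope: you speak of ``careful analysis of the connecting homomorphism'', ``combinatorial expressions supported on connection paths'', and ``careful bookkeeping'', without carrying any of it out. The paper avoids this entirely by a much simpler device: rather than computing the global connecting map, it attaches the handles \emph{one at a time} and observes that for each single attachment $Y\mapsto Y'$ one has a five-term sequence
\[
0\rightarrow H^1_T(Y';\mathbb{Q})\rightarrow H^1_T(Y;\mathbb{Q})\rightarrow \mathbb{Q}\rightarrow H^2_T(Y';\mathbb{Q})\rightarrow H^2_T(Y;\mathbb{Q})\rightarrow 0,
\]
so either $\dim H^1_T$ drops by one or $\dim H^2_T$ jumps by one. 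Since the total drop in $H^1_T$ from $M$ to $N_1$ already equals the number of handles (as $H^1_T(N_1;\mathbb{Q})=0$), it suffices to check that $\dim H^1_T$ strictly decreases at every step. This the paper verifies by passing to orbit spaces: each intermediate $Y/T$ is a $D^2$-bundle over a surface with boundary, and one checks directly that each connection handle reduces the number of boundary components and each free handle kills a generator of $\pi_1$ of the base surface. No analysis of Thom classes or connecting homomorphisms is needed.
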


\begin{proof}
Recall that $G\rightarrow M$ is an equivariant homotopy equivalence and $N_1$ arises out of $M$ by attaching equivariant almost free $2$-handles $D^2\times T\times_H D^2$ for some finite $H\subset T$. Since the action on the handle is almost free we may compute its rational equivariant cohomology via the orbit space, which yields $H^*_T(D^2\times T\times_H D^2;\mathbb{Q})=\mathbb{Q}$ and $H^*_T(S^1\times T\times_H D^2;\mathbb{Q})=H^*(S^1;\mathbb{Q})$. If a $T$-space $Y'$ arises out of a $T$-space $Y$ by equivariant attachment of such a handle a Mayer-Vietoris argument yields $H^k(Y';\mathbb{Q})=H^k(Y;\mathbb{Q})$ for $k\geq 3$ as well as an exact sequence
\[0\rightarrow H^1_T(Y';\mathbb{Q})\rightarrow H^1_T(Y;\mathbb{Q})\rightarrow \mathbb{Q}\rightarrow H^2_T(Y';\mathbb{Q})\rightarrow H^2_T(Y;\mathbb{Q})\rightarrow 0.\]
Thus either $\dim H^1_T(Y;\mathbb{Q})$ decreases by $1$ or $\dim H^2_T(Y;\mathbb{Q})$ increases by $1$. We have already shown that $\pi_1((N_1)_T)=0$ and hence $H_T^1(N_1;\mathbb{Q})=0$. Thus it suffices to check that any handle attached to $M$ on the way to $N_1$ decreases the rank of $H^1_T(M;\QQ)$ which will imply that $H^2_T(M;\QQ)$ remains unchanged, finishing the proof.

As argued in the proof of Proposition \ref{prop:pi1N2} we have an isomorphism $\pi_1(Y_T)\rightarrow \pi_1(Y/T)$ for any intermediate stage $Y$ on the way from $M$ to $N_1$. In particular $H_T^1(Y;\mathbb{Q})\cong H^1(Y/T;\mathbb{Q})$. Hence it suffices to consider orbit spaces of these $Y$ which are all $D^2$-bundles over surfaces with boundary (recall that we have proved this over all building blocks and the gluings are compatible with this structure, cf.\ Theorem \ref{thm:diskbudle}). The orbit space $M/T$ is a $D^2$-bundle over some surface with boundary. Each handle attached on the way to $N_0$ reduces the number of boundary components by $1$ up to a minimum of $1$ component for $N_0$. In particular $\dim H^1$ decreases in each step. When going from $N_0$ to $N_1$ as in Proposition \ref{prop:N0->N1}, we attach pairs of handles where each pair eliminates a $T^2$ summand from the surface. Consequently each handle reduces $\dim H^1$ by $1$.
\end{proof}

\begin{rem}
The reader may wonder why there is no $N_2$ appearing in this section although it was present in the construction of $X$. The reason for this is that we had to introduce $N_2$ in order to deal with finite isotropies and that from a rational standpoint this is a detour when constructing $X$ out of $N_1$.
\end{rem}

\begin{lem}\label{lem:N_1->X}
The map $H_T^k(X;\mathbb{Q})\rightarrow H_T^k(N_1;\mathbb{Q})$ is an isomorphism for $k\neq 3,4$ and there is an exact sequence
\[0\rightarrow H_T^3(X;\mathbb{Q})\rightarrow H_T^3(N_1;\mathbb{Q})\rightarrow \mathbb{Q}\rightarrow H_T^4(X;\mathbb{Q})\rightarrow H_T^4(N_1;\mathbb{Q})\rightarrow 0.\]
\end{lem}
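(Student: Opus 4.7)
The plan is to derive the statement from the long exact sequence of the pair $(X,N_1)$ in rational $T$-equivariant cohomology. Set $Y$ to be the closure of $X\setminus N_1$, so that $\partial N_1 = N_1\cap Y$. Using an equivariant collar of $\partial N_1$ in $N_1$, equivariant excision gives
\[
H^*_T(X,N_1;\mathbb{Q}) \;\cong\; H^*_T(Y,\partial N_1;\mathbb{Q}),
\]
and the goal is to compute the right-hand side and then read off the desired sequence.

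The key observation is that $Y$ contains no $T$-fixed points. Indeed, the fixed set of the $T$-action on $X$ consists of the origins of the $B_v$, which all lie in $M\subset N_0\subset N_1$, since the only modifications performed to pass from $M$ to $N_1$ were the attachment of additional handles (the connection handle $D_c$ not being present in $N_1$, and the free handles from Proposition \ref{prop:N0->N1} having trivial stabilizers). Consequently every isotropy subgroup appearing in $Y$ or in $\partial N_1$ is finite: the isotropies in $Y$ are subgroups of those arising in the equivariant handle $D_c$ (which, as noted in Section~\ref{sec:2-handles}, are the finite groups $\ker\alpha_1\cap\ker\alpha_2$), or come from $Z\times T$ (where the action is free), while $\partial N_1$ is free outside of the circle $\eta$, where the stabilizer is again finite. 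Since the stabilizers are finite, the rational equivariant cohomology of each of $Y$, $\partial N_1$ coincides with the rational cohomology of its orbit space, and a standard five-lemma argument extends this identification to the pair:
\[
H^*_T(Y,\partial N_1;\mathbb{Q})\;\cong\; H^*(Y/T,\partial N_1/T;\mathbb{Q}).
\]

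Next I would identify the orbit spaces. By Proposition \ref{prop:N0->N1} we have $N_1/T\cong D^4$, and by construction $X/T\cong S^4$; hence $Y/T$ is the closure of the complement $S^4\setminus D^4$, i.e.\ another $D^4$, with $\partial(Y/T)=\partial N_1/T\cong S^3$. Therefore
\[
H^k_T(X,N_1;\mathbb{Q}) \;\cong\; H^k(D^4,S^3;\mathbb{Q}) \;=\; \begin{cases} \mathbb{Q} & k=4, \\ 0 & k\neq 4.\end{cases}
\]

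Plugging this into the long exact sequence of the pair $(X,N_1)$ proves the lemma. In degrees $k\neq 3,4$ both adjacent relative terms vanish, so $H^k_T(X;\mathbb{Q})\to H^k_T(N_1;\mathbb{Q})$ is an isomorphism. At degrees $3$ and $4$ the long exact sequence collapses to
\[
0 \to H^3_T(X;\mathbb{Q}) \to H^3_T(N_1;\mathbb{Q}) \to H^4_T(X,N_1;\mathbb{Q}) \to H^4_T(X;\mathbb{Q}) \to H^4_T(N_1;\mathbb{Q}) \to 0,
\]
which under the identification $H^4_T(X,N_1;\mathbb{Q})\cong\mathbb{Q}$ is exactly the claimed exact sequence. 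The most delicate point is the verification that $Y$ contains no fixed points and that the orbit spaces assemble as asserted, both of which are by now direct consequences of our explicit construction; no other technical obstacle is expected.
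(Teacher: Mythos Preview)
Your argument is correct and follows essentially the same route as the paper's proof: the paper uses the equivariant Mayer--Vietoris sequence for the decomposition $X = N_1 \cup (X\setminus N_1)$, you use the long exact sequence of the pair $(X,N_1)$ together with excision, and in both cases the point is that the complement of $N_1$ is almost free so its rational equivariant cohomology is that of the orbit space.

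One small over-claim: you assert $Y/T \cong D^4$, i.e.\ that the closure of the complement of your $D^4$ in $S^4$ is again a $D^4$. This is the generalized Schoenflies theorem and requires knowing that $\partial N_1/T \cong S^3$ is bicollared in $X/T$, which you have not argued (the action on $\partial N_1$ is only almost free, so the quotient structure near the circle $\eta$ is a priori orbifold-like). The paper avoids this by only asserting that $(X\setminus N_1)/T$ is \emph{homologically trivial}, which follows immediately from Mayer--Vietoris on $S^4 = (N_1/T)\cup (Y/T)$ and suffices for the computation of $H^*(Y/T,S^3;\mathbb{Q})$. Replacing your identification $Y/T\cong D^4$ by this weaker homological statement makes the argument complete.
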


\begin{proof}
Note that $\partial N_1/T=S^3$ and the action on $\partial N_1$ is almost free. Hence $H_T^*(\partial N_1;\mathbb{Q})=H^*(S^3;\mathbb{Q})$. Also $(X-N_1)/T$ is the complement of a $D^4$ in $S^4$ and in particular homologically trivial. Hence $H_T^*(X-N_1;\mathbb{Q})=\mathbb{Q}$. The lemma now follows from the equivariant Mayer-Vietoris sequence of $X=N_1\cup (X-N_1)$.
\end{proof}

\begin{thm}\label{thm:rationalGKM}
If $H^*(\Gamma,\alpha;\mathbb{Q})$ satisfies (6-dimensional) Poincaré duality, then $X$ is equivariantly formal over $\mathbb{Q}$. Hence it is a rational GKM manifold.
\end{thm}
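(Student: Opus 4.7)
The plan is to reduce rational equivariant formality of $X$ to the vanishing of odd rational cohomology, using the (unnumbered) proposition in Section \ref{sec:prelims} that characterizes rational equivariant formality via vanishing of $H^{\mathrm{odd}}(X;\mathbb{Q})$ for actions with finite fixed point set. Since $X^T$ is finite by construction (one fixed point per vertex of $\Gamma$), and the one-skeleton of $X$ is the graph $G$ of invariant $2$-spheres encoded in $(\Gamma,\alpha)$, once $H^{\mathrm{odd}}(X;\mathbb{Q}) = 0$ is verified, $X$ is automatically a rational GKM manifold.

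Next I would gather the preliminary vanishings. $X$ is a closed, simply-connected, orientable $6$-manifold (orientability of $M$ comes from Corollary \ref{cor:PDorientable} combined with Proposition \ref{prop:orientability} and is preserved under all subsequent handle attachments and cap-offs; $\pi_1(X) = 0$ was shown in the previous subsection). Poincaré duality then yields $b_1(X) = b_5(X) = 0$, so the only odd Betti number left to control is $b_3(X)$.

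The main step is to compute $b_2(X)$ via the Serre spectral sequence of the Borel fibration $X \to X_T \to BT$, with $E_2^{p,q} = H^p(BT;\mathbb{Q}) \otimes H^q(X;\mathbb{Q})$. In total degree $2$, only $E_2^{0,2} = H^2(X;\mathbb{Q})$ and $E_2^{2,0} = H^2(BT;\mathbb{Q}) \cong \mathbb{Q}^2$ contribute, since $E_2^{1,1}$ vanishes using $H^1(X;\mathbb{Q}) = 0$. All differentials landing in or emanating from these two slots vanish, as they would necessarily involve either $H^1(X;\mathbb{Q})$ or the odd cohomology of $BT$. On the other hand, combining Lemmas \ref{lem:N_1->X}, \ref{lem:G->N1} and \ref{lem:dimensionstuff} gives $H^2_T(X;\mathbb{Q}) \cong H^2_T(\Gamma,\alpha;\mathbb{Q})$, and the Poincaré series $(1+(v-1)t^2+(v-1)t^4+t^6)/(1-t^2)^2$ (with $|V(\Gamma)| = 2v$) identifies its dimension as $v+1$. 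Therefore $b_2(X) = (v+1) - 2 = v - 1$.

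The conclusion follows from the Euler characteristic localization $\chi(X) = \chi(X^T) = 2v$, valid for any torus action on a compact space. Combined with Poincaré duality ($b_4 = b_2 = v-1$) and the vanishings $b_0 = b_6 = 1$, $b_1 = b_5 = 0$, the relation $\chi(X) = 2 + 2b_2(X) - b_3(X) = 2v$ forces $b_3(X) = 0$. Hence $H^{\mathrm{odd}}(X;\mathbb{Q}) = 0$, completing the proof. The delicate input is the spectral sequence computation of $b_2$, which relies crucially on the simple-connectedness of $X$ to rule out all differentials in low degree.
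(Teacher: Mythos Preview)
Your proof is correct, and it takes a genuinely different route from the paper's own argument. Both proofs use Lemmas \ref{lem:N_1->X}, \ref{lem:G->N1}, and \ref{lem:dimensionstuff} to identify $H^2_T(X;\mathbb{Q})\cong H^2_T(\Gamma,\alpha;\mathbb{Q})$, but from there they diverge. The paper argues on the equivariant side: it examines the connecting map $\varphi\colon H^3_T(N_1;\mathbb{Q})\cong\mathbb{Q}\to\mathbb{Q}$ from Lemma \ref{lem:N_1->X}, and in the hypothetical case $\varphi=0$ derives a contradiction via a degree-$4$ dimension count that pits the Serre spectral sequence bound $\dim H^4_T(X;\mathbb{Q})\le 3+3n$ (using Poincar\'e duality of $X$) against the strict inequality $\dim H^4_T(X;\mathbb{Q})>\dim H^4_T(\Gamma,\alpha;\mathbb{Q})=3+3n$. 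You instead pass immediately to the nonequivariant side, extract $b_2(X)=v-1$ from the same spectral sequence in degree $2$, and then close the argument with the Euler characteristic identity $\chi(X)=\chi(X^T)=2v$ to force $b_3(X)=0$. Your approach is slightly more economical, avoiding the case split and the degree-$4$ bookkeeping; the paper's approach has the small advantage of exhibiting the isomorphism $H^{even}_T(X;\mathbb{Q})\cong H^*_T(\Gamma,\alpha;\mathbb{Q})$ directly along the way. One minor simplification: you need not trace orientability of $X$ through the handle attachments, since simple connectedness of $X$ already forces orientability.
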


\begin{proof}
We have $H^3_T(N_1;\mathbb{Q})=\mathbb{Q}$ by Lemmas \ref{lem:dimensionstuff} and \ref{lem:G->N1}.
Let $\varphi\colon H_T^3(N_1;\mathbb{Q})\rightarrow \mathbb{Q}$ denote the corresponding map in the exact sequence from Lemma \ref{lem:N_1->X}. If $\varphi\neq 0$ then $H_T^{odd}(X;\mathbb{Q})=0$ and
\[H_T^{even}(X;\mathbb{Q})\rightarrow H_T^{even}(G;\mathbb{Q})=H^*_T(\Gamma,\alpha;\mathbb{Q})\]
is an isomorphism. As $H^*_T(\Gamma,\alpha;\mathbb{Q})$ is a free module by Proposition \ref{prop:eqcohomfreeoverQ}, it follows that $X$ is equivariantly formal.

Now in the case $\varphi=0$ we will derive a contradiction. We obtain
\[\dim H^4_T(X;\mathbb{Q})>\dim H^4_T(N_1;\mathbb{Q})=\dim H^4(\Gamma,\alpha;\mathbb{Q}).\]
Let $n=\dim H^2(X;\mathbb{Q})=\dim H_T^2(X;\mathbb{Q})-2=\dim H^2(\Gamma,\alpha;\mathbb{Q})-2$. As $H^*_T(\Gamma,\alpha;\mathbb{Q})$ has a single generator in degree $0$, we deduce that a minimal $R_\QQ$-generating set of the latter has $n$ elements in degree $2$. by the Poincaré duality assumption there are also $n$ generators in degree $4$ and $R_\QQ$-freeness implies $\dim H^4(\Gamma,\alpha;\mathbb{Q})= 3+ 3n$. On the other hand, using Poincaré duality of $X$ we deduce that $3+3n$ is the total degree $4$ dimension of $R_\QQ\otimes H^*(X;\mathbb{Q})$. Thus the Serre spectral sequence of the Borel fibration of $X$ yields $\dim H_T^4(X;\mathbb{Q})\leq 3+3n=\dim H^4(\Gamma,\alpha;\mathbb{Q})$ which is a contradiction.
\end{proof}

\subsection{Integral cohomology}

The Mayer-Vietoris sequence of $S^4 = S^2\times D^2\cup Z$ implies
\begin{lem}\label{lem:homZ}
We have $H^1(Z)={\mathbb{Z}}$ and $H^2(Z)=H^3(Z)=H^4(Z)=0$. 
\end{lem}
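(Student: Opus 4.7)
The plan is to extract all four cohomology groups from a single Mayer--Vietoris computation for the decomposition $S^4 = (S^2\times D^2)\cup Z$, whose intersection deformation retracts onto $S^2\times S^1$. Before writing down the sequence I would record two elementary facts about $Z$: first, $Z$ is connected, since its boundary $S^2\times S^1$ is connected and any compact manifold with connected non-empty boundary is itself connected; second, $H^4(Z)=0$, since $Z$ is a connected compact $4$-manifold with non-empty boundary.

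The cohomology of $S^4$ is $\mathbb{Z}$ in degrees $0$ and $4$ (and zero elsewhere), the cohomology of $S^2\times D^2\simeq S^2$ is $\mathbb{Z}$ in degrees $0$ and $2$, and the cohomology of $S^2\times S^1$ has one copy of $\mathbb{Z}$ in each of degrees $0,1,2,3$. Hence the Mayer--Vietoris sequence
\[\cdots \to H^{k-1}(S^2\times S^1)\to H^k(S^4)\to H^k(S^2)\oplus H^k(Z)\to H^k(S^2\times S^1)\to \cdots\]
becomes tractable in each degree. In degree $1$ it reads $0\to H^1(Z)\to\mathbb{Z}\to 0$, giving $H^1(Z)=\mathbb{Z}$. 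In degree $2$ the restriction $H^2(S^2\times D^2)\to H^2(S^2\times S^1)$ is an isomorphism, because both sides are generated by the pullback of the class of the $S^2$-factor; the sequence then yields an injection $\mathbb{Z}\oplus H^2(Z)\hookrightarrow\mathbb{Z}$, which forces $H^2(Z)=0$ (ranks give the vanishing of the free part, and the ambient $\mathbb{Z}$ being torsion-free rules out torsion). Finally, in degrees $3$ and $4$ the sequence becomes
\[0\to H^3(Z)\to H^3(S^2\times S^1)=\mathbb{Z}\xrightarrow{\delta} H^4(S^4)=\mathbb{Z}\to H^4(Z)\to 0;\]
substituting $H^4(Z)=0$ forces $\delta$ to be surjective, hence an isomorphism of $\mathbb{Z}$ onto $\mathbb{Z}$, and consequently $H^3(Z)=0$.

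No step here is genuinely difficult. The only points deserving explicit mention are the vanishing of $H^4(Z)$ from the manifold-with-boundary structure, and the surjectivity of $H^2(S^2\times D^2)\to H^2(S^2\times S^1)$, without which the Mayer--Vietoris sequence would only give a weaker bound on $H^2(Z)$ rather than its vanishing.
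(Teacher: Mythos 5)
Your proposal uses exactly the decomposition the paper indicates — the Mayer--Vietoris sequence of $S^4 = (S^2\times D^2)\cup Z$ with intersection $S^2\times S^1$ — and the paper's one-line proof amounts to asserting this computation works; your write-up simply makes the routine bookkeeping explicit, and the conclusion is correct. One small inaccuracy in the justification: the general claim ``any compact manifold with connected non-empty boundary is itself connected'' is false (e.g.\ $S^2\sqcup D^3$ has connected boundary $S^2$ but is disconnected). Here $Z$ \emph{is} connected, but for a different reason: any clopen piece of $Z$ avoiding $\partial Z$ would be clopen in $S^4$, contradicting connectedness of $S^4$ (equivalently, the degree-$0$ segment of the same Mayer--Vietoris sequence forces $H^0(Z)=\mathbb{Z}$). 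This does not affect the rest of your argument, since the degree-$1$ computation and the vanishing of $H^4(Z)$ go through once $Z$ is known to be a connected compact orientable $4$-manifold with non-empty boundary.
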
 Note that the fundamental group of $Z$ depends on the particular choice of $Z$, see Section \ref{sec:nonrigid} below.

\begin{prop}\label{prop:torsionfree}
If $H^3_T(\widetilde{G})$ is torsion free then $H^*(X)$ is torsion free. 
\end{prop}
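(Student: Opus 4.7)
The plan is to reduce torsion-freeness of $H^*(X)$ to torsion-freeness of $H^3_T(X)$ via the Serre spectral sequence, and then to establish the latter through two Mayer--Vietoris arguments that ultimately pull back to the hypothesis on $H^3_T(\widetilde G)$. As preliminary observations: $X$ is simply-connected (by the previous subsection), so the universal coefficient theorem gives $T(H^2(X)) = T(H_1(X)) = 0$; and by Theorem \ref{thm:rationalGKM}, $X$ is rational GKM, so $H^3(X;\mathbb{Q}) = 0$ and hence $H^3(X)$ is pure torsion.

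The first Mayer--Vietoris decomposes $X = N_2 \cup (Z \times T)$ along $\partial N_2 \cong S^2 \times S^1 \times T$. Since the $T$-action is free both on $Z \times T$ and on $\partial N_2$, equivariant cohomology collapses to ordinary cohomology of the orbit spaces; combined with $H^{\geq 2}(Z) = 0$ from Lemma \ref{lem:homZ}, the relevant portion becomes
\[
H^2_T(N_2) \to H^2(S^2 \times S^1) \to H^3_T(X) \to H^3_T(N_2) \to H^3(S^2 \times S^1).
\]
The crucial input is surjectivity of the leftmost map: the identification $N_2/T \cong S^2 \times D^2$ from Proposition \ref{prop:N1->N2} together with functoriality of the orbit-map pullback shows that the restriction $H^2_T(N_2) \to H^2_T(\partial N_2)$ factors the isomorphism $H^2(S^2 \times D^2) \xrightarrow{\sim} H^2(S^2 \times S^1)$. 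This forces $H^3_T(X) \cong \ker(H^3_T(N_2) \to \mathbb{Z})$.

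A second Mayer--Vietoris for $N_2 = \widetilde N \cup \bigsqcup_i (D^2 \times D^2 \times T)$ attached along $\bigsqcup_i (S^1 \times D^2 \times T)$---where the action is free on the handles and attaching regions---then gives $H^3_T(N_2) \cong H^3_T(\widetilde N)$, using $H^{\geq 2}(S^1) = 0$ and contractibility of the handle orbit spaces $D^2 \times D^2$. By Proposition \ref{prop:retraction}, $\widetilde N \simeq_T \widetilde G$, so $H^3_T(N_2) \cong H^3_T(\widetilde G)$, which is torsion free by hypothesis. Hence $H^3_T(X)$ is torsion free.

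Finally, the Serre spectral sequence $E_2^{p,q} = H^p(BT) \otimes H^q(X) \Rightarrow H^{p+q}_T(X)$ (no Tor terms since $H^*(BT)$ is free) has $E_2^{1,2} = E_2^{2,1} = E_2^{3,0} = 0$ by simple-connectedness of $X$ and vanishing of $H^{\mathrm{odd}}(BT)$, so $H^3_T(X) = E_\infty^{0,3}$. The outgoing differentials $d_2 \colon H^3(X) \to H^2(BT) \otimes H^2(X)$ and $d_4 \colon E_4^{0,3} \to H^4(BT)$ land in torsion-free groups (using $T(H^2(X)) = 0$ for the former), while the source $H^3(X)$ is pure torsion; both therefore vanish, giving $H^3_T(X) = H^3(X)$. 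Torsion-freeness paired with pure-torsion forces $H^3(X) = 0$, and the universal coefficient theorem combined with Poincar\'e duality then yields $T(H^4(X)) = T(H^3(X)) = 0$; together with $T(H^2(X)) = 0$, this shows $H^*(X)$ is entirely torsion free. The main obstacle is the surjectivity claim in the first Mayer--Vietoris, which requires a careful check of compatibility between equivariant cohomology, the orbit-space pullback, and the restriction to the boundary.
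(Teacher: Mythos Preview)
Your proof follows the same route as the paper's, with one small but genuine overreach. You invoke Theorem~\ref{thm:rationalGKM} to deduce that $H^3(X;\mathbb{Q})=0$, so that $H^3(X)$ is pure torsion; this lets you kill all outgoing differentials from $E_2^{0,3}$ and conclude $H^3_T(X)=H^3(X)$. But Theorem~\ref{thm:rationalGKM} carries the hypothesis that $H^*(\Gamma,\alpha;\mathbb{Q})$ satisfies Poincar\'e duality, and that is \emph{not} a hypothesis of the present proposition. The paper avoids this: it only tracks the torsion subgroup $T(H^3(X))\subset E_2^{0,3}$, observes that torsion elements map to zero under each differential (since, as you correctly identified, the targets $E_r^{r,4-r}$ are torsion-free), and hence that $T(H^3(X))$ injects into $E_\infty^{0,3}=H^3_T(X)$. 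Torsion-freeness of $H^3_T(X)$ then forces $T(H^3(X))=0$, which is all that is needed. Your fix is immediate: drop the appeal to Theorem~\ref{thm:rationalGKM} and argue with the torsion subgroup only.

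Your two Mayer--Vietoris reductions (first to $H^3_T(N_2)$, then via the free handles to $H^3_T(\widetilde N)\cong H^3_T(\widetilde G)$), the surjectivity argument for $H^2_T(N_2)\to H^2(S^2\times S^1)$ via the orbit-space comparison, and the UCT/Poincar\'e duality bookkeeping for the remaining degrees all match the paper exactly.
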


\begin{proof}
Note that $H^1(X)=0$ and $H^2(X)$ is torsion free since its torsion agrees with the torsion of $H_1(X)=0$ by universal coefficients. Also $H^5(X)\cong H_1(X)=0$ and $H^6(X)=\mathbb{Z}$. Now if $H^3(X)$ were torsion free this would imply torsion freeness of $H_2(X)\cong H^4(X)$ by universal coefficients and Poincaré duality. Thus we may concentrate on degree $3$.

Now if $H^3(X)$ had torsion this would give torsion in the $E_2^{0,3}$ entry of the Serre spectral sequence of the Borel fibration. As $H^2(X)$ is torsion free, the same holds for the $E_2^{2,2}$ entry and hence any torsion in $E_2^{0,3}$ survives until $E_\infty$. It would follow that also $H^3_T(X)$ has torsion thus it suffices to show that the latter is torsion free.

Using Lemma \ref{lem:homZ}, the equivariant Mayer-Vietoris sequence of $X= N_2\cup (Z\times T)$ yields the exact sequence
\[H_T^2(N_2)\rightarrow H_T^2(S^2\times S^1\times T)\rightarrow H_T^3(X)\rightarrow H_T^3(N_2).\]
We claim that the first map in this sequence is surjective. Indeed, consider the diagram
\[\xymatrix{
H_T^2(N_2)\ar[r] & H_T^2(S^2\times S^1\times T)\\
H^2(N_2/T)\ar[u]\ar[r] & H^2(S^2\times S^1)\ar[u]_\cong
}\]
where the right hand vertical arrow is an isomorphism due to freeness of the action. But the bottom horizontal map is induced by $S^2\times S^1\rightarrow S^2\times D^2$ and in particular surjective. This proves the claim.

As a consequence $H_T^3(X)\rightarrow H_T^3(N_2)$ is injective. However $H_T^3(N_2)$ arises from $\widetilde{N}$ (as above Proposition \ref{prop:retraction}) by attaching free handles so $H_T^3(N_2)=H_T^3(\widetilde{N})=H_T^3(\widetilde{G})$. Hence if $H_T^3(\widetilde{G})$ is torsion free, so is $H_T^3(X)$.
\end{proof}

\begin{lem}\label{lem:restriction}
Assume that any torsion element of $H_T^3(G)$ restricts nontrivially to $H_T^3(G^U)$ for some nontrivial subgroup $U\subset T$. Then $H_T^3(\widetilde{G})$ is torsion free.
\end{lem}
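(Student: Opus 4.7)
The plan is to compute $H_T^3(\widetilde{G})$ via an equivariant Mayer--Vietoris sequence, and then use the hypothesis to exclude torsion from the resulting kernel.

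Apply Mayer--Vietoris to the decomposition $\widetilde{G}=G\cup D$, where $D=\bigsqcup_i(D^2\times T/H_i)$ and the (thickened) intersection retracts $T$-equivariantly to $S=\bigsqcup_i(S^1\times T/H_i)$. Each $H_i$ is a finite abelian subgroup of $T$, so $H^{\mathrm{odd}}(BH_i;\mathbb{Z})=0$, and Künneth yields $H_T^k(D^2\times T/H_i)\cong H^k(BH_i)$ and $H_T^k(S^1\times T/H_i)\cong H^k(BH_i)\oplus H^{k-1}(BH_i)$, with the inclusion $S^1\hookrightarrow D^2$ inducing the identity on the $H^k(BH_i)$ summand. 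The map $H_T^2(G)\oplus H_T^2(D)\to H_T^2(S)$ is thus surjective, forcing the Mayer--Vietoris connecting homomorphism into $H_T^3(\widetilde{G})$ to vanish; combined with $H_T^3(D^2\times T/H_i)=0$, this gives the short exact sequence
\[0\longrightarrow H_T^3(\widetilde{G})\longrightarrow H_T^3(G)\xrightarrow{\iota^*}\bigoplus_i H^2(BH_i),\]
where $\iota^*$ is the sum of the restrictions along the connection orbit maps $\tilde{c}_i\colon S^1\times T/H_i\to G$ (followed by the Künneth projection).

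Suppose for contradiction that $x\in H_T^3(\widetilde{G})\subset H_T^3(G)$ is a nonzero torsion element; by hypothesis there exists a nontrivial $U\subset T$ with $x|_{G^U}\neq 0$. The subspace $G^U$ consists of $V(\Gamma)$ together with the spheres $S_e$ for which $U\subset\ker\alpha(e)$. A fixed point contributes $H_T^3=0$, and a single sphere contributes $H_T^3(S_e)=0$ via Leray--Hirsch on $S^2\hookrightarrow(S_e)_T\to BT$; hence the nonvanishing of $x|_{G^U}$ forces some vertex $v$ in $G^U$ to have at least two adjacent edges in $G^U$. The connection axiom ensures that the common kernel of the weights of consecutive edges is constant along a connection path, so these two edges extend to a connection path $c_i$ with $U\subset H_i$, and hence $c_i\subset G^U$.

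To close the argument, note that the Mayer--Vietoris analysis of the first step applies equally to $\widetilde{G}^U=G^U\cup\bigsqcup_{i\colon U\subset H_i}(D^2\times T/H_i)$, the $U$-fixed locus of $\widetilde{G}$, and yields an injection $H_T^3(\widetilde{G}^U)\hookrightarrow H_T^3(G^U)$ with image the kernel of the analogous restriction map $\bar\iota^*$. Since $\iota^*x=0$, the restriction $x|_{G^U}$ sits in $\ker\bar\iota^*$, so $x|_{G^U}$ lifts to a nonzero torsion class in $H_T^3(\widetilde{G}^U)$. Iterating the previous step inside the successive fixed loci produces a strictly increasing chain of nontrivial subgroups $U\subsetneq U'\subsetneq\cdots$, each enlarging the isotropy detected by $x$; since only finitely many subgroups of $T$ arise as intersections of edge-weight kernels, this chain must terminate at a subgraph containing no further connection paths, where $H_T^3=0$ and the nonzero restriction cannot persist, yielding the desired contradiction. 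The principal obstacle is justifying the iteration, since strictly the hypothesis of the lemma is phrased for $G$ rather than for each successive $G^{U^{(j)}}$; a cleaner alternative is to describe the torsion of $H_T^3(G)$ directly via the cokernel presentation $\bigoplus_v R^2\to\bigoplus_e H^2(BK_e)$ coming from the vertex--edge Mayer--Vietoris of $G$, compute $\tilde{c}_i^*$ as an explicit signed sum $\sum_j\pm y_{e_j}|_{BH_i}$ along the edges of $c_i$, and verify directly that a torsion class killed by all such restrictions must vanish---the hypothesis on $G^U$ entering only to ensure that the support of the torsion meets some connection path.
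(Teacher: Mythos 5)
Your Mayer--Vietoris setup and the identification of the kernel as $H_T^3(\widetilde{G})$ agree with the paper, as does the first half of the argument: using the hypothesis to locate a nontrivial $U$, noting that isolated vertices and single spheres contribute nothing to $H_T^3$, and observing that two adjacent edges of $G^U$ propagate along a full connection path $c_i$ with $U\subset H_i$. Up to that point you track the paper closely.

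The gap is in how you finish. Your plan is to iterate the hypothesis inside successively smaller fixed loci $G^U, G^{U'}, \ldots$, which you yourself flag as problematic. The obstacle is real and not easily repaired: the hypothesis of the lemma is a statement about torsion in $H_T^3(G)$, and in the application (Proposition~\ref{prop:freeimpliestorsioncondition}) it is established via localization and depth arguments that do not transfer to $\widetilde{G}^U$, which is not the one-skeleton of a GKM graph. Moreover there is no reason the subgroup $U'$ produced at the next stage should strictly enlarge $U$ (it could well be contained in $U$), so the claimed strictly increasing chain and hence the termination argument do not go through. Your suggested ``cleaner alternative'' at the end is only a sketch and does not supply the missing injectivity.

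The paper avoids any iteration by a single additional observation you are missing: once $x$ is known to restrict nontrivially to a path component $P$ of $G^U$ that is (graph-theoretically) a simple loop equal to a connection path $c_i$, the capped-off space $Q := P\cup_{\tilde{c_i}} D^2\times T/H_i$ carries an effective $T/H_i$-action making it a quasitoric manifold. By \cite[Proposition 7.3.23]{ToricTopology} its cohomology is concentrated in even degrees, and then $H^*_T(Q)=H^*(BT)\otimes_{H^*(BT/H_i)}H^*_{T/H_i}(Q)$ is too. Running Mayer--Vietoris for $Q = P \cup (D^2\times T/H_i)$ with $H_T^3(Q)=0$ forces $\tilde{c_i}^*\colon H_T^3(P)\to H_T^3(S^1\times T/H_i)$ to be injective, so $\tilde{c_i}^*(x)\neq 0$, which contradicts $x\in\ker\iota^*=H_T^3(\widetilde{G})$. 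This quasitoric step is precisely what closes the argument in one pass, and it is the piece your proposal would need to become complete.
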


\begin{proof}
We have an exact Mayer vietoris sequence associated to covering $\widetilde{G}$ with a thickened $G$ and the interiors of the equivariant $2$-cells. This gives the sequence
\[0\rightarrow H^3_T(\widetilde{G})\rightarrow H^3_T(G)\rightarrow H^3_T\left(\bigsqcup_{i=1}^l S^1\times T/H_i\right)\]
where we have used that $H_T^3(D^2\times T/H_i)=0$ and $H_T^2(D^2\times T/H_i)\rightarrow H_T^2(S^1\times T/H_i)$ is surjective. In order to show torsion freeness of $H_T^3(\widetilde{G})$ it suffices to show that any torsion element $x\in H_T^3(G)$ gets mapped nontrivially under some $\tilde{c_i}$ (as defined in the beginning of Section \ref{sec:generalities}). By assumption $x$ restricts nontrivially to $H_T^3(G^U)$ for some nontrivial $U$. This is a graph of $2$-spheres. Any connected component with less than $2$ edges does not contribute to $H_T^3(G^U)$. In particular $x$ restricts nontrivially to a path component $P$ of $G^U$ with two adjacent $2$-spheres corresponding to edges $e_1,e_2$ with weights $\alpha(e_1),\alpha(e_2)$. But then since $U\in \ker \alpha(e_1)\cap \ker\alpha(e_2)$ it follows that $U\subset \ker \alpha(\nabla_{e_1}(e_2))$. Inductively we see that all spheres belonging the connection path $c_i$ through $e_1,e_2$ all lie in $P$. In fact this is already the entirety of $P$ since no three edges meeting at a vertex can vanish on $U$ due to the effectivity assumption. The same argument shows that $P$ is graph theoretically a simple loop.

Note that $Q:=P\cup_{\tilde{c_i}} D^2\times T/H_i$ with induced $T/H_i$-action is an effective quasitoric manifold. By \cite[Proposition 7.3.23]{ToricTopology}, $H^*(Q)$ is concentrated in even degrees, and consequently also $H_{T/H_i}^*(Q)$. Then the same holds for $H_T^*(Q)=H^*(BT)\otimes_{H_T^*(BT/H_i)} H_{T/H_i}^*(Q)$. By a similar Mayer-Vietoris sequence to above we deduce that the attaching map $\tilde{c_i}^*\colon H_T^3(P)\rightarrow H_T^3(S^1\times T/H_i)$ is injective. Thus $x$ does not vanish under $\tilde{c_i}^*$.
\end{proof}

\begin{lem}\label{lem:depth}
Assume that $H^*_T(\Gamma,\alpha)$ is a free $R$-module and let $x\in H_T^3(G)$ be a nontrivial torsion element. Then $\sqrt{\Ann(x)}$ does not contain $R^+$.
\end{lem}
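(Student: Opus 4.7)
The plan is to recognise the statement as a claim about local cohomology. From the proof of Lemma~\ref{lem:dimensionstuff}, the Mayer--Vietoris sequence for $G$ produces a natural map $\varphi\colon \bigoplus_v R \to \bigoplus_e R/(\alpha(e))$ whose graded cokernel $C := \mathrm{coker}(\varphi)$ computes the odd equivariant cohomology of $G$ via $H_T^{2k+1}(G) = C^{2k}$; in particular $H_T^3(G) = C^2$. Since $R^+ = (x_1, x_2)$ is finitely generated, the condition $R^+ \subset \sqrt{\Ann(x)}$ is equivalent to $x$ lying in the $R^+$-torsion submodule $H^0_{R^+}(C)$. It therefore suffices to show that $H^0_{R^+}(C)$ is torsion-free as an abelian group, since then the nontrivial $\mathbb{Z}$-torsion element $x$ cannot lie inside it.

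To this end I combine the two short exact sequences
\[
0 \to I \to \bigoplus_e R/(\alpha(e)) \to C \to 0, \qquad 0 \to H^*_T(\Gamma,\alpha) \to \bigoplus_v R \to I \to 0
\]
(where $I := \im\varphi$) with their long exact sequences in local cohomology. The key vanishings are: first, $H^0_{R^+}(R/(\alpha(e))) = 0$ for every edge $e$, which follows by writing $\alpha(e) = d\alpha'$ with $\alpha'$ a primitive linear form and noting that at least one of $x_1, x_2 \in R^+$ is coprime to $d\alpha'$ in the UFD $R$ and hence acts as a nonzerodivisor on $R/(\alpha(e))$; and second, $\depth_{R^+}(R) = 2$ via the regular sequence $x_1, x_2$, so $H^i_{R^+}$ vanishes on any free $R$-module for $i \leq 1$. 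Combined with the freeness assumption on $H^*_T(\Gamma,\alpha)$, these vanishings produce the chain of injections
\[
H^0_{R^+}(C) \hookrightarrow H^1_{R^+}(I) \hookrightarrow H^2_{R^+}(H^*_T(\Gamma,\alpha)).
\]

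Finally, as $H^*_T(\Gamma,\alpha)$ is $R$-flat, the rightmost module identifies with $H^*_T(\Gamma, \alpha) \otimes_R H^2_{R^+}(R)$. An explicit \v{C}ech computation with the regular sequence $x_1, x_2$ presents $H^2_{R^+}(R)$ as the $\mathbb{Z}$-free module with basis $\{x_1^{-a}x_2^{-b} : a, b \geq 1\}$, so every module in the chain above is torsion-free as an abelian group. In particular $H^0_{R^+}(C)$ is $\mathbb{Z}$-torsion-free, which gives the claim. The main obstacle is establishing the vanishing $H^0_{R^+}(R/(\alpha(e))) = 0$ when $\alpha(e)$ fails to be primitive, because in that case $R/(\alpha(e))$ itself carries $\mathbb{Z}$-torsion, which needs to be carefully distinguished from the $R^+$-nilpotent elements concerning the statement.
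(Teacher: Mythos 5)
Your proof is correct, and it takes a genuinely different route from the paper's. The paper's argument reduces to the case where $x$ has prime order $p$, localizes the exact sequence from Lemma~\ref{lem:dimensionstuff} at the maximal ideal $\mathfrak{p}=(p,R^+)$, deduces $\depth_{\mathfrak q}\bigl(H^{odd}_T(G)_{\mathfrak p}\bigr)\geq 1$ by a direct depth count (using freeness of $H^{even}_T(G)_{\mathfrak p}$ and $H^{even}_T(V)_{\mathfrak p}$, then peeling off $H^{even}_T(E)_{\mathfrak p}$), and finally extracts a specific $f\in R^+$ with $f^i x\neq 0$ for all $i$ by writing the regular element as $kp+f$ and using $px=0$. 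You instead work globally with local cohomology supported at $R^+$: you split the same four-term sequence into two short exact sequences, use $H^0_{R^+}\bigl(R/(\alpha(e))\bigr)=0$ (a nonzerodivisor in $R^+$ on each $R/(\alpha(e))$, which indeed exists in the UFD $R$ regardless of whether $\alpha(e)$ is primitive) and $\depth_{R^+}(R)=2$ to chain injections $H^0_{R^+}(C)\hookrightarrow H^1_{R^+}(I)\hookrightarrow H^2_{R^+}\bigl(H^*_T(\Gamma,\alpha)\bigr)$, and then identify the last term via the \v{C}ech complex as a $\mathbb{Z}$-free module, forcing the $R^+$-torsion submodule $H^0_{R^+}(C)$ to be $\mathbb{Z}$-torsion-free. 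Your approach avoids any choice of prime $p$ and any localization, and actually proves the stronger statement that \emph{all} of $H^0_{R^+}\bigl(H^{odd}_T(G)\bigr)$ is torsion-free as an abelian group; the paper's argument is more elementary, staying within depth and regular-sequence language without invoking local cohomology or the \v{C}ech presentation of $H^2_{R^+}(R)$. Both arguments rely on exactly the same two inputs — the Mayer--Vietoris exact sequence and the $R$-freeness hypothesis on $H^*_T(\Gamma,\alpha)$ — so the underlying mechanism is the same; the difference is in the homological-algebra packaging.
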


\begin{proof}
As in Lemma \ref{lem:dimensionstuff} we obtain the exact sequence 
\[0\rightarrow H_T^{even}(G)\rightarrow H_T^{even}(V)\rightarrow H_T^{even}(E)\rightarrow H^{odd}_T(G)\rightarrow 0.\]
Assume $x$ is of order $n$ and let $p$ be a prime factor of $n$. By replacing $x$ with $\frac{n}{p}x$ we may assume for the sake of the lemma that $x$ is of order $p$. Now we localize at the maximal ideal $\mathfrak{p}=(p,R^+)\subset R$. We define $\mathfrak{q}$ as the maximal ideal of $R_\mathfrak{p}$ and consider the $\mathfrak{q}$-depth of the $R_\mathfrak{p}$-modules. Set $M= H_T^{even}(V)_\mathfrak{p}/H_T^{even}(G)_\mathfrak{p}$. By assumption and Lemma \ref{lem:dimensionstuff}, $H_T^{even}(G)_\mathfrak{p}$ is a free $R_\mathfrak{p}$-module and thus $\depth_\mathfrak{q}(H_T^{even}(G)_\mathfrak{p})=3$. The same holds for $H_T^{even}(V)_\mathfrak{p}$ and thus we obtain $\depth_\mathfrak{q}(M)\geq 2$. Now the exact sequence above gives the short exact sequence
\[0\rightarrow M\rightarrow H_T^{even}(E)_\mathfrak{p}\rightarrow H_T^{odd}(G)_\mathfrak{p}\rightarrow 0.\]

The middle term is a direct sum over modules of $\mathfrak{q}$-depth $\geq 1$ so $\depth_\mathfrak{q}(H_T^{even}(E)_\mathfrak{p})\geq 1$. Consequently we obtain $\depth_\mathfrak{q}H_T^{odd}(G)_\mathfrak{p} \geq 1$. By the description of depth via regular sequences it follows that there is an element $\alpha\in\mathfrak{q}$ such that multiplication with $\alpha$ is injective on $H_T^{odd}(G)_\mathfrak{p}$. We may assume the denominator of $\alpha$ to be $1$ thus $\alpha\in \mathfrak{q}\cap R=\mathfrak{p}$. Now $x$ defines a nontrivial element in $H_T^{odd}(G)_\mathfrak{p}$. It follows inductively that $\alpha^i x$ is nonzero for any $i\geq 0$. Observe that $\alpha$ is of the form $\alpha=k\cdot p+f$ for some $f\in R^+,~k\in\mathbb{Z}$ and that $px=0$ implies $\alpha^ix=f^ix$. Thus $\sqrt{\Ann(x)}$ does not contain $f$.
\end{proof}

\begin{lem}\label{lem:subgroupcorrespondence}
Let $C\subset R^2$ be a subgroup such that $R^2/C$ is finite. Then there is a unique finite subgroup $U\subset T$ with $C=\ker(R^2\rightarrow H^*(BU))$. If furthermore $U'\subset T$ is any closed subgroup and $\ker(R^2\rightarrow H^*(BU')) \subset C$, then $U\subset U'$.
\end{lem}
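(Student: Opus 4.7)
The statement is essentially Pontrjagin duality between the compact abelian Lie group $T$ and its discrete character group $T^\vee := \hom(T,S^1) \cong R^2$, so my plan is to make this explicit and then read off both parts of the lemma.

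First I would identify the algebraic map in the statement with the classical annihilator operation. Via the natural isomorphism $R^2 = H^2(BT;\mathbb{Z}) \cong \hom(T,S^1)$, the map $R^2 \to H^*(BU')$ induced by $BU' \hookrightarrow BT$ lands in degree $2$ and, under the analogous identification $H^2(BU';\mathbb{Z}) \supset \hom(U',S^1)$, is simply restriction of characters $\varphi \mapsto \varphi|_{U'}$. Thus
\[
\ker\bigl(R^2 \to H^*(BU')\bigr) \;=\; (U')^{\perp} \;:=\; \{\varphi \in T^\vee : \varphi|_{U'} = 1\}.
\]
Dually, for a subgroup $C \subset T^\vee$ I would set $C^{\perp} := \{t \in T : \varphi(t) = 1 \text{ for all } \varphi \in C\}$, which is a closed subgroup of $T$.

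For existence I would take $U := C^{\perp}$. Since $C$ has finite index in $T^\vee \cong \mathbb{Z}^2$, Smith normal form provides a $\mathbb{Z}$-basis $\varphi_1,\varphi_2$ of $T^\vee$ and integers $n_1,n_2 \geq 1$ with $C = \mathbb{Z}\langle n_1\varphi_1, n_2\varphi_2\rangle$, from which one reads off that $U = \ker(n_1\varphi_1)\cap \ker(n_2\varphi_2)$ is finite of order $n_1 n_2$, and moreover $U^\perp = C$ (this is the elementary lattice identity $C^{\perp\perp} = C$ for finite-index sublattices of $\mathbb{Z}^2$). Uniqueness follows from the dual identity $U^{\perp\perp}=U$ for closed subgroups of $T$: if two finite subgroups have the same annihilator $C$, then they both equal $C^{\perp}$. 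For the final implication, $(U')^{\perp} \subset C = U^{\perp}$ together with the inclusion-reversing property of $(-)^{\perp}$ gives
\[
U \;=\; U^{\perp\perp} \;\subset\; (U')^{\perp\perp} \;=\; U',
\]
again invoking $U^{\perp\perp}=U$ for the closed subgroup $U' \subset T$.

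There is no real obstacle; the only care needed is to justify that the two kernels involved really are annihilators in the Pontrjagin sense, i.e., to check that $R^2 \to H^2(BU')$ agrees with character restriction for an arbitrary closed subgroup $U' \subset T$ (including disconnected ones). This reduces via the splitting $U' \cong U'_0 \times F$ of a closed subgroup of a torus into its identity component and a finite complement to the two well-known special cases: restriction $\hom(T,S^1) \to \hom(U'_0,S^1) = H^2(BU'_0;\mathbb{Z})$ for subtori, and the first-Chern-class identification $H^2(BF;\mathbb{Z}) \cong \hom(F,S^1)$ for finite abelian $F$. Everything else is then formal Pontrjagin duality.
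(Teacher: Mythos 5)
Your proof is correct, and it follows a genuinely different route from the paper. You set up the problem once and for all as Pontrjagin duality between the compact group $T$ and the discrete group $T^\vee=\hom(T,S^1)\cong R^2$, identify the map $R^2\to H^2(BU')$ with character restriction, and then read off both assertions from the standard annihilator identities $C^{\perp\perp}=C$ and $(U')^{\perp\perp}=U'$. The paper instead avoids invoking full Pontrjagin duality for compact/LCA groups: it fixes $n=|R^2/C|$, passes to the $n$-torsion subgroup $T_n\cong\mathbb{Z}_n\times\mathbb{Z}_n$, proves $H^2(BT_n)\cong\hom(T_n,\mathbb{Z}_n)$ by a short chain of algebraic identifications (universal coefficients, $\mathrm{Ext}$, Hurewicz), and uses only the elementary annihilator bijection for subgroups of the \emph{finite} group $T_n$, reducing a general closed $U'$ to its $n$-torsion $U'_n$ at the end. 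Each approach has something to recommend it: yours is more conceptual, treats existence, uniqueness, and the monotonicity claim uniformly, and makes the canonical choice $U=C^\perp$ visible; the paper's is more self-contained in that it never needs duality beyond finite abelian groups, at the cost of the $T_n$ bookkeeping and a small reduction step for the last claim. One small point you flag yourself and should actually carry out if you wrote this up: the identification of $H^2(BT)\to H^2(BU')$ with character restriction for arbitrary closed $U'\subset T$ needs the splitting $U'\cong U'_0\times F$ plus a Künneth argument to combine the subtorus and finite-group cases; this is routine, but it is the only nontrivial verification in your argument and mirrors exactly the work the paper does via $H^2(BT_n)\cong\hom(T_n,\mathbb{Z}_n)$.
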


\begin{proof}
Let $n$ be the order of $R^2/C$. Then $R^2\supset C\supset nR^2$ and $C$ induces a subgroup $\overline{C}$ of $\Lambda= R^2/nR^2$. Let $T\supset T_n\cong \mathbb{Z}_n\times \mathbb{Z}_n$ be the subgroup of $n$-torsion elements. Then $BT_n\rightarrow BT$ induces an isomorphism $\Lambda\cong H^2(BT_n)$. We have \[H^2(BT_n)\cong \mathrm{Ext}^1_\mathbb{Z}(H_1(BT_n),\mathbb{Z})\cong \hom(H_1(BT_n),\mathbb{Z}_n)\cong \hom(\pi_1(BT_n),\mathbb{Z}_n)\cong \hom(T_n,\mathbb{Z}_n).\]
The second isomorphism is obtained from the long exact sequence
\[0\rightarrow\hom(T_n,\mathbb{Z}_n)\rightarrow \mathrm{Ext}^1(T_n,\mathbb{Z})\xrightarrow{\cdot n} \mathrm{Ext}^1(T_n,\mathbb{Z})\]
associated to the short exact sequence $0\rightarrow \mathbb{Z}\rightarrow \mathbb{Z}\rightarrow\mathbb{Z}_n\rightarrow 0$. For the remaining identifications we use universal coefficients, the Hurewicz Theorem, and the identification of $\pi_1(BT_n)$ with the deck group of $ET\rightarrow BT_n$ (with respect to some choice of base point in $ET$). Now for some $U\subset T_n$ we get an induced continuous map $BU=ET/U\rightarrow ET/T_n=BT_n$ and the induced square
\[\xymatrix{
H^2(BT_n)\ar[d]\ar[r]^\cong& \hom(T_n,\mathbb{Z}_n)\ar[d]\\
H^2(BU)\ar[r]^\cong& \hom(U,\mathbb{Z}_n)
}\]
commutes by naturality of the isomorphism chain above.
Since the maps \[T_n\supset U\mapsto \ker(\hom(T_n,\mathbb{Z}_n)\rightarrow \hom (U,\mathbb{Z}_n))\quad\text{and}\quad\hom(T_n,\mathbb{Z}_n)\supset U\mapsto \bigcap_{\phi\in U}\ker\phi\] are inverse to one another, it follows that there is a unique group $U\subset T_n$ such that we have $\ker (H^2(BT_n)\rightarrow H^2(BU))=\overline{C}$. But then $C=\ker(H^2(BT)\rightarrow H^2(BU))$. Furthermore since any two finite subgroups $U$ and $U'$ lie in a common subgroup of the form $T_n$ we obtain uniqueness of $U$.

Now let $U'\subset T$ be any closed subgroup such that $D:=\ker(R^2\rightarrow H(BU'))$ is contained in $C$. The group $U'$ is isomorphic to a product of copies of $S^1$ and finite cyclic groups. Thus it follows by looking at individual factors that if we set $U'_n$ to be the $n$-torsion subgroup then the kernel of $H^2(BU')\rightarrow H^2(BU'_n)$ is exactly $n\cdot H^2(BU')$. It follows that $D':=\ker(R^2\rightarrow H^2(BU'_n))=n\cdot R^2+D\subset C$. Now $U'_n$ and $U$ from above are both subgroups of $T_n$ and the prior results on subgroups of $T_n$ imply $U\subset U'_n$. This finishes the proof.
\end{proof}

\begin{prop}\label{prop:freeimpliestorsioncondition}
If $H^*_T(\Gamma,\alpha)$ is a free $R$-module, then $H^*(X)$ is torsion free.
\end{prop}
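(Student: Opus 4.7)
The plan is to chain together the preparatory results in reverse order. Proposition \ref{prop:torsionfree} reduces the claim to showing $H^3_T(\widetilde{G})$ is torsion-free, and Lemma \ref{lem:restriction} reduces this further to the statement that every nontrivial torsion element $x\in H^3_T(G)$ restricts nontrivially to $H^3_T(G^U)$ for some nontrivial subgroup $U\subset T$. I would first pass to a suitable nonzero multiple of $x$ so that $x$ has prime order $p$, then apply Lemma \ref{lem:depth} (this is where the freeness of $H^*_T(\Gamma,\alpha)$ enters) to produce a prime $\mathfrak{p}\subset R$ with $\Ann(x)\subset\mathfrak{p}$ but $R^+\not\subset\mathfrak{p}$.

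The next step is to extract the required subgroup $U$ from $\mathfrak{p}$ via Lemma \ref{lem:subgroupcorrespondence}. Since $p\in\Ann(x)\subset\mathfrak{p}$, the subgroup $C := \mathfrak{p}\cap R^2\subset R^2$ contains $pR^2$ and so has finite index, while $R^+\not\subset\mathfrak{p}$ forces $C\subsetneq R^2$. The correspondence then produces a nontrivial finite subgroup $U\subset T$ with $\ker(R^2\to H^*(BU))=C$, and by construction the edge set of $\Gamma^U$ is exactly $\{e\in E(\Gamma):\alpha(e)\in\mathfrak{p}\}$.

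To finish I would localize the Mayer--Vietoris sequence from the proof of Lemma \ref{lem:dimensionstuff} at $\mathfrak{p}$. For $e\notin E(\Gamma^U)$ the weight $\alpha(e)$ is a unit in $R_\mathfrak{p}$, hence the corresponding summand $R/\alpha(e)$ localizes to zero; the localized Mayer--Vietoris sequences for $G$ and for $G^U$ therefore coincide term by term, so the restriction map induces an isomorphism $H^3_T(G)_\mathfrak{p}\cong H^3_T(G^U)_\mathfrak{p}$. Since $\Ann(x)\subset\mathfrak{p}$ the class $x_\mathfrak{p}$ is nonzero, and hence $x|_{G^U}\neq 0$ in $H^3_T(G^U)$.

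The step that requires the most care, and which is where the freeness hypothesis on $H^*_T(\Gamma,\alpha)$ ultimately pays off, is guaranteeing the nontriviality of $U$. Without the control $R^+\not\subset\sqrt{\Ann(x)}$ supplied by Lemma \ref{lem:depth}, the only available prime $\mathfrak{p}$ would have to be the maximal ideal $(p,R^+)$, corresponding to $U=T$; but this choice is useless, because $H^*_T(V)=\bigoplus_v R$ is torsion-free and so every torsion class in $H^3_T(G)$ automatically restricts to zero on $G^T=V$. Beyond this subtlety the whole argument is formal, relying only on exactness of localization and the bookkeeping of Lemma \ref{lem:subgroupcorrespondence}.
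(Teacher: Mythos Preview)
Your proof is correct and follows essentially the same route as the paper: chain Proposition~\ref{prop:torsionfree}, Lemma~\ref{lem:restriction}, Lemma~\ref{lem:depth}, and Lemma~\ref{lem:subgroupcorrespondence} in exactly this order. The one genuine variation is the final step: the paper invokes the Borel localization theorem (Allday--Puppe) for the multiplicative set $S$ generated by $R^2\setminus C$ and then argues $G^S\subset G^U$, whereas you localize the explicit Mayer--Vietoris sequence of $G$ at $\mathfrak{p}$ directly and compare it to that of $G^U$. Your version is slightly more self-contained; note that it implicitly uses $H_T^{even}(\text{edge }e)\cong R/(\alpha(e))$, which holds even for non-primitive $\alpha(e)$ since $H^*(B(S^1\times\mu_m);\mathbb{Z})\cong\mathbb{Z}[t,u]/(mu)$. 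A related minor difference is that you take $C=\mathfrak{p}\cap R^2$ directly (using $p\in\Ann(x)\subset\mathfrak{p}$ to force finite index), whereas the paper only asks for $C$ to be any proper finite-index subgroup containing $\mathfrak{p}\cap R^2$.

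One small slip in your closing commentary: with $\mathfrak{p}=(p,R^+)$ one gets $C=\mathfrak{p}\cap R^2=R^2$, which under Lemma~\ref{lem:subgroupcorrespondence} corresponds to the \emph{trivial} subgroup $U$ (so $G^U=G$), not to $U=T$. The conclusion that this prime is useless is of course still correct, and the main argument is unaffected.
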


\begin{proof}
We show that $H_T^3(\widetilde{G})$ is torsion free and use Proposition \ref{prop:torsionfree}.
Combining Lemma \ref{lem:restriction} and Lemma \ref{lem:depth} it suffices to prove the following. An element $x\in H_T^*(G)$ for which $\sqrt{\Ann(x)}$ does not contain $R^+$ restricts nontrivially to $H_T^*(G^U)$ for a nontrivial subgroup $U\subset T$. As $\sqrt{\Ann(x)}$ is the intersection of all prime ideals containing $\Ann(x)$, there is a prime ideal $\mathfrak{p}\subset R$ which contains $\Ann(x)$ but does not contain $R^+$. In particular $\mathfrak{p}\cap R^2$ is strictly smaller than $R^2$ and we choose a finite index strict subgroup $C\subset R^2$ which contains $\mathfrak{p}\cap R^2$.
Denote by $S\subset R$ the multiplicative subset generated by $R^2\backslash C$. We observe that $(R^2\backslash C)\cap \mathfrak{p}=\emptyset$ implies $S\cap\mathfrak{p}=\emptyset$ and hence $S\cap \Ann(x)=\emptyset$. Thus $x$ induces a nontrivial element of $S^{-1}H_T^*(G)$.

By Borel localization \cite[Theorem 3.2.6]{AlldayPuppe} it follows that $x$ maps to a nontrivial element in $H_T^*(G^S)$, where $G^S=\{p\in G~|~ \text{no element of $S$ becomes trivial in $H^*(BT_p)$}\}$ and $T_p$ is the stabilizer of $p$.
Now by Lemma \ref{lem:subgroupcorrespondence} there is a unique (finite) nontrivial subgroup $U\subset T$ such that $C=\ker(H^2(BT)\rightarrow H^2(BU))$. The proof is complete if we can show $G^S\subset G^U$ as then $x$ restricts nontrivially to $H_T^*(G^U)$.
For $p\in G^S$ we observe that $\ker (H^2(BT)\rightarrow H^2(BT_p))$ has to be contained in $C$. Also by Lemma \ref{lem:subgroupcorrespondence} it follows that $U\subset T_p$ and hence $p\in G^U$.
\end{proof}

\begin{thm}\label{thm:mainintegerGKM}
If $H^*_T(\Gamma,\alpha)$ is a free $R$-module and $H^*(\Gamma,\alpha;\QQ)$ satisfies ($6$-dimensional) Poincaré duality, then $X$ is an integer GKM manifold.
\end{thm}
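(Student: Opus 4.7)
The plan is to combine the rational result already established in Theorem \ref{thm:rationalGKM} with the torsion-freeness statement of Proposition \ref{prop:freeimpliestorsioncondition}, and conclude via universal coefficients.

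First, I would verify that the conditions of Definition \ref{defn:gkmaction} which are not cohomological are immediate from the construction. By construction the fixed point set $X^T$ consists of the centers of the building blocks $B_v$, one for each vertex of $\Gamma$, hence is finite; and the $1$-skeleton $X_1$ is precisely the graph $G$ of $T$-invariant $2$-spheres encoded by $(\Gamma,\alpha)$. Compactness is clear since $X$ is built from finitely many compact handles, connectedness from connectedness of $\Gamma$, orientability was established in Proposition \ref{prop:orientability} (using that $(\Gamma,\alpha)$ is orientable by Corollary \ref{cor:PDorientable}), and $X$ is simply-connected as shown in the preceding fundamental group computation. The dimension is $6$ by construction.

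It therefore remains to verify that $H^{odd}(X;\ZZ)=0$. By Theorem \ref{thm:rationalGKM}, the Poincaré duality hypothesis on $H^*(\Gamma,\alpha;\QQ)$ implies that $X$ is a rational GKM manifold, hence $H^{odd}(X;\QQ)=0$. On the other hand, the freeness hypothesis on $H^*_T(\Gamma,\alpha)$ allows us to apply Proposition \ref{prop:freeimpliestorsioncondition}, which yields that $H^*(X;\ZZ)$ is torsion free. Combining these two facts via the universal coefficient theorem, $H^k(X;\ZZ)\otimes\QQ = H^k(X;\QQ) = 0$ for odd $k$, and since $H^k(X;\ZZ)$ has no torsion, it must itself vanish. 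Thus $H^{odd}(X;\ZZ)=0$, and all conditions of Definition \ref{defn:gkmaction} for an integer GKM action are satisfied.

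The key point of the argument is that the heavy lifting has already been done: Theorem \ref{thm:rationalGKM} handles the rational equivariant formality and Proposition \ref{prop:freeimpliestorsioncondition} handles the absence of integral torsion; at this stage no further geometric or combinatorial input is needed, so the proof reduces to a brief bookkeeping of the conditions in Definition \ref{defn:gkmaction}.
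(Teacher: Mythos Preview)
Your proof is correct and takes essentially the same approach as the paper: the paper's proof is the single sentence ``This follows from Theorem \ref{thm:rationalGKM} and Proposition \ref{prop:freeimpliestorsioncondition},'' and you have simply unpacked the implicit reasoning behind this, namely that rational vanishing of odd cohomology together with integral torsion-freeness forces $H^{odd}(X;\ZZ)=0$.
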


\begin{proof}
This follows from Theorem \ref{thm:rationalGKM} and Proposition \ref{prop:freeimpliestorsioncondition}.
\end{proof}

\section{Rigidity}\label{sec:rigidity}
In this section we prove the rigidity statement that a simply-connected $6$-dimensional integer $T^2$-GKM manifold such that 
\begin{enumerate}
\item[(a)] every closed stratum of a finite isotropy group contains a $T^2$-fixed point. 
\item[(b)] there exists a $T^2$-fixed point in whose vicinity there occur at most two distinct finite nontrivial isotropy groups
\end{enumerate}
is determined, up to equivariant homeomorphism, by their GKM graph, see Theorem \ref{thm:rigidity}.
 We would like to remark that in the special case of connected isotropy groups it can also be proven using the results and ideas of \cite{Ayzenberg}. Concretely, one has to observe that an isomorphism of the GKM graphs of two integer GKM manifolds as above extends to a homeomorphism of the orbit spaces (which are necessarily $S^4$ by \cite[Corollary 1.3]{AyzenbergMasuda}) intertwining the GKM graphs, and apply \cite[Theorem 5.5]{Ayzenberg} or \cite[Corollary 5.8]{Ayzenberg}. Our proof proceeds along the lines of our realization procedure: we first prove that there is an equivariant homeomorphism between any two thickenings of the one-skeleta, extend this equivariant homeomorphism to a thickening of the full nonregular set and then conclude by extending it to their complements. 

\begin{rem}\label{rem:condition(b)} Condition (b) above has an equivalent formulation in terms of the GKM graph. It is equivalent to the fact that there are two adjacent edges whose weights are a $\mathbb{Z}$-basis. Indeed the finite isotropies occurring in any neighbourhood of a fixed point are precisely the intersections $\ker\alpha_1\cap\ker\alpha_2$, $\ker\alpha_2\cap\ker\alpha_3$, and $\ker\alpha_3\cap\ker\alpha_1$, where the $\alpha_i$ are the weights at that fixed point. Clearly if two of the $\alpha_i$ form a $\mathbb{Z}$-basis then the corresponding intersection is empty. Conversely if the three intersections are non-empty, then they must be pairwise distinct for otherwise the triple intersection $\ker\alpha_1\cap\ker\alpha_2\cap\ker\alpha_3$ is non-empty and the action would not be effective.
\end{rem}

\subsection{Neighbourhoods of the one-skeleton}

\begin{lem}\label{lem:equivisotopies} Consider the $T$-space $X=S^1_{\alpha_1} \times D^4_{(\alpha_2,\alpha_3)}$, where the $\alpha_i\colon T\rightarrow S^1$ are pairwise linearly independent. Let $Y$ be the space of self homeomorphisms of $X$ which are $T$-equivariant, respect the $D^4$-fibers over $S^1$, and are $\mathbb{R}$-linear on the fibers. Then $Y$ is connected.
\end{lem}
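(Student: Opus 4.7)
My plan is to parametrize $Y$ explicitly and reduce the connectedness question to that of a centralizer in $O(4)$. First, any $f \in Y$ has the form $f(s, v) = (g(s), h_s(v))$ where $g\colon S^1 \to S^1$ is a self-homeomorphism and $s \mapsto h_s \in O(4)$ is a continuous family; the orthogonality of $h_s$ is automatic, since any $\mathbb{R}$-linear endomorphism of $\mathbb{R}^4$ that restricts to a self-homeomorphism of the closed unit disk must be a Euclidean isometry.

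Next I would use the $T$-equivariance of $f$ to constrain $g$ and $h$. The $T$-action on $S^1$ factors through $\alpha_1$, which is primitive and hence surjective; consequently $g$ must commute with every rotation, forcing $g(s) = as$ for a unique $a \in S^1$. On the fibers, equivariance reads $h_{\alpha_1(t)s} = \rho(t) \circ h_s \circ \rho(t)^{-1}$, where $\rho\colon T \to O(4)$ encodes the action on $D^4$ by $(\alpha_2, \alpha_3)$. Surjectivity of $\alpha_1$ then determines the whole family $h$ from its value $h_1$, and well-definedness forces $h_1$ to lie in the centralizer $C := Z_{O(4)}(\rho(\ker \alpha_1))$. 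Conversely, every pair $(a, h_1) \in S^1 \times C$ arises in this way, and the evaluation map $Y \to S^1 \times C$, $f \mapsto (a, h_1)$, is a homeomorphism (continuity of the inverse follows from the existence of local sections of $\alpha_1$). Hence it suffices to show that $C$ is connected.

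The main technical content is verifying connectedness of $C$. The subgroup $\rho(\ker \alpha_1) \cong S^1$ acts on $\mathbb{C}^2 = D^4$ with integer weights $(p, q)$, namely the values of $\alpha_2, \alpha_3$ on a generator of $\ker \alpha_1$; pairwise linear independence of the $\alpha_i$ together with primitivity of $\alpha_1$ ensures $p, q \neq 0$. If $|p| \neq |q|$, the real $2$-dimensional irreducible summands $\mathbb{C}_{(p)}, \mathbb{C}_{(q)}$ are non-isomorphic as $S^1$-representations; any centralizing element preserves this decomposition, and commuting with a nontrivial rotation on each summand forces it to be $\mathbb{C}$-linear there, so $C = U(1) \times U(1)$. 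The more delicate case is $|p| = |q|$, where the two summands become isomorphic and $C$ a priori contains elements mixing them. To handle this I would introduce a modified complex structure $J'$ on $\mathbb{R}^4$ (the standard one when $p = q$, and the one obtained by negating $J$ on the second factor when $p = -q$) with respect to which $\rho$ acts by scalar multiplication; the centralizer then consists of the $J'$-linear orthogonal maps, which is a copy of $U(2)$ inside $O(4)$. In both cases $C$ is connected, so $Y \cong S^1 \times C$ is connected.
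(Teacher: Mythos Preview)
Your proof is correct and follows essentially the same approach as the paper: both identify $Y$ with $S^1 \times C$, where $C$ is the centralizer in $O(4)$ of the image of $\ker\alpha_1$ under $(\alpha_2,\alpha_3)$, and then verify that $C$ is connected by distinguishing the cases $|p|\neq|q|$ (giving $C\cong U(1)\times U(1)$) and $|p|=|q|$ (giving $C\cong U(2)$). Your argument is slightly more explicit in places---for instance in justifying $h_s\in O(4)$ and in handling the $p=-q$ case via a modified complex structure---but the ideas and structure coincide with the paper's proof.
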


\begin{proof}
Let $f\in Y$. Then there is a linear transformation of $\mathbb{R}^4$ preserving the unit disk, i.e. $A\in \mathrm{O}(4)$, as well as $t\in S^1$ such that for any $p\in D^4$ we have $f(1,p)=(t, Ap)$. Let $S\subset T$ denote the kernel of $\alpha_1$. This is a one-dimensional, possibly disconnected subgroup of $T$ Then for any $s\in S$, $p\in D^4$ the $T$-equivariance implies $Asp=sAp$. Denoting by $\alpha(S)$ the subgroup that is the image of $S$ under $(\alpha_2,\alpha_3)\colon T\rightarrow S^1\times S^1\subset \mathrm{U}(2)\subset \mathrm{O}(4)$ this means that $A$ lies in the centraliser $C(\alpha(S))\subset \mathrm{O}(4)$ of $\alpha(S)$. Conversely every $(t,A)\in S^1\times C(\alpha(S))$ defines a unique $f\in Y$ by setting $f(s,p)=z\cdot (t(z^{-1}s),A(z^{-1}p))$ for some $z\in T$ with $z\cdot 1=s$. Thus $Y\cong S^1\times C(\alpha(S))$.

It remains to see that $C(\alpha(S))$ is connected. Let $S_0$ be the identity component of $S$.
Note that neither $\alpha_2$ nor $\alpha_3$ vanish on $S_0$ since otherwise they
would be linearly dependent from $\alpha_1$. If $\alpha_2$ and $\alpha_3$ give
rise to isomorphic $2$-dimensional $S_0$-representations (i.e.\
$\alpha_2|_{S_0}=\pm \alpha_3|_{S_0}$), then $C(\alpha(S_0))$ is isomorphic to
$\mathrm{U}(2)$: indeed in case the signs of the weights agree, $\alpha(S_0)$ is
the diagonal circle and we have that it is $\mathrm{U}(2)$. In case they do not
we obtain a conjugate of $\mathrm{U}(2)$ in $\mathrm{O}(4)$. As $S$ is generated
by $S_0$ and a cyclic subgroup, the centralizer $C(\alpha(S))$ is isomorphic to
the centralizer of a single element in $\mathrm{U}(2)$, in particular connected.
If the subrepresentations defined by the $\alpha_i|_{S_0}$
are not isomorphic then by Schur's Lemma they are preserved by $A$.
Automorphisms of the irreducible representations are isomorphic to $S^1$, hence
$C(\alpha(S_0))= T^2$ in this case. Thus also
$C(\alpha(S))=T^2$.
\end{proof}

\begin{prop}\label{prop:thickening} Let $X$ be a smooth orientable $T$-manifold whose one-skeleton is the $T$-space encoded in a GKM graph $(\Gamma,\alpha)$. Then there is a smooth codimension $0$ submanifold $\tilde{M}$ with boundary which contains the one-skeleton of the action and which is equivariantly homeomorphic to the manifold $M$ constructed in Section \ref{sec:realization}. Moreover if $X'$ is another $T$-manifold as above, then we can choose an analogous neighbourhood $\tilde{M}'\subset X'$ of the one-skeleton such that $\tilde{M}'$ is equivariantly diffeomorphic to $\tilde{M}$.
\end{prop}

\begin{proof}
We choose a $T$-invariant Riemannian metric $g$ on $X$ such that for any fixed
point $p$ the exponential map $T_p X\rightarrow X$ is an isometry on a small
neighbourhood $U_p$ of the origin, i.e.\ $g$ is flat around the fixed points.
For a fixed point $p$ corresponding to some vertex $v\in V(\Gamma)$, the
$T$-representation $T_p X$ decomposes as the sum of irreducible $2$-dimensional
subrepresentations $V_i$, $i=1,2,3$. Now choose $\varepsilon$ small enough such
that \[\tilde{B}_v:=\{(u_1,u_2,u_3)\in V_1\oplus V_2\oplus
V_3~|~\|u_1\|,\|u_2\|,\|u_3\|\leq \varepsilon\}\]
is contained in $U_p$. In particular $\exp(\tilde{B_v})$ is a neighbourhood of $p$ isometric to $\tilde{B}_v$.

For an edge $e\in E(\Gamma)$ connecting vertices $v,w$ there is a corresponding
invariant $2$-sphere $S_e$. We consider the normal bundle $NS_e$. For
$\delta>0$, let $D_\delta S_e$ be the disk bundle obtained by fiberwise
restriction to closed $\delta$-balls around the $0$-section. For $\delta$ sufficiently small the exponential map is a diffeomorphism from $D_\delta S_e$ onto a $T$-invariant tubular neighbourhood of $S_e$. If we assume $\delta$ to be small compared to $\varepsilon$, then by choice of the metric $\exp(\tilde{B}_v)\cap \exp (D_\delta S_e)$ is $\exp(D_\delta B_\varepsilon^2(p))$, where $B_\varepsilon^2(p)\subset S_e$ is the $2$-dimensional closed $\varepsilon$-ball $B_\varepsilon^2(p)\subset S_e$ around the fixed point $p\in S_e$ corresponding to $v$ and $D_\delta  B_\varepsilon^2 (p)$ denotes the restriction of the disk bundle to this subset (we will use the analogous notation for restrictions of the disk bundle to any subset).
We use analogous terminology around the fixed point $q$ corresponding to $w$. We define $\tilde{K}_e$ to be the closure of \[D_\delta S_e\backslash (D_\delta B^2_\varepsilon(p)\cup D_\delta B^2_\varepsilon(q)).\]
Now $\exp(\tilde{B}_v)\cap \exp(\tilde{K}_e)$ is $\exp(D_\delta (\partial
B^2_\varepsilon(p))$ and the analogous observation holds around $q$. We do this
for all $v\in V(\Gamma)$, $e\in E(\Gamma)$ and choose $\varepsilon,\delta$
sufficiently small such that intersections of the above type (i.e. vertex and
adjacent edge) are the only nontrivial intersections among all of the
$\exp(\tilde{B}_v)$ and $\exp(\tilde{K}_e)$. Furthermore we observe that the
union of all of the latter give a topological manifold with boundary
$\tilde{M}\subset X$ which is a neighbourhood of the one-skeleton. Note that at this point $\tilde{M}$ is not a smooth submanifold. We will prove first that
$\tilde M$ is equivariantly homeomorphic to $M$ and address questions of smoothness in the end.

We now choose $T$-equivariant homeomorphisms $f\colon B_v\rightarrow \tilde{B}_v$ where we use the terminology from Section \ref{sec:realization}. Let again $v$ be a vertex belonging to an edge $e$ and $p$ be the fixed point associated to $v$. We identify $T_p X\cong \mathbb{C}^3$ isometrically such that each of the $\mathbb{C}$-factors is an irreducible subrepresentation. Then the choice of $f$ as above comes down to choosing an identification \[\mathbb{C}^3\supset \mathbb{R}^3\supset D^3\cap (I^3)\rightarrow [0,\varepsilon]^3 \subset \mathbb{R}^3\subset \mathbb{C}^3\] and extending $T^3$-equivariantly w.r.t.\ the standard $T^3$-action.

Recall that $K_e$ gets glued to $B_v$ along a map $\psi_{e,v}\colon S^1\times D^4\rightarrow \partial B_v$. The definition of $\psi_{e,v}$ involved the choice of a small constant $\delta$ which we assume to be the same as above. From the definition of $\psi_{e,v}$ we see that the homeomorphism $f$ can be adjusted via suitable real stretching factors such that $f\circ \psi_{e,v}$ is --up to switching of coordinates-- the map
\[S^1\times D^4\rightarrow \tilde{B}_v,\quad (t,z_1,z_2)\mapsto (\varepsilon t,
\delta \cdot c_{\varepsilon_2}(z_1)\cdot t^{k_2},\delta\cdot
c_{\varepsilon_3}(z_2)\cdot t^{k_3}).\]
In particular composing this with the exponential map the image agrees with $\exp(D_\delta(\partial B^2_\varepsilon(p))$. Hence there is a unique map $g$ making the diagram
\[\xymatrix{
S^1\times D^4\ar[d]^{\psi_{e,v}}\ar[r]^g & D_\delta(\partial B^2_\varepsilon(p))\ar[d]^{\exp}\\
B_v\ar[r]^{\exp\circ f}& \exp(\tilde{B}_v)
}\]
commute. It is $T$-equivariant, respects the respective fibers over $S^1$ and $\partial B^2_\varepsilon$ and is in fact $\mathbb{R}$-linear on these fibers. The same situation occurs around the other end of the edge near the fixed point $q$ associated to the other vertex $w$ connected to $e$. Thus we obtain a map $g\colon \partial K_e\rightarrow D_\delta(\partial B_\varepsilon^2(p)\cup \partial B_\varepsilon^2(q))$. In order to obtain the desired equivariant homeomorphism $M\rightarrow \tilde{M}$ it suffices to extend $g$ to an equivariant homeomorphism $K_e\rightarrow \tilde{K}_e$.

Observe that $\tilde{K}_e$ is an equivariant disk bundle over $S_e\backslash
(B^2_\varepsilon(p)\cup B^2_\varepsilon(q))$. The latter is equivariantly homeomorphic
to $\partial B_\varepsilon^2(p)\times I$ which in turn deformation retracts
equivariantly onto the subspace $\partial B^2_\varepsilon(p)\times\{0\}$. It
follows that as a linear disk bundle $\tilde{K}_e$ is equivalent to $D_\delta
(\partial B_\varepsilon^2(p))\times I$. We can now identify $D_\delta(\partial
B^2_\varepsilon(p))\cong S^1\times D^4$, e.g.\ using the isomorphism $g$ of linear
disk bundles above. The extension problem for $g$ above then reduces to the question whether we can extend a $T$-equivariant linear bundle automorphism (not necessarily covering the identity) of $S^1\times D^4\times \partial I$ to all of $S^1\times D^4\times I$. This follows from Lemma \ref{lem:equivisotopies}.

Finally, regarding the smoothness, we adjust $\tilde{M}$ by modifying $\tilde{B}_v$ within $T_p X\cong\mathbb{C}^3$ by suitable real stretching. Using the local flatness of the metric in the neighbourhood $\exp(U_p)$ lets us achieve that the new $\exp (\tilde{B}_v)$ connects smoothly to $\exp(\tilde{K}_e)$ and hence $\tilde{M}$ is a smooth submanifold. Given another manifold $X'$ with the same one-skeleton we construct $\tilde{M}'$ in completely analogous fashion as a union of $\exp(\tilde{B}_v)$ and $\exp(\tilde{K}_v)$. We obtain homeomorphism $f'\colon B_v\rightarrow \tilde{B_v'}$ in the same way as we defined $f$ above, where due to the modifications made to the $\tilde{B}_v$, the role of $[0,\epsilon]^3$ is replaced by a suitable smooth submanifold of $\mathbb{R}^3$ with corners. Then $f'\circ f^{-1}$ is smooth. The resulting map on the boundary of $\exp(\tilde{K}_e)$ is smooth and the extension can be carried out smoothly. Thus $\tilde{M}$ is diffeomorphic to $\tilde{M}'$.
\end{proof}

\begin{lem}\label{lem:isotropymanifolds}
Let $X$ be a $6$-dimensional integer $T$-GKM manifold such that any finite
isotropy stratum contains a $T$-fixed point. Then for
any finite group $H\subset T$ the fixed point set $X^H$ is a disjoint union of
(non-effective) $4$-dimensional quasitoric $T$-manifolds, invariant $2$-spheres and isolated points.
\end{lem}
\begin{proof}
Every component $N$ of $X^H$ where the principal isotropy has positive dimension
is either a fixed point or an invariant $2$-sphere. In case the principal
isotropy on $N$ is discrete then by assumption the finite
isotropy stratum $N$ contains a $T$-fixed point. It follows from our global
effectivity assumption that the discrete principal isotropy on $N$ is the
intersection of two weights at this fixed point. Hence $N$ is $4$-dimensional.
In particular $N$ is also a component of $X^{H'}$ for any nontrivial subgroup
$H'\subset H$. Hence it suffices to prove the case where $H\cong \mathbb{Z}_p$
for a prime $p$.

As a first step we show that the $T$-action on $X^H$ is equivariantly formal
over $\mathbb{Q}$. In order to show this it suffices to prove that
$\dim_\mathbb{Q} H^*(X^H;\mathbb{Q})=\dim_\mathbb{Q} H^*(X^T;\mathbb{Q})$, where
we note that  $\dim_\mathbb{Q} H^*(X^H;\mathbb{Q})\geq\dim_\mathbb{Q}
H^*(X^T;\mathbb{Q})$ is automatic (see e.g.\ \cite[Proposition 9.6]{GZsurvey}).
Now since the Serre spectral sequence of $X_T\rightarrow BT$ collapses
with $\mathbb{Z}$-coefficients and hence also with
$\mathbb{Z}_p$-coefficients, the same holds for the Borel fibration of the
restricted $H$-action which is the pullback of the above map along
$BH\rightarrow BT$. Hence \[\dim_{\ZZ_p} H^*(X;\mathbb{Z}_p)=
\mathrm{rk}_{H^*(BH;\ZZ_p)} H_H^*(X;\ZZ_p)= \mathrm{rk}_{H^*(BH;\ZZ_p)}
H_H^*(X^H;\ZZ_p)=\dim_{\ZZ_p} H^*(X^H;\mathbb{Z}_p)\]
where the first equality uses the aforementioned collapsing of $X_H\rightarrow BH$ and the second equality follows from Borel localization (cf.\ \cite[Theorem 3.2.6]{AlldayPuppe}). Furthermore the universal coefficient theorem implies $\dim_\QQ H^*(X^H;\QQ)\leq\dim_{\ZZ_p} H^*(X^H;\ZZ_p)$. It follows that
\[\dim_\QQ H^*(X^H;\QQ)\leq \dim_{\ZZ_p} H^*(X;\mathbb{Z}_p)=\dim_\QQ H^*(X;\QQ)=\dim_\QQ H^*(X^T;\QQ).\]
Thus $T$ indeed acts rationally equivariantly formal on $X^H$. Let $N$ be a
$4$-dimensional component of $X^H$. Then the $T$-action on $N$ is rationally
equivariantly formal as well. Since any closed stratum of an
occurring finite isotropy group is 4-dimensional, there is only a single
occurring finite isotropy type $U\supset H$ on $N$. In particular as a
$T/U$-manifold $N$ is rationally equivariantly formal and locally standard of
complexity $0$ (cf.\ \cite[Definition 7.1.1]{ToricTopology}).
The locally standard condition implies that $N/T$ is a surface $\Sigma$ with
boundary $\partial \Sigma$. The part of $N$ lying over $\partial \Sigma$
consists of circles of $2$-spheres in the one-skeleton. The remaining orbits are
regular. It remains to show that $\Sigma$ is a disk (and in
particular that it has only one boundary component). But as $T/U$ acts
rationally equivariantly formally we have $H_{T/U}^{odd}(N;\mathbb{Q})=0$ and a
Mayer-Vietoris argument shows that
$H^1(\Sigma;\mathbb{Q})=H^1_{T/U}(N;\mathbb{Q})=0$. The disc is the only surface
with non-trivial boundary satisfying this condition.
\end{proof}

\begin{lem}\label{lem:rededges}
Let $(\Gamma,\alpha)$ be a $3$-valent GKM graph, $\nabla$ a compatible connection, and $c$ a connection path. Then we can order the connection paths $c_1,\ldots,c_k$ with $c=c_k$ such that there are $e_1,\ldots,e_{k-1}\in E(\Gamma)$ with the property that for $1\leq i\leq k-1$ the edge $e_i$ is contained in $c_i$ but not in $c_j$ for $j<i$.
\end{lem}
\begin{proof}
We observe first that any edge is contained in precisely two connection paths. We consider the dual graph $\Gamma^*$ whose vertices are the connection paths and whose edges are given by $E(\Gamma)$ where an edge $e$ connects the two connection paths it is contained in. Now choose a maximal subtree $\cT\subset \Gamma^*$ and orient the edges such that they point away from $c$. We order the connection paths as $c_1,\ldots,c_k$ in a way such that moving along an oriented edge in $T$ reduces the index. Now for every $c_i$, $i\leq k-1$, choose $e_i$ to be the unique edge pointing into $c_i$.
\end{proof}

\begin{prop}\label{prop:twohandlethickening}
Let $X_1$ and $X_2$ be two $6$-dimensional integer $T$-GKM manifolds with the
property that any finite isotropy stratum contains a $T$-fixed point. We assume
that they have the same GKM graph $(\Gamma,\alpha)$. Assume furthermore that
there are two adjacent weights in $\Gamma$ which form a basis. Then there are
codimension $0$ $T$-invariant submanifolds $Y_1\subset X_1$, $Y_2\subset X_2$
which contain all singular orbits and such that $Y_1$ and $Y_2$ are
equivariantly homeomorphic.
\end{prop}

\begin{proof}
We start by considering smooth neighbourhoods of the one-skeleta $M_1\subset X_1$ and $M_2\subset X_2$ as in Proposition \ref{prop:thickening} as well as a diffeomorphism $\varphi\colon M_1\rightarrow M_2$. Now the $M_i$ already contain all singular orbits of positive dimensional isotropy. Thus we must extend the neighbourhoods to the orbits of nontrivial finite isotropy. By Lemma \ref{lem:isotropymanifolds} these are exactly the regular strata of (noneffective) quasitoric $T$-manifolds embedded into the $X_i$ whose individual $1$-skeleta are circles of two-spheres corresponding to a connection path.

We consider a connection path $c$ along which the intersection of the kernels of the weights is some nontrivial finite group $H\subset T$ and let $Q_1\subset X_1$ be the associated quasitoric $4$-manifold with principal isotropy $T/H$. Then $\varphi$ maps a neighbourhood of the $1$-skeleton of $Q_1$ onto a neighbourhood of a quasitoric $4$-manifold $Q_2\subset X_2$. Abstractly we know that $Q_1$ and $Q_2$ are equivariantly diffeomorphic. We wish to extend $\varphi$ to an equivariant diffeomorphism $Q_1\cong Q_2$. We note that $B_i:=Q_i\backslash \mathring{M_i}$ is equivariantly homeomorphic to ${D^2}\times T/H$. Thus we have to extend a given smooth equivariant diffeomorphism $h\colon S^1\times T/H\rightarrow S^1\times T/H$ to an equivariant diffeomorphism of $D^2\times T/H$. We may assume $h$ is of the form $(p,t)\mapsto (p,\eta(p)t)$ where $\eta\colon S^1\rightarrow T/H$. Then $h$ extends if and only if $\eta$ is nullhomotopic. To achieve this we may modify $\varphi\colon  M_1\cong M_2$ as follows. Choose an edge $e$ that is part of the connection path of $Q_1$ (the choice of $e$ will be elaborated on below, for now it is arbitrary). Then there is a building block associated to $e$ in $M_1\cong M$ (cf.\ Proposition \ref{prop:thickening}) which is diffeomorphic to $K_e= I\times S^1_{\alpha_1}\times D^4_{\alpha_2,\alpha_3}$ where $\alpha_1,\alpha_2$ are weights belonging to edges in $c$. The intersection of $\partial B_1\cong S^1\times T/H\subset Q_1$ with this $K_e$ is precisely
$I\times S^1_{\alpha_1}\times S^1_{\alpha_2}\times \{0\}\subset K_e$. Now we
precompose $\varphi$ with the automorphism of $M_1$ which is the identity
outside $K_e$ and which multiplies the $\alpha_1$ and $\alpha_2$ component of
$K_e$ with the inverse of $\eta$ parametrized over the $I$-component. Note that
this is well defined as $\ker\alpha_1\cap\ker\alpha_2=H$ and the
$\alpha_3$-component remains fixed. Hence we may assume that $\varphi$ extends
over $B_1$ to an equivariant diffeomorphism $Q_1\cong Q_2$.

Next we wish to extend the map between normal bundles $NQ_i$. Note that
$\varphi$ induces a map between the normal bundles $N(Q_1\backslash
B_1)\rightarrow N(Q_2\backslash B_2)$. For convenience we may assume $\varphi$
extends to a small collar around and $M_1$ and hence the map between normal
bundles $NQ_1|_{\partial B_1}\rightarrow NQ_2|_{\partial B_2}$ is defined. We
observe that the inclusion of $B_1$ is equivariantly homotopic to the map onto a
single $T/H$ orbit which we may take inside $K_e$. Hence $NB_1$ is equivariantly
isomorphic to the diagonal action on $B_1\times \mathbb{C}_{\alpha_3}$.
Similarly $NB_2=B_2\times \mathbb{C}_{\alpha_3}$. Identifying $B_1=D^2\times
T/H$ and identifying this further with $B_2$ via $\varphi$, the question of
extending $\varphi_*\colon NB_1|_{\partial B_1}\rightarrow NB_2|_{\partial B_2}$
to the interior is equivalent to extending an automorphism of $S^1\times
T/H\times \mathbb{C}_{\alpha_3}$ to one of $D^2\times
T/H\times\mathbb{C}_{\alpha_3}$, where automorphisms are supposed to cover the
identity of $D^2\times T/H$. Choosing compatible orientations and Riemannian
metrics on the $M_i$ we may furthermore assume that the fiberwise linear
transformations of $\CC_{\alpha_3}$ lie in $\mathrm{U}(1)$. Hence, using the
equivariance, such an automorphism of $S^1\times T/H\times\mathbb{C}_{\alpha_3}$
is of the form $(p,[t],v)\mapsto (p,[t],\eta(p)v)$ where $\eta\colon
S^1\rightarrow \mathrm{U}(1)$. The map $\varphi_*$ extends to $NB_1$ if and only
if $\eta$ is nullhomotopic. This can be achieved similarly to before by
modifying $\varphi$ by precomposing with an equivariant automorphism of $M_1$
which is supported on $K_e=I\times S^1_{\alpha_1}\times
D^4_{\alpha_2,\alpha_3}$. Concretely set $(t,v,w,z)\mapsto
(t,v,w,\eta(t)^{-1}z)$ where we interpret $\eta$ as a loop. Note that this does
not change $\varphi$ on $Q_1$. For this modified $\varphi$ the map of normal
bundles extends.

We now choose Riemannian metrics on $X_1$ and $X_2$ such that $\varphi$ is an
isometry on $M_1$. A small closed neighbourhood of the normal bundles of the
$Q_i$ maps to a closed tubular neighbourhood $T_i$ around the $Q_i$. The map
$\varphi_*$ defines an extension of $\varphi$ to a homeomorphism $M_1\cup
T_1\rightarrow M_2\cup T_2$. With respect to the right choice
of metric and neighbourhood the thickening $M_i\cup T_i$ of the one-skeleton and
$Q_i$ will be a topological manifold. To be precise we set $S=Q_i\cap\partial
M_i$ and decompose $T{X_i}|_S=TS\oplus E\oplus F$ where $TS\oplus E=T\partial
M_i|_S$ and $TQ_i=TS\oplus F$. Now choose a $T$-invariant Riemannian metric $g$
in a tube around $S$ such that the above decomposition is orthogonal. We want to
ensure geodesics in $S$ in the direction of $E$ stay in $\partial M_i$. To do
this we use an argument from \cite[Lemma 6.8]{MilnorhCobord}: we choose a
$T$-equivariant involution $A$ around $\partial M_i$ (locally in a tube, send
every vector to its negative) whose fixed point set is $\partial M_i$ and then
replace $g$ by $g'=g+A^*g$. Then extend $g'$ to all of $X$ without modifying it
near $S$. With respect to this metric $\partial M_i$ is totally geodesic near
$S$ and $T_i\cap \partial M_i$ is the image of a neighbourhood of the
$0$-section of $E=NQ_1|_S$. In particular $M_i\cup T_i$ is equivariantly
homeomorphic to a gluing $M_i\cup B_i\times D^2_{\alpha_3}$ along an equivariant
embedding $\partial B_i\times D^2_{\alpha_3}\rightarrow \partial B_i$.

To finish the proof we do this inductively for all connection paths. The only potential problem in the procedure is that above we needed to modify $\varphi$ on $K_e$, where $e$ was an arbitrarily chosen edge in the connection path $c$ belonging to $Q_1$. If however $e$ is not part of a previously considered connection path these modifications can be carried out analogously. In order to ensure this, recall that by assumption there is a connection path $c$ along which two consecutive weights $\alpha_1,\alpha_2$ satisfy $\ker\alpha_1\cap \ker\alpha_2=\{1\}$. In particular there is no quasitoric $4$-manifold of nontrivial isotropy which belongs to $c$. Now choose an order $c_1,\ldots,c_k$ of the connection paths and edges $e_i$ in $c_i$, $i\leq k-1$, as in Lemma \ref{lem:rededges}, where $c=c_k$. When constructing an extension of $\varphi$ onto a thickening of the quasitoric manifold belonging to $c_i$ then we choose the edge $e_i$ in order to carry out the modifications of $\varphi$ on $M_1$. In this way we extend $\varphi$ to a thickening containing all singular orbits.
\end{proof}

\begin{rem}\label{rem:shapeofthickening} For later reference
	we note that in the above proof $B_i\times D^2_{\alpha_3}$ is equivariantly
homeomorphic to the equivariant $2$-handle $D_c$ from Section
\ref{sec:2-handles}. In particular the manifolds $Y_i$ above arise from the
manifolds $M\cong M_i$ constructed in Proposition \ref{prop:thickening} (with
$M$ as in Section \ref{sec:realization}) by gluing in the equivariant
$2$-handles $D_c$ along all connection paths $c$ along which the isotropy (i.e.\
the intersection of the kernels of two consecutive weights) is nontrivial. We
have not verified that the attaching maps agree with the ones used in Section
\ref{sec:realization} (which is the case) but it turns out to not be necessary.
We will however need the following fact about the locally defined $T^3$-action
on the $D_c$.

Let $e$ be an edge in a connection path of nontrivial isotropy. Recall from Lemma \ref{lem:T3action} that the $T^3$-action on $K_e$ extends to $M_{c}$ where the latter is the union of all $B_\bullet$, $K_\bullet$ through which $c$ runs. We note that on the manifold $M\cong M_i\subset X_i$ as constructed in Propostition \ref{prop:thickening} the locally defined $T^3$-action is smooth. Hence in the notation used in the construction of the $Y_i$ in the above Proposition \ref{prop:twohandlethickening}, $T^3$ acts on the normal bundle $NQ_i|_{Q_i\cap M_i}$ (where $Q_i$ is the quasitoric fixed point manifold associated to $c$). It follows from Lemma \ref{lem:extendactionon2handle} below that this action extends over all of $NQ_i$. We may choose the metric $g$ in the definition of the tube $T_i$ around $Q_i$
to be $T^3$-invariant on $M_c$. Then the $T^3$-action on $M_{c}$ extends to $T_i$. In particular when viewing the $Y_i$ as a gluing of the $M_i$ and two-handles $D_c$ then the locally defined $T^3$-action extends to the $D_c$.
\end{rem}

\begin{lem}\label{lem:extendactionon2handle}
Let $\alpha\colon T\rightarrow S^1$ be a weight, $H\subset T$ discrete and consider the equivariant $\mathbb{C}$-bundle $D^2\times T/H\times \mathbb{C}_\alpha$ with the diagonal $T$-action. Furthermore assume there is an effective $S^1$-action on $S^1\times T/H\times\mathbb{C}_\alpha$ which commutes with the $T$-action, acts linearly on the $\mathbb{C}$-fibers and covers the identity of $S^1\times T/H$. Then this $S^1$-action extends in this way to all of $D^2\times T/H\times \mathbb{C}_\alpha$.
\end{lem}
\begin{proof}
A $T$-equivariant linear bundle automorphism
over $\{p\}\times T/H$ is of the form $(p,[t],v)\mapsto
(p,[t],tAt^{-1} v)$ where $A\in \mathrm{GL}(2,\mathbb{R})$ is a linear map
commuting with the $H$-action on $\mathbb{C}_{\alpha}$. Let $G\subset
\mathrm{GL}(2,\mathbb{R})$ denote the subgroup of these matrices (either
$G=\mathrm{GL}(2,\mathbb{R})$ or $G=\mathbb{R}^+\cdot \mathrm{U}(1)$). In
particular the action $\{p\}\times T/H\times
\mathbb{C}_{\alpha}$ is determined by the action on
$\{p\}\times\{[1]\} \times \mathbb{C}_{\alpha}$ which in turn is given by an
injective homomorphism $S^1\rightarrow G$. Since maximal compact subgroups are
conjugate, all circles in $G$ are conjugate to the standard circle
${\mathrm{U}}(1)$. We thus find a map $\eta\colon S^1\rightarrow G$ such that
$z\cdot (p,[1],v)=(p,[1],\eta(p) z\eta(p)^{-1}v)$ where multiplication with $z$
on the right is to be understood as the standard linear transformation on
$\mathbb{C}$. Concretely, for every $p$ we have a circle in
$G$, with Lie algebra a line in the Lie algebra ${\mathfrak{g}}$ of $G$. This
defines a curve that lies in a single orbit of the $G$-action by conjugation on
the space of lines in $\mathfrak{g}$. Hence, the map $\eta$ can be chosen
continuous everywhere except at $1\in S^1$, such that the difference when
approaching $1$ from both sides lies in the centralizer
$\mathrm{GL}(1,\CC) = {\mathbb{R}}^+\cdot {\mathrm{U}}(1)$
of ${\mathrm{U}}(1)$ in $\mathrm{GL}(2,\mathbb{R})$. Thus, by multiplying $\eta$
with a suitable curve in the connected group ${\mathbb{R}}^+\cdot
{\mathrm{U}}(1)$, we can assume that it is continuous. Then we may multiply
$\eta$ from the right with a further curve in ${\mathrm{U}}(1)$ in order to make
it nullhomotopic, at the same time keeping valid the above description of the
action in terms of $\eta$. Now this description allows to extend it to all of
$D^2\times T/H\times \mathbb{C}_{\alpha}$.
\end{proof}

We define boundary connected sum of two oriented topological manifolds $M,N$ with boundary. We choose an embedded disk in each boundary, one orientation preserving and the other one orientation reversing, and such that the boundary spheres are bicollared (cf.\ Definition \ref{defn:bicollared}). Now we glue $M$ and $N$ along these embeddings. We denote the resulting space by $M\#^\flat N$. It is a manifold with boundary with the latter given by the usual connected sum $\partial M\#\partial N$. The construction is reviewed in some more detail in Appendix \ref{sec:topologicalshiat}. The reason we work in the topological category is that our orbit spaces do not naturally come with a smooth structure (even if they are usually topological manifolds). The homeomorphism type of $M\#^\flat N$ does not depend on the chosen disks (cf.\ Definition \ref{defn:bicollared}, Corollary \ref{prop:consumunique}). In particular it makes sense to write $\#^\flat_p M$ for the $p$-fold boundary connected sum.

\begin{lem}\label{lem:sphere-twist}
Let $X$ be a $6$-dimensional integer GKM $T$-manifold
with the property that any finite isotropy stratum contains
a $T$-fixed point, with GKM graph $(\Gamma,\alpha)$. We
assume that there are two adjacent weights in $\Gamma$ which form a basis. Let
$Y\subset X$ be the codimension $0$ submanifold constructed in the proof of
Proposition \ref{prop:twohandlethickening}. Then $Y/T\cong \#_p^{\flat} D^3
\times S^1$. With respect to this identification for every summand of the
boundary $\#_p S^2 \times S^1$ there is a sphere of the form $S^2\times\{p\}$
such that a twist around that sphere is induced by a $T$-equivariant
automorphism of $Y$.
\end{lem}

The sphere twists in the above lemma are defined as follows (cf.~\cite[p.\ 79]{Laudenbach}): Let $W^3$ be a closed and orientable 3-manifold and $S \subset W^3$ an
embedded $2$-sphere. Choose a tubular neighbourhood of $S$ in $W^3$ which is
diffeomorphic to $S^2 \times I$. Let $\gamma \colon I \to \textrm{SO}(3)$ be
a loop which represents the generator of $\pi_1(\rm{SO}(3))\cong\mathbb{Z}_2$
with basepoint the identity. It is up to homotopy given by rotation around an axis. The homeomorphism 
\[
	I\times S^2 \to I\times S^2 , \quad (t,p) \mapsto (t,\gamma(t) \cdot p)
\]
clearly extends by the identity to a homeomorphism of $W^3$, which we denote by
$H_S(\gamma)$. It follows that its isotopy class in $\pi_0(\mathrm{Homeo(W))}$ is well-defined
with respect to the homotopy class of $\gamma$ as well as with respect to the
homotopy class of
$S$ in $W^3$  (cf.~\cite{MR314054}).

\begin{proof}
 By construction (see Remark
\ref{rem:shapeofthickening}), $Y$ is equivariantly homeomorphic to the manifold
$M$ from Section \ref{sec:realization} with equivariant $2$-handles $D_c$
attached along all connection paths $c$ with nontrivial isotropy.

Let $c_1,\ldots,c_l$ denote those connection paths such that
the kernels of two adjacent weights have nontrivial intersection. By assumption
there is at least one connection path which is not of this form, so if we choose
one of those as the connection path with highest index in Lemma
\ref{lem:rededges}, it follows that we can choose edges $e_i$ in $c_i$,
$i=1,\ldots,l$, such that $e_i$ is not in $c_j$ for $j<i$. Hence $e_i$ is part
of a closed loop after removing $e_j$ for all $j>i$. It follows that there is a
maximal tree $\cT\subset \Gamma$ which contains none of the $e_i$.

We start by considering the union over the $B_v/T$ and $K_e/T$ in $\cT$. It
follows from Lemma \ref{lem:orbitspaceisS1bundle} and the preceding discussion
that for $v\in V(\Gamma)$ and $e\in E(\Gamma)$ we have $B_v/T\cong D^4$ and
$K_e/T\cong I\times D^3$. These get glued together at $3$-disks in their
boundary. One checks that all boundary spheres of attaching
disks are bicollared. By Corollary \ref{prop:consumunique}, the result of
gluing the $B_v/T$ and the $K_e/T$ only depends on the orientation of the
attaching maps. When considering the union over the tree
$\cT$ the resulting orbit space is homeomorphic to $D^4$ independently of the
orientation of the attaching maps. Now we glue the remaining edges $I\times
D^3$ to the boundary. This time the orientations of the attaching maps are
relevant, and there are in fact two possible results of an attaching of $K_e/T$
(only two since $K_e$ has an orientation reversing automorphism). One of them
results in a nonorientable boundary while the other one is the same as taking
the boundary connected sum with $D^3\times S^1$. However recall that we already
know that $\partial M/T$ is orientable by Proposition \ref{prop:orientability}.
Thus it has to be the latter which proves that $M/T$ is a boundary connected sum
of copies of $D^3 \times S^1$.
 To arrive at $Y$ we add the $D_{c_i}$ for $i=1,\ldots,l$,
	which cancels $l$ of the $D^3\times S^1$-factors. One way to see this is to
	instead of adding all $K_e$ in the previous step, we leave out
	$e_1,\ldots,e_l$ and then add $K_{e_i}\cup D_{c_i}$ inductively for
	$i=1,\ldots,l$. The union $K_{e_i}/T\cup D_{c_i}/T$ is a $4$-ball and it is
	attached along a $3$-ball in its boundary. Hence adding these does not change
	the homeomorphism type.

We now come to the statement about the sphere twists. Let
$M'$ denote the union of all $B_v$ and $K_e$ except for $e=e_1,\ldots,e_l$
(i.e., all those in the tree $\cT$, and possibly some more). Thus
$M'/T=\#_p^{\flat} D^3 \times S^1$ as above and $Y/T\cong M'/T$ as it arises by
gluing $4$-balls along $3$-balls in the boundary. Note first that the summands
in the connected sum $\partial M'/T=\#_p S^2 \times S^1$ correspond to edges
$K_e/T$, $e\neq e_1,\ldots, e_l$, glued after the completion of the tree.
In what follows we fix one such summand, i.e., one such edge
$e$. The corresponding sphere $S^2\times\{p\}$ as in the statement of the lemma
is given by the sphere $\{1/2\}\times S^2\subset I\times D^3\cong K_e/T$. Its
sphere twist (up to isotopy) is given by the automorphism which rotates the
$S^2$ factor around an axis once, parametrized over the first component of
$[1/2-\varepsilon,1/2+\varepsilon]\times S^2\subset K_e/T$ and is the identity
everywhere else. We prove first that this lifts to an
equivariant automorphism of $M'$.

Consider $T_e\subset T^3$ (as in Section \ref{sec:realization}) and a complementary circle $S$ such that $T_e\times S=T^3$. Recall that $K_e= I\times S^1\times D^4$ with the standard $T^3$-action on $S^1\times D^4$ and $T$ acts via the subtorus $T\cong T_e\subset T^3$. In Lemma \ref{lem:orbitspaceisS1bundle} we have written $K_e/T$ as $\overline{K}_e\times T^3/T_e$ with the $T^3/T_e$ fibers collapsed over $F_e$. Thus via the canonical map $S\cong T^3/T_e$ the circle $S$ acts on $K_e/T$ by rotating the $T^3/T_e$ factor at unit speed. Thus when identifying $K_e/T\cong I\times D^3$ we see that $S$ acts as standard rotation on the $D^3$-factor. Now let $\gamma\colon [1/2-\varepsilon,1/2+\varepsilon]\rightarrow S$ be a single loop around $S$ based at $1$. Then we define an automorphism $\varphi$ of $M'$ by sending $(t,p)\mapsto (t,\gamma(t)\cdot p)$ for $(t,p)\in [1/2-\varepsilon,1/2+\varepsilon]\times(S^1\times D^4)\subset K_e$ and taking $\varphi$ to be the identity everywhere else. The $S$-action commutes with the $T$-action hence $\varphi$ is indeed $T$-equivariant. By what we have argued before, it induces the desired sphere twist on $M'/T$.

It now suffices to argue that this equivariant automorphism
of $M'$ extends to automorphisms of all $D_{c_i}\cup K_{e_i}$ and thus to an
equivariant automorphism of $Y$, and that this extension is still isotopic to
the sphere twist. We proceed inductively. If $c_1$ does not run through $e$
then there is nothing to check. In case it does, the
$T^3$-action on $K_e$ extends along the connection path, i.e.\ to $M_{c_1}$
(cf. Lemma \ref{lem:T3action}), and to the tube $D_{c_1}$ as explained in Remark
\ref{rem:shapeofthickening}.

Recall the projection $K_e\rightarrow F_e$ to the compact
surface introduced in Section \ref{sec:realization}. The automorphism $\varphi$
on $M'$ can be understood (up to isotopy) as multiplying all points lying over
some $p\in F_e$ by some $\psi (p)\in T^3$ for some function $\psi\colon
F_e\rightarrow T^3$ which maps to the unit near the ends of $F_e$ intersecting
with the neighbouring $F_v$ for vertices $v$ adjacent to $e$. The map
$K_\bullet\rightarrow F_\bullet$ extends to a map $K_e\cup D_{c_1}\cup
K_{e_1}\rightarrow F_e\cup F_{c_1}\cup F_{e_1}$. Thus to extend $\varphi$ to
$D_{c_1}\cup K_{e_1}$ it suffices to extend the map $\psi$ to $F_e\cup
F_{c_1}\cup F_{e_1}$ such that it maps to the identity of $T^3$ near the parts
of $\partial F_{e_1}$ which connect to other $F_v$ for vertices $v$ adjacent to
$e,e_1$ as well as those parts of $\partial F_{c_1}$ which are not glued to
either $F_e$ or $F_{e_i}$. There is no obstruction to doing so: one can take the
support of the extended $\psi$ to be a (quadrangular) stripe going from  $F_e$
through the interior of $F_{c_i}$ to $F_{e_1}$ such that $\psi$ maps to the unit
on the two opposing sides of the quadrangle travelling through the interior of
$F_{c_1}$.

\begin{figure}[h]
	\centering
	\includegraphics{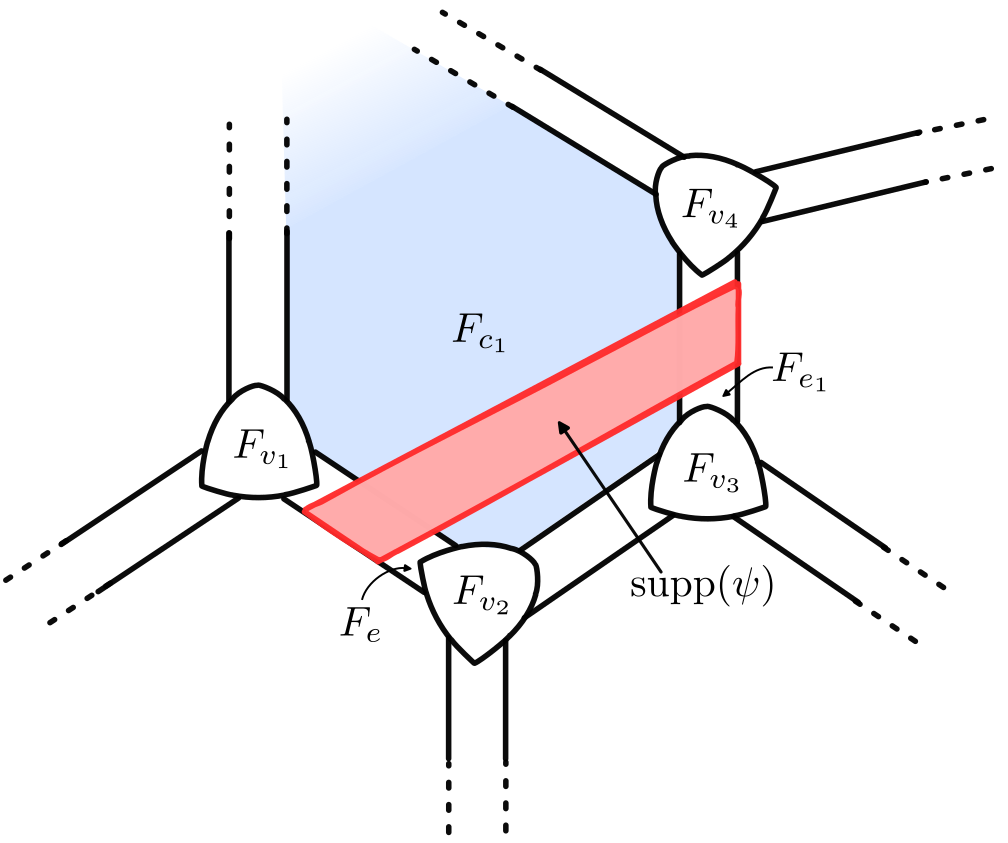}
\end{figure}

This solves the problem of extending $\varphi$ to $D_{c_1}\cup K_{e_1}$. We show that the extension is still isotopic to the sphere twist. There is a homeomorphism $h\colon M_1':= M'/T\cup (D_{c_1}\cup K_{e_1})/T\cong M'/T\cong \#_p^{\flat} D^3 \times S^1$ which absorbs the added $4$-ball $(D_{c_1}\cup K_{e_1})/T$ into neighbourhoods of their attaching $3$-balls in $\#_p^{\flat} D^3 \times S^1$. Then the automorphism $\overline{\varphi}$ of $\#_p^{\flat} D^3 \times S^1$ induced by the equivariant map $\varphi\colon M_1'\rightarrow M_1'$ will be isotopic to the one induced by $\varphi|_{M'}$ and thus to the desired sphere twist. To see this argue as follows: the attaching $D^3$ has a neighbourhood in $\#_p^{\flat} D^3 \times S^1$ homeomorphic to $D^3\times [-1,0]$ with the attaching $D^3$ corresponding to $D^3\times\{0\}$. The attached $(D_{c_1}\cup K_{e_1})/T$ can be identified with $D^3\times [0,1]$ with the attaching $D^3$ corresponding to $D^3\times \{0\}$. The automorphism $\overline{\varphi}$ in question respects the attaching disk $D^3\times\{0\}$. Now observe that on the attached $D^3\times [0,1]$ we may first isotope $\overline{\varphi}$ to the map $\overline{\varphi}|_{D^3}\times \mathrm{id}_{[0,1]}$. Now the desired isotopy is given by $h_t\circ \overline{\varphi}\circ h_t^{-1}\colon \#_p^{\flat} D^3 \times S^1\rightarrow \#_p^{\flat} D^3 \times S^1$ where $h_t$  corresponds to the linear stretching map $D^3\times [-1,t]\rightarrow D^3\times [-1,0]$ and is the identity everywhere else on $\#_p^{\flat} D^3 \times S^1$. 

Now proceed inductively, adding the remaining $D_{c_i}\cup K_{e_i}$ until having arrived at $Y$. One proceeds analogously, the only difference being that $c_i$, $i\geq 1$ might now run through multiple edges where $\varphi$ is not the identity, i.e. through $e$ and $e_j$ for $j<i$. The argument still works if we just take individual disjoint stripes connecting each of these edges to $F_{c_i}$ and extend $\psi$ in the above fashion for each one.
\end{proof}

\subsection{Extending the equivariant homeomorphism}

\begin{prop}\label{prop:equiv-homeo} Let $X_1$ and $X_2$ be closed $T$-manifolds with $H^2(X_i/T)=0$ and let $M_1\subset X_1$ and $M_2\subset X_2$ be compact codimension $0$ submanifolds with boundary, whose interiors contain the nonregular stratum of the $X_i$. Assume that there is a retract $r\colon M_1/T\rightarrow W$ onto a subspace $W\subset \partial M_1/T$ such that $r$ is a homotopy equivalence and that furthermore there is a commutative diagram
\[\xymatrix{M_1\ar[d]\ar[r]^{\varphi} & M_2\ar[d]\\ X_1/T\ar[r]^\psi & X_2/T
}\]
where $\varphi$ is an equivariant homeomorphism, $\psi$ a nonequivariant homeomorphism and the vertical maps are the canonical ones. Then there is an equivariant homeomorphism $X_1\cong X_2$.

\begin{proof}
Let $C_i$ be the closure of the complements of the $M_i$ in the $X_i$. The map $\psi$ restricts to a homeomorphism $C_1/T\rightarrow C_2/T$. The action over these orbit spaces is free hence we have principal bundles $C_i\rightarrow C_i/T$. The pullback bundle $\psi^*(C_2)$ is isomorphic to $C_2$. It is isomorphic to $C_1$ if and only if their defining classes in $H^2(C_1/T)$ agree. Note however that their restrictions to $H^2(\partial M_1)$ agree since the bundles $C_1|_{\partial M_1/T}$ and $C_2|_{\partial M_2/T}$ are isomorphic via $\varphi$. From the Mayer-Vietoris sequence
\[H^2(X_1/T)\rightarrow H^2(M_1/T)\oplus H^2(C_1/T)\rightarrow H^2(\partial M_1/T)\]
we obtain injectivity of the right hand arrow. This implies the classes agree already in $H^2(C_1/T)$. Hence abstractly $C_1\cong \psi^* C_2\cong C_2$.

In order to obtain an equivariant homeomorphism $M_1\rightarrow M_2$ this needs to be compatible with the map $\varphi$ on $\partial M_1$. However, depending on the map $\varphi$, this might or might not be the case. From the commutative diagram
\[\xymatrix{
C_1|_{\partial M_1/T}\ar[d]\ar@/^1.3pc/[rr]^{\varphi} & C_1\ar[r]^\cong\ar[d] & C_2\ar[d]\\
 \partial M_1/T \ar[r]& C_1/T \ar[r]^\psi & C_2/T
}\]
we see the question is equivalent to asking whether the automorphism $\phi$ of $C_1|_{\partial M_1/T}$ obtained by composing $\varphi$ with the inverse of the right hand isomorphism does extend to all of $C_1$. Such an automorphism is defined by a map $f_\phi\colon \partial M_1/T\rightarrow T$ with the property that $\phi(p)=f_\phi(\pi(p))\cdot p$, where $\pi$ denotes the projection of the bundle. The automorphism $\phi$ extends if and only if $f_\phi\colon \partial M_1/T\rightarrow T$ extends to $C_1/T$. It is sufficient that $f_\phi$ is nullhomotopic since in that case we may homotope $f_\phi$ to a constant map on a collar neighbourhood of $\partial M_1/T$ in $C_1/T$ and extend it constantly to the rest of $C_1/T$. In order for such a map to be nullhomotopic it is sufficient that its restriction to $W$ is nullhomotopic. This will not in general be the case for $\varphi$ but there is a choice of equivariant homeomorphism $\varphi'\colon M_1\rightarrow M_2$ for which this holds. Namely starting with a given $\varphi$ we set $\varphi'=\varphi\circ \phi'$ where $\phi'$ is the equivariant automorphism of $M_1$ which we define as $p\mapsto f_\phi(r(\pi(p)))^{-1}\cdot p$. Extending the notation from above we see that the associated map $f_{\phi\circ \phi'}\colon \partial M_1/T\rightarrow T$ is constant on $W$. Hence by the previous arguments $\varphi'|_{\partial M_1}$ extends to an isomorphism $C_1\rightarrow C_2$ of principal bundles which yields an equivariant homeomorphism $X_1\rightarrow X_2$.
\end{proof}

\end{prop}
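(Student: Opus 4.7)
The plan is to extend the equivariant homeomorphism $\varphi\colon M_1\to M_2$ across the complements $C_i:=\overline{X_i\setminus M_i}$. The key observation is that since the nonregular stratum lies in the interior of $M_i$, the $T$-action on $C_i$ is free, so each $C_i\to C_i/T$ is a principal $T$-bundle. The homeomorphism $\psi$ restricts to $C_1/T\to C_2/T$, so the task reduces to identifying the principal $T$-bundles $C_1$ and $\psi^*C_2$ over $C_1/T$ via an isomorphism that agrees with $\varphi|_{\partial M_1}$ on the common boundary $\partial M_1$.

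First I would show $C_1\cong \psi^*C_2$ abstractly as principal bundles. Their classifying classes in $H^2(C_1/T;\mathbb{Z}^k)$ restrict to the same class on $\partial M_1/T$, because $\varphi|_{\partial M_1}$ already furnishes a bundle isomorphism there. The Mayer--Vietoris sequence of $X_1/T=M_1/T\cup C_1/T$ combined with the hypothesis $H^2(X_1/T)=0$ (which remains valid with $\mathbb{Z}^k$ coefficients, as $\mathbb{Z}^k$ is free abelian) makes the difference-of-restrictions map $H^2(M_1/T)\oplus H^2(C_1/T)\to H^2(\partial M_1/T)$ injective, forcing the two classes on $C_1/T$ to coincide. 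Picking any bundle isomorphism $\Psi\colon C_1\to C_2$, its restriction to $\partial M_1$ will generally not equal $\varphi|_{\partial M_1}$; but both cover the same map on orbit spaces, so their ratio is a bundle automorphism of $\partial M_2$ covering the identity, parametrized by a continuous map $f_\phi\colon \partial M_1/T\to T$.

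Next I would modify $\varphi$ so that $f_\phi$ becomes extendable over $C_1/T$. Any continuous $g\colon M_1/T\to T$ yields an equivariant self-homeomorphism of $M_1$ via $p\mapsto g(\pi(p))^{-1}\cdot p$ (well-defined even across the isotropy locus since $T$ is abelian). Precomposing $\varphi$ with this changes $f_\phi$ to $f_\phi\cdot(g|_{\partial M_1/T})^{-1}$. The retract $r$ supplies a natural choice $g:=f_\phi|_W\circ r$, which forces the new discrepancy to be constant (equal to $1$) on $W$. Since $T$ is a $K(\mathbb{Z}^k,1)$, homotopy classes of maps into $T$ are governed by $H^1(-;\mathbb{Z}^k)$, and the hypothesis that $r$ is a homotopy equivalence is what allows one to conclude nullhomotopy of the new $f_\phi$ on all of $\partial M_1/T$. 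A collar of $\partial M_1/T$ inside $C_1/T$ then lets us homotope $f_\phi$ to a constant along the collar and extend constantly to the rest of $C_1/T$; this produces a modified $\Psi$ that agrees with the modified $\varphi$ on $\partial M_1$, and gluing yields the equivariant homeomorphism $X_1\cong X_2$.

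The principal obstacle is the final extension step: ensuring that the discrepancy, known only to be trivial on the subspace $W$, is genuinely nullhomotopic on all of $\partial M_1/T$ and hence extendable over $C_1/T$. A more robust alternative that bypasses this delicate point is to invoke Mayer--Vietoris directly in degree one: since $H^2(X_1/T)=0$, the same difference-of-restrictions map $H^1(M_1/T;\mathbb{Z}^k)\oplus H^1(C_1/T;\mathbb{Z}^k)\twoheadrightarrow H^1(\partial M_1/T;\mathbb{Z}^k)$ is surjective, so $[f_\phi]$ always splits as a sum of a piece realizable as $g|_{\partial M_1/T}$ for some $g\colon M_1/T\to T$ (to be cancelled by modifying $\varphi$) and a piece extendable over $C_1/T$ (to be absorbed into $\Psi$). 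This abstract argument produces the required modifications from the topological hypotheses alone, reducing the role of the retract to that of an explicit, geometric realization of the correction term.
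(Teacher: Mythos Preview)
Your argument follows the paper's proof essentially line for line: establish $C_1\cong\psi^*C_2$ as principal bundles via Mayer--Vietoris in degree $2$ and $H^2(X_1/T)=0$, then correct $\varphi$ by the gauge automorphism $p\mapsto f_\phi(r(\pi(p)))^{-1}\cdot p$ so that the residual discrepancy $f_{\phi'}$ becomes trivial on $W$. The paper's proof and yours are the same up to this point.

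You are right to flag the final step as the delicate one. The paper asserts that it suffices for $f_{\phi'}|_W$ to be nullhomotopic, but the hypotheses only give that $W\hookrightarrow M_1/T$ is a homotopy equivalence, not that $i_W^*\colon H^1(\partial M_1/T)\to H^1(W)$ is injective; in the stated generality this implication does not follow (e.g.\ take $M_1/T=D^2$, $W$ a point). In the paper's actual application $C_1/T\cong\#_p^{\flat}S^2\times D^2$ is simply connected, so $j_1^*\colon H^1(M_1/T)\to H^1(\partial M_1/T)$ is an isomorphism by Mayer--Vietoris, and then $i_W^*$ is forced to be an isomorphism as well; the paper's argument is fine there but not as written for the abstract proposition. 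Your Mayer--Vietoris alternative in degree $1$ is a genuine improvement: surjectivity of $H^1(M_1/T)\oplus H^1(C_1/T)\to H^1(\partial M_1/T)$ (again from $H^2(X_1/T)=0$) lets you split $[f_\phi]$ into a piece killed by modifying $\varphi$ and a piece that extends over $C_1/T$, with no appeal to $W$ at all. This closes the gap cleanly and, as you observe, renders the retract hypothesis superfluous---it survives only as a concrete way to write down the correction $g$.
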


\subsection{The rigidity theorem}

\begin{thm}\label{thm:rigidity}
Let $X_1$, $X_2$ be two smooth simply-connected
$6$-dimensional integer $T$-GKM manifolds with the same GKM graph
$(\Gamma,\alpha)$. Assume further that there are two adjacent weights in
$\Gamma$ that form a basis, and that every finite isotropy stratum in $X_1$ and
$X_2$ contains a $T$-fixed point. Then $X_1$ and $X_2$ are $T$-equivariantly
homeomorphic.
\end{thm}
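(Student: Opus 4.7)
The plan is to follow the outline sketched before the theorem: first produce an equivariant identification of thickenings of the one-skeleta in $X_1$ and $X_2$, and then extend it through the complements on the level of orbit spaces. Applying Proposition \ref{prop:thickening} to each $X_i$ gives codimension-zero submanifolds $\widetilde{M}_i \subset X_i$ containing the one-skeleton, each equivariantly homeomorphic to the combinatorial thickening $M$ built in Section \ref{sec:realization} from $(\Gamma,\alpha)$. Composing these identifications yields an equivariant homeomorphism $\varphi\colon \widetilde{M}_1 \to \widetilde{M}_2$. Because the isotropies are connected we may invoke \cite[Corollary 1.3]{AyzenbergMasuda} to conclude $X_i/T \cong S^4$, and Lemma \ref{lem:sphere-twist} gives $\widetilde{M}_i/T \cong \#^\flat_p D^3 \times S^1$. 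The closures $C_i$ of $X_i \setminus \widetilde{M}_i$ lie over the corresponding complementary regions in $S^4$, and a direct decomposition shows $C_i/T \cong \#^\flat_p S^2 \times D^2$, with common boundary $\#_p S^2 \times S^1$.

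Next I would attempt to produce a nonequivariant homeomorphism $\psi\colon X_1/T \to X_2/T$ compatible with the map $\bar\varphi\colon \widetilde{M}_1/T \to \widetilde{M}_2/T$ induced by $\varphi$, so that Proposition \ref{prop:equiv-homeo} can be applied. The data of $\psi$ on the interiors of $\widetilde{M}_i/T$ and $C_i/T$ is unproblematic: the former is dictated by $\bar\varphi$, and the latter exists because both complements are abstractly homeomorphic to $\#^\flat_p S^2 \times D^2$. The real issue is matching the two pieces along the common boundary, i.e.\ deciding whether the restriction of $\bar\varphi$ to $\partial(\widetilde{M}_1/T) = \#_p S^2 \times S^1$ extends over $\#^\flat_p S^2 \times D^2$.

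This is precisely the extension problem advertised in the introduction of this section, to be settled in Section \ref{sec:extendinghomeos}, and it is the main obstacle. I would expect to characterise an appropriate subgroup of $\pi_0 \mathrm{Homeo}(\#_p S^2 \times S^1)$ of self-homeomorphisms that extend across $\#^\flat_p S^2 \times D^2$; natural candidates are those generated by summand-preserving maps, permutations of summands, twists along the $S^1$ factors, and sphere twists around the spheres $S^2 \times \{p\}$. In parallel, I would use the flexibility still available in the construction of $\varphi$: precomposing with equivariant automorphisms of $\widetilde{M}_1$ changes $\bar\varphi|_{\partial}$ without destroying equivariance. The sphere twists supplied by Lemma \ref{lem:sphere-twist} are realised by such equivariant automorphisms, and together with variations in the identification $\widetilde{M}_i \cong M$ from Proposition \ref{prop:thickening} they should generate a subgroup of $\pi_0 \mathrm{Homeo}(\partial \widetilde{M}_1/T)$ large enough to compensate for any non-extendable component of $\bar\varphi|_\partial$. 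The technical heart of the argument is therefore the combination of a mapping class group computation for $\#_p S^2 \times S^1$ with a verification that the equivariantly realisable self-homeomorphisms surject onto the quotient by the extendable subgroup.

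Granted this, one can modify $\varphi$ so that $\bar\varphi|_\partial$ extends, glue the two extensions to a homeomorphism $\psi\colon X_1/T \to X_2/T$, and conclude by Proposition \ref{prop:equiv-homeo}. The hypothesis $H^2(X_i/T) = 0$ is immediate from $X_i/T \cong S^4$, and the retraction hypothesis is exactly Lemma \ref{lem:retractionsectionthing}. This yields the desired equivariant homeomorphism $X_1 \cong X_2$.
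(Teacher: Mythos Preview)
Your outline matches the paper's proof strategy closely: thickenings via Proposition \ref{prop:thickening}, reduction to Proposition \ref{prop:equiv-homeo}, identification of the complements as $\#^\flat_p S^2\times D^2$, and the mapping class group analysis of $\#_p S^2\times S^1$ to arrange that $\bar\varphi|_\partial$ extends. Two points deserve tightening.

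First, the claim that ``a direct decomposition shows $C_i/T\cong \#^\flat_p S^2\times D^2$'' understates what is needed. In the paper this is Lemma \ref{lem:complementconsum}: one deduces $\pi_1(C_i/T)=1$ and $H^2(C_i/T)\cong H^1(\partial C_i/T)$ from Seifert--van Kampen and Mayer--Vietoris, and then invokes Boyer's classification of simply-connected topological $4$-manifolds with boundary to pin down the homeomorphism type. There is no elementary decomposition argument here; you should cite Lemma \ref{lem:complementconsum}.

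Second, your description of the mapping class step is correct in spirit but too vague to be a proof. The paper's argument is sharp: by Laudenbach one has a short exact sequence $1\to(\mathbb{Z}_2)^p\to\Gamma_+\to\mathrm{Out}(*_p\mathbb{Z})\to 1$ with the kernel generated precisely by the sphere twists. Lemma \ref{lem:sphere-twist} realises every sphere twist by an equivariant automorphism of $M$, so after modifying $\varphi$ one may assume the class of $\bar\varphi|_\partial$ lies in any chosen set of lifts of $\mathrm{Out}(*_p\mathbb{Z})$. Section \ref{sec:extendinghomeos} then constructs, for each Nielsen generator of $\mathrm{Out}(*_p\mathbb{Z})$, an explicit self-homeomorphism of $\#_p S^2\times S^1$ that extends over $\#^\flat_p S^2\times D^2$. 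This is the complete logic; your phrase ``together with variations in the identification $\widetilde{M}_i\cong M$ \ldots\ they should generate a subgroup large enough'' is not needed and would be hard to make precise---only the sphere twists are used on the equivariant side.
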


Before starting the proof, we mention the following application. In the mid 90's, Tolman \cite{Tolman} found the first example of a simply-connected compact manifold with an Hamiltonian torus action with finite fixed point set that does not admit an invariant K\"ahler structure. Shortly after, Woodward \cite{Woodward} modified her example so that it even extends to a multiplicity-free $\U(2)$-action. In \cite{GKZdim6} we proved that both spaces are nonequivariantly diffeomorphic to Eschenburg's twisted flag manifold $\SU(3)//T^2$, by showing that all these spaces are simply-connected and that their natural GKM actions have identical GKM graph. In view of the above theorem, the same ingredients imply
\begin{cor}\label{cor:tolwood}
Tolman's and Woodward's examples are equivariantly homeomorphic to Eschen\-burg's twisted flag manifold.
\end{cor}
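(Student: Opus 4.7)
The plan is to execute the two-step strategy outlined at the start of the section: first produce a $T$-equivariant homeomorphism between invariant thickenings of the one-skeleta of $X_1$ and $X_2$, then extend across their complements.

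First, I would apply Proposition \ref{prop:thickening} to both $X_i$ to obtain codimension-zero $T$-invariant submanifolds $M_i\subset X_i$, each equivariantly homeomorphic to the combinatorial model thickening $M$ associated with $(\Gamma,\alpha)$ from Section \ref{sec:realization}. Composing these identifications gives a $T$-equivariant homeomorphism $\tilde\varphi\colon M_1\to M_2$. Since the isotropy groups are connected, \cite[Corollary 1.3]{AyzenbergMasuda} yields $X_i/T\cong S^4$, and Lemma \ref{lem:sphere-twist} gives $M_i/T\cong \#_p^{\flat} D^3\times S^1$. Consequently the complements $C_i:=\overline{X_i\setminus M_i}$ have orbit spaces $C_i/T\cong \#_p^{\flat} S^2\times D^2$, meeting $M_i/T$ along a common boundary $\partial M_i/T\cong \#_p\, S^2\times S^1$.

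The second step is to invoke Proposition \ref{prop:equiv-homeo}. The retraction hypothesis is supplied by Lemma \ref{lem:retractionsectionthing}, and $H^2(X_i/T)=H^2(S^4)=0$; what is missing is a homeomorphism $\psi\colon X_1/T\to X_2/T$ of orbit spaces restricting on $M_1/T$ to the map induced by $\tilde\varphi$. Equivalently, the induced self-homeomorphism of $\#_p\, S^2\times S^1$ on $\partial M_1/T$ must extend across the complementary $\#_p^{\flat} S^2\times D^2$.

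This extension problem is the main obstacle, and my plan is to defeat it by modifying $\tilde\varphi$ by $T$-equivariant automorphisms of $M_1$. Building on classical results in the spirit of Laudenbach \cite{Laudenbach, MR314054}, the mapping class group $\pi_0\mathrm{Homeo}(\#_p\, S^2\times S^1)$ is generated by permutations of the connected-sum factors, slide homeomorphisms, self-homeomorphisms supported in a single $D^3\times S^1$-summand, and sphere twists around the essential $2$-spheres; the obstruction to extending across $\#_p^{\flat} S^2\times D^2$ is concentrated in the subgroup generated by sphere twists, while the remaining generators always extend. The decisive input from our construction is Lemma \ref{lem:sphere-twist}, which realizes each of these sphere twists by a $T$-equivariant automorphism of $M_1$. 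Hence, precomposing $\tilde\varphi$ with an appropriate product $\tau$ of equivariant sphere twists yields a $T$-equivariant homeomorphism $\varphi:=\tilde\varphi\circ\tau\colon M_1\to M_2$ whose induced boundary map on $\partial M_1/T$ extends to a homeomorphism $\psi\colon S^4\to S^4$. Applying Proposition \ref{prop:equiv-homeo} to the pair $(\varphi,\psi)$ then produces the desired $T$-equivariant homeomorphism $X_1\cong X_2$.
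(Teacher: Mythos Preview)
You have written a correct outline of the proof of Theorem~\ref{thm:rigidity}, not of the corollary. In the paper the corollary is an immediate application of that theorem: one simply observes (as established in \cite{GKZdim6}) that Tolman's, Woodward's, and Eschenburg's spaces are simply-connected $6$-dimensional integer $T^2$-GKM manifolds with identical GKM graph and connected isotropy groups, and then invokes Theorem~\ref{thm:rigidity} directly. Your write-up never mentions the specific examples, never cites \cite{GKZdim6} for the fact that their GKM graphs coincide, and never checks the connected-isotropy hypothesis for them; instead it re-derives the entire rigidity machinery for generic $X_1$, $X_2$.

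As a sketch of Theorem~\ref{thm:rigidity} your argument is essentially the paper's own (Propositions~\ref{prop:thickening} and~\ref{prop:equiv-homeo}, Lemmas~\ref{lem:sphere-twist}, \ref{lem:retractionsectionthing}, \ref{lem:complementconsum}, and the Laudenbach analysis of $\pi_0\,\mathrm{Homeo}(\#_p\,S^2\times S^1)$), and the key point---that the only non-extendable generators are sphere twists, which can be absorbed equivariantly into $M_1$ via Lemma~\ref{lem:sphere-twist}---is stated correctly. One small imprecision: you assert without proof that the $\mathrm{Out}(*_p\mathbb{Z})$-part of the mapping class group ``always extends'' over $\#_p^{\flat}\,S^2\times D^2$; in the paper this is the content of Section~\ref{sec:extendinghomeos} and requires a nontrivial geometric construction. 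But for the corollary itself none of this is needed: simply cite Theorem~\ref{thm:rigidity} and verify its hypotheses for the three spaces in question.
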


The strategy to prove Theorem \ref{thm:rigidity} is as follows: by
Proposition \ref{prop:twohandlethickening} there are
codimension $0$ submanifolds $Y_i\subset X_i$ which are $T$-equivariantly
homeomorphic and such that the action is free outside of the
$Y_i$. Furthermore by \cite[Theorem
1]{AyzenbergMasuda} we know that $X_i/T$ is a rational homology $4$-sphere. As
the $X_i$ are simply-connected, \cite[Corollary II.6.3]{BredonIntro} implies
that the orbit spaces $X_i/T$ are simply-connected as well. The universal
coefficient theorem and Poincaré duality imply that $X_i/T$ is actually an
integral homology sphere and thus, by Freedman's solution of the
four-dimensional Poincar\'e conjecture \cite{Freedman}, already homeomorphic to
$S^4$. Furthermore by Lemma \ref{lem:sphere-twist}
we have $Y_i/T=\#_p^{\flat} D^3 \times S^1$ which deformation retracts onto a
wedge of circles contained in $\partial Y_i/T$ where the orbits are free. We
are hence in the situation of Proposition \ref{prop:equiv-homeo} with the only
remaining question being whether for a given equivariant identification
$\varphi$ below, one can choose the nonequivariant homeomorphism $\psi$ such
that the diagram

\[\xymatrix{Y_1\ar[d]\ar[r]^{\varphi} & Y_2\ar[d]\\ X_1/T\ar[r]^\psi & X_2/T
}\]
commutes. Or equivalently we ask whether a map on $Y_1/T$ prescribed by some equivariant homeomorphism $\varphi$ extends to a homeomorphism over the complements of the $Y_i/T$.

\begin{lem}\label{lem:complementconsum}
The complement $V_i$ of $Y_i/T\setminus \partial Y_i/T$ (for $i=1,2$) in $X_i/T$
is homeomorphic to the boundary connected sum 
\[ 
	\#_p^{\flat} S^2 \times D^2.
\]
\end{lem}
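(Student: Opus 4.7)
The plan is to identify $V_i$ with the complement of a standardly embedded regular neighborhood of a finite 1-complex in $S^4$, and then to compute this complement by an explicit induction.

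First I would set up the geometric picture. By Proposition \ref{prop:thickening}, $M_i \subset X_i$ is equivariantly homeomorphic to the thickening $M$ from Section \ref{sec:realization}, so Proposition \ref{prop:retraction} gives an equivariant deformation retraction of $M_i$ onto the one-skeleton $G$; passing to orbit spaces, $M_i/T$ deformation retracts onto $G/T$. Since the isotropy groups are connected by hypothesis, each invariant two-sphere in $G$ has orbit space a closed interval, so $G/T$ is canonically homeomorphic to the underlying abstract graph $\Gamma$ of first Betti number $p$. Combined with Lemma \ref{lem:sphere-twist}, which identifies $M_i/T \cong \#_p^{\flat} D^3 \times S^1$, and with the identification $X_i/T \cong S^4$ obtained from \cite[Corollary 1.3]{AyzenbergMasuda}, this exhibits $M_i/T$ as a codimension-zero regular neighborhood of the embedded 1-complex $\Gamma \subset S^4$.

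Next I would invoke the codimension-three unknotting principle for 1-complexes: any locally flat embedding of a finite graph into $S^4$ is ambient isotopic to a standard embedding inside a 3-disk $B^3 \subset S^4$. In the PL category this is classical (Zeeman's engulfing), and in the topological locally flat category it follows by smoothing, which is unobstructed in codimension $\geq 3$. Consequently the pair $(S^4, M_i/T)$ is homeomorphic to the standard model in which $\Gamma$ sits in a 3-disk and $M_i/T$ is a closed regular neighborhood thereof; in particular $V_i$ is homeomorphic to the complement of this standard regular neighborhood.

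Finally I would compute the standard complement by induction on $p$. For $p=0$ the graph $\Gamma$ is a tree with standard 4-dimensional neighborhood a bicollared $D^4$, and the topological 4-dimensional Schoenflies theorem gives complement $D^4 = \#_0^{\flat} S^2 \times D^2$. For the inductive step, adding a loop to $\Gamma$ changes $M_i/T$ by boundary connected sum with $D^3 \times S^1$; on the complement side the corresponding local change is boundary connected sum with $S^2 \times D^2$, as is visible from the standard genus-one decomposition $S^4 = (D^3 \times S^1) \cup_{S^2 \times S^1} (S^2 \times D^2)$ applied inside a small ball meeting the new loop. Induction yields $V_i \cong \#_p^{\flat} S^2 \times D^2$, as claimed. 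The main obstacle is to verify the topological locally flat version of Zeeman's unknotting for 1-complexes in $S^4$; once standardness of the embedding is established the complement computation reduces to a clean geometric induction.
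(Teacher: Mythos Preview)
Your approach is genuinely different from the paper's and is correct in spirit, though it trades one heavy tool for another. The paper argues purely algebraically: it computes $\pi_1(V_i)=0$ via Seifert--van Kampen, reads off $H_2(V_i)$ from Mayer--Vietoris, shows the intersection form is identically zero for rank reasons, and then invokes Boyer's classification \cite{Boyer} of simply-connected topological $4$-manifolds with boundary to conclude that $V_i$ is determined up to homeomorphism. Your argument instead identifies $M_i/T$ as a regular neighbourhood of a $1$-complex in $S^4$, unknots that $1$-complex, and computes the complement of the standard model by induction. What you gain is a concrete geometric picture and independence from Freedman--Boyer; what you lose is that you must justify two nontrivial topological steps that the paper's route bypasses entirely.

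The first step needing more care is the passage from ``$M_i/T$ deformation retracts onto $\Gamma$ and is abstractly homeomorphic to $\#_p^\flat D^3\times S^1$'' to ``$M_i/T$ is a regular neighbourhood of $\Gamma$ in $S^4$ in the sense required for uniqueness''. You would want $M_i/T$ to be a mapping cylinder neighbourhood of $\Gamma$, and then appeal to topological uniqueness of such neighbourhoods; this is obtainable from the explicit structure in Section~\ref{sec:realization} but is not automatic from the homotopy statement alone. The second step is the one you flag yourself: topological unknotting of a $1$-complex in $S^4$. In the PL category this is immediate from general position, and you can reduce to it by noting that any topological $S^4$ admits a PL structure and that a $1$-complex with a mapping cylinder neighbourhood can be isotoped to a PL embedding; but this reduction should be stated explicitly rather than left as ``smoothing is unobstructed in codimension $\geq 3$'', which as phrased applies to submanifolds rather than complexes. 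Once these two points are in place, your inductive complement computation is clean and correct.
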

\begin{proof}
$V_i$ is a
topological $4$-manifold with boundary $\partial V_i = \partial Y_i$. Using the first part of Lemma \ref{lem:sphere-twist} and Seifert--Van Kampen for the decomposition $S^4\cong M_i/T\cup V_i$, we obtain the pushout of
\[\pi_1(\#_p^{\flat} D^3 \times S^1)\leftarrow \pi_1(\#_p S^2 \times S^1)\rightarrow \pi_1(V_i)\]
is $0$. Since the left hand map is an isomorphism it follows that $\pi_1(V_i)=0$. Furthermore from the decomposition $S^4=Y_i/T\cup V_i$ and its Mayer-Vietoris sequence we obtain $H^2(V_i)\cong H^2(\partial V_i)$. Since the latter is $\#_p S^2\times S^1$ we thus obtain $H^2(V_i)=H^1(\partial V_i)$. Now it follows from \cite{Boyer} that there is only one choice for a manifold $V_i$ bounding the given $\partial V_i$ and satisfying these properties. Indeed if $(H_2(V_i),L)$ is the intersection form then it sits in an exact sequence (cf.\ \cite[Definition 1.1]{Boyer})
\[0\rightarrow H_2(\partial V_i)\rightarrow H_2(V_i)\xrightarrow{\mathrm{ad}(L)}  (H_2(V_i))^{*}\rightarrow H_1(\partial V_i)\rightarrow 0.\]
For rank reasons the right hand arrow is an isomorphism, which implies $\mathrm{ad}(L)=0$ so $L$ is trivial. The uniqueness of $V_i$ is then provided by \cite[Remark 5.3]{Boyer}.
\end{proof}

Thus the proof of Theorem \ref{thm:rigidity} reduces to extending a given homeomorphism $\#_p S^2 \times S^1\rightarrow \#_p S^2 \times S^1$ to a map $\#_p^{\flat} S^2 \times D^2\rightarrow \#_p^{\flat} S^2 \times D^2$. We may assume this map to be orientation preserving as there is an orientation reversing automorphism of $\#_p^{\flat} S^2 \times D^2$. This only depends on the isotopy type of the first map:

\begin{lem}\label{lem: homeo enters the game}
Let $\tilde \psi \colon \partial V_1 \to \partial V_2$ be a homeomorphism isotopic
to $\psi_0$. Then $\psi_0$ extends to a homeomorpism of $V_1$ and $V_2$ if
$\tilde \varphi$ extends.
\end{lem}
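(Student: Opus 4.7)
The plan is to show that the property of \emph{extending over $V_i$} depends only on the isotopy class of the boundary homeomorphism. Concretely, given an extension $\hat\psi\colon V_1\to V_2$ of $\tilde\psi$, I would post-compose with a carefully constructed self-homeomorphism $G\colon V_2\to V_2$ that agrees with $\tilde\psi\circ\psi_0^{-1}$ on $\partial V_2$ and is the identity outside a collar neighbourhood of $\partial V_2$; then $G^{-1}\circ\hat\psi$ will restrict to $\psi_0$ on $\partial V_1$ and thereby furnish the required extension.

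The first step is to fix bicollars, i.e.\ embeddings $c_i\colon \partial V_i\times[0,1]\to V_i$ with $c_i(x,0)=x$. Since $V_i$ is a topological manifold with boundary, this is M.\ Brown's collar neighbourhood theorem. Now let $H_s\colon \partial V_1\to\partial V_2$, $s\in[0,1]$, be an isotopy with $H_0=\psi_0$ and $H_1=\tilde\psi$, and put
\[
J_s \;:=\; H_{1-s}\circ\psi_0^{-1}\colon \partial V_2\longrightarrow\partial V_2,
\]
an isotopy from $J_0=\tilde\psi\circ\psi_0^{-1}$ to $J_1=\mathrm{id}_{\partial V_2}$ through homeomorphisms. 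Define
\[
G(y) \;=\; \begin{cases} c_2\bigl(J_s(y_0),\,s\bigr) & \text{if } y=c_2(y_0,s),\ s\in[0,1],\\ y & \text{otherwise.}\end{cases}
\]
Since each $J_s$ is a homeomorphism, the formula $(y_0,s)\mapsto(J_s(y_0),s)$ is a self-homeomorphism of $\partial V_2\times[0,1]$ that restricts to the identity on the slice $\partial V_2\times\{1\}$. Transporting via $c_2$ and extending by the identity therefore yields a well-defined homeomorphism $G$ of $V_2$ with $G|_{\partial V_2}=\tilde\psi\circ\psi_0^{-1}$.

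Finally, the composition $G^{-1}\circ\hat\psi\colon V_1\to V_2$ is a homeomorphism whose boundary restriction is $(\tilde\psi\circ\psi_0^{-1})^{-1}\circ\tilde\psi=\psi_0$, completing the proof. The argument is purely formal and makes no use of features specific to $\#_p^{\flat} S^2\times D^2$; the only nontrivial ingredient is the existence of a topological collar, which circumvents the need for any ambient isotopy extension theorem in the topological category. This is why I expect no serious obstacle: the collar absorbs the entire isotopy, and orientations play no role in the construction.
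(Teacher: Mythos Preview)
Your proof is correct and uses essentially the same idea as the paper: absorb the isotopy in a collar neighbourhood of the boundary. The paper decomposes $V_i$ as a collar union an inner copy $\tilde V_i\cong V_i$, maps collar to collar via $(p,t)\mapsto (H(p,t),t)$, and uses the assumed extension on the inner copies; you instead post-compose the given extension $\hat\psi$ with a collar-supported self-homeomorphism $G$ of $V_2$. Your packaging is arguably a bit cleaner since it avoids invoking the identification $\tilde V_i\cong V_i$, but the underlying mechanism is identical. One small point worth making explicit: continuity of $G^{-1}$ requires that the inverse isotopy $s\mapsto J_s^{-1}$ be continuous, which follows here because $\partial V_2$ is compact (so $(y_0,s)\mapsto (J_s(y_0),s)$ is a continuous bijection of a compact Hausdorff space, hence a homeomorphism).
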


\begin{proof}
	Let $H \colon \partial V_1  \times  I \to \partial V_2$ be an isotopy between $\psi_0$ and $\tilde \psi$, i.e., $H( \cdot , 0) =
	\psi_0$, $H( \cdot , 1) = \tilde \psi$ and for every $t \in
	I$ the map $H( \cdot , t)$ is a homeomorphism. Let $U_i$ be a collar
	neighbourhood of $\partial V_i$, i.e. $U_i \cong \partial V_i \times (-\tfrac{1}{2},
	1]$. The map
	\[
		\partial V_i \times [0,
		1]  \to \partial V_i \times [0,
	1], \quad (p,t) \mapsto (H(p,t),t)
	\]
	is clearly a homeomorphism. The complement $\tilde V_i$ of $\partial V_i \times (0,1]$ in
	$V_i$ is diffeomorphic to $V_i$ itself. Furthermore the above map induces a
	homeomorphism $\tilde \psi \colon \partial \tilde V_1 \to \partial \tilde V_2$ which extends by assumption to a homeomorphism $\tilde V_1 \to \tilde V_2$. Hence $\psi_0$ extends to a homeomorphism of $V_1 \to
	V_2$.
\end{proof}

Thus we need to understand which elements of $\pi_0 (\mathrm{Homeo}(\#_p S^2
\times S^1))$ (the group of path components of the homeomorphism group of $\#_p
S^2 \times S^1$) have a representative which extends to a homeomorphism of $V_1
\to V_2$. Denote by $\Gamma_+$ the path components of the orientation preserving
homeomorphisms. Laudenbach shows in \cite[Th\'{e}or\`{e}me 4.3]{Laudenbach} that
$\Gamma_+$ fits in an exact sequence
\[
	1 \longrightarrow (\ZZ_2)^p \longrightarrow \Gamma_+ \longrightarrow
	\textrm{Out}(*_p \ZZ) \longrightarrow 1
\]
where $\mathrm{Out}(*_p \ZZ)$ is the outer automorphism group and the right hand map is given via the action on $\pi_1(\#_p S^2
{\times S^1})\cong *_p \ZZ$. It is furthermore shown (cf.\ \cite[p.79]{Laudenbach}) that generators of the kernel of the map $\Gamma_+ \to
\textrm{Out}(*_p \ZZ)$ are given by sphere twists $H_{S^2 \times
\{\mathrm{pt}\}}(\gamma)$ for $S^2 \times \{\mathrm{pt}\}$ in $\#_p
S^2 \times S^1$.

Now with regards to the proof of the main theorem, according to Lemma \ref{lem:sphere-twist} we can change the automorphism of $\#_p S^2
\times S^1$ that we wish to extend by these sphere twists if we compose the initial $\varphi\colon Y_1\rightarrow Y_2$ with a suitable equivariant automorphism on some $Y_i$. Thus the proof of Theorem \ref{thm:rigidity} reduces to the proof of the following non-equivariant statement.

\begin{prop}
Every element in $\mathrm{Out}(\pi_1(\#_p
S^2 \times S^1))$ is induced by a homeomorphism of $\#_p
S^2 \times S^1$ which extends to $\#_p^{\flat} S^2 \times D^2$.
\end{prop}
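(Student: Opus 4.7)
The plan is to reduce to a standard generating set of $\mathrm{Out}(F_p)$ and to exhibit, for each generator, an explicit self-homeomorphism of $\#_p^\flat S^2\times D^2$. I present the $4$-manifold via the Kirby handlebody $\#_p^\flat S^2\times D^2=D^4\cup h_1\cup\cdots\cup h_p$, where each $h_i$ is a $2$-handle attached along an unknot $K_i\subset S^3=\partial D^4$ with $0$-framing, with $K_1\sqcup\cdots\sqcup K_p$ an unlink. Under this identification the meridians $m_1,\ldots,m_p$ of the $K_i$ freely generate $\pi_1(\#_pS^2\times S^1)=F_p$. By Nielsen's theorem, $\mathrm{Out}(F_p)$ is generated by the outer classes of transpositions $m_i\leftrightarrow m_j$, inversions $m_i\mapsto m_i^{-1}$, and the elementary Nielsen transformations $m_i\mapsto m_im_j$ $(i\ne j)$; so it suffices to realize each of these generators by a self-homeomorphism of $\#_p^\flat S^2\times D^2$.

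Transpositions are produced by ambient isotopies of $D^4$ swapping two components of the unlink, extended across the corresponding handles via the obvious identification $h_i\cong h_j$. Nielsen moves are realized by Kirby handle slides: sliding $h_i$ over $h_j$ arises from a $1$-parameter family of embeddings $S^1\times D^2\hookrightarrow\partial(D^4\cup h_j)\cong S^2\times S^1$ whose terminal attaching circle is the band sum of $K_i$ with a pushoff of $K_j$ along the $0$-framing. By the isotopy extension theorem this family lifts to a self-diffeomorphism of $D^4\cup h_j$, and further extends by the identity across the remaining handles to a self-diffeomorphism of $\#_p^\flat S^2\times D^2$ whose effect on the boundary fundamental group is precisely $m_i\mapsto m_im_j$.

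The step I expect to be the main obstacle is the realization of inversions. I would start from the explicit orientation-preserving self-diffeomorphism $\phi\colon S^2\times D^2\to S^2\times D^2$, $\phi(x,z)=(\rho(x),\bar z)$, where $\rho$ is an orientation-reversing involution of $S^2$; this acts as $m\mapsto m^{-1}$ on $\pi_1(S^2\times S^1)=\ZZ$. To be able to insert $\phi$ into a single factor of the boundary connect sum decomposition of $\#_p^\flat S^2\times D^2$ (which is homeomorphic to the Kirby picture above), $\phi$ must first be replaced by an isotopic diffeomorphism which agrees with the identity on a small $3$-disk in the boundary. This isotopic modification is standard: $\phi$ has a boundary fixed point $p_0$, its derivative $d\phi_{p_0}$ lies in the connected group $\mathrm{SO}(4)$ and may hence be isotoped to the identity, and then an Alexander-type radial shrinking in a chart around $p_0$ isotopes $\phi$ to the identity on a small boundary $3$-disk around $p_0$. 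Choosing this $3$-disk inside the gluing region of the $i$-th summand and extending by the identity over the remaining factors then produces the desired global self-homeomorphism of $\#_p^\flat S^2\times D^2$ realizing $m_i\mapsto m_i^{-1}$ while fixing the other meridians. Since the three types of generators produced in this way exhaust $\mathrm{Out}(F_p)$, the proposition follows.
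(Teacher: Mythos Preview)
Your argument is correct and takes a genuinely different route from the paper. Both proofs reduce to realizing the Nielsen generators of $\mathrm{Out}(F_p)$, but the constructions differ substantially.

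The paper works with the decomposition $S^4=\#_p^\flat S^2\times D^2\cup\#_p^\flat D^3\times S^1$ obtained by embedding $p$ one-handles in $\mathbb{R}^4\subset S^4$, and produces the required homeomorphisms by conjugating the fixed reflection $\varphi(t_1,\ldots,t_4)=(t_1,t_2,t_3,-t_4)$ of $S^4$ by diffeomorphisms $f$ that place the handles in various symmetric configurations relative to the equator $t_4=0$. Since $\varphi$ respects the decomposition, so does each $f^{-1}\varphi f$; one then composes with a fixed orientation-reversing map $\omega$ acting trivially on $\pi_1$ to obtain orientation-preserving representatives. Each Nielsen generator $\sigma,\pi,\rho$ corresponds to a specific handle placement (one handle orthogonal to the equator for $\sigma$, two handles swapped by the reflection for $\pi$, one handle stacked on top of another for $\rho$).

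Your Kirby-calculus approach is more in line with standard four-manifold techniques: permutations come from ambient isotopies of the $0$-framed unlink, Nielsen transvections from $2$-handle slides, and inversions from an explicit involution of a single $S^2\times D^2$ summand isotoped to be the identity near the gluing disk. One small point worth recording: the $\pi_1$-effect of a handle slide is the transpose-inverse of its action on $H_2$ (via the duality pairing between handle cores $[S_k]$ and cocore boundary circles $m_k$), so sliding $h_i$ over $h_j$ yields a Nielsen move of the form $m_j\mapsto m_j m_i^{\pm 1}$ rather than $m_i\mapsto m_i m_j$ as you wrote. This is harmless, since these moves together with permutations and inversions still generate $\mathrm{Out}(F_p)$. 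The paper's approach has the virtue of being entirely elementary and pictorial, requiring no handle theory; yours places the result in a standard framework and makes the extension over the $4$-manifold essentially automatic once the Kirby moves are identified.
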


This is proved in the next section.

\subsection{Extending homeomorphisms in dimension 4}
\label{sec:extendinghomeos}

\begin{rem}
In general not every homeomorphism of $\#_p S^2 \times S^1$ can be extended to
$\#_p^{\flat} S^2 \times D^2$. For $p=1$ take for example the clutching map $S^2
\times S^1 \to S^2 \times S^1$ of the non-trivial $S^2$ bundle over $S^2$. If
this map could be extended then the non-trivial $S^2$-bundle over $S^2$ would be
diffeomorphic to $S^2 \times S^2$ which is not the case.
\end{rem}

Let $x_1, \ldots x_p$ be generators of $\ast_p \ZZ$. Then from \cite[p.\
131]{Magnus} a generating set of $\mathrm{Out}(\ast_p \ZZ)$ is induced by the maps
($k, m = 1, \ldots, p$)
\begin{enumerate}[label=(\alph*)]
	\item $\sigma(x_k) = x_k^{-1}$, $\sigma(x_l) = x_l$ for $l \neq k$ 
	\item $\pi(x_k)=x_{m}$, $\pi(x_{m})=x_{k}$ and
		$\pi(x_l) =x_l$ for $l \neq {k}, {m}$
	\item $\rho(x_{k}) = x_{k}x_{m}$, $\rho(x_l) =x_l$ for $l \neq k$.
\end{enumerate}

In \cite[p.\ 82]{Laudenbach} Laudenbach describes elements in $\Gamma_+$ which are
mapped to the above generators. We would like to give an alternative
description of these elements, which will allow us to prove that they extend over 
$\#_p^{\flat} S^2 \times D^2$.

We take the $4$-ball $D^4\subset \mathbb{R}^4$ to which we add handles embedded in $\mathbb{R}^4$, each of which is homeomorphic to $I\times D^3$ and glued to $\partial D^4$ along the ends, such that the result is an embedding of $\#_p^{\flat} D^3 \times S^1$ into $\mathbb{R}^4$. Furthermore we view $\mathbb{R}^4$ as a subset of $S^4$ via the one-point compactification. The closure of the complement of $\#_p^{\flat} D^3 \times S^1\subset S^4$ is homeomorphic to $\#_p^{\flat} S^2 \times D^2$ (see e.g.\ Lemma \ref{lem:complementconsum}) giving rise to the decomposition
\[
	S^4 = \#_p^{\flat} S^2 \times D^2 \cup \#_p^{\flat} D^3 \times S^1
\]
Both sides of the decomposition intersect in the common boundary $C:= \#_p S^2 \times S^1$.\\

The strategy is as follows: We choose a base point in $S^3$ away from the handles and consider the fixed generating set $x_1,\ldots,x_p\in\pi_1(C)$ where each path corresponds to one of the handles and runs through it once while remaining in $S^3$ for the rest of the time. We note that this property does characterize the $x_i$ uniquely (up to sign) as removing the handles from $C$ results in a simply-connected space.

Our main tool for the construction is the
\emph{flip map} $\varphi \colon \RR^4
\to \RR^4$, defined by $\varphi(t_1,t_2,t_3,t_4) = (t_1,t_2,t_3,-t_4)$, which extends to $S^4$.
If the handles are placed symmetrically around the equator $t_4=0$ and the base point in $C$ is chosen with $t_4=0$, then $\varphi$ preserves $C$ and induces a map on $\pi_1(C)$. Furthermore $\varphi$ respects the above decomposition of $S^4$ and in particular restricts to $\#_p^{\flat} S^2 \times D^2$ as desired. The effect on $\pi_1(C)$ depends on the placement of the handles. The desired automorphisms will be of the form $f^{-1}\circ\varphi\circ f$ where $f$ is a diffeomorphism of $S^4$ moving the handles into a specific placement. We have the starting position of the handles be located within individual small $4$-balls in $\mathbb{R}^4$ away from each other. These small balls may be moved independently to desired positions while preserving $S^3$. Having placed these balls invariant under the flip map one can go on to ensure that the new placement of the handles is preserved by $\varphi$ as well. Via ambient isotopies the starting handle placement can be mapped to the new one via an ambient diffeomorphism $f\colon S^4\rightarrow S^4$. We may assume that $f$ fixes the base point on the equator. The map $f^{-1}\circ\varphi\circ f$ now respects $C$ and extends to $\#_p^{\flat} S^2 \times D^2$. When determining what it does on the $x_i$, note that the images of the $f_*(x_i)$ are characterized uniquely in the same way as the original $x_i$ with respect to the new handle placement.

The above construction produces orientation reversing automorphisms, although we are interested in orientation preserving ones. Thus as a first step we 
need to define a diffeomorphism $\omega \colon C \to C$ which reverses the
orientation and acts by the identity on the $\pi_1(C)$. This is achieved by placing the handles ``tangent'' to the equator as in the figure below.

		\begin{figure}[h]
			\centering
			\includegraphics{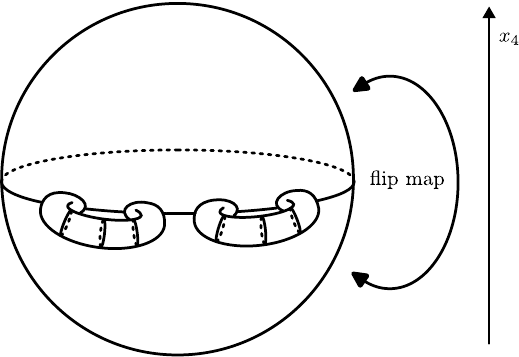}
			\caption{The placement of the discs for $\omega$}
			\label{fig:omega_flip} 
  	\end{figure}

We continue with a case by case study on the generators of
$\mathrm{Out}(\pi_1(C))$.
\begin{enumerate}[label = (\alph*)]
	\item To realize $\sigma$ we place one handle ``orthogonal'' to the equator by putting one boundary disk on each hemisphere of $S^3$ one handle, such
		that they are mapped to each other under the flip map. The remaining discs
		are placed along the equator as above.

		\begin{figure}[h]
			\centering
			\includegraphics{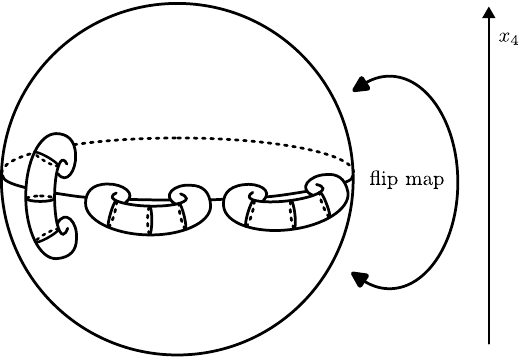}
			\caption{Diffeomorphism representing $\sigma$}
			\label{fig:case_a} 
		\end{figure} 
		
		This induces a diffeomorphism on $\colon C \to C$ which is
		orientation reversing and represents $\sigma$. Composing with $\omega$ yields the desired map.

	\item For this case we put one handle in each hemisphere such that they are mapped to one another by
		the flip map.
		The remaining handles are placed along the equator as before.

		 		\begin{figure}[H]
		 			\centering
		 			\includegraphics{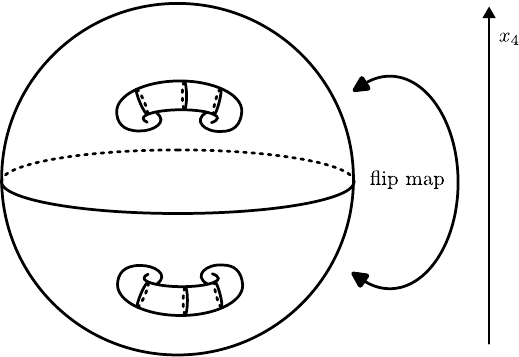}
		 			\caption{Diffeomorphism representing $\pi$}
		 			\label{fig:case_b}
		   	\end{figure}

		The map represents $\pi$ in $\mathrm{Out}(\pi_1(C))$. Furthermore
		concatenating with $\omega$ produces an orientation preserving map,
		which still represents $\pi$.

	\item To realize $\rho$ we consider the symmetric placement of the discs as
		pictured in figure \ref{fig:case_c}. Note that in addition to the previous setup this configuration requires us to bring two handles close to each other and slide one on top of the other.
				\begin{figure}[H]
					\centering
					\includegraphics{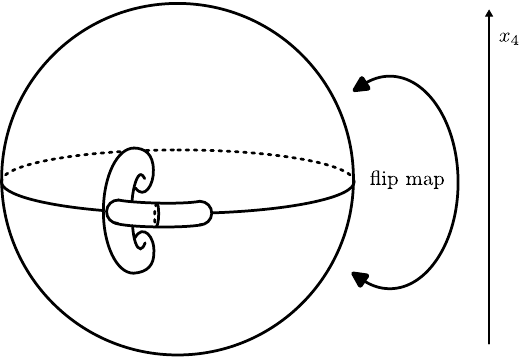}
					\caption{A diffeomorphism representing $\rho$.}
					\label{fig:case_c} 
		  	\end{figure}
	 With the same arguments as in the cases (a) and (b) we see, that the flip map 	 induces a diffeomorphism $h$ of $C$ which extends to $S^4$. 	 
	 It remains to check what $h$ does on $\pi_1(C)$. Consider Figure \ref{fig:paths_case_c},
	 the blue curve represents $x_k$ (or rather its image after moving the base setup to this new handle placement) and goes on the tube on top of
	 the other. The red curve goes through the tube which is reflected
	 under the flip map.
	 		\begin{figure}[H]
	 			\centering
	 			\includegraphics{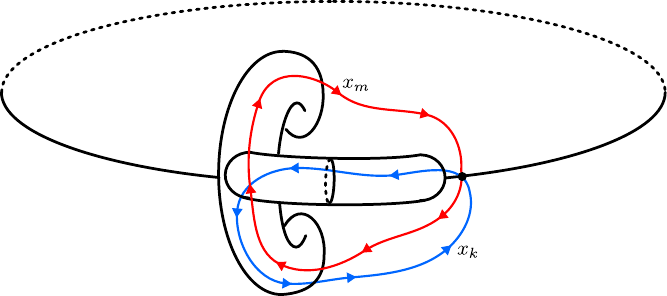}
	 			\caption{Generators under the diffeomorphism $h$}
	 			\label{fig:paths_case_c} 
	   	\end{figure}

We observe that $h_*(x_k)=x_k x_m$, $h_*(x_m)=x_m^{-1}$ and $h_*(x_l)=x_l$ for $l\neq k,m$. Now consider $g \colon C \to C$ from case
	(a) such that $g_*(x_m)= x_m^{-1}$ and $g_*(x_l) = x_l$ ($l \neq m$). Then $(h\circ g)_*(x_k)
 = x_k x_m$ and $(h \circ g)_*(x_l)=x_l$ for $l \neq k$. Hence $h\circ g$
represents $\rho$. 
\end{enumerate}

\subsection{Equivariant non-rigidity: knotted spheres}\label{sec:nonrigid}
In this and the next section we prove that the rigidity statement is false in general if one of the conditions on the finite isotropies is violated.

\begin{thm}\label{thm:nonrigidity}
There are two $6$-dimensional integer GKM $T^2$-manifolds which are not equivariantly homeomorphic but have the same GKM graph. The counterexamples can be chosen such that both satisfy one of the following conditions
\begin{enumerate}
\item[(a)] every closed stratum of a finite isotropy group contains a
	$T^2$-fixed point,
\item[(b)] there exists a $T^2$-fixed point in whose vicinity there occur at most two distinct finite nontrivial isotropy groups.
\end{enumerate}
\end{thm}

In this section we construct examples satisfying condition
(a) and violating (b) (cf.\ Proposition \ref{prop:differentembeddings}) while
examples satisfying (b) and violating (a) are constructed in the next section
(cf.\ Proposition \ref{prop:nonstandard})

The counterexample we construct in this section will have the GKM graph of the GKM $T$-action on $S^2\times S^2\times S^2$ which arises as the pullback of the standard componentwise $T^3$-action pulled back along the homeomorphism $T\rightarrow T^3$, $(s,t)\mapsto (s,st^2,st^5)$. The corresponding GKM graph is given by the cube

\begin{center}
\begin{tikzpicture}
\draw[very thick, red] (0,0) -- ++(2,0);
\draw[very thick, red] (0,2) -- ++(2,0);
\draw[very thick, red] (1,1) -- ++(2,0);
\draw[very thick, red] (1,3) -- ++(2,0);

\draw[very thick, blue] (0,0) -- ++(0,2);
\draw[very thick, blue] (2,0) -- ++(0,2);
\draw[very thick, blue] (1,1) -- ++(0,2);
\draw[very thick, blue] (3,1) -- ++(0,2);

\draw[very thick, cyan] (0,0) -- ++(1,1);
\draw[very thick, cyan] (2,0) -- ++(1,1);
\draw[very thick, cyan] (2,2) -- ++(1,1);
\draw[very thick, cyan] (0,2) -- ++(1,1);

  \node at (0,0)[circle,fill,inner sep=2pt]{};
  \node at (0,2)[circle,fill,inner sep=2pt]{};
  \node at (2,0)[circle,fill,inner sep=2pt]{};
  \node at (2,2)[circle,fill,inner sep=2pt]{};
  
   \node at (1,1)[circle,fill,inner sep=2pt]{};
  \node at (1,3)[circle,fill,inner sep=2pt]{};
  \node at (3,1)[circle,fill,inner sep=2pt]{};
  \node at (3,3)[circle,fill,inner sep=2pt]{};
  
  \node at (1,-0.5)[red]{(1,0)};
   \node at (-0.7,1)[blue]{(1,2)};
    \node at (3.3,0.5)[cyan]{(1,5)};

\end{tikzpicture}
\end{center}
where equally coloured (colinearly depicted) edges have the same label. The connection respects the sides of the cube such that the connection paths (cf.\ Section \ref{sec:2-handles}) are the paths around the faces of the cube. We could now follow the realization process outlined in Section \ref{sec:realization} until we arrive at the manifold $N_2$ (see Proposition \ref{prop:N1->N2}). We do however give a more direct construction of $N_2$ in order to avoid having the reader go through the whole construction process in order to check certain properties which we will claim below.

Consider the $T$-space $X:=S^2\times S^2\times S^2$ above and $T\subset T^3$ via the above embedding. The orbit space of $S^2$ under the standard $S^1$-action is the interval $I$, hence $X/T^3$ is the cube $I^3$. A section of the orbit projection $S^2\rightarrow S^2/S^1\cong I$ gives rise to a section $s\colon I^3\rightarrow X$, and we obtain a $T^3$-equivariant surjection $ I^3\times T^3\rightarrow X$. This induces a map $\varphi\colon I^3\times T^3/T\rightarrow X/T$. For $p\in \partial I^3$ the subgroup $T$ operates transitively on the $T^3$-orbit of $s(p)$. Thus $\varphi$ maps $\{p\}\times T^3/T$ to a single point. On the other hand for $p$ in the interior of $I^3$ the $T^3$-orbit of $s(p)$ is free and $\varphi$ is injective on $\{p\}\times T^3/T$. Thus there is a homeomorphism

\[\psi\colon (I^3\times T^3/T)/_\sim\longrightarrow X/T\] where $\sim$ collapses $\{p\}\times T^3/T$ whenever $p\in\partial I^3$.

Now we choose a small closed collar neighbourhood of $\partial I^3$ isomorphic to $S^2\times [0,\varepsilon]$ with $S^2\times \{0\}$ corresponding to $\partial I^3$. Let $Y$ be the preimage of this collar neighbourhood under the projection $X\rightarrow I^3$. Then $Y$ is the desired manifold satisfying the properties of Proposition \ref{prop:N1->N2}: indeed $\psi$ induces $Y/T\cong (S^2\times [0,\varepsilon]\times T^3/T)/_\sim$ where the $T^3/T$ part gets collapsed over $S^2\times\{0\}$. Identifying $T^3/T$ with a circle this gives $Y/T\cong S^2\times D^2$. Furthermore $\partial Y$ is the part of $Y$ lying over $S^2\times \{\varepsilon\}$. Using a splitting $T^3=S^1\times T$ we see that $\partial Y$ is equivariantly homeomorphic to $S^2\times S^1\times T$.

\begin{lem}\label{lem:singularstratum}
With respect to the above identification $Y/T\cong S^2\times D^2$ the set of nonregular orbits is precisely $S^2\times\{0\}$
\end{lem}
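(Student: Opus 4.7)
The plan is to analyse $T$-stabilizers on $Y$ directly via the ambient $T^3$-action on $X=S^2\times S^2\times S^2$. Since $Y\subset X$ is $T^3$-invariant and $T\subset T^3$, the $T$-stabilizer of any point $x\in Y$ equals $T^3_x\cap T$; and under the $T^3$-orbit projection $X\to I^3$, the subgroup $T^3_x$ depends only on which open face of the cube the image of $x$ lies in. Concretely it is trivial over the interior of $I^3$, equal to the $i$-th coordinate circle $S^1_i\subset T^3$ over the interior of a face perpendicular to the $i$-th axis, a rank-two subtorus over the interior of an edge, and all of $T^3$ at a vertex.

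First I would verify that a point whose projection lies in the interior of $I^3$ has trivial $T$-stabilizer: this is immediate since already the $T^3$-action is free there. Next, for points projecting into $\partial I^3$, it suffices -- as the stabilizer can only grow on passing from a face interior to an edge or vertex -- to check that $S^1_i\cap T$ is nontrivial for each $i$. Using the embedding $(s,t)\mapsto (s,st^2,st^5)$ one computes directly $S^1_1\cap T=\{(t^{-2},t)\mid t^3=1\}\cong \ZZ_3$, $S^1_2\cap T=\{(1,t)\mid t^5=1\}\cong \ZZ_5$, and $S^1_3\cap T=\{(1,t)\mid t^2=1\}\cong \ZZ_2$, all nontrivial. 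Thus the set of nonregular orbits in $Y/T$ is exactly the image of the preimage of $\partial I^3$.

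Finally I would trace this through the identification $Y/T\cong S^2\times D^2$. By construction it factors as $Y/T\cong (S^2\times[0,\varepsilon]\times T^3/T)/_\sim$ with $T^3/T\cong S^1$ collapsed over $S^2\times\{0\}$, the latter corresponding to $\partial I^3$ in the chosen collar. Under the standard identification $([0,\varepsilon]\times S^1)/_\sim\cong D^2$ sending $0$ to the center, the preimage of $\partial I^3$ becomes precisely $S^2\times\{0\}\subset S^2\times D^2$, yielding the claim. There is no real obstacle here; the key computation is the three one-line arithmetic checks of the circle intersections, and the only thing demanding a little care is the bookkeeping through the two successive identifications.
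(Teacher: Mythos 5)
Your proof is correct and follows essentially the same route as the paper: both identify the nonregular orbits with the preimage of $\partial I^3$ and reduce to checking that the $T$-stabilizer over each open face is nontrivial, which in the paper is phrased as showing that no pair of the weights $(1,0),(1,2),(1,5)$ forms a $\mathbb{Z}$-basis, and in your version as computing $S^1_i\cap T\cong\mathbb{Z}_3,\mathbb{Z}_5,\mathbb{Z}_2$ -- the same computation. The observation that stabilizers only grow along lower-dimensional faces and the final bookkeeping through $(S^2\times[0,\varepsilon]\times T^3/T)/_\sim\cong S^2\times D^2$ match the paper as well.
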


\begin{proof}
We have already seen that the $T^3$- and hence also the $T$-orbits in $X$ are free over the interior of $\partial I^3$. It remains to show that every $T$-orbit lying over $\partial I^3$ is indeed nonregular. Those orbits consist of those points where at least one of the three components in $S^2\times S^2\times S^2$ is a pole $P$ of the corresponding $S^2$ and hence remains fixed under the action. Thus it suffices to show that the action is not effective on the subsets of the form $\{P\}\times S^2\times S^2$ (up to switching of the factors). This is equivalent to the condition that the intersection of the kernels of any two of the weights in the above GKM graph is nontrivial. To see this one checks that any pair within the weight vectors $(1,0),(1,2),(1,5)$ is not a basis of $\mathbb{Z}^2$.
\end{proof}

Now we finish the construction as in Section \ref{sec:realization}: choose some topological embedding $\phi\colon S^2\times D^2\rightarrow S^4$. Denoting by $Z$ the closure of the complement of the image of $\varphi$ we may now glue $Z\times T$ equivariantly to $Y$ along $\partial Y\cong S^2\times S^1\times T$ to obtain a manifold $X_\phi$ with orbit space homeomorphic to $S^4$. By the results of Section \ref{sec:cohom}, $X_\phi$ is an integer GKM manifold. We observe that there is a commutative diagram

\[\xymatrix{Y\ar[r]\ar[d] & X_\phi\ar[d]\\ S^2\times D^2\ar[r]^{\phi} &S^4}\]
where the horizontal maps are orbit projections.

\begin{prop}\label{prop:differentembeddings}
Let $\phi_1,\phi_2\colon S^2\times D^2\rightarrow S^4$ be two topological embeddings such that the complements of $S^2\times\{0\}$ in $S^4$ are not homeomorphic. Then the manifolds $X_{\phi_1}$ and $X_{\phi_2}$ as constructed above are not equivariantly homeomorphic.
\end{prop}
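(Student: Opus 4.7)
The plan is to deduce from any hypothetical equivariant homeomorphism $f\colon X_{\phi_1}\to X_{\phi_2}$ a homeomorphism between the complements of $\phi_1(S^2\times\{0\})$ and $\phi_2(S^2\times\{0\})$ in $S^4$, contradicting the hypothesis. First I would pass to orbit spaces: $f$ descends to a homeomorphism $\bar f\colon X_{\phi_1}/T\to X_{\phi_2}/T$, which via the canonical identifications $X_{\phi_i}/T\cong S^4$ built into the construction (compare the commutative diagram preceding the statement) becomes a self-homeomorphism of $S^4$.

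The key step is identifying the singular stratum of $X_{\phi_i}/T$ — that is, the image $\Sigma_i\subset X_{\phi_i}/T$ of the set of points with nontrivial stabilizer — with $\phi_i(S^2\times\{0\})$. Since the $T$-action on $Z_i\times T$ is free, $\Sigma_i$ is entirely contained in $Y/T\cong S^2\times D^2$, where by Lemma \ref{lem:singularstratum} it equals $S^2\times\{0\}$. Under the gluing $S^4=(S^2\times D^2)\cup_{\phi_i}Z_i$ implicit in the construction this subset corresponds precisely to $\phi_i(S^2\times\{0\})\subset S^4$, so $\Sigma_i=\phi_i(S^2\times\{0\})$.

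Finally, since $f$ is equivariant it preserves stabilizer subgroups; in particular $\bar f$ must carry $\Sigma_1$ bijectively onto $\Sigma_2$, and restricting $\bar f$ to the complements then yields a homeomorphism
\[
S^4\setminus\phi_1(S^2\times\{0\})\;\xrightarrow{\;\cong\;}\;S^4\setminus\phi_2(S^2\times\{0\}),
\]
contradicting the assumption. There is no real obstacle here: the argument is essentially formal once the identification $\Sigma_i=\phi_i(S^2\times\{0\})$ is in hand, and this latter step is immediate from Lemma \ref{lem:singularstratum} together with the freeness of the $T$-action on the capping piece $Z_i\times T$.
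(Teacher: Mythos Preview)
Your proof is correct and follows essentially the same argument as the paper: pass to orbit spaces, invoke Lemma~\ref{lem:singularstratum} (together with the freeness of the $T$-action on $Z_i\times T$) to identify the nonregular locus in $X_{\phi_i}/T\cong S^4$ with $\phi_i(S^2\times\{0\})$, and observe that an equivariant homeomorphism preserves this locus, yielding the forbidden homeomorphism of complements.
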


\begin{proof}
By Lemma \ref{lem:singularstratum} the image of the nonregular orbits in $X_{\phi_i}/T\cong S^4$ is precisely the image of $S^2\times\{0\}\rightarrow S^2\times D^2\xrightarrow{\phi_i} S^4$.
An equivariant homeomorphism $f\colon X_{\phi_1}\rightarrow X_{\phi_2}$ maps the nonregular orbits onto the nonregular orbits. In particular it would restrict to a homeomorphism between $S^4\backslash \phi_1(S^2\times\{0\})$ and $S^4\backslash \phi_2(S^2\times\{0\})$. By assumption this cannot happen.
\end{proof}

There are knotted $2$-spheres in $S^4$, first constructed in \cite{Artin} by taking an arc in the real half space $\{(x_1,x_2,x_3)\in\mathbb{R}^3~|~ x_3\geq 0\}$ and rotating it around the plane $x_3=x_4=0$ in $\mathbb{R}^4$. We note that the resulting embeddings of $S^2$ extend to embeddings of $S^2\times D^2$ by considering a tube around the original arc in the $3$-dimensional half space. The author of \cite{Artin} computes the fundamental group of the complements of these spheres, which turn out to be different for suitable choices of arcs. Thus embeddings as in Proposition \ref{prop:differentembeddings} exist, which proves Theorem \ref{thm:nonrigidity}.

\begin{rem}\label{rem:smoothuniqueness}
Although the $T^2$-manifolds in the statement of Theorem \ref{thm:rigidity} are smooth, the rigidity is only obtained up to equivariant homeomorphism. Furthermore we even know the underlying manifolds to be diffeomorphic by \cite{GKZdim6}. Therefore we want to comment on the possibility of an equivariant diffeomorphism classification. Recall that manifolds $X_i$, $i=1,2$, as in Theorem \ref{thm:rigidity} are the equivariant gluing of a thickening $M$ of the GKM graph as well as some free $T$-space $C_i$ with orbit space $C_i/T=\#^\flat_p S^2\times D^2$. The proof constructs an equivariant homeomorphism
$C_1\rightarrow C_2$ compatible with the gluing data and naturally one would want to strengthen this to an equivariant diffeomorphism. In that case it would follow that $C_1/T$ and $C_2/T$ are diffeomorphic with respect to the smooth structures induced from the $X_i$.
However to our knowledge it is presently not known whether smooth structures on $\#^\flat_p S^2\times D^2$ are even unique. In case $C_1/T$ and $C_2/T$ are not diffeomorphic no equivariant diffeomorphism as above can exist.

We note that if exotic smooth structures as above exist, then they will occur in
our equivariant setting by the following construction: we start with a smooth
GKM manifold $X$ and $M,C\subset X$ as above. Now choose any smooth structure on
$C/T$. we can give $C$ a compatible smooth structure by smoothing the map
$C/T\rightarrow BT$. In this way $C\rightarrow C/T$ becomes a smooth principal
bundle. However since the smooth structure on $\partial C/T\cong\#_p S^2\times
S^1\cong \partial M/T$ is unique, the result will always glue together smoothly:
there is a diffeomorphism $ \partial C/T\cong \partial M/T$ homotopic to the original
gluing data. It follows via smoothing of homotopies of the maps to $BT$ that
the, a priori only topologically equivalent, principal bundles $\partial M$ and
$\partial C$ may be identified smoothly. Hence the result is a smooth GKM
manifold whose smooth structure induces the (possibly exotic) initial smooth
structure on $C/T$.
\end{rem}

\subsection{Equivariant non-rigidity: intersection of finite isotropy strata}

We consider the example of $T=T^2$ acting on $X:=S^2\times S^2\times S^2$ from
the previous section, although we change the action to be the pullback of the
standard $T^3$-action along $(s,t)\mapsto(s,st^2,st^3)$. The weights are
$\alpha=(1,0)$, $\beta=(1,2)$, and $\gamma=(1,3)$. We note that
$\ker\alpha\cap\ker \beta=1\times\mathbb{Z}_2=:H_1$, $\ker\alpha\cap \ker
\gamma=1\times \mathbb{Z}_3=:H_2$ and $\ker\beta\cap\ker\gamma$ is trivial. In
particular the action is free over two of the faces of the cube and
hence satisfies condition (b) from Theorem
\ref{thm:nonrigidity}.

Now in the cube $I^3=X/T^3$ consider a tube $\overline{A}\cong D^2\times I$,
where $D^2\times\{0\}$ is embedded in a face over which the $T$-isotropy is
$H_1$ while $D^2\times\{1\}$ is embedded in a  face which corresponds to
$T$-isotropy $H_2$ and the rest of the tube lies in the interior of the cube.
Let $A=\pi^{-1}(\overline{A})$ where $\pi$ is the map $X\rightarrow X/T$. Then
using the map $\psi$ defined in the previous section we obtain a homeomorphism
$A/T=(D^2\times I\times T^3/T)/\sim$ where $\sim$ collapses the $T^3/T$-factor
over $D^2\times\{0,1\}$. Hence $A/T\cong D^2\times S^2$. The
subset in $D^2\times S^2$ corresponding
to nonregular orbits is of the form $D^2\times\{N,S\}$ for $N,S\in S^2$. Now
choose a small open disk $C\subset S^2$ away from $\{N,S\}$.
We set $B\subset X$ to be the union of all points lying over
$D^2\times (S^2\backslash C)$. This is a $T$-manifold with
boundary and $B/T\cong D^2\times D^2$ where the singular orbits are represented
by $B^{H_1}/T=D^2\times\{N\}$ and $B^{H_2}=D^2\times \{S\}$ for $N,S$ in the
interior of $D^2$.

We wish to modify $X$ by cutting out $B$ and gluing in a modified version in which the $H_1$- and the $H_2$-stratum intersect without violating the integer GKM condition. We collect the needed properties of this modification in the following

\begin{lem}\label{lem:nonstandardball}
There is a $T$-manifold $B'$ with boundary such that
\begin{enumerate}
\item $\partial B'$ is equivariantly homeomorphic to $\partial B$ but $B'^{H_1}$ and $B'^{H_2}$ intersect in two orbits with isotropy $H_1\cdot H_2$.
\item There is a surjection $\pi_1(B_T)\rightarrow \pi_1(B'_T)$ which commutes with the maps from $\pi_1(\partial B_T')\cong \pi_1(\partial B_T)$.
\item There is an isomorphism $H^*_T(B')\cong H^*_T(B)$ commuting with the maps to $H^*_T(\partial B')\cong H^*_T(\partial B)$.
\end{enumerate}
\end{lem}

\begin{proof}
Before we come to the construction of $B'$ we construct
certain tubes around the nonregular orbits in $\partial B$. These nonregular
orbits consist of those points lying over $\partial \overline{A}\cap \partial
I^3$. Their $T^3$-orbit space consists of of two disjoint circles
over which the isotropy is constant $H_1$ or $H_2$. We consider an equivariant
tube $U_1\subset \partial B$ around the circle of singular orbits
with isotropy $H_1$. Recall that via a section of
$X\rightarrow X/T^3\cong I^3$ we may write $X=I^3\times T^3/\sim$, where $\sim$
collapses certain isotropy groups in the $T^3$ factor. With respect to this
description we may understand $U_1$ as $S^1\times [0,\epsilon]\times T^3/\sim$
where $S^1\times [0,\epsilon]$ is understood as the corresponding subset in $
\overline{A}\cong D^2\times [0,1]$ such that $S^1\times\{0\}$ is the
intersection with a face of the cube over which the third circle of $T^3$ (i.e.\
the one corresponding to the weight $\gamma$) acts trivially. Since the third
circle factor gets collapsed over $S^1\times\{0\}$ we may rewrite $U_1\cong
S^1\times D^2_\gamma\times S^1_\alpha\times S^1_\beta$. Similarly a tube around
the circle of singular $T$-orbits with isotropy $H_2$ is of the form $U_2\cong
S^1\times D^2_\beta\times S^1_\alpha\times S^1_\gamma$.

We now turn to the construction of $B'$. Consider the group
$H=H_1\cdot H_2$ and its diagonal action on $D^2_\beta\times
D^2_\gamma$, i.e.\ $H_1$ acts only on $D^2_\gamma$ and $H_2$
only on $D^2_\beta$. Associated to this we obtain the $T$-space
$Y:=(D^2_\beta\times D^2_\gamma)\times_H T$ where we have divided by the
diagonal $H$-action and the $T$-action on $Y$ is understood as acting only on
the rightmost $T$-summand. The orbit space of the singular stratum consists of two disks $Y^{H_1}$
and $Y^{H_2}$ which intersect in the center. However $\partial Y$ is different
from $\partial B$ as the circles of singular orbits in $\partial Y/T\cong S^3$
are linked. To overcome this problem we consider two copies of $Y$ and glue
them.

Let $p\colon Y\rightarrow Y/T\cong D^2_\beta/ H_2\times D^2_\gamma/H_1\cong D^2\times D^2$ denote the projection. Now in $\partial Y/T=S^1\times D^2\cup D^2\times S^1$ we consider the subspace \[C=S^1_{[-\frac{\pi}{2},\frac{\pi}{2}]}\times D^2\cup D^2\times S^1_{[-\frac{\pi}{2},\frac{\pi}{2}]}\]
where $S^1_{[a,b]}=\{e^{it}~|~t\in [a,b]\}$. One checks that $C$ is a topological $3$-ball. Now we define $B':=Y\cup_{p^{-1}(C)} Y$ as two copies of $Y$ glued along $p^{-1}(C)$. We prove that it satisfies properties $(i),(ii)$, and $(iii)$ from Lemma \ref{lem:nonstandardball}.

\begin{figure}[H]
	\centering
	\includegraphics{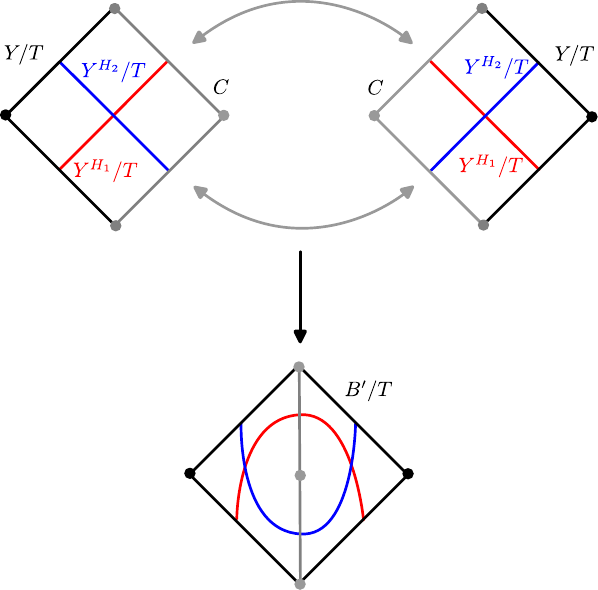}
  \caption{The glueing of the double of $Y$ along $p^{-1}(C)$ in the $T$-orbit
	space (schematically).}
\end{figure}

With regards to $(i)$ we note first that $B'/T$ is the boundary connected sum of
two $4$-balls and hence itself a $4$-ball. Thus $\partial B'/T\cong S^3\cong
\partial B/T$. The strategy to prove $\partial B\cong
\partial B'$ is to find equivariant tubes $U_1'$, $U_2'$ around the singular
orbits in $\partial B'$, equivariant homeomorphisms $U_i\cong U_i'$ and then
extend this to a global homeomorphism by using Proposition
\ref{prop:equiv-homeo}.

A neighbourhood of $(\partial Y)^{H_1}$ in $\partial Y$ is given by $(S^1_\beta\times D^2_{\gamma,\epsilon})\times_H T$, where $D^2_{\gamma,\epsilon}$ refers to the subdisc of $D^2_\gamma$ of radius $\epsilon$ for some small $\epsilon>0$.
Its image under $p$ in $Y/T\cong D^2\times D^2$ is $S^1\times D^2_\epsilon$.
However if we view this neighborhood as a subspace in one of the copies of $Y$ in $B'$, then only $p^{-1}(S^1_{[\frac{\pi}{2},\frac{3\pi}{2}]}\times D^2_\epsilon)$ lies in $\partial B'$ since its rest lies in the interior of $p^{-1} (C)$. Let $J\subset S^1_\beta$ be an interval such that $J$ maps homeomorphically onto $S^1_{[\frac{\pi}{2},\frac{3\pi}{2}]}$ under $S^1_\beta/H_2\cong S^1$. Then we have an equivariant homeomorphism
\[p^{-1}(S^1_{[\frac{\pi}{2},\frac{3\pi}{2}]}\times D_\epsilon)\cong J\times (D^2_{\gamma,\epsilon}\times_{H_1} T)\]
induced by the canonical inclusion $J\times (D^2_{\gamma,\epsilon}\times_{H_1} T)\rightarrow S^1_\beta\times D^2_{\gamma,\epsilon}\times_H T$.

We consider the double of this in $\partial B'$: the subspace $\partial J\times
(D^2_{\gamma,\epsilon}\times_{H_1} T)$ lying over the points $\{\pm i\}\times
D^2_\epsilon\subset S^1\times D^2_\epsilon$ glues with itself via the identity
and thus yields a tube $U_1'$ around $(\partial B')^{H_1}$ in $\partial B'$ with
$U_1'\cong S^1\times (D^2_{\gamma,\epsilon}\times_{H_1} T)$. We note that this
is equivariantly homeomorphic to $U_1$ above via the map
\[S^1\times (D^2_{\gamma,\epsilon}\times_{H_1} T)\rightarrow S^1\times (D^2_\gamma\times S^1_\alpha\times S^1_\beta)\]
obtained by $(x,y,t)\mapsto (x,\epsilon^{-1}
\overline{y}\gamma(t),\alpha(t),\beta(t))$. Complex
conjugation in the $y$-coordinate is needed as
$D^2_{\gamma,\epsilon}\times_{H_1} T$ is the quotient of the diagonal left
$H_1$-action.

The next step is to observe that the circles in the center of $U_1/T$ and
$U_1'/T$ are unknots in $\partial B/T\cong S^3\cong \partial B'/T$. We start
with $U_1'$ and the knot $\eta_1'=S^1\times \{0\}\subset
U_1'/T\cong S^1\times D^2_{\epsilon}$. We consider
\[W=S^1_{[\frac{\pi}{2},\frac{3\pi}{2}]}\times [0,1]\subset S^1\times D^2\subset
\partial(D^2\times D^2)\cong \partial Y/T.\]
Now observe that \[W\cap C= \{i\}\times [0,1]\cup S^1_{[\frac{\pi}{2},\frac{3\pi}{2}]}\times\{1\}\cup \{-i\}\times[0,1].\]
Let $D'=W\cup W\subset \partial B'/T$ be the double of $W$. It consists of two copies of $W$ glued along the subspace above. We observe that $D'$ is an embedded $2$-disk bounding $\eta'$ whose interior is contained in the regular orbits.

Similarly we construct a disk $D\subset \partial B/T$ which bounds
$\eta_1=S^1\times \{0\}\subset S^1\times (D^2_\gamma \times S^1_\alpha\times
S^1_\beta)/T=U_1/T$. Recall the initial homeomorphism $B/T= D^2\times D^2$ under
which the singular orbits are $D^2\times \{N,S\}$ for $N,S\in \mathring{D^2}$.
Under this identification $\eta_1= S^1\times\{N\}$. Let
$P \subset D^2$ be a path from $N$ to a point $P(1)\in S^1$ which does not go
through $S$. Then a disk $D$ in $\partial B/T$ bounding $\eta_1$ is given by
$S^1\times P\cup D^2\times P(1)$.

Now in completely analogous fashion we build an equivariant tube $U_2'\subset \partial B'$ around the circle $\eta_2'$ of singular orbits $(\partial B')^{H_2}$ and consider the analogous equivariant homeomorphism $U_2'\cong U_2$. In the same way as before we find a disk in $\partial B'/T$ bounding $\eta_2'$ and we note that it is disjoint from the previously constructed $D'$ which bounds $\eta_1'$. Hence $\eta_1',\eta_2'$ are two unlinked unknots. The same applies to the knots $\eta_1,\eta_2$ of singular orbits in $\partial B/T$.

We wish to extend the equivariant homeomorphism $\varphi\colon U_1'\cup
U_2'\rightarrow U_1\cup U_2$ to an equivariant homeomorphism $\partial B'\cong
\partial B$ with the help of Proposition \ref{prop:equiv-homeo}. As $\partial
B/T\cong S^3\cong \partial B'/T$, we have $H^2(\partial B/T)= H^2(\partial
B'/T)=0$. Moreover, regarding the requirements of the
proposition we check that $U_1/T \cong S^1\times D^2_{\epsilon}$ deformation
retracts onto $S^1\times \{{\mathrm{pt}}\}\subset S^1\times\partial
D^2_\epsilon$, and analogous observations hold for $U_2/T$ as well as
$U_i'/T$. The only requirement that needs investigation is the existence of a
homeomorphism $\psi\colon \partial B'/T\rightarrow \partial B/T$ which extends
the map induced by $\varphi$. One way to see this is the following: there is a
smooth structure on $\partial B'/T$ with respect to which the maps $S^1\times
D^2\cong U_i'/T\rightarrow \partial B'/T$ are smooth. Note however that it needs
to be constructed directly on $\partial B'/T\cong D^4$ and is not (everywhere)
induced from a smooth structure on $\partial B'$ as the action is not free. The
same holds for $S^1\times D^2\cong U_i/T\subset \partial B/T$. Thus the question
comes down to whether two pairs of smooth tubular neighbourhoods around two
unlinked smooth unknots can be converted into one another by means of an ambient
diffeomorphism $f$. It is well known that there is a diffeomorphism mapping the
respective knots onto one another and the uniqueness of smooth tubular
neighbourhoods implies that $f$ as above exists up to potentially applying Dehn
twists to $S^1\times D^2\cong U_i/T\rightarrow \partial B/T$. But Dehn twists
lift to equivariant maps of the $U_i$ (see e.g.\ the map $\kappa$ from
Proposition \ref{prop:N1->N2}) hence we find the desired extension $\psi$ after
potentially modifying $\varphi$ via equivariant Dehn twists. This finishes the
proof of part $(i)$ of Lemma \ref{lem:nonstandardball}.

For the proof of $(ii)$ we first compute $\pi_1(B_T')$. Recall that $B'$ is a
gluing of two copies of $Y$ along $p^{-1}(C)$. The space $Y$ is equivariantly
homotopy equivalent to its orbit $\{0\}^2\times_H T\cong T/H$. Hence the
inclusion of this orbit induces a homotopy equivalence $BH\rightarrow Y_T$. It
remains to study the space $p^{-1}(C)$ which is itself a gluing.
Let $J'\subset S^1_\beta$ be an interval which maps
	homeomorphically to $S^1_{[-\frac{\pi}{2},\frac{\pi}{2}]}$ under
	$S^1_\beta/H_2\cong S^1$.
We observe
\[A_1:=p^{-1}(S^1_{[-\frac{\pi}{2},\frac{\pi}{2}]}\times D^2)\cong J'\times (D^2_\gamma\times_{H_1} T)\simeq T/H_1\]
where the first homeomorphism is induced by the canonical inclusion $J'\times
(D^2_\gamma\times_{H_1} T)\rightarrow (S^1_\gamma\times D^2_\beta)\times_H T$
and the right hand equivariant homotopy equivalence is the inclusion of any
$T/H_1$ orbit. Similarly $A_2:=p^{-1}(D^2\times
S^1_{[-\frac{\pi}{2},\frac{\pi}{2}]})\simeq T/H_2$. We note that the
intersection $S^1_{[-\frac{\pi}{2},\frac{\pi}{2}]}\times D^2\cap D^2 \times
S^1_{[-\frac{\pi}{2},\frac{\pi}{2}]}$ is contractible and consists of free
$T$-orbits. Hence $(A_1\cap A_2)_T$ is contractible. Thus $\pi_1(p^{-1}(C)_T)$
is the free product $\pi_1(BH_1)\ast \pi(BH_2)$ with the map being induced by
two orbit inclusions. We note that the inclusion of a $T/H_i$-orbit into $B'$ is
equivariantly homotopic to the projection $T/H_i\rightarrow T/H$ onto the
$T/H$-orbit in one of the two copies of $Y$. Hence by Seifert-Van Kampen
$\pi_1(B'_T)$ is isomorphic to the pushout of
\[\pi_1(BH)\leftarrow \pi_1(BH_1)\ast\pi_1(BH_2)\rightarrow \pi_1(BH).\]
By our choice of groups $\pi_1(BH_1)\cong \mathbb{Z}_2$, $\pi_1(BH_2)\cong \mathbb{Z}_3$, and $\pi_1(BH)\cong \mathbb{Z}_6$ with the maps between them being the obvious inclusions. Hence the above pushout is just isomorphic to a single copy of $\pi_1(BH)$ and $\pi_1(BH)\cong \pi_1(B'_T)$ is induced by the inclusion of any of the two $T/H$-orbits.

Now observe that $\pi_1(BH)\cong \pi_1(BH_1)\oplus \pi_1(BH_2)$ via the obvious maps $\pi_1(BH_i)\rightarrow \pi_1(BH)$. Furthermore the inclusion of a combination of any $T/H_1$- and $T/H_2$-orbit into $B'$ factors -up to equivariant homotopy- through the $T/H$-orbits  and thus induces an isomorphism $\pi_1(BH_1)\oplus \pi_1(BH_2) \rightarrow\pi_1(B'_T)$.

We aim for a similar description of $\pi_1(B_T)$. It is convinient to be a little more specific on the construction of $B$. Recall that $A=D^2\times I\times T^3/\sim$ and that $B$ arises from $A$ by removing a small $D^2\times D^2\times T$ from $A$, away from the singular stratum. Explicitly we can do the following: write $T^3=S\times T$ for a circle $S$ complementary to the image of $T$ in $T^3$. Then set $B=A\backslash (D^2\times (\frac{1}{3},\frac{2}{3})\times S_{(-\epsilon,\epsilon)}\times T)$, where $S_{(-\epsilon,\epsilon)}\subset S$ is a small connected neighbourhood. Now cover $B=B^+\cup B^-$ where $B^+$ (resp.\ $B^-$) consists of those points in $B$ where the $I$-component is $>\frac{1}{3}$ (resp. $<\frac{2}{3}$). Then $B^+$ (resp.\ $B^-$) deformation retracts equivariantly onto any $T/H_1$-orbit (resp.\ $T/H_2$-orbit). The intersection $(B^+\cap B^-)_T$ is contractible. Hence we obtain that the inclusion of any two orbits of this type induces an isomorphism

\[\pi_1(BH_1)\ast \pi_1(BH_2)\rightarrow \pi_1(B_T).\]
Including two orbits of this kind into the boundary of $\partial B\cong\partial B'$ and considering the inverse of the resulting isomorphisms as above leads to the commutative diagram

\[\xymatrix{
\pi_1(BH_1)\ast \pi_1(BH_2)\ar[d] & \ar[l]_(0.3){\cong} \pi_1(B_T) &\ar[l]\pi_1(\partial B_T)\ar[d]^\cong\\
\pi_1(BH_1)\oplus \pi_1(BH_2) & \ar[l]_(0.3){\cong} \pi_1(B_T')& \ar[l] \pi_1(\partial B_T').
}\]
The homomorphism $\pi_1(B_T)\to \pi_1(B_T')$ making this diagram commutative proves $(ii)$.

The proof of part $(iii)$ is very similar. We note that $H^*(BH_1)\cong \mathbb{Z}[t]/(2t)$, $H^*(BH_2)\cong \mathbb{Z}[t]/(3t)$, and $H^*(BH)\cong \mathbb{Z}[t]/(6t)$ with $t$ in degree $2$. The maps $H^*(BH)\rightarrow H^*(BH_i)$ are the obvious projection maps. Now the preceding discussion of $B'$ with the Mayer-Viertoris sequence applied to the identical decompositions as in the computation of $\pi_1(B'_T)$ shows that $H^*(B_T')$ is isomorphic to $H^*(BH)$ with the isomorphism being induced by the inclusion of a $T/H$-orbit. Now as before we may consider any $T/H_1$- and $T/H_2$-orbit in $B'$. The inclusion of these orbits factors (up to homotopy) through the inclusion of the $T/H$-orbit and hence the resulting map factors as \[H^*(B'_T)\rightarrow H^*(BH)\rightarrow H^*(BH_1)\times H^*(BH_2).\] The latter map is an isomorphism in positive degrees as one can see degreewise from the previous algebraic description of the maps using $\mathbb{Z}_6\cong \mathbb{Z}_2\times\mathbb{Z}_3$.

If we can show that an analogous description is valid for $H^*(B_T)$ then this proves part $(iii)$ of Lemma \ref{lem:nonstandardball}.
The Mayer-Vietoris sequence applied to the previously discussed decomposition $B=B^-\cup B^+$ yields an isomorphism
\[H^*(B_T)\rightarrow H^*(BH_1)\times H^*(BH_2)\]
in positive degrees induced by including any $T/H_1$- and $T/H_2$-orbit. Choosing those orbits in the boundary of $\partial B\cong\partial B'$ these maps factor through the boundary and we obtain a commutative diagram
\[\xymatrix{
H^*(BH_1)\times H^*(BH_2) \ar[r]^(0.7){\cong} \ar[rd]^\cong & H_T^*(B)\ar[r] &H_T^*(\partial B)\ar[d]^\cong\\
 & H_T^*(B')\ar[r]&H^*_T(\partial B').
}\]
The isomorphism $H^*_T(B) \to H^*_T(B')$ making this diagram  commutative proves part $(iii)$.
\end{proof}

\begin{prop}\label{prop:nonstandard}
The $T$-manifold which arises from $X$ by removing $B$ and gluing in $B'$ along $\partial B'\cong \partial B$ is a simply-connected integer GKM manifold. It has the same GKM graph as $X$ but unlike $X$ does not satisfy condition (a) from Theorem \ref{thm:nonrigidity}.
\end{prop}

\begin{proof}
The modified manifold $X'$ violates condition (a) since the $H$-fixed points (with $H$ as above) consist of two isolated $T/H$-orbits.

Let $C$ be the complement of $B$ in $X$. Then by Lemma \ref{lem:nonstandardball} there is a commutative diagram
\[\xymatrix{
\pi_1(B_T)\ar[d]& \pi_1(\partial B_T)\ar[d]\ar[r]\ar[l] & \pi_1(C_T)\ar[d]\\
\pi_1(B_T')&\pi_1(\partial B_T')\ar[r]\ar[l] &\pi_1(C_T)
}\]
in which all vertical maps are surjective and the pushout of the first row yields $\pi_1(X_T)$ while the pushout of the second row is $\pi_1(X'_T)$. Hence there is a surjection $0=\pi_1(X_T)\rightarrow \pi_1(X'_T)$. Since $X'$ has a fixed point we obtain also $\pi_1(X)=0$.

In the Mayer-Vietoris sequence of $X=C\cup B$ we observe that $H^*_T(\partial B)$ is $R$-torsion. Since $X$ is integer GKM $H_T^*(X)$ is torsion-free. Hence the connecting homomorphism (a map of $R$-modules) is trivial.  We obtain a commutative diagram
\[\xymatrix{
0 \ar[r] & H^*_T(X) \ar[r] & H^*_T(C) \oplus H^*_T(B) \ar[r]\ar[d]^{\cong} & H^*_T(\partial B) \ar[r]\ar[d]^\cong & 0 \\
&H^*_T(X') \ar[r] & H^*_T(C)\oplus H^*_T(B') \ar[r] & H^*_T(\partial B')
}\]
in which the upper row is short exact, the vertical isomorphism in the middle is given by the map in part $(iii)$ of Lemma \ref{lem:nonstandardball}, and the bottom row is part of the Mayer-Vietoris sequence of $X'=C\cup B'$. It follows that $H^*_T(C)\oplus H^*_T(B')\to H^*_T(\partial B')$ is surjective, hence the bottom row, after adding zeros on the left and right, is short exact as well.  It follows that $H_T^*(X')\cong H_T^*(X)$ and $X$ is indeed an integer GKM $T$-manifold.
\end{proof}

\appendix

\section{Boundary connected sum of topological manifolds}\label{sec:topologicalshiat}

Throughout the paper we encounter various gluing constructions along disks and
we want to argue that these do not depend on the choice of disk. As our orbit
spaces are not naturally endowed with a smooth structure, it is in some way the
easiest solution to work in the topological category. There are however a few
subtleties that arise because topological disks can be rather ''wild``
\footnote{For example one could embed the \emph{Alexander
horned ball} into $S^3$, but there cannot be an orientation preserving homeomorphism of $S^3$ mapping
this ball to one of the hemispheres, since the complement of the Alexander
horned ball is not simply-connected.}. Thus actually proving that working in the topological category does not cause us any problems does involve rather heavy tools. As references on these matters are scarce, we will discuss the needed material below, although it is certainly well-known to experts.

\begin{defn}\label{defn:bicollared}
An embedded sphere $S^{n-1}\rightarrow M$ in an $n$-dimensional topological manifold is called \emph{bicollared} if it has a tubular neighbourhood homeomorphic to $S^{n-1}\times \mathbb{R}$ where $S^{n-1}$ corresponds to the $0$-section $S^{n-1}\times\{0\}$.
\end{defn}

We fix an orientation for $D^n$. Let $M,N$ be $n$-dimensional oriented topological manifolds with boundary. Let $\phi\colon D^{n-1}\rightarrow \partial M$, $\psi\colon D^{n-1}\rightarrow \partial N$ be topological embeddings whose boundary spheres are bicollared. Assume furthermore that $\phi$ preserves orientations and that $\psi$ is orientation reversing. The \emph{boundary connected sum} $M\#^\flat N$ is given by gluing along the chosen disks i.e.\ by identifying  $\phi(x)\sim\psi(x)$ for $x\in D^n$. The result is a topological manifold with boundary where the boundary is given by the usual connected sum $\partial M\#\partial N$ along the embedded disks $\phi,\psi$.

We want to argue that this gives a well defined space independent from the
choices involved. In the smooth category this is a consequence of the \emph{disk
theorem} of Palais \cite{MR117741} and Cerf \cite{MR140120}.
We will use the following topological variant:

\begin{prop}\label{prop:diskscooter}
Let $M$ be a connected oriented topological manifold and $\phi,\phi'\colon D^n\rightarrow M$ be two disks preserving the orientation whose boundary spheres are bicollared. Then there is a homeomorphism $h\colon M\rightarrow M$ with compact support which is isotopic to the identity such that the following diagram commutes:
\[\xymatrix{ & D^n\ar[dl]_{\phi}\ar[dr]^{\phi'}& \\M\ar[rr]^h & & M
}\]
\end{prop}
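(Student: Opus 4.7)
The plan is to reduce to the local situation of two disks in a Euclidean chart and then invoke the topological Annulus Theorem (due to Kirby for $n\geq 5$, with the corresponding resolutions in the lower-dimensional cases) together with Alexander's trick. The main obstacle is that we must stay within the class of locally flat embeddings throughout, since none of these tools tolerate wild embeddings; the hypothesis that the boundary spheres are bicollared is precisely what makes this possible, and its use must be carefully tracked through each stage.

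First I would use the bicollar hypothesis together with the classical collar uniqueness theorem to extend each of $\phi$ and $\phi'$ to locally flat embeddings $\bar\phi,\bar\phi'\colon D^n_{1+\varepsilon}\to M$ whose images $V$ and $U$ are open sets each homeomorphic to $\mathbb{R}^n$. Using connectedness of $M$, I would choose a path from $\phi(0)$ to $\phi'(0)$ and cover it by finitely many coordinate charts. A radial shrink inside $V$ (which is a compactly supported isotopy of $M$ since it can be taken to be the identity outside a compact subset of $V$) brings $\phi(D^n)$ into an arbitrarily small disk around $\phi(0)$, after which a finite sequence of local translations along the covering charts moves this small disk into $U$. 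Each piece of this procedure is compactly supported and manifestly isotopic to the identity, so after composing we may replace $\phi$ by an ambient-isotopic embedding whose image lies inside $U$.

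In this reduced setting, both $\phi$ and $\phi'$ are orientation-preserving locally flat embeddings of $D^n$ into $U\cong\mathbb{R}^n$. Applying the topological Annulus Theorem to the bicollared spheres $\phi(\partial D^n)$ and $\phi'(\partial D^n)$, together with the bicollars on both sides, gives a compactly supported ambient isotopy of $\mathbb{R}^n$ (extended by the identity to all of $M$) that carries $\phi(D^n)$ onto $\phi'(D^n)$ as oriented subsets; with a little more care, using the topological isotopy extension theorem of Edwards--Kirby applied to a locally flat family of bicollared spheres, one can arrange in addition that the boundary parametrizations agree, i.e.\ $\phi|_{\partial D^n}=\phi'|_{\partial D^n}$. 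Once this is achieved, the composition $\phi'^{-1}\circ\phi$ is a self-homeomorphism of $D^n$ restricting to the identity on $\partial D^n$, and Alexander's trick provides a compactly supported isotopy to the identity, which transports through $\phi'$ to a final ambient isotopy of $M$ correcting the parametrization.

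The hard part, and the point where the bicollar hypothesis enters most crucially, is obtaining genuine \emph{ambient} isotopies of compact support at each step while preserving local flatness. In the smooth category the corresponding statement is the classical disk theorem of Palais and Cerf; in the topological category the bicollar assumption is exactly what allows one to invoke the annulus and isotopy extension theorems of Kirby and Edwards--Kirby as substitutes.
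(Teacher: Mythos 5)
Your overall strategy is the same as the paper's: extend the disks via the bicollar hypothesis to embeddings of open sets homeomorphic to $\mathbb{R}^n$, move the center of one disk next to the other by concatenating compactly supported local isotopies along a covering of a path, shrink so that one disk lies inside the other's chart, apply the topological Annulus Theorem to match the images as sets, and finish with the Alexander trick. This is exactly the paper's proof. However, there is a gap in your treatment of the final step, the one you dismiss as requiring ``a little more care.'' After the annulus step you have an ambient homeomorphism carrying $\phi(D^n)$ onto $\phi'(D^n)$ as sets, but the induced parametrization of the common boundary sphere differs from $\phi'|_{\partial D^n}$ by some orientation-preserving self-homeomorphism of $S^{n-1}$. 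The Edwards--Kirby isotopy extension theorem, which you cite at this point, does not resolve this: it only promotes a \emph{given} isotopy of the locally flat sphere to an ambient isotopy, it does not produce the isotopy in $\mathrm{Homeo}(S^{n-1})$ that you need. What is actually required here is the fact that an orientation-preserving homeomorphism of $S^{n-1}$ is isotopic to the identity, which the paper records separately as Proposition~\ref{prop:homeomorphismsofsphere}(i) and proves by factoring through stable homeomorphisms (Stable Homeomorphism Theorem, equivalent to the Annulus Theorem you already invoke) and the Alexander trick. Once one has this, one may skip the boundary-matching maneuver entirely: the change of parametrization $\phi^{-1}\circ\phi'$ is an orientation-preserving homeomorphism of $D^n$, hence isotopic to the identity (radially extend the boundary isotopy, compose, then apply the Alexander trick), and the resulting isotopy is transported ambiently through the bicollar, just as in the paper.
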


\begin{rem}\label{rem:multipledisks}
Note that for $n\geq 2$ the result immediately generalizes to embeddings of finite disjoint unions of $D^n$: to see this assume we have disjoint embeddings $\phi_1,\ldots,\phi_k\colon D^n\rightarrow M$ with bicollared boundary spheres. Then removing $\phi_i(D^n)$ for $i=1,\ldots,k-1$ leaves us with a topological manifold which is still connected. Now given some other $\phi'$ as above whose orientation matches that of $\phi_k$ we may apply Proposition \ref{prop:diskscooter}. The resulting homeomorphism of $M\backslash \left(\bigcup_i \phi_i(D^n)\right)$ has compact support and hence extends to all of $M$ via the identity on the $\phi_i(D^n)$. Now apply this inductively.
\end{rem}

Before coming to the proof we note that indeed this has the following immediate consequence:

\begin{cor}\label{prop:consumunique}
Let $M,N$ be two $(n+1)$-dimensional oriented manifolds with boundary. Let \[\phi,\phi'\colon \bigsqcup_{i=1}^k D^n\rightarrow \partial M\quad \text{ and }\quad\psi,\psi'\colon \bigsqcup_{i=1}^k D^n\rightarrow \partial N\]
be embeddings of the disjoint union of $k$ $n$-disks, with bicollared boundary spheres and assume $n\geq 2$ if $k\geq 2$.
Assume further that on each disk $\phi,\phi'$ as well as $\psi,\psi'$ map to the same path component of the respective boundaries and are either both orientation preserving or orientation reversing. Then the pushouts $M\cup_{\phi,\psi} N$ and $M\cup_{\phi',\psi'} N$ are homeomorphic.
In particular the homeomorphism type of the boundary connected sum does not depend on the choice of disks.
\end{cor}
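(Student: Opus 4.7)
The plan is to reduce the statement to constructing ambient self-homeomorphisms $h_M\colon M\to M$ and $h_N\colon N\to N$ with the property that $h_M\circ\phi = \phi'$ and $h_N\circ\psi = \psi'$. Given such homeomorphisms, the map $h_M\sqcup h_N$ descends to a homeomorphism $M\cup_{\phi,\psi}N\to M\cup_{\phi',\psi'}N$, because on the identified subset $h_M(\phi(x))=\phi'(x)$ is identified with $\psi'(x)=h_N(\psi(x))$ in the target pushout. The ``in particular'' clause for the boundary connected sum is then an immediate application.

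To produce the boundary restriction of $h_M$, I will apply Proposition \ref{prop:diskscooter} together with Remark \ref{rem:multipledisks} to the $n$-dimensional oriented topological manifold $\partial M$. The hypotheses are met by assumption: each pair $\phi_i,\phi_i'$ maps to the same path component of $\partial M$ and induces compatible orientations on the image (if both are orientation reversing, reverse the orientation of $D^n$ once and apply the orientation-preserving version), while the dimensional condition $n\geq 2$ when $k\geq 2$ is precisely what is needed to apply the multi-disk extension in the remark. This produces a compactly supported homeomorphism $g_M\colon\partial M\to\partial M$ together with an isotopy $G_t\colon \partial M\to \partial M$ from $G_0=\id$ to $G_1=g_M$ such that $g_M\circ\phi = \phi'$. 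The analogous pair $(g_N,H_t)$ is produced on $\partial N$.

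The next step is to extend $g_M$ to all of $M$. I will invoke Brown's collaring theorem to fix a collar $c\colon \partial M\times[0,1]\hookrightarrow M$ and set $h_M$ equal to $g_M$ on $\partial M$, equal to $c(x,t)\mapsto c(G_{1-t}(x),t)$ on the collar, and equal to the identity elsewhere. The endpoint conditions $G_1=g_M$ and $G_0=\id$ ensure this is continuous along both seams, and the same recipe with the reversed isotopy furnishes the inverse. The construction of $h_N$ is identical, and $h_M\sqcup h_N$ then yields the desired homeomorphism of pushouts.

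The main obstacle in this plan is Proposition \ref{prop:diskscooter} itself, which in the purely topological category is a deep statement depending on the annulus theorem and topological isotopy extension; but since it is available to us as a black box, the rest of the argument is essentially bookkeeping with a collar.
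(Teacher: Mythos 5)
Your proposal is correct and takes essentially the same route as the paper: invoke Proposition \ref{prop:diskscooter} together with Remark \ref{rem:multipledisks} to obtain a boundary homeomorphism isotopic to the identity carrying $\phi$ to $\phi'$ (and likewise for $N$), and then extend it over a Brown collar via the isotopy, damping to the identity away from the boundary. The only difference is that you spell out the collar-extension formula and the orientation bookkeeping a bit more explicitly, which is harmless.
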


\begin{proof}
It follows from Proposition \ref{prop:diskscooter} and Remark \ref{rem:multipledisks} that there is homeomorphism of $\partial M$ isotopic to the identity which transforms $\phi$ into $\phi'$. Using a collar neighbourhood of $\partial M$ (see \cite{Brown} for the existence in the topological category) and the isotopy we may extend this to a homeomorphism on all of $M$, setting it to be the identity away from the boundary. The same can be done for $N$ and the two homeomorphisms together define the desired homeomorphism on the pushouts.
\end{proof}

We will need the following statement for the proof of Proposition \ref{prop:diskscooter}.

\begin{prop}\label{prop:homeomorphismsofsphere}\begin{enumerate}
\item
An orientation preserving homeomorphism of $S^n$ is isotopic to the identity.
\item An orientation preserving homeomorphism of $D^n$ is isotopic to the
	identity. 
\end{enumerate}
\end{prop}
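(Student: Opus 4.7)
My plan is to prove both parts by a simultaneous induction on $n$ in the order $\text{(i)}_{n-1} \Rightarrow \text{(ii)}_n \Rightarrow \text{(i)}_n$, starting from the trivial case $n=0$. The main mechanism for each inductive step is the Alexander trick, combined with an extension lemma for isotopies.

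For part (ii), given an orientation preserving $f\colon D^n \to D^n$, I would first invoke part (i) in dimension $n-1$ to obtain an isotopy $\phi_t$ of $S^{n-1}$ from the identity to $f|_{\partial D^n}$. Using Brown's topological collaring theorem, I extend $\phi_t$ to an ambient isotopy $\Phi_t$ of $D^n$ supported in a collar of $\partial D^n$, with $\Phi_1|_{\partial D^n} = f|_{\partial D^n}$. The composite $g := \Phi_1^{-1} \circ f$ then fixes $\partial D^n$ pointwise, and the Alexander trick
\[
g_t(x) = \begin{cases} t\, g(x/t), & |x| \le t, \\ x, & t \le |x| \le 1, \end{cases}\qquad t \in (0,1],
\]
extended by $g_0 = \mathrm{id}$, is an isotopy from the identity to $g$ through boundary-fixing homeomorphisms; continuity at $t=0$ follows from the boundedness of $g$. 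Concatenating $\Phi_t$ with $g_t$ yields an isotopy from the identity to $f$.

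For part (i), given an orientation preserving $f\colon S^n \to S^n$, I would first use transitivity of $\mathrm{SO}(n+1)$ on $S^n$ to isotope $f$ to a homeomorphism fixing a chosen basepoint $N$. The heart of the argument is then to further isotope $f$ so that it becomes the identity on some small closed disk $D$ around $N$ with bicollared boundary. Once this is accomplished, $f$ restricts to a self-homeomorphism of the complementary disk $\overline{S^n \setminus D}$ that fixes its boundary, and a second application of the Alexander trick to this restriction completes the isotopy of $f$ to the identity on all of $S^n$.

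The main obstacle is the local straightening step near $N$. In the smooth category it is immediate from linearization, but topologically $f$ may be wild near $N$. My approach would be to choose concentric bicollared disks $D \subset D^*$ around $N$ with $f(D) \subset \mathrm{int}(D^*)$ and apply the induction hypothesis $\text{(ii)}_n$ to $f|_{D^*}$: one first exploits the fact that any two bicollared disk neighbourhoods of $N$ inside $D^*$ are ambient isotopic rel $\partial D^*$ (a Schoenflies-type statement) to arrange $f(D) = D$, and then uses $\text{(ii)}_n$ once more, now applied to $f|_D$, to isotope $f|_D$ to the identity; both ambient isotopies are supported in $D^*$ and extend by the identity on $S^n$. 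This argument is cleanest in the low dimensions ($n \le 3$) relevant to the paper, where Schoenflies is classical; the inductive order $\text{(i)}_{n-1} \Rightarrow \text{(ii)}_n \Rightarrow \text{(i)}_n$ is chosen precisely to avoid circularity.
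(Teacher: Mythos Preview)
Your argument for (ii) is essentially the paper's: reduce to the boundary-fixing case via $(i)_{n-1}$ and a radial (or collar) extension of the boundary isotopy, then apply the Alexander trick.

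For (i) the paper proceeds differently and more directly. After arranging that $\psi$ fixes a point $p$, it restricts to $\mathbb{R}^n \cong S^n\setminus\{p\}$ and invokes the Stable Homeomorphism Conjecture (Rad\'o, Moise, Kirby, Quinn in the respective dimensions) to factor $\psi|_{\mathbb{R}^n}$ as a finite composition of homeomorphisms each of which is the identity on some nonempty open set; each such factor is then isotoped to the identity by an Alexander trick applied to the complement of a small disk inside that open set.

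Your inductive local-straightening scheme is a legitimate alternative organization, but the step you label a ``Schoenflies-type statement'' --- that any two bicollared disk neighbourhoods of $N$ inside $D^*$ are ambiently isotopic rel $\partial D^*$ --- is precisely the Annulus Theorem, which is known to be equivalent to the Stable Homeomorphism Conjecture. So you are not circumventing the deep input the paper cites explicitly; you have merely hidden it inside this lemma. In particular this step still requires Kirby/Quinn for $n\geq 4$ and Moise for $n=3$; only for $n\leq 2$ is it genuinely classical Schoenflies. There is also a small point you glide over: once $f(D)=D$, the isotopy $g_t\colon D\to D$ produced by $(ii)_n$ need not fix $\partial D$, so it is not yet an ambient isotopy of $S^n$; one must damp $g_t|_{\partial D}$ across the outer bicollar of $\partial D$ to obtain an isotopy supported in $D^*$, which is routine but should be said.
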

\begin{proof} We begin with the proof of $(i)$ by following the arguments of\
	\cite{39959}. It suffices to prove this for a homeomorphism $\psi\colon
	S^n\rightarrow S^n$ fixing a point $p$ as we can move $p$ anywhere using
	homeomorphisms isotopic to the identity. Restricting $\psi$ to
	$S^n\backslash\{p\}\cong \mathbb{R}^n$, we apply the Stable Homeomorphism
	Conjecture (This conjecture was proven by Kirby \cite{MR242165} for
	$n \geq 5$, by Quinn \cite{MR679069} for $n=4$, by Moise \cite{MR48805} for
	$n=3$ and by Rad\'{o} for $n=2$ \cite{MR1544659}): It lets us write
$\psi|_{\mathbb{R}^n}$ as a finite composition of stable homeomorphisms, i.e.\
homeomorphisms which are the identity on a nonempty open subset. Any of these
induces a homeomorphism on the one point compactification $S^n$ hence we obtain
a factorization of $\psi$. But any homeomorphism of $S^n$ which is the identity
on some open subset $U$ is isotopic to the identity. Indeed, by removing a small
open disk from $U$ this reduces to the question whether a homeomorphism $h$ of
$D^n$ which is the identity on the boundary can be isotoped to the identity.
This can be achieved using the Alexander trick, i.e.\ via the isotopy with $H_t(x)=x$ for $\|x\|\geq t$ and $H_t(x)=t\cdot h(x/t)$ for $\|x\|\leq t$.

Part $(ii)$ follows from part $(i)$ in the following way: given an orientation
preserving homeomorphism $\psi\colon D^n\rightarrow D^n$ we know that its
restriction to the boundary is isotopic to the identity. Thus the same holds for
the radial extension of this restriction when considered as a homeomorphism of
$D^n$. Composing with its inverse reduces the proof to the case that
$\psi|_{S^{n-1}}$ is the identity. Applying the Alexander trick again yields the result.
\end{proof}

\begin{proof}[Proof of Proposition \ref{prop:diskscooter}]
We follow the proof outlined in \cite{121635}, see also \cite{TopManifolds}.
Given two points in an Euclidean neighbourhood we find a compactly supported
ambient isotopy mapping one onto the other. These extend to ambient isotopies on
$M$ via the identity. Choose a path between the centers (i.e.\ the images of the
origin) of $\phi$ and $\phi'$. Then we may move one of the centers along the
path by concatenating finitely many ambient isotopies of the above type. Hence
we may assume $\phi(0)=\phi'(0)$.

Using that the boundary spheres are bicollared, we extend $\phi,\phi'$ to embeddings $\tilde{\phi},\tilde{\phi}'\colon \mathbb{R}^n\rightarrow M$ onto open neighbourhoods by gluing the outside of the collar $S^{n-1}\times [0,\infty)$ to the boundary of $D^n$ and mapping it onto the tubular neighbourhoods. Using the coordinates of $\tilde{\phi}$ we may shrink $\phi(D^n)$ to be arbitrarily close to $\phi(0)$ using a compactly supported ambient isotopy which extends to $M$ via the identity. Hence we may assume $\phi(D^n)$ is contained in the interior of $\phi'(D^n)$.

It follows from the Annulus Theorem (note that this theorem is equivalent to the
Stable Homeomorphism Conjecture by \cite{MR158384}) that the closure of $\phi'(D^n)\backslash \phi(D^n)$ is homeomorphic to $S^{n-1}\times [0,1]$. In fact, gluing respective sides of the collars around $\phi(S^{n-1})$ and $\phi'(S^{n-1})$ we may extend this to a homeomorphsim of $S^{n-1}\times \mathbb{R}$ onto an open neighbourhood of $\phi'(D^n)\backslash \phi(D^n)$ with $S^{n-1}\times\mathbb{R}_-$ belonging to $\phi(D^n)$. Within this neighbourhood we may find a compactly supported ambient isotopy pushing $S^{n-1}\times [0,1]$ onto $S^{n-1}\times[-1,0]$. Thus we may assume that $\phi(D^n)=\phi'(D^n)$.

Now $\phi^{-1}\circ\phi'$ is an orientation preserving homeomorphism of $D^n$. By Proposition \ref{prop:homeomorphismsofsphere} it is isotopic to the identity. Using the outside collar of $\phi(D^n)$ this isotopy extends to a compactly supported ambient isotopy on all of $M$ starting at the identity. Hence we find a compactly supported homeomorphism of $M$ transforming $\phi$ into $\phi'$ which is isotopic to the identity.
\end{proof}

\bibliographystyle{acm}
\bibliography{GKMrealization}
\end{document}